\def\R {\mathbb{R}}
\def\C {\mathcal{C}}
\def\N {\mathbb{N}}
\def\S {\mathbb{S}}
\def\bv {\mathbf{v}}
\def\bu {\mathbf{u}}
\def\eps{\varepsilon}
\def\dist{{\rm dist}}
\newcommand{\loc}{\mathrm{loc}}
\newcommand{\Lip}{\mathrm{Lip}}
\newcommand{\wc}{\rightharpoonup}
\newcommand{\de}[1] {\mathrm{d} #1}
\newcommand{\pa}{\partial}
\newcommand{\mf}[1]{\mathbf{#1}}
\DeclareMathOperator*{\tsum}{\textstyle{\sum}}
\DeclareMathOperator{\supp}{supp}
\DeclareMathOperator*{\osc}{osc}
\DeclareMathOperator*{\meas}{meas}
\newtheorem{proposition}{Proposition}[section]
\newtheorem{theorem}[proposition]{Theorem}
\newtheorem*{theorem*}{Theorem}
\newtheorem{corollary}[proposition]{Corollary}
\newtheorem{lemma}[proposition]{Lemma}
\theoremstyle{definition}
\newtheorem{definition}[proposition]{Definition}
\newtheorem{remark}[proposition]{Remark}
\numberwithin{equation}{section}
\title[Uniform Lipschitz bounds]{Uniform bounds for strongly competing systems:\\ the optimal Lipschitz case}
\subjclass[2010]{Primary: 35B65; secondary: 35B25, 35J47, 35R35, 81Q05, 92D25}
\keywords{optimal regularity theory, singular perturbation, systems of elliptic equations, monotonicity formulae}
\begin{document}

\maketitle

\centerline{Nicola Soave}
 \centerline{Justus Liebig Universit\"at Giessen}
   \centerline{Mathematisches Institut}
   \centerline{Arndtstrasse 2, 35392, Giessen, Germany}
\centerline{email: nicola.soave@math.uni-giessen.de; nicola.soave@gmail.com}   

\medskip

\centerline{Alessandro Zilio}
\centerline{Centre d'analyse et de math\'{e}matique sociales}
\centerline{\'{E}cole des Hautes \'{E}tudes en Sciences Sociales}
\centerline{190-198 Avenue de France, 75244, Paris CEDEX 13}
\centerline{email: alessandro.zilio@polimi.it}

\begin{abstract}
For a class of systems of semi-linear elliptic equations, including
\[
    -\Delta u_i=f_i(x,u_i) - \beta u_i\sum_{j\neq i}a_{ij}u_j^p,\qquad i=1,\dots,k,
\]
for $p=2$ (\emph{variational}-type interaction) or $p = 1$ (\emph{symmetric}-type interaction), we prove that uniform $L^\infty$ boundedness of the solutions implies uniform boundedness of their Lipschitz norm as $\beta \to +\infty$, that is, in the limit of strong competition. This extends known quasi-optimal regularity results and covers the \emph{optimal} case for this class of problems. The proofs rest on monotonicity formulae of Alt-Caffarelli-Friedman and Almgren type in the variational setting, and on the Caffarelli-Jerison-Kenig almost monotonicity formula in the symmetric one.
\end{abstract}

\section{Introduction and main results}

In this paper we are concerned with the optimal uniform regularity of families of solutions to strongly competing systems either of Gross-Pitaevskii type:
\begin{equation}\label{be classical}
\begin{cases}
-\Delta u_{i,\beta} +\lambda_{i,\beta} u_{i,\beta}= \omega_i u_{i,\beta}^3 - \beta u_{i,\beta} \sum_{j \neq i} a_{ij} u_{j,\beta}^2 & \text{in $\Omega$} \\
u_{i,\beta} > 0 & \text{in $\Omega$},
\end{cases}
\end{equation}
or of Lotka-Volterra type:
\begin{equation}\label{lv classical}
\begin{cases}
-\Delta u_{i,\beta} +\lambda_{i} u_{i,\beta} = \omega_i u_{i,\beta}^2 - \beta u_{i,\beta} \sum_{j \neq i} u_{j,\beta} & \text{in $\Omega$} \\
u_{i,\beta} > 0 & \text{in $\Omega$},
\end{cases}
\end{equation}
with $i=1,\dots,k$. In both cases, $\Omega \subset \R^N$ is a domain neither necessarily bounded, nor smooth, $\omega_i \in \R$, and $\beta$ is a positive parameter which has to be thought as tending to $+\infty$. In the previous setting, our main results read as follows.

\begin{theorem}\label{thm: be example}
In dimension $N \le 4$, let us assume that $a_{ij}=a_{ji}$ and $(\lambda_{i,\beta})$ is a bounded sequence. Let $\{\mf{u}_{\beta}\}$ be a family of solutions of \eqref{be classical} uniformly bounded in $L^\infty(\Omega)$. Then for every compact set $\Omega' \subset \subset \Omega$ there exists $M>0$ independent of $\beta$ such that
\[
\|\mf{u}_{\beta}\|_{\Lip(\Omega')}:= \|\mf{u}_{\beta}\|_{L^\infty(\Omega')}+ \|\nabla \mf{u}_{\beta}\|_{L^\infty(\Omega')} \le M.
\]
\end{theorem}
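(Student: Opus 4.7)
The plan is to argue by contradiction via a blow-up analysis, and to classify the resulting limit profiles through the Almgren and Alt-Caffarelli-Friedman monotonicity formulae. Suppose, toward contradiction, that there exist a compact $\Omega'\subset\subset\Omega$, a sequence $\beta_n\to+\infty$, and solutions $\mathbf{u}_n$ uniformly bounded in $L^\infty(\Omega)$ with $L_n:=\|\nabla\mathbf{u}_n\|_{L^\infty(\Omega')}\to+\infty$. By a near-maximality / doubling selection, I pick $x_n\in\Omega'$ and indices $i_n$ with $|\nabla u_{i_n,n}(x_n)|\sim L_n$, arranged so that the blow-up center converges to a free-boundary point in the limit. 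Define the rescaled fields
\[
v_{i,n}(y) := u_{i,n}\bigl(x_n + y/L_n\bigr),\qquad y\in B_{L_n d_n}(0),
\]
with $d_n:=\dist(x_n,\partial\Omega)$ bounded away from zero. By construction $\|v_n\|_{L^\infty}\le C$, $|\nabla v_{i,n}|\le 1$, and the oscillation of $v_{i_n,n}$ over $B_1(0)$ is bounded below by a positive constant. The rescaled system reads
\[
-\Delta v_{i,n} + \frac{\lambda_{i,n}}{L_n^2}\,v_{i,n} = \frac{\omega_i}{L_n^2}\,v_{i,n}^3 - \frac{\beta_n}{L_n^2}\,v_{i,n}\sum_{j\ne i}a_{ij}v_{j,n}^2,
\]
with effective competition parameter $M_n:=\beta_n/L_n^2$.

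Up to subsequences, two regimes arise. In the \emph{subcritical} regime $M_n\to M_\infty\in [0,+\infty)$, standard elliptic estimates give $C^{1,\alpha}_{\loc}$ convergence to a limit $\mathbf{v}$ solving the bounded-competition system $-\Delta v_i = -M_\infty\,v_i\sum_{j\ne i}a_{ij}v_j^2$ on $\R^N$. Each $v_i\ge 0$ is bounded and subharmonic on $\R^N$ and $\mathbf{v}$ is non-constant by the oscillation lower bound; a Liouville-type rigidity obtained from the Almgren monotonicity formula (the frequency stays bounded as $r\to\infty$ by the $L^\infty$ control) forces $\mathbf{v}$ to be constant, yielding a contradiction. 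In the \emph{supercritical} regime $M_n\to+\infty$, the uniform interaction estimate $\beta_n\int u_{i,n}^2 u_{j,n}^2 \le C$, obtained by testing the original system, passes to the rescaled fields and forces $v_i v_j\equiv 0$ for $i\ne j$ in the limit. One obtains a non-trivial, globally Lipschitz, segregated configuration $\mathbf{v}\colon\R^N\to\R^k$, with each $v_i\ge 0$ harmonic on $\{v_i>0\}$ and globally subharmonic.

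To rule out the segregated case, I apply the Alt-Caffarelli-Friedman monotonicity formula to each pair $(v_i,v_j)$ with disjoint supports: the functional
\[
J_{ij}(r) = \frac{1}{r^{4}}\int_{B_r}\frac{|\nabla v_i|^2}{|x|^{N-2}}\,\de{x}\cdot\int_{B_r}\frac{|\nabla v_j|^2}{|x|^{N-2}}\,\de{x}
\]
is non-decreasing in $r$, and the uniform Lipschitz bound $|\nabla\mathbf{v}|\le 1$ gives $J_{ij}(r)\le C$ for all $r>0$. Combining this with the Almgren monotonicity formula for the vector $\mathbf{v}$, a blow-down analysis at infinity reduces $\mathbf{v}$ to a homogeneous profile of degree $\nu$; the ACF upper bound combined with ACF monotonicity pins $\nu=1$, so $\mathbf{v}$ has exactly linear growth at infinity. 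The $L^\infty$ bound on $\mathbf{v}$ then forces $\mathbf{v}\equiv 0$, contradicting the oscillation lower bound at the origin.

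The main obstacle is the classification step for the segregated blow-up, where ACF and Almgren must be used in tandem: Almgren reduces $\mathbf{v}$ to a homogeneous profile, while ACF pins down the homogeneity degree. The restriction $N\le 4$ enters at precisely this step, through the effectiveness of the ACF formula applied to the segregated limit. A secondary technical point is the uniform handling of the double limit $\beta_n\to+\infty$ and $L_n\to+\infty$: one must choose the blow-up centers $x_n$ so that the origin lands on a genuine free-boundary point of the limit, and verify that elliptic compactness, the interaction bound, and the monotonicity formulae all survive the rescaling uniformly in $n$.
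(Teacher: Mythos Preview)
Your proposal has a genuine gap at the Liouville step, and the paper's approach differs from yours in an essential way precisely because the Liouville route fails.

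\textbf{The Liouville claim is false.} In your subcritical regime $M_n\to M_\infty\in(0,\infty)$, you assert that a bounded, non-constant, non-negative solution of $-\Delta v_i=-M_\infty v_i\sum_{j\ne i}a_{ij}v_j^2$ on $\R^N$ cannot exist. But it can: already in dimension one there are bounded heteroclinic solutions of $-u''=-uv^2$, $-v''=-u^2v$ connecting $(a,0)$ to $(0,a)$, and these extend trivially to $\R^N$. Bounded Almgren frequency does not force constancy. Likewise in the segregated regime, your claim that Almgren plus ACF pins the homogeneity degree to $1$, combined with the $L^\infty$ bound, would force $\mathbf{v}\equiv 0$; but you have not shown that a blow-down of a \emph{bounded} segregated configuration is homogeneous of degree $1$ (indeed $H(r)$ stays bounded so the usual blow-down normalization degenerates), nor have you justified $C^1$ convergence at the origin when $M_n\to\infty$, which is needed to retain $|\nabla v_1(0)|=1$.

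\textbf{What the paper actually does.} The paper uses a different normalization: it divides by $L_n r_n$ with $r_n$ chosen so that $\sum_i v_{i,n}(0)=1$, producing a limit $\mathbf{v}$ that is globally Lipschitz with \emph{linear growth}, not bounded. The paper explicitly notes that both limit problems (regular and segregated) admit non-trivial globally Lipschitz solutions (e.g.\ $x_1^\pm$), so no contradiction can be obtained from the limit alone. Instead, the argument works on the \emph{sequence} $\mathbf{v}_n$: it introduces a threshold radius $\bar r_n$ where the Almgren frequency $N(\mathbf{v}_n,0,r)$ crosses $1$, proves $\bar r_n\to\infty$ with $r_n\bar r_n\to 0$, applies a perturbed ACF formula (uniform in $n$) on $[2,\bar r_n/3]$, an Almgren-type monotonicity of $(E+H)/r^2$ on $[\bar r_n,\tilde r/r_n]$, and chains the two at $\bar r_n$ to show $J_n(2)\le C\,r_n^2(E+H)(\mathbf{v}_n,0,\tilde r/r_n)/\tilde r^2\to 0$, contradicting $J_n(2)\ge C>0$. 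The dimension restriction $N\le 4$ enters not through ACF on the segregated limit, as you suggest, but through the sign of the $(4-N)$ term in the derivative of $E$ in the Almgren formula for the \emph{system} with interaction.
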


\begin{theorem}\label{thm: lv example}
In any dimension $N \ge 1$, let us assume that $\lambda_{i} \in \R$. Let $\{\mf{u}_\beta\}$ be a family of solutions of \eqref{lv classical} uniformly bounded in $L^\infty(\Omega)$. Then for every compact set $\Omega' \subset \subset \Omega$ there exists $M>0$ independent of $\beta$ such that
\[
\|\mf{u}_{\beta}\|_{\Lip(\Omega')}:= \|\mf{u}_{\beta}\|_{L^\infty(\Omega')}+ \|\nabla \mf{u}_{\beta}\|_{L^\infty(\Omega')} \le M.
\]
\end{theorem}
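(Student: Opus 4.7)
\emph{Proof plan.} We argue by contradiction via a blow-up analysis, employing the Caffarelli--Jerison--Kenig (CJK) almost monotonicity formula in place of the Almgren-type frequency, which is unavailable here because the Lotka--Volterra interaction $u_i u_j$ is not the gradient of a symmetric functional.

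Suppose the thesis fails. Up to subsequences, there exist $\beta_n\to+\infty$, points $x_n\in\Omega'$, and an index $i$ with $L_n:=|\nabla u_{i,\beta_n}(x_n)|\to+\infty$. Using a doubling/quasi-maximality argument in the spirit of the proofs of sub-optimal H\"older bounds for \eqref{lv classical} (Noris--Tavares--Terracini--Verzini, Dancer--Wang--Zhang), one may further assume
\[
\max_j\sup_{B_{R/L_n}(x_n)}|\nabla u_{j,\beta_n}|\le 2L_n \qquad\text{eventually, for every fixed }R>0.
\]
Set $v_{j,n}(x):=u_{j,\beta_n}(x_n+L_n^{-1}x)$: these are non-negative, uniformly Lipschitz and bounded on every ball, with $|\nabla v_{i,n}(0)|=1$; by Arzel\`a--Ascoli, $v_{j,n}\to v_j$ locally uniformly on $\R^N$. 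The rescaled equation
\[
-\Delta v_{j,n}+\frac{\lambda_j}{L_n^2}v_{j,n}=\frac{\omega_j}{L_n^2}v_{j,n}^2-\frac{\beta_n}{L_n^2}v_{j,n}\sum_{\ell\ne j}v_{\ell,n}
\]
splits into a \emph{segregation regime} $\beta_n/L_n^2\to+\infty$ and a \emph{regular regime} $\beta_n/L_n^2=O(1)$. The latter yields Schauder $C^{2,\alpha}_{\loc}$ bounds on $v_{j,n}$ and is excluded by a direct Liouville argument for the linear limit equation, combined with the normalisation at the origin.

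In the segregation regime, testing against Lipschitz cutoffs forces $v_j v_\ell\equiv 0$ for $j\ne\ell$ in the limit, while $-\Delta v_{j,n}\le\omega_j v_{j,n}^2/L_n^2\to 0$ shows that each $v_j$ is non-negative, subharmonic on $\R^N$, harmonic on $\{v_j>0\}$, and has at most linear growth. A refined doubling step centres the blow-up at an emerging free-boundary point, arranging that at least two components $v_i$ and $v_\ell$ vanish at the origin while preserving $|\nabla v_i(0)|=1$. One then applies the CJK almost monotonicity formula to $(v_i,v_\ell)$: the ACF functional
\[
\Phi(r):=\frac{1}{r^4}\Bigl(\int_{B_r}\frac{|\nabla v_i|^2}{|x|^{N-2}}\,dx\Bigr)\Bigl(\int_{B_r}\frac{|\nabla v_\ell|^2}{|x|^{N-2}}\,dx\Bigr)
\]
is non-decreasing and, by linear growth, bounded on $(0,+\infty)$. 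The gradient normalisation gives $\Phi(0^+)>0$; the rigidity of the equality case in ACF (constant-$\Phi$ profiles must be two-plane configurations), combined with the structural constraints inherited from the symmetric Lotka--Volterra cancellation $-\Delta(u_{i,\beta_n}-u_{\ell,\beta_n})=O(1)$ on the rescaled pair, rules out such limits and yields the contradiction.

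\emph{Main obstacle.} The crux is the refined doubling step that selects a blow-up centre at which the limit exhibits a genuine two-sided free-boundary configuration, so that CJK can be brought to bear with a non-trivial lower bound on $\Phi(0^+)$. Without such centring, the blow-up may degenerate to a single non-vanishing component harmonic on $\R^N$, a scenario compatible with $|\nabla v_i(0)|=1$ and left unconstrained by CJK. Securing this centring, and then translating the CJK bound into a \emph{linear} (rather than merely H\"older) growth control, is precisely what promotes the earlier sub-optimal estimates to the optimal Lipschitz regularity.
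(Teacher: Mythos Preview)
Your overall strategy (contradiction, blow-up, CJK) is correct in spirit, but the way you deploy CJK is off-target, and this creates a real gap.

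\medskip

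\textbf{Where the argument diverges from the paper.} You propose to apply CJK (or ACF) to the \emph{limiting} segregated pair $(v_i,v_\ell)$, and then spend the ``main obstacle'' paragraph trying to engineer a blow-up centre at which two distinct components survive with $\Phi(0^+)>0$. The paper does something structurally different and much simpler: it applies CJK, at \emph{every finite} $\beta$, to the pair $(u_{i,\beta}-u_{j,\beta})^+$ and $(u_{i,\beta}-u_{j,\beta})^-$. These have disjoint support for all $\beta$, and the Lotka--Volterra cancellation
\[
-\Delta(u_{i,\beta}-u_{j,\beta})+\beta(u_{i,\beta}-u_{j,\beta})\sum_{h\neq i,j}u_{h,\beta}=f_{i,\beta}-f_{j,\beta}
\]
gives $-\Delta(u_{i,\beta}-u_{j,\beta})^{\pm}\le C$ in the distributional sense. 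CJK then yields a bound on the ACF product that, after blow-up, tends to $0$ because of the factor $L_n^{-4}$; hence in the limit one of $(v_i-v_j)^{\pm}$ has vanishing gradient, i.e.\ the blow-up components are \emph{globally ordered}: either $v_i\le v_j$ or $v_j\le v_i$ on all of $\R^N$.

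\medskip

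\textbf{Why your route stalls.} The ordering is decisive. In the segregation regime it forces $v_j\equiv 0$ for every $j\neq 1$ (if $v_j\le v_1$ and $v_1v_j=0$ with both non-negative, then $v_j\equiv 0$). So the two-component free-boundary configuration you are trying to manufacture simply does not arise; your ``refined doubling step'' is chasing a scenario that the symmetric structure excludes. Once only $v_1$ survives, the paper sets $\hat v_{1,n}:=v_{1,n}-\sum_{j\neq1}v_{j,n}$ and uses the same cancellation to get $-\Delta\hat v_{1,n}\ge o(1)$; passing to the limit, $v_1$ is both sub- and super-harmonic, hence harmonic, positive and non-constant, contradicting Liouville.

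In the bounded-$M_n$ regime your description is also off: the limit system is $-\Delta v_1=-M_\infty v_1v_2$, $-\Delta v_2=-M_\infty v_1v_2$, which is \emph{not} linear. The paper again uses the ordering (say $v_1\ge v_2$) to get $-\Delta v_2\le -M_\infty v_2^2$, and concludes $v_2\equiv 0$ by Brezis's Liouville theorem for $-\Delta u\le -u^p$ in $\R^N$; then $v_1$ is harmonic, positive, non-constant, contradiction.

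\medskip

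\textbf{Summary.} The missing idea is to apply CJK to $(u_i-u_j)^{\pm}$ rather than to the components themselves, obtaining an \emph{ordering} of the blow-up limit. This replaces your unresolved centring step entirely and feeds directly into short Liouville arguments (Brezis in the regular regime, the $\hat v_{1,n}$ super-solution trick in the segregated one). Your outline, as written, leaves both regimes without a closing argument.
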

Here and in the rest of the paper we adopt the vector notation $\mf{u}=(u_1,\dots,u_k)$.

\subsection{Introduction to the problem}

The study of the asymptotic behaviour of singularly perturbed equations and systems of elliptic type is a very broad and active subject of research. In recent years, a lot of interest has been given to systems of equations of competing densities, coming from chemical, biological, physical or purely mathematical applications. Typical examples of such systems can fit under the comprehensive model
\[
    - \Delta u_i = f_i(x,u_i) - \beta g_i (u_1, \dots, u_k) \qquad \text{in $\Omega \subset \R^N$,}
\]
where the functions $g_i$, modelling the interaction between the densities, can assume different shapes according to the underlying phenomena.
\begin{enumerate}[(I)]
    \item For models coming from the physics, typically related to the Gross-Pitaevskii equations (see e.g. \cite{PaWa, PiSt, RuCaFu, Timm}), the coupling between the different densities takes a variational form, as in
        \[
            g_i(u_1, \dots, u_k) = u_i \sum_{j \neq i} a_{ij} u_j^2.
        \]
        Here the matrix $a_{ij}$ is assumed symmetric. This interaction is variational since one can easily see that the functions $g_1, \dots, g_k$ are nothing but the partial derivatives of $G(u_1, \dots, u_k) = \sum_{i, j \neq i} a_{ij} u_i^2 u_j^2$. These models are also of importance in other mathematical problems, such as the approximation of optimal partition problems and of harmonic maps to singular manifolds \cite{caflin, rtt}.
    \item In biological or chemical application as in \cite{Mi, ShKaTe}, the interaction term is, in general, more symmetric, as it is derived from some probabilistic reasonings. In these situations one has, for instance,
        \[
            g_i(u_1, \dots, u_k) = u_i \sum_{j \neq i} u_j.
        \]
        Note that the lack of a variational structure is compensated by the symmetry of the competition.
\end{enumerate}
Great efforts have been directed to the description of a precise asymptotic of the solutions of the previous systems when the competition parameter $\beta$ diverges; with this we mean that the main goals have been:
\begin{enumerate}[(a)]
    \item\label{point reg} to develop a common regularity theory for the solutions of the system, which is independent of the strength of the competition $\beta > 0$;
    \item to investigate under which assumptions one can guarantee convergence of the solutions to some limiting profile;
    \item to study the regularity of the class of limiting profiles, both in terms of the densities and in terms of the emerging free boundary problem;
    \item to give qualitative properties and precise estimates of such convergence.
\end{enumerate}
This paper is mainly devoted to the improvement of the known results concerning the first point, since this serves as foundation for the subsequent ones. Before presenting our contribution, we give a brief review of the existing literature; this serves also as a motivation for our work.

\subsection{Uniform bounds in H\"older spaces}

The limiting behaviour of minimal solutions to variational systems of type \eqref{be classical} when $\beta \to +\infty$ has firstly been studied in \cite{ctvNehari,ctvOptimal} by Conti, Terracini and Verzini in the so-called focusing case $\omega_i>0$: it has been shown that any sequence of minimizers of the energy functional associated to \eqref{be classical} is convergent in $H^1(\Omega)$, as $\beta \to +\infty$, to a limiting profile $\mf{u}_\infty$ whose components have disjoint support, that is $u_{i,\infty} u_{j,\infty} \equiv 0$ a. e. in $\Omega$ for every $i \neq j$. This phenomenon, called \emph{phase separation} or \emph{segregation}, reflects the competitive nature of the considered type of interaction, and has been analysed also in the de-focusing case $\omega_i<0$ in \cite{clll} by Chang et. al. Afterwards, a breakthrough in the comprehension of the regularity issues of the phase separation have been achieved in \cite{caflin}, where for the first time Caffarelli and Lin have shown the $\mathcal{C}^{0,\alpha}$-uniform regularity for families of minimizers. As far as the excited states are concerned, probably the most relevant result available in the literature is the following.

\begin{theorem*}[Noris, Tavares, Terracini, Verzini, \cite{nttv}]
In dimension $N \le 3$, let us assume that $a_{ij}=a_{ji}$ and that $(\lambda_{i,\beta})$ is a bounded sequence. Let $\{\mf{u}_{\beta}\} \subset H_0^1(\Omega)$ be a family of solutions of \eqref{be classical} uniformly bounded in $L^\infty(\Omega)$. Then for every $0<\alpha<1$ there exists $M>0$ independent of $\beta$ such that
\[
\|\mf{u}_{\beta}\|_{\mathcal{C}^{0,\alpha}(\overline{\Omega})} \le M.
\]
Up to a subsequence $\mf{u}_{\beta} \to \mf{u}_\infty$ in $\mathcal{C}^{0,\alpha}(\overline{\Omega})$ and in $H^1(\Omega)$, and $\mf{u}_\infty$ is a segregated configuration, that is $u_{i,\infty} u_{j,\infty} \equiv 0$ in $\Omega$ for every $i \neq j$.
\end{theorem*}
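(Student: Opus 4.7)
The plan is to establish the uniform H\"older bound via an approximate Alt--Caffarelli--Friedman (ACF) monotonicity formula combined with a blow-up/contradiction argument; the convergence to a segregated limit then follows by compactness and a clean-up of the competition term. I would first collect the preliminary energy estimates: testing \eqref{be classical} against $u_{i,\beta}$ on a cut-off ball and using the $L^\infty$ bound together with the assumed bound on $\lambda_{i,\beta}$ yields a uniform local $H^1$ estimate for $\mf{u}_\beta$ and, crucially, a uniform bound on $\beta \int u_{i,\beta}^2 u_{j,\beta}^2$ for $i\neq j$. This last estimate is what allows the interaction error to be effectively controlled in the ACF argument that follows.

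For the H\"older bound itself, I would argue by contradiction: suppose there exist $\alpha\in(0,1)$, an index $i_0$, and sequences $\beta_n\to +\infty$, $x_n,y_n\to x_\infty\in\Omega'$ with $r_n:=|x_n-y_n|\to 0$ such that $L_n := |u_{i_0,\beta_n}(x_n)-u_{i_0,\beta_n}(y_n)|/r_n^\alpha\to +\infty$. Rescale by
\[
v_{j,n}(y) := \frac{u_{j,\beta_n}(x_n+r_n y)}{L_n r_n^\alpha},
\]
to obtain a family solving a rescaled Gross--Pitaevskii system with effective competition parameter $\tilde\beta_n := \beta_n L_n^{2} r_n^{2(1+\alpha)}$ and with vanishing linear and cubic source terms. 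By construction $v_{i_0,n}$ exhibits unit oscillation at scale $1$.

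The core technical device is a perturbed ACF inequality: for each pair $i\neq j$, the quantity
\[
\Phi_{ij}(r):=\prod_{\ell\in\{i,j\}} \frac{1}{r^2}\int_{B_r}\frac{|\nabla v_{\ell,n}|^2}{|x|^{N-2}}\,dx
\]
is non-decreasing up to error terms controlled by $\tilde\beta_n\int v_{i,n}^2 v_{j,n}^2$, which is bounded by the preliminary estimate. The classical ACF does not apply directly since $v_{i,n}v_{j,n}\not\equiv 0$; one recovers a workable inequality by means of a Friedland--Hayman-type spherical eigenvalue bound combined with a Cauchy--Schwarz control on the overlap. Iterating this monotonicity from a small scale up to scale $1$ produces a growth estimate on the energies that forbids the unit oscillation of $v_{i_0,n}$ on $B_1$, contradicting $L_n\to +\infty$. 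Once the uniform $\mathcal{C}^{0,\alpha}$ bound is in hand, Arzel\`a--Ascoli yields a subsequence $\mf{u}_{\beta_n}\to \mf{u}_\infty$ in $\mathcal{C}^{0,\alpha}(\overline{\Omega})$; the bound on $\beta\int u_{i,\beta}^2 u_{j,\beta}^2$ gives $u_{i,\beta}u_{j,\beta}\to 0$ in $L^2$, so $u_{i,\infty}u_{j,\infty}\equiv 0$. Strong $H^1$ convergence then comes from testing the equation against $u_{i,\beta}-u_{i,\infty}$ and using that the interaction term against the segregated limit vanishes.

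The main obstacle is to extend the ACF monotonicity from its classical segregated regime to the non-segregated setting with quantitatively small but nonzero overlap, and to balance the error terms against the eigenvalue optimization on $\mathbb{S}^{N-1}$. This is precisely where the dimension restriction $N\le 3$ enters: the subcritical Sobolev exponent and the explicit optimal Friedland--Hayman constant for two disjoint cones on $\mathbb{S}^{N-1}$ together allow the competition error to be absorbed into the kinetic energy, an absorption that is known to degenerate for $N\geq 4$ and that is one of the motivations for the refined monotonicity formulae developed in the present paper.
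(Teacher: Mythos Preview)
This theorem is not proved in the present paper: it is quoted from \cite{nttv} as background, and the paper uses it (together with its local version from \cite{Wa,stz}) as an input. So there is no in-paper proof to compare against directly; what can be compared is your sketch against the strategy of \cite{nttv} as described in the paper's outline and against the tools the paper develops.

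Your overall architecture---contradiction via blow-up, preliminary $H^1$ and interaction-energy bounds, and a monotonicity-based Liouville obstruction---is correct in spirit, and the preliminary estimates you list are exactly those obtained in Lemma~\ref{lem: mega lemma}. Two points, however, need correction.

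First, your explanation of where $N\le 3$ enters is wrong. The ACF/Friedland--Hayman spherical inequality is dimension-independent in the relevant sense; the dimensional restriction in \cite{nttv} comes from the Almgren--Pohozaev identity for the system, where the cubic self-interaction $\omega_i u_i^3$ produces a term whose sign is controlled only under subcriticality, i.e.\ $N\le 3$. The present paper relaxes this to $N\le 4$ precisely by handling the reaction via the sublinearity hypothesis \eqref{be assumpt} rather than via subcriticality of $u^4$; see Remark~\ref{rem: dimension criticality}. Your claim that the ACF absorption ``degenerates for $N\ge 4$'' is not the mechanism.

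Second, for $\alpha<1$ your proposal is more elaborate than necessary. Once the blow-up produces a limit $\mf{v}$ with global growth $|\mf{v}(x)|\le C(1+|x|^\alpha)$, the Liouville step can be carried out \emph{on the limit}: the ACF product $\Phi(r)$ is monotone and bounded by $Cr^{4(\alpha-1)}\to 0$, forcing $\Phi\equiv 0$ and hence triviality (cf.\ Corollaries~\ref{corol 1 ACF} and~\ref{corol acf 2}). There is no need to run a perturbed ACF on the non-segregated sequence $\{\mf{v}_n\}$. Working at the sequence level---because nontrivial globally Lipschitz solutions \emph{do} exist, so one cannot pass to the limit---is precisely the new difficulty that distinguishes the Lipschitz case treated in this paper from the H\"older case of \cite{nttv}; see the discussion around \eqref{pb reg intro}--\eqref{pb segr intro}. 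Your sketch imports the harder machinery into the easier problem, and the vague step ``iterating this monotonicity from a small scale up to scale $1$ forbids the unit oscillation'' does not, as written, close the argument: ACF monotonicity by itself gives no upper bound on a single factor, only on the product, and you have not explained how the contradiction with unit oscillation is reached without either passing to the limit or invoking Almgren as well.
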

Such a result extends and improves previous ones obtained by Wei and Weth in \cite{ww}, where under the same assumptions the equi-continuity of $\{\mf{u}_\beta\}$ was proved in dimension $N=2$. We mention that in \cite{ww} a wider class of systems is considered, including both \eqref{be classical} and \eqref{lv classical}. 

Also in the symmetric setting phase separation phenomena arise in the limit $\beta \to +\infty$.
\begin{theorem*}[Conti, Terracini, Verzini, \cite{ctv}]
In dimension $N \ge 1$, let us assume that $(\lambda_{i,\beta})$ is a bounded sequence. Let $\{\mf{u}_{\beta}\} \subset H^1(\Omega)$ be a family of solutions of \eqref{lv classical} subjected to the boundary conditions
\[
u_i = \varphi_i \qquad \text{on $\pa \Omega$},
\]
where $\varphi_i$ are positive $\Lip(\pa \Omega)$-functions having disjoint supports. Then for every $0<\alpha<1$ there exists $M>0$ independent of $\beta$ such that
\[
\|\mf{u}_{\beta}\|_{\mathcal{C}^{0,\alpha}(\overline{\Omega})} \le M.
\]
Up to a subsequence $\mf{u}_{\beta} \to \mf{u}_\infty$ in $\mathcal{C}^{0,\alpha}(\overline{\Omega})$ and in $H^1(\Omega)$, and $\mf{u}_\infty$ is a segregated configuration.
\end{theorem*}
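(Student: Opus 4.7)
The plan is in three steps: (i) derive $\beta$-uniform energy bounds that force limit segregation; (ii) exploit the symmetry of the Lotka-Volterra coupling to establish the Caffarelli-Jerison-Kenig almost-monotonicity formula for the pairwise differences $u_{i,\beta}-u_{j,\beta}$; and (iii) pass from H\"older bounds on the differences to H\"older bounds on the individual components.

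\emph{Step 1 (energy estimates and segregation).} Extend each $\varphi_i$ to a Lipschitz function $\tilde\varphi_i$ on $\overline\Omega$ still with pairwise disjoint supports, and test the $i$-th equation of \eqref{lv classical} against $u_{i,\beta}-\tilde\varphi_i$. The $L^\infty$-bound and the boundedness of $\lambda_i,\omega_i$ yield
\begin{equation*}
\|u_{i,\beta}\|_{H^1(\Omega)}^2+\beta\int_\Omega u_{i,\beta}\sum_{j\neq i}u_{j,\beta}\,\mathrm{d}x\le C,
\end{equation*}
uniformly in $\beta$. Consequently every weak $H^1$-cluster point $\mf{u}_\infty$ is segregated, $u_{i,\infty}u_{j,\infty}\equiv 0$ for $i\neq j$.

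\emph{Step 2 (CJK for pairwise differences).} A direct manipulation of \eqref{lv classical} yields, for $i\neq j$,
\begin{equation*}
-\Delta(u_{i,\beta}-u_{j,\beta})=h_{ij,\beta}-\beta(u_{i,\beta}-u_{j,\beta})\sum_{\ell\neq i,j}u_{\ell,\beta},
\end{equation*}
where $h_{ij,\beta}:=\omega_i u_{i,\beta}^2-\omega_j u_{j,\beta}^2-\lambda_i u_{i,\beta}+\lambda_j u_{j,\beta}$ is uniformly bounded in $L^\infty$. The symmetric coupling produces the complete cancellation of the $u_iu_j$-interactions, and the residual $\beta$-term has the favourable sign on each side of $\{u_{i,\beta}=u_{j,\beta}\}$, so
\begin{equation*}
-\Delta(u_{i,\beta}-u_{j,\beta})^{\pm}\le C\quad\text{in }\mathcal{D}'(\Omega),
\end{equation*}
with $C$ independent of $\beta$. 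Since $(u_{i,\beta}-u_{j,\beta})^+(u_{i,\beta}-u_{j,\beta})^-\equiv 0$, the Caffarelli-Jerison-Kenig almost-monotonicity formula applies and yields, at every $x_0\in\Omega'\subset\subset\Omega$ and every $r\le\tfrac12\dist(\Omega',\pa\Omega)$, a uniform bound on
\begin{equation*}
\Bigl(\frac1{r^2}\int_{B_r(x_0)}\frac{|\nabla(u_{i,\beta}-u_{j,\beta})^+|^2}{|x-x_0|^{N-2}}\,\mathrm{d}x\Bigr)\Bigl(\frac1{r^2}\int_{B_r(x_0)}\frac{|\nabla(u_{i,\beta}-u_{j,\beta})^-|^2}{|x-x_0|^{N-2}}\,\mathrm{d}x\Bigr).
\end{equation*}
Standard Morrey-type reasoning then upgrades this to uniform $\C^{0,\alpha}(\overline{\Omega'})$-bounds on each difference $u_{i,\beta}-u_{j,\beta}$, for every $\alpha\in(0,1)$.

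\emph{Step 3 (from differences to components).} Each $u_{i,\beta}$ satisfies $-\Delta u_{i,\beta}\le C$ and $0\le u_{i,\beta}\le M$, uniformly in $\beta$. Combining this with the H\"older bounds of Step 2 and the quantitative segregation $\beta\int_\Omega u_{i,\beta}u_{j,\beta}\le C$ of Step 1, one argues that at every pair of nearby points $x,y\in\Omega'$ there is an index $j\neq i$ such that $u_{j,\beta}(x)+u_{j,\beta}(y)\le C|x-y|^\alpha$ (extracting pointwise smallness from the integral smallness via the upper mean-value inequality available for ``subharmonic-up-to-a-bounded-term'' functions); then
\begin{equation*}
|u_{i,\beta}(x)-u_{i,\beta}(y)|\le|(u_{i,\beta}-u_{j,\beta})(x)-(u_{i,\beta}-u_{j,\beta})(y)|+u_{j,\beta}(x)+u_{j,\beta}(y)\le C|x-y|^\alpha,
\end{equation*}
uniformly in $\beta$. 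Arzel\`a-Ascoli gives the announced subsequential $\C^{0,\alpha}(\overline\Omega)$-convergence, and segregation of $\mf{u}_\infty$ is inherited from Step 1.

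\emph{Main obstacle.} The structurally decisive point is the pairwise-difference cancellation of Step 2: only the full symmetry of the Lotka-Volterra coupling makes the residual $\beta$-term signed, a feature that fails for variational couplings like \eqref{be classical} and which explains why the two regimes require genuinely different monotonicity tools. The technically most delicate step, however, is Step 3: converting the energy-level segregation into a pointwise smallness of a ``losing'' density at each pair of nearby points requires a careful interplay between the uniform one-sided bound $-\Delta u_{i,\beta}\le C$ and the integral bound on $u_{i,\beta}u_{j,\beta}$, and must be done in a way that does not rely on the \emph{a priori} H\"older regularity of the individual components that one is trying to establish.
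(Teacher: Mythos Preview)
This theorem is quoted from \cite{ctv} as background; the present paper does not supply its own proof of it, so there is no ``paper's proof'' to compare against directly. That said, your Step~2 isolates exactly the structural feature the paper itself exploits in Section~\ref{sec: symmetric} for its Lipschitz result: the symmetric coupling produces the cancellation that makes $-\Delta(u_{i,\beta}-u_{j,\beta})^{\pm}\le C$ uniformly in $\beta$, placing the pairwise differences in the Caffarelli--Jerison--Kenig framework. The paper uses this inside a blow-up argument (see Corollary~\ref{cor: ordering}) rather than to manufacture H\"older bounds directly, but the key algebraic observation is the same.

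Your Step~3, however, is genuinely broken. Consider $k=2$: for $i=1$ the only choice is $j=2$, and your claim becomes $u_{2,\beta}(x)+u_{2,\beta}(y)\le C|x-y|^\alpha$ for \emph{every} nearby pair $(x,y)$; letting $y\to x$ forces $u_{2,\beta}\equiv 0$, which is absurd. The underlying issue is that the integral bound $\beta\int_\Omega u_{i,\beta}u_{j,\beta}\le C$ carries no spatial-scale information and cannot be localized to produce pointwise smallness of order $|x-y|^\alpha$; the mean-value inequality for almost-subharmonic functions bounds point values \emph{above} by averages, which is the wrong direction for what you need. Knowing that all pairwise differences $u_i-u_j$ are uniformly H\"older is simply not enough to conclude that each $u_i$ is: one needs an additional piece of information, and the natural one (used both in \cite{ctv} and in the paper's Section~\ref{sec: symmetric}) is the one-sided control on the ``hat'' functions $\hat u_i:=u_i-\sum_{j\neq i}u_j$, which satisfy $-\Delta\hat u_i\ge -C$ uniformly in $\beta$. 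This superharmonicity-up-to-a-constant, paired with the subharmonicity-up-to-a-constant of each $u_i$, is what closes the argument; your proposed decomposition does not.
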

We point out that $\{\mf{u}_\beta\}$ is uniformly bounded in $L^\infty(\Omega)$ as a consequence of the maximum principle. We also remark that the results in \cite{ctv} are actually more general, in the sense that the reaction term $\omega_i u_i^2-\lambda_{i} u_i$ can be replaced by a general reaction term of type $f_{i}(x,u_i)$ (independent of $\beta$).

All the aforementioned results  are global, in the sense that the solutions are assumed to be defined on \emph{smooth bounded} domains of $\R^N$ and to satisfy \emph{suitable boundary conditions}, and consequently the uniform estimates which are proved hold in the whole $\overline{\Omega}$.

Concerning the regularity of the limit configurations and of their free-boundary, we mainly refer to \cite{caflin} for the variational setting, to \cite{ckl} for the symmetric one, and in particular to \cite{tt}, which provides a unified approach investigating the regularity of a wide class of segregated vector valued functions, including limiting configurations of both the classes of systems. Altogether, the main result which we want to recall in this setting can be stated as follows.

\begin{theorem*}[Caffarelli et al. \cite{ckl}, Caffarelli and Lin \cite{caflin}, Tavares and Terracini \cite{tt}]
Under the assumptions of the previous theorems, let $\mf{u}_{\beta} \to \mf{u}_\infty$ in $\mathcal{C}^{0,\alpha}(\overline{\Omega})$ and in $H^1(\Omega)$ as $\beta \to +\infty$. Then $\mf{u}_\infty$ is Lipschitz continuous in $\Omega$.
\end{theorem*}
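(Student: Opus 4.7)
The plan is to combine interior elliptic regularity on each positivity set $\{u_{i,\infty} > 0\}$ with an Alt--Caffarelli--Friedman (ACF) monotonicity formula, or its Caffarelli--Jerison--Kenig almost-monotonicity variant, at points of the common nodal set. Since $\mf{u}_\beta \to \mf{u}_\infty$ in $H^1 \cap C^{0,\alpha}$ and the right-hand sides of \eqref{be classical}--\eqref{lv classical} are uniformly bounded in $L^\infty$, each limit component satisfies $-\Delta u_{i,\infty} = f_i(x,u_{i,\infty})$ with $f_i \in L^\infty$ inside the open set $\{u_{i,\infty}>0\}$. Standard Schauder and $W^{2,p}$ estimates then yield a scaling-invariant interior bound of the form $|\nabla u_{i,\infty}(x)| \le (C/d)\, \sup_{B_d(x)} u_{i,\infty} + Cd$, where $d = \dist(x, \pa\{u_{i,\infty}>0\})$. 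Hence it suffices to prove the linear growth estimate $\sup_{B_r(x_0)} u_{i,\infty} \le Cr$ at every free boundary point $x_0$, with constants uniform on compact subsets $\Omega'\subset\subset\Omega$.

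For this key estimate I would fix such an $x_0$, select a second component $u_{j,\infty}$ whose positivity set accumulates at $x_0$ (if no such component exists, $x_0$ lies on the boundary of a single positivity set and the first step already applies), and apply the ACF/CJK formula to the pair $(u_{i,\infty}, u_{j,\infty})$. These two functions have disjoint open supports and are \emph{almost subharmonic} because $-\Delta u_{\ell,\infty} \le \|f_\ell\|_\infty$ on their supports; the quantity
\[
J_{ij}(r) = \frac{1}{r^4}\int_{B_r(x_0)} \frac{|\nabla u_{i,\infty}|^2}{|x-x_0|^{N-2}}\,\de x \cdot \int_{B_r(x_0)} \frac{|\nabla u_{j,\infty}|^2}{|x-x_0|^{N-2}}\,\de x
\]
therefore satisfies an almost-monotonicity bound of the form $J_{ij}(r) \le J_{ij}(r_0) + C$ for $0<r<r_0$. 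Estimating $J_{ij}(r_0)$ in terms of the uniform $H^1 \cap L^\infty$ norms of $\mf{u}_\infty$, using $u_{i,\infty}(x_0)=0$, and combining with a Poincar\'e-type argument translates the gradient integral bound into the desired linear growth $\sup_{B_{r/2}(x_0)} u_{i,\infty} \le Cr$. A standard covering of $\Omega'$ by balls centred either at interior points or at nearest free boundary points then gives the uniform Lipschitz bound.

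The main obstacle is establishing the monotonicity formula itself in the presence of a non-zero bounded right-hand side: in the variational (symmetric-matrix) case one subtracts a controlled paraboloid and invokes the Friedland--Hayman spherical eigenvalue inequality to recover the sharp ACF almost-monotonicity, whereas in the symmetric-coupling case the full Caffarelli--Jerison--Kenig perturbation machinery is required to absorb the lower-order terms into an additive constant. Once this ingredient is available, the passage from the monotonicity estimate to the pointwise Lipschitz bound is essentially routine scaling, covering, and Poincar\'e arithmetic.
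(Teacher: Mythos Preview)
This theorem is quoted from the literature (references \cite{ckl,caflin,tt}) rather than proved in the present paper; the paper's own main theorems in fact yield it as a corollary, since uniform Lipschitz bounds on $\{\mf{u}_\beta\}$ pass to the $\mathcal{C}^{0,\alpha}$-limit. That said, your outline is in the spirit of \cite{ckl} for the symmetric case, but it contains a genuine gap.

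The ACF/CJK functional $J_{ij}(r)$ is a \emph{product} of two one-sided Dirichlet energies. Almost-monotonicity gives $J_{ij}(r)\le C$, but this does not by itself control either factor: deducing
\[
\frac{1}{r^{2}}\int_{B_r(x_0)}\frac{|\nabla u_{i,\infty}|^2}{|x-x_0|^{N-2}}\,\de x\le C
\]
would require a \emph{lower} bound (non-degeneracy) on the companion integral for $u_{j,\infty}$, and you have not supplied one. Without it the ``Poincar\'e-type argument'' cannot convert the product bound into linear growth of a single component. In \cite{tt} this difficulty is bypassed entirely by working with the Almgren frequency $N(\mf{u}_\infty,x_0,r)$ of the full vector: one shows $N(x_0,0^+)\ge 1$ at every free-boundary point (cf.\ Theorem~\ref{prop: monot segregated} here), and the almost-monotonicity of $N$ then yields $H(\mf{u}_\infty,x_0,r)\le Cr^{2}$ directly, with no need to isolate components or prove non-degeneracy. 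In the symmetric setting \cite{ckl} the product issue is circumvented differently: the \emph{differences} $u_{i,\infty}-u_{j,\infty}$ have bounded Laplacian and hence are $\mathcal{C}^{1,\alpha}$; CJK is then applied to $(u_{i,\infty}-u_{j,\infty})^{\pm}$, whose supports partition $\R^N$, rather than to $u_{i,\infty}$ and $u_{j,\infty}$ themselves.

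Your treatment of points $x_0$ where only one component accumulates is also incorrect: the interior gradient estimate $|\nabla u_{i,\infty}(x)|\le (C/d)\sup_{B_d(x)}u_{i,\infty}+Cd$ blows up as $d=\dist(x,\pa\{u_{i,\infty}>0\})\to 0$, so it cannot ``already apply'' at $x_0$; linear growth is still needed there. In the cited references this case is disposed of by showing that self-segregation does not occur for limits of \eqref{be classical}--\eqref{lv classical}: every free-boundary point has multiplicity at least two (recalled here as Theorem~\ref{prop: on multipl}).
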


As the limiting profile are not $\mathcal{C}^1$, the uniform Lipschitz regularity is optimal for this class of problems.

\subsection{Uniform bounds in Lipschitz spaces}

As a matter of fact, especially in the variational setting, it is still an open question whether one can deduce uniform bounds in the Lipschitz norm. The aim of this paper is to show that this is the case, in a rather general framework. Some results concerning uniform Lipschitz boundedness are already known in the literature, but in some specific cases.

In \cite[Lemma 2.4]{blwz_phase} Berestycki, Lin, Wei and Zhao deal with the variational system $q=2$ in dimension $N=1$, proving the following. Let $\{(u_\beta, v_\beta)\} \in H^1_0([0,1])$ be solutions of
\begin{equation}\label{1 dim var}
 \begin{cases}
        -u_\beta'' + \lambda_{1,\beta} u_\beta = \omega_{1} u_\beta^3 - \beta u_\beta v_\beta^2 &\text{ in $[0,1]$}\\
        -v_\beta'' + \lambda_{2,\beta} v_\beta = \omega_{2} v_\beta^3 - \beta v_\beta u_\beta^2 &\text{ in $[0,1]$}
    \end{cases}
\end{equation}
with uniformly bounded coefficients $(\lambda_{i,\beta})$. If $0 \leq u_\beta, v_\beta \leq C$, then $u_\beta$ and $v_\beta$ are uniformly bounded in the Lipschitz norm.
The proof of such result heavily rests on the ODE aspects of the one dimensional Hamiltonian system.

In \cite[Theorem 3]{ctv} Conti, Terracini and Verzini deal with the symmetric competition $q = 1$. In the case of \emph{two} components without reaction terms, they proved that if $\{(u_\beta, v_\beta)\} \in H^1(\Omega)$ are non-negative solutions of 
\[    \begin{cases}
        \Delta u_\beta = \beta u_\beta v_\beta &\text{ in $\Omega$}\\
        \Delta v_\beta = \gamma \beta u_\beta v_\beta &\text{ in $\Omega$}\\
        u_\beta = \varphi, v_\beta = \psi &\text{ in $\partial \Omega$}
    \end{cases}
\]
with $\gamma >0$ and traces $\varphi, \psi \in \Lip(\partial \Omega)$, then $\{(u_\beta, v_\beta)\}$ is uniformly bounded in the Lipschitz norm.
With a different method, the result has been generalized to systems with an arbitrary number of components (possibly with suitable reaction terms) in \cite{WaZh}.
We refer the interested reader also to the paper \cite{cafroq}, where it is possible to find some extensions of the previous result (involving different kinds of systems, but always restricted to the case of two components).

We emphasize that the existence of uniform Lipschitz bounds is relevant not only for a pure mathematical flavour. As already observed, it is necessary to obtain rigorous qualitative description of phase separation phenomena. This is clearly the case of \cite{blwz_phase}, where the authors derived a precise decay rate for solutions of \eqref{1 dim var} on the interface $\{u_\beta=v_\beta\}$ in dimension $N=1$, strongly using the uniform Lipschitz boundedness of the solutions themselves (the H\"older bounds would not be sufficient for this purpose).


Our aim is twofold: we shall extend the optimal regularity to general cases and, in the mean time, we shall prove local versions of the regularity estimates, avoiding any assumptions on the boundary behaviour of the solutions. This is in the spirit of the classical elliptic regularity theory, and turns out to be particularly useful in blow-up analysis, when one has to deal with sequences of functions defined on expanding domains and hence the global estimates would not be applicable. We mention that a first step in this second direction can be found in \cite[Theorem 2.6]{Wa}, where the author proves that the main results in \cite{nttv} hold also in a local setting. We refer to \cite{BeTeWaWe, FaSo, SoTe, SoZi, Wa} for several applications which rest upon the local nature of such statement. We refer also to the forthcoming paper \cite{stz} for further extensions, see Remark \ref{rem: x_n to free boundary}.

Finally, we mention that uniform regularity issues have been considered for fully non-linear equations in \cite{qui}, for non-local operators in \cite{tvz1,tvz2,vz}, and in a parabolic setting in \cite{DaWaZh2}.

\subsection{Main results} Concerning the optimal regularity problem, our main results, stated in the greatest possible generality, are the following. 

\begin{theorem}[Case (I)]\label{thm: main general be}
Let $p \ge 1$ and let $\Omega \subset \R^N$ be neither necessarily bounded, nor necessarily smooth, where $N \le 2(1+1/p)$ is a positive integer. Let $\{\mf{u}_\beta\}$ be a family of weak solutions to 
\begin{equation}\label{be completa}
\begin{cases}
-\Delta u_{i,\beta}= f_{i,\beta}(x,u_{i,\beta}) - \beta u_{i,\beta}^p\sum_{j \neq i} a_{ij}  u_{j,\beta}^{p+1}  &\text{ in $\Omega$}\\
u_{i,\beta} > 0 &\text{ in $\Omega$,}
\end{cases}
\end{equation}
with $a_{ij}=a_{ji}$, uniformly bounded in $L^\infty(\Omega)$: $\|\mf{u}_{\beta}\|_{L^\infty(\Omega)} \le m$ for some $m>0$. Let us assume that $\mf{f}_\beta \in \C (\Omega \times \R)$ are such that
    \begin{equation}\label{be assumpt}
        \max_{s \in [0,m]} \sup_{x \in \Omega} \left|\frac{f_{i,\beta}(x,s)}{s}\right| \leq d
    \end{equation}
for some $d>0$; moreover for any sequence $\beta_n \to +\infty$ there exist a subsequence (still denoted $\beta_n$) and a function $\mf{f} \in \mathcal{C}^1(\Omega \times \R)$ such that $\mf{f}_{\beta_n} \to \mf{f}$ in $\mathcal{C}_{\loc}(\Omega \times \R)$. Then for any $\Omega' \subset\subset \Omega$ there exists $M>0$ such that
\[
    \|\mf{u}_\beta\|_{\Lip(\bar \Omega')} = \|\mf{u}_\beta\|_{L^{\infty}(\bar \Omega')}+\|\nabla \mf{u}_\beta\|_{L^\infty(\bar \Omega')} \leq M.
\]
\end{theorem}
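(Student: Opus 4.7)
The proof will proceed by contradiction through a blow-up analysis, driven by an Alt-Caffarelli-Friedman (ACF) monotonicity formula together with Almgren's frequency formula, and closed by a Liouville-type classification of the blow-up limit.

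\emph{Blow-up construction.} Suppose $L_n := \|\nabla\mathbf{u}_{\beta_n}\|_{L^\infty(\bar\Omega')} \to \infty$ for some $\beta_n \to +\infty$. A doubling/localization argument of Hajlasz--Koskela type selects $x_n \in \Omega'$ and $r_n \to 0$ such that the gradient of $\mathbf{u}_{\beta_n}$ at $x_n$ equals $L_n$ up to a fixed factor and remains bounded by a multiple of $L_n$ on $B_{R_n r_n}(x_n)$ with $R_n \to +\infty$. Define
\[
\mathbf{v}_n(y) := \frac{1}{L_n r_n}\,\mathbf{u}_{\beta_n}(x_n+r_n y),
\]
so that $|\nabla\mathbf{v}_n(0)| = 1$ and $|\nabla\mathbf{v}_n| \le C$ on $B_{R_n}$. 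A direct computation shows that $\mathbf{v}_n$ solves a system of the form \eqref{be completa} on $B_{R_n}$, with vanishing reaction term (of order $r_n/L_n$, using \eqref{be assumpt}) and effective competition $\tilde\beta_n := \beta_n (L_n r_n)^{2p}r_n^2$. Up to subsequences, $\mathbf{v}_n \to \mathbf{v}$ locally uniformly on $\R^N$ (or on a half-space if $x_n \to \pa\Omega$), with $|\nabla\mathbf{v}(0)| = 1$ and $\mathbf{v}$ of at most linear growth.

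\emph{Trichotomy and the core contradiction.} Three regimes for $\tilde\beta_n$ must be analyzed. If $\tilde\beta_n \to 0$, $\mathbf{v}$ is harmonic and nonnegative with linear growth, so it is affine; nonnegativity forces $\nabla \mathbf{v} \equiv 0$, contradicting the normalization. If $\tilde\beta_n$ stays bounded away from $0$ and $\infty$, $\mathbf{v}$ solves a limiting variational competitive system on $\R^N$ with a fixed finite coupling, and a Liouville theorem in that class---available in the dimension range $N \le 2(1+1/p)$ by combining Pohozaev identities with an Almgren frequency argument---excludes nontrivial $\mathbf{v}$. The essential case is $\tilde\beta_n \to +\infty$, in which $\mathbf{v}$ is a segregated configuration: $v_i v_j\equiv 0$, each $v_i$ is harmonic on its positivity set, and $\mathbf{v}$ satisfies the domain-variation (i.e.\ ``tangency'') identity characteristic of the variational class as in \cite{tt}. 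Here the ACF monotonicity formula applied pairwise to $(v_i, v_j)$, combined with Almgren's frequency formula for $\sum_\ell v_\ell$, forces $\mathbf{v}$ into a rigid half-space profile $v_i = \alpha_i (e\cdot y)_+$, $v_j = \alpha_j(e\cdot y)_-$ with all other components vanishing, as the only segregated configuration compatible with linear growth in this dimension. Since the origin lies on the interface (i.e.\ $\mathbf{v}(0) = 0$), this profile gives $|\nabla\mathbf{v}(0)| = 0$, contradicting the normalization.

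\emph{Dimensional role and main obstacle.} The restriction $N \le 2(1+1/p)$ is precisely the range in which the Almgren frequency of the limit segregated configuration is forced to be at least $1$, so that linear growth is the critical rate matching the sharp ACF exponent $4$. The main technical obstacle is not the blow-up passage itself but \emph{deriving monotonicity information at the level of the approximating system \eqref{be completa} uniformly in $\beta$}: the competitive term $\beta u_i^p\sum_{j} a_{ij}u_j^{p+1}$ is unbounded as $\beta\to\infty$, and one must construct modified ACF-type and Almgren-type functionals (crucially exploiting the symmetry $a_{ij}=a_{ji}$) whose error terms in the monotonicity identities are controlled uniformly in $\beta$, so that the classical formulae survive in the limit and can be applied to $\mathbf{v}$.
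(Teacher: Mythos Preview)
Your blow-up scheme and the trichotomy on $\tilde\beta_n$ are correct in outline, and you are right that Almgren- and ACF-type monotonicity formulae, uniform in $\beta$, are the essential tools. But the mechanism by which you propose to obtain a contradiction is structurally wrong, not merely technically incomplete. The key point---which the paper states explicitly in its outline of proof---is that \emph{nontrivial globally Lipschitz entire solutions exist} for both limiting problems. In the bounded-competition case there is no Liouville theorem of the kind you invoke: the system $-\Delta v_1 = -M v_1 v_2^2$, $-\Delta v_2 = -M v_1^2 v_2$ admits one-dimensional heteroclinic solutions with exactly linear growth (see \cite{blwz_phase,BeTeWaWe}), so Pohozaev and Almgren give no contradiction. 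In the segregated case the half-space profile $v_1=\gamma x_1^+$, $v_2=\gamma x_1^-$ is itself a legitimate blow-up limit; your assertion that $\mathbf{v}(0)=0$ and $|\nabla\mathbf{v}(0)|=0$ is unjustified---under the paper's normalization one shows $v_1(0)=1>0$, so $0$ is interior to $\{v_1>0\}$, $C^1$ convergence holds there, and $|\nabla v_1(0)|=1$ is perfectly compatible with the half-space profile.

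Because of this, \emph{one cannot conclude at the level of the limit $\mathbf{v}$}; the contradiction must be produced at the level of the sequence $\{\mathbf{v}_n\}$. The paper does this by introducing a threshold radius $\bar r_n$ (where the perturbed Almgren frequency of $\mathbf{v}_n$ crosses $1$), proving $\bar r_n\to\infty$ and $r_n\bar r_n\to 0$, and then chaining a perturbed ACF formula for $\mathbf{v}_n$ on the ``linear'' range $[2,\bar r_n/3]$ with an Almgren-type almost-monotonicity of $r\mapsto (E+H)(\mathbf{v}_n,0,r)/r^2$ on the ``superlinear'' range $[\bar r_n,\tilde r/r_n]$. This yields a chain of inequalities of the form
\[
0<C\le J_n(2)\le C\,J_n(\bar r_n/3)\le C\left(\frac{E(\mathbf{v}_n,0,\bar r_n)+H(\mathbf{v}_n,0,\bar r_n)}{\bar r_n^2}+o(1)\right)^2\le C\left(\frac{E(\mathbf{u}_{\beta_n},x_n,\tilde r)+H(\mathbf{u}_{\beta_n},x_n,\tilde r)}{L_n^2\,\tilde r^2}+o(1)\right)^2\to 0,
\]
the last step using only that the macroscopic energies are bounded while $L_n\to\infty$. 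So the uniform-in-$\beta$ formulae you mention are indeed the crux, but their role is not to ``survive in the limit and be applied to $\mathbf{v}$'': they must be applied to $\mathbf{v}_n$ across the two scale ranges to connect the microscopic normalization to a macroscopic quantity that vanishes. Your plan is missing precisely this multiscale chaining argument, and the Liouville-type conclusion you propose in its place is false.
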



As a second result we establish the uniform Lipschitz boundedness of solutions of symmetric systems with an arbitrary number of components and general reaction terms, thus extending the results of \cite{ctv,WaZh}.
\begin{theorem}[Case (II)]\label{thm: main general lv}
In any dimension $N \ge 1$, let $\Omega \subset \R^N$ be neither necessarily bounded, nor necessarily smooth. Let $p_i \ge 1$ for $i=1,\dots,k$, and let $\{\mf{u}_\beta\}$ be a family of weak solutions to
    \begin{equation}\label{lv completa}
\begin{cases}
-\Delta u_{i,\beta}= f_{i,\beta}(x,u_{i,\beta}) - \beta u_{i,\beta}^{p_i} \sum_{j \neq i}  u_{j,\beta}^{p_j}  &\text{ in $\Omega$}\\
u_{i,\beta} > 0 &\text{ in $\Omega$,}
\end{cases}
\end{equation}
uniformly bounded in $L^\infty(\Omega)$. Let us assume that $\mf{f}_{\beta}$ maps bounded sets in bounded sets, uniformly in $\beta$. Then, for any $\Omega' \subset\subset \Omega$ there exists $M>0$ such that
\[
    \|\mf{u}_\beta\|_{\Lip(\bar \Omega')} = \|\mf{u}_\beta\|_{L^{\infty}(\bar \Omega')}+\|\nabla \mf{u}_\beta\|_{L^\infty(\bar \Omega')} \leq M.
\]
\end{theorem}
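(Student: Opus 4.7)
Argue by contradiction and blow-up, with the Caffarelli--Jerison--Kenig (CJK) almost-monotonicity formula as the main geometric input, which is the route announced in the abstract for the symmetric case. Suppose the Lipschitz bound fails on some $\Omega' \subset\subset \Omega$: there exist $\beta_n\to\infty$ and $x_n\in\Omega'$ with $L_n:=\max_i|\nabla u_{i,\beta_n}(x_n)|\to\infty$. By a standard doubling selection we also arrange $\sup_{B_{R/L_n}(x_n)}|\nabla\mf{u}_{\beta_n}|\le 2L_n$ for every fixed $R>0$, so that $x_n$ is near-maximal at scale $1/L_n$. Rescaling
\[
v_{i,n}(y):=u_{i,\beta_n}\!\left(x_n+\tfrac{y}{L_n}\right),
\]
one finds $0\le v_{i,n}\le m$, $|\nabla v_{i_0,n}(0)|=1$ for some $i_0$, $|\nabla v_{i,n}|\le 2$ on fixed balls, and
\[
-\Delta v_{i,n}=L_n^{-2}f_{i,\beta_n}(\cdot,v_{i,n})-\tilde\beta_n\, v_{i,n}^{p_i}\sum_{j\neq i}v_{j,n}^{p_j},\qquad\tilde\beta_n:=\beta_n L_n^{-2}.
\]
If $\tilde\beta_n$ were bounded along a subsequence, standard $C^{1,\alpha}$-regularity would give a uniform pointwise gradient bound on $v_{i,n}$, contradicting $L_n\to\infty$ upon unrescaling; hence $\tilde\beta_n\to+\infty$ and the rescaled system is in the segregation regime.

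By the local H\"older estimate for symmetric competing systems (the local version of \cite{ctv}), combined with the uniform Lipschitz bound $|\nabla v_{i,n}|\le 2$, along a subsequence $v_{i,n}\to v_i$ in $\mathcal{C}^{0,\alpha}_{\loc}(\R^N)$ and in $H^1_{\loc}$. The limit configuration $\mf{v}$ is bounded by $m$, segregated ($v_iv_j\equiv 0$), each $v_i$ is harmonic on $\{v_i>0\}$, and Lipschitz on $\R^N$ (e.g.\ by \cite{tt}). Now apply CJK to each pair $i\ne j$ of the limit: the function
\[
\Phi_{ij}(r):=\left(\frac{1}{r^2}\int_{B_r(0)}\frac{|\nabla v_i|^2}{|y|^{N-2}}\,dy\right)\left(\frac{1}{r^2}\int_{B_r(0)}\frac{|\nabla v_j|^2}{|y|^{N-2}}\,dy\right)
\]
is nondecreasing in $r$. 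A Caccioppoli estimate plus $\|v_i\|_\infty\le m$ gives $\int_{B_r}|\nabla v_i|^2\le Cm^2r^{N-2}$, so $\Phi_{ij}(r)\to 0$ as $r\to\infty$; hence $\Phi_{ij}\equiv 0$. This forces one of $v_i,v_j$ to be locally constant, and by positivity and segregation to vanish identically. Running this over all pairs, at most one species $v_{i_0}$ survives; it is then a non-negative bounded subharmonic function on all of $\R^N$ (harmonic on its support, extended by $0$), hence constant by Liouville for subharmonic functions, so $\nabla v_{i_0}(0)=0$. Together with the smooth $C^1$-convergence of $v_{i_0,n}$ on the interior of $\{v_{i_0}>0\}$, this contradicts $|\nabla v_{i_0,n}(0)|=1$.

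The main obstacle is that the classical CJK formula requires \emph{strict} segregation, whereas at finite $\beta_n$ one has $v_{i,n}v_{j,n}\not\equiv 0$. A quantitative almost-monotonicity version must be established whose error term, driven by the interaction $\tilde\beta_n\int v_{i,n}^{p_i}v_{j,n}^{p_j}$, is controlled uniformly in $n$ and vanishes as $n\to\infty$. This is particularly delicate for general exponents $p_i\ne 1$: the elementary trick of passing to differences $u_i-u_j$ (which for $p_i=1$ cancels the direct $\beta$-coupling between species $i$ and $j$, leaving only third-party contributions) is unavailable, so CJK must be applied directly to the individual components, exploiting the almost-subharmonicity $\Delta v_{i,n}\ge -L_n^{-2}\|f\|_\infty$ and an integral control of $\tilde\beta_n v_{i,n}^{p_i}v_{j,n}^{p_j}$ obtained by testing the equations against suitable cut-offs. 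A secondary subtlety is ensuring that the normalization $|\nabla v_{i_0,n}(0)|=1$ survives the passage to the limit, which requires that $0$ belongs to the interior of $\{v_{i_0}>0\}$ (otherwise the same scheme is run separately at a free-boundary point, where the constant from CJK again yields triviality of the blow-up).
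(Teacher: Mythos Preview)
Your proposal has two genuine gaps.

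First, your dismissal of the case $\tilde\beta_n$ bounded is not an argument. The bound $|\nabla v_{i,n}|\le 2$ on fixed balls is already built into your doubling selection, so $C^{1,\alpha}$-regularity adds nothing new, and ``unrescaling'' merely recovers $|\nabla u_{i,\beta_n}|\le 2L_n$ near $x_n$, which you already knew. In this regime the limit is a bounded nonnegative solution of $-\Delta v_i=-\tilde\beta_\infty\, v_i^{p_i}\sum_{j\ne i}v_j^{p_j}$ on $\R^N$ with a nontrivial gradient at the origin; nothing you wrote excludes this.

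Second, your Liouville step in the segregated case is false for $N\ge 3$: a bounded nonnegative subharmonic function on $\R^N$ need not be constant (take $u(x)=(1-|x|^{2-N})^+$). So after reducing to a single surviving component $v_{i_0}$ via ACF/CJK on the limit, you cannot conclude $\nabla v_{i_0}\equiv 0$ from subharmonicity and boundedness alone.

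The paper closes both gaps by exploiting the symmetry of the interaction through \emph{differences} rather than through the individual densities, and by using a different blow-up normalization (choosing $r_n$ so that $\sum_i \bar v_{i,n}(0)=1$), which yields a limit of at most linear growth rather than a bounded one. Subtracting the $i$- and $j$-equations cancels the direct coupling $\beta u_iu_j$, so $(u_{i,\beta}-u_{j,\beta})^\pm$ are disjointly supported and satisfy $-\Delta(\cdot)\le C$ uniformly in $\beta$; CJK applied at the \emph{pre-limit} level then forces, after rescaling, the blow-up limit to have globally \emph{ordered} components. In the bounded-$M_n$ regime this ordering feeds into Brezis's Liouville theorem for $-\Delta u\le -u^p$, killing the smaller component and leaving a nonconstant \emph{positive harmonic} function, which is impossible. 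In the $M_n\to\infty$ regime only $v_1$ survives, and the auxiliary function $\hat v_{1,n}:=v_{1,n}-\sum_{j\ne 1}v_{j,n}$ is \emph{super}harmonic up to a vanishing error; hence the limit $v_1$ is harmonic (not merely subharmonic), again contradicting Liouville for positive harmonic functions. Both conclusions rely on positivity rather than boundedness, which is why the paper's linear-growth normalization is the right one and your bounded blow-up is not.
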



Theorems \ref{thm: be example} and \ref{thm: lv example} follow straightforwardly.

Some remarks are in order.
\paragraph{\textbf{Remarks}}
1) In the variational case this is the first occurrence in the literature of semi-linear terms depending on the independent variable $x$ (in the symmetric setting, such a dependence has been already considered in \cite{ctv}). This enables us actually to give an equivalent, but apparently more general, formulation of the assumptions allowing the reaction terms $f_{i,\beta}$ to depend upon $u_1,\dots,u_k$ and, if needed, also on $\nabla u_1,\dots, \nabla u_k$ in a uniformly bounded way.
Above all, we stress this in order to point out that the locution \emph{variational} and \emph{symmetric} have to be referred to the type of interaction and not to the system per se.\\
2) The restriction on the dimension in the variational case $\rm(I)$ is mainly a technical assumption, related to the subcriticality of the potentials $u_i^{p+1} u_j^{p+1}$ associated to the interaction terms $u_i^p u_j^{p+1}$.  
We point out that it can be easily dropped if one requires, for instance, that the semi-linear terms satisfy the additional assumption
\[
    f_{i,\beta}(x,s) \leq 0 \qquad \text{for $s \in [0, m]$, $i=1,\dots,k$}.
\]
In \cite{caflin} Caffarelli and Lin considered the variational system
\begin{equation}\label{eqn: caf lin min}
    - \Delta u_i = - \beta u_i \sum_{j \neq i} u_j^2
\end{equation}
(\emph{without} any internal reaction term, $f_{i,\beta} \equiv 0$). In their setting, they proved that minimal solutions to such system are uniformly bounded in $\C^{0,\alpha}(\bar \Omega)$ for any $\alpha \in (0,1)$. The proof of Theorem \ref{thm: main general be} can be slightly modified as indicated in the forthcoming Remark \ref{rem: dimension criticality} to obtain the following result, holding in \emph{any} dimension.
\begin{theorem}
In dimension $N \geq 1$, let $\{\mf{u}_\beta\} \in H^1(\Omega)$ be a family of positive solutions of \eqref{eqn: caf lin min}, uniformly bounded in $L^\infty(\Omega)$. Then for any $\Omega' \subset \subset \Omega$ there exists $M > 0$ such that
\[
    \|\mf{u}_\beta\|_{\Lip(\bar \Omega')}  \leq M.
\]
\end{theorem}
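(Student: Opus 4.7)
The plan is to rerun the blow-up contradiction argument behind Theorem~\ref{thm: main general be}, observing that the dimension restriction $N\le 2(1+1/p)$ there enters solely through a Sobolev estimate on the contribution of the reaction term $f_{i,\beta}$; in the present setting $f_{i,\beta}\equiv 0$, so that estimate becomes vacuous and the proof goes through in every dimension.

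Assume by contradiction that the conclusion fails on some $\Omega'\subset\subset\Omega$: there exist $\beta_n\to+\infty$, $x_n\to x_0\in\overline{\Omega'}$ and an index $i$ with $L_n:=|\nabla u_{i,\beta_n}(x_n)|\to+\infty$. As in the proof of Theorem~\ref{thm: main general be}, pick a scale $r_n\to 0$ and an amplitude $\rho_n$ (calibrated so that an Almgren-type frequency at radius $1$ of the rescaled configuration is bounded above and bounded away from $0$) and set $v_{i,n}(y):=\rho_n^{-1}u_{i,\beta_n}(x_n+r_n y)$. The rescaled system reads
\[
-\Delta v_{i,n}\;=\;-\tilde\beta_n\, v_{i,n}\sum_{j\ne i} v_{j,n}^{2}, \qquad \tilde\beta_n:=\beta_n r_n^{2}\rho_n^{2},
\]
on expanding balls; the local H\"older estimates of \cite{nttv,Wa} extract, along a subsequence, a limit $\mf{v}_\infty$ whose components are non-negative, pairwise segregated, and subharmonic on $\R^N$. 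The key simplification is that $-\Delta u_{i,\beta}=-\beta u_{i,\beta}\sum_{j\ne i}u_{j,\beta}^{2}\le 0$ pointwise on $\Omega$, so subharmonicity of the $u_{i,\beta}$ (and hence of the $v_{i,n}$ and of the limit $v_{i,\infty}$) is immediate, with no almost-monotonicity device needed to recover it.

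To conclude, apply the Alt-Caffarelli-Friedman monotonicity formula to the pairs $(v_{i,\infty},v_{j,\infty})$, valid in every dimension $N\ge 2$; the case $N=1$ is trivial since each $u_{i,\beta}$ is convex and hence already uniformly Lipschitz on compact sub-intervals under the $L^\infty$ bound. The dimensional assumption $N\le 2(1+1/p)$ in Theorem~\ref{thm: main general be} is used at exactly one point: controlling the contribution of $f_{i,\beta}$ to an Almgren-type identity by a Sobolev embedding on the potential $u_i^{p+1}u_j^{p+1}$. Here no such error term appears, so Almgren monotonicity becomes \emph{exact} in arbitrary dimension; combined with ACF it forces $\mf{v}_\infty$ to be a homogeneous segregated configuration of a degree incompatible with the Lipschitz-blow-up normalisation, yielding the desired contradiction. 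The main technical obstacle, as in any Lipschitz-contradiction scheme, is that $\mathcal{C}^{0,\alpha}$ convergence $v_{i,n}\to v_{i,\infty}$ does not automatically transfer a gradient blow-up at the origin; this is addressed, exactly as in the proof of Theorem~\ref{thm: main general be}, by calibrating $r_n$ and $\rho_n$ so that the non-triviality of the blow-up is quantitatively encoded by an Almgren frequency of order one at a fixed radius, whence both monotonicity formulae apply unconditionally on $N$ and the argument closes.
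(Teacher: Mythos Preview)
Your overall strategy---rerun the proof of Theorem~\ref{thm: main general be} with $f_{i,\beta}\equiv 0$ and observe that the dimension restriction disappears---is the paper's own approach (Remark~\ref{rem: dimension criticality}). However, your diagnosis of \emph{where} the restriction $N\le 4$ enters is wrong, and this matters.

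You claim the constraint comes from ``a Sobolev estimate on the contribution of the reaction term $f_{i,\beta}$''. It does not. Look at the proof of Lemma~\ref{thm: almgren}: the derivative of $E$ contains the term
\[
\frac{(4-N)\beta}{r^{N-1}}\int_{B_r(x_0)}\sum_{i<j}a_{ij}u_i^2u_j^2,
\]
which arises from the Pohozaev-type identity applied to the \emph{interaction} term, not the reaction term. This term is discarded only because $N\le 4$ makes it non-negative. No Sobolev embedding is involved. Setting $f_{i,\beta}\equiv 0$ kills the remainder $R_1$, but the $(4-N)$ factor in $R$ is still there, and for $N>4$ the original frequency $N(\mf{u},x_0,r)=E/H$ is \emph{not} monotone.

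The actual fix, spelled out in Remark~\ref{rem: dimension criticality}, is to redefine the energy as
\[
\tilde E(\mf{u},x_0,r):=\frac{1}{r^{N-2}}\int_{B_r(x_0)}\sum_i|\nabla u_i|^2+\beta\sum_{i<j}u_i^2u_j^2
\]
(coefficient $1$ rather than $2$ on the interaction). With $f_i\equiv 0$, the Pohozaev computation for $\tilde E$ produces no dimension-dependent sign, and $\tilde N:=\tilde E/H$ is genuinely monotone in every $N$ (cf.\ \cite[Proposition~5.2]{BeTeWaWe}). Only then does the rest of Section~\ref{sec: variational} go through unchanged.

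A secondary remark: you describe the contradiction as ``$\mf{v}_\infty$ is forced to be homogeneous of a degree incompatible with the Lipschitz normalisation''. The paper explicitly warns (Section~1.5) that this naive route fails, since globally Lipschitz solutions of both limit problems \emph{do} exist. The actual contradiction is the chain~\eqref{stima cruciale}, which works on the sequence $\{\mf{v}_n\}$ across the threshold $\bar r_n$, not on the limit alone. You do defer to ``exactly as in the proof of Theorem~\ref{thm: main general be}'' for this, which is acceptable, but your one-line summary of the mechanism is misleading.
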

\noindent 3) Concerning the assumptions on the reaction terms in Theorem \ref{thm: main general be}, we firstly stress that the pre-compactness assumption on $\{\mf{f}_{\beta}\}$, although technical at a first glance, is natural and is shared by all the known results in the literature. Indeed, when considering the particular system \eqref{be classical}, it is simply given by the requirement that the sequence $(\lambda_{i,\beta})$ is bounded. Regarding \eqref{be assumpt}, we point out that so far limiting configurations of systems of type \eqref{be completa} have been proved to be Lipschitz continuous in \cite{nttv,tt} using such condition. Without it, the Lipschitz regularity of the limiting profiles is still an open problem. In this perspective, we emphasize that Theorem \ref{thm: main general lv}, not making use of \eqref{be assumpt}, establishes the Lipschitz regularity of all limiting profiles of system \eqref{lv completa} for a wider class of reaction terms with respect to those considered in the literature \cite{tt}. \\
4) In the symmetric setting, the possibility of considering different exponents $p_i$ can be used to obtain uniform bounds in more general models. Indeed, with the change of variable $v_i : = u_i^{p_i}$, equation \eqref{lv completa} reads as
    \[
        - \Delta v_i^{1/p_i} = f_i(x, v_1, \dots, v_k) -\beta v_i \sum_{j \neq i} v_j \qquad \text{in $\Omega$}
    \]
that is, the Lotka-Volterra system for the fast-diffusion equation.\\
5) Concerning uniform regularity up to the boundary, we believe that all our results can be extended with some efforts to deal with systems of equations with elliptic operators with variable coefficients. Since we will make use of several monotonicity formulae, this would be very technical and not always easy; the reader can easily understand what we mean by looking at the contribution \cite{matpet}, where the Caffarelli-Jerison-Kenig monotonicity formula has been extended to systems with variable coefficient. We point out that such contribution allows to slightly modify the proof of Theorem \ref{thm: main general lv} to obtain uniform estimates up to the boundary, and then to obtain also the global regularity for solutions of systems with boundary conditions \emph{possibly depending on $\beta$}.
\begin{theorem}\label{thm: lv local}
Under the assumptions of Theorem \ref{thm: main general lv}, let $\Omega$ be smooth and bounded, and let $\mf{u}_{\beta}$ satisfy the boundary conditions
\[
u_{i,\beta} = \varphi_{i,\beta} \qquad \text{on $\pa \Omega$}
\]
in a weak sense, where $\varphi_{i,\beta} \in \Lip(\pa \Omega)$ are uniformly bounded in $\Lip(\pa \Omega)$. Then there exists $M>0$ independent of $\beta$ such that
\[
\|\mf{u}_\beta\|_{\Lip(\overline{\Omega})} \le M.
\]
\end{theorem}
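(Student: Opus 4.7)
The strategy is to adapt the proof of Theorem \ref{thm: main general lv} so that it yields estimates up to $\pa\Omega$, replacing the constant-coefficient Caffarelli-Jerison-Kenig almost monotonicity formula with its variable-coefficient counterpart from \cite{matpet}. By the interior bound of Theorem \ref{thm: main general lv} together with a finite covering of $\pa\Omega$, it is enough to prove that for every $x_0 \in \pa\Omega$ there is a one-sided neighborhood of $x_0$ in which $|\nabla \mf{u}_\beta|$ is bounded uniformly in $\beta$. Fix such $x_0$; since $\pa\Omega$ is smooth, a local $\C^2$-diffeomorphism sends a neighborhood of $x_0$ to $B_r$ and $\Omega$ to $B_r^+ := B_r \cap \{y_N > 0\}$, and in the new coordinates each equation of \eqref{lv completa} becomes
\begin{equation*}
-\div(A(y) \nabla \tilde u_{i,\beta}) = \tilde f_{i,\beta}(y,\tilde u_{i,\beta}) - \beta \tilde u_{i,\beta}^{p_i} \sum_{j \neq i} \tilde u_{j,\beta}^{p_j} \quad \text{in } B_r^+,
\end{equation*}
with $A$ smooth and uniformly elliptic (and independent of $\beta$), $\tilde f_{i,\beta}$ still mapping bounded sets into bounded sets uniformly in $\beta$, and $\tilde u_{i,\beta} = \tilde\varphi_{i,\beta}$ on $B_r \cap \{y_N = 0\}$ with $\tilde\varphi_{i,\beta}$ uniformly bounded in the Lipschitz norm.

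Arguing by contradiction, suppose there exist $\beta_n \to +\infty$ and $x_n \in \overline{B_{r/2}^+}$ with $L_n := |\nabla \tilde{\mf u}_{\beta_n}(x_n)| \to +\infty$. Applying Theorem \ref{thm: main general lv} at interior scale (in the flattened coordinates) forces $L_n \cdot \dist(x_n,\{y_N=0\}) \to 0$. Set $r_n = 1/L_n$ and $\pi(x_n) = (x_n',0)$, and define
\begin{equation*}
w_{i,n}(y) := \tilde u_{i,\beta_n}\bigl(\pi(x_n) + r_n y\bigr) - \tilde\varphi_{i,\beta_n}(\pi(x_n)).
\end{equation*}
The uniform Lipschitz bound on $\tilde\varphi_{i,\beta_n}$ implies that the boundary trace of $w_{i,n}$ on $\{y_N=0\}$ is uniformly Lipschitz and vanishes at the origin, while $|\nabla \mf{w}_n|$ attains the value $1$ at some point near the origin in $\{y_N > 0\}$.

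Apply the variable-coefficient almost monotonicity formula of \cite{matpet} to pairs of components of $\tilde{\mf u}_{\beta_n}$ centered at $\pi(x_n)$. The natural scale-invariance of the leading-order part of the CJK functional, together with the integrable remainder provided by \cite{matpet}, ensures that after rescaling by $r_n \to 0$ one recovers in the blow-up limit the \emph{exact} constant-coefficient CJK monotonicity formula. Hence any cluster point $\mf{w}_\infty$ of $\mf{w}_n$ consists of pairwise segregated harmonic functions on $\{y_N > 0\}$, vanishing on $\{y_N = 0\}$, and satisfying the sharp CJK monotonicity. The Liouville-type theorem used in the proof of Theorem \ref{thm: main general lv} -- here applied to the odd reflection of $\mf{w}_\infty$ across $\{y_N=0\}$, which provides an analogous monotone object on the whole of $\R^N$ -- then forces $\mf{w}_\infty$ to be constant, contradicting $|\nabla \mf{w}_\infty|=1$ at an accumulation point of $(x_n-\pi(x_n))/r_n$.

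The main obstacle is to track the error terms produced by (i) the variable coefficients $A(y)$, (ii) the non-zero source terms $\tilde f_{i,\beta}$, and (iii) the non-vanishing boundary values $\tilde\varphi_{i,\beta}$, and to verify that all three contribute only $o(1)$ corrections under the blow-up rescaling, so that the limit problem is a clean constant-coefficient segregation problem on a half-space with zero Dirichlet data. The first error is handled by the quantitative almost-monotonicity estimate of \cite{matpet}; the second by the uniform $L^\infty$ bound on $\mf{u}_\beta$ combined with the assumption that $\tilde f_{i,\beta}$ is bounded on bounded sets uniformly in $\beta$; the third by subtracting from each $\tilde u_{i,\beta}$ a bounded smooth extension of $\tilde\varphi_{i,\beta}$ whose gradient is controlled in $L^\infty$ by the uniform Lipschitz bound on the boundary traces.
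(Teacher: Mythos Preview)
The paper does not give a detailed proof of this theorem; it only remarks (immediately before the statement) that the variable-coefficient Caffarelli--Jerison--Kenig formula of \cite{matpet} ``allows to slightly modify the proof of Theorem \ref{thm: main general lv} to obtain uniform estimates up to the boundary.'' Your proposal follows exactly this strategy --- flatten the boundary, run the blow-up of Section \ref{sec: asymptotic} and Section \ref{sec: symmetric} near a boundary point, and replace the constant-coefficient CJK estimate by \cite{matpet} --- so you are doing precisely what the paper suggests, with more detail than the paper itself supplies.

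A couple of points in your sketch deserve tightening. First, your blow-up normalization fixes $|\nabla \mf{w}_n|=1$ at one point but does not by itself give a uniform bound on $|\nabla \mf{w}_n|$ over compact sets; in the interior proof this is handled by the cut-off function $\eta$ and the choice of $x_n$ as the \emph{maximum} of $|\nabla(\eta \mf{u}_{\beta_n})|$, and a boundary analogue of this device is needed here. Second, the Liouville step in Section \ref{sec: symmetric} does not proceed via odd reflection and segregated harmonic functions; rather, the CJK estimate is applied to the pairs $(u_{i,\beta}-u_{j,\beta})^\pm$, yielding that any blow-up limit has \emph{ordered} components (Corollary \ref{cor: ordering}), after which Lemma \ref{lem: liouville equation} and the Liouville theorem for harmonic functions give the contradiction. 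Your half-space argument should be recast in these terms; the odd reflection you propose does not immediately produce a pair satisfying the hypotheses of CJK, whereas the ordering argument transfers to the boundary setting without modification once the \cite{matpet} estimate is in hand.
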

In the proof of Theorem \ref{thm: main general be} we shall make use of new monotonicity formulae which are still not available for operators with variable coefficients. Therefore, for the moment the issue of the uniform regularity up to the boundary remains open, and will be investigated in future works.\\
6) Finally, we point out that local estimates as the ones in Theorem \ref{thm: main general be} and \ref{thm: main general lv} can be used directly to obtain global uniform bounds in the case in which $\Omega = \R^N$. As an example, we have
\begin{theorem}
Let $\{\mf{u}_\beta\}$  be a family of $H^1_\loc(\R^N)$ functions such that $\|\mf{u}_\beta\|_{L^\infty(\R^N)} \leq m$ for some $m > 0$. Let us also assume that $\{\mf{u}_\beta\}$ solves either \eqref{be completa} or \eqref{lv completa} in $\R^N$, under the respective assumptions. Then there exists $M > 0$ independent of $\beta$ such that
\[
    \|\mf{u}_\beta\|_{\Lip(\R^N)} \le M.
\]
\end{theorem}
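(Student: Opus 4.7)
The plan is to derive the global Lipschitz bound from the local estimates of Theorems~\ref{thm: main general be} and~\ref{thm: main general lv} by exploiting the translation invariance of $\R^N$. For each $x_0 \in \R^N$, I would consider the translated family $\mf{v}^{x_0}_\beta(y) := \mf{u}_\beta(y + x_0)$, which satisfies the same system on $\R^N$ with reaction terms $\tilde f_{i,\beta}^{x_0}(y,s) := f_{i,\beta}(y + x_0, s)$. All the quantitative assumptions on the data carry over to the translated family with the same constants: the uniform $L^\infty$ bound $m$, the pointwise bound $d$ in \eqref{be assumpt} for case (I), and the uniform boundedness of $\mf{f}_\beta$ on bounded sets for case (II).

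Applying Theorem~\ref{thm: main general be} (in case (I)) or Theorem~\ref{thm: main general lv} (in case (II)) to $\{\mf{v}_\beta^{x_0}\}$ with the fixed pair $\Omega'=B_1(0) \subset\subset B_2(0) = \Omega$ yields a constant $M$ such that $\|\mf{v}_\beta^{x_0}\|_{\Lip(B_1(0))} \le M$, which after translating back reads $\|\mf{u}_\beta\|_{\Lip(B_1(x_0))} \le M$. Taking the supremum over $x_0 \in \R^N$ then gives the global bound, provided $M$ can be chosen independently of $x_0$. This independence should follow from inspection of the proofs of the local theorems: the constants produced there depend only on $N$, on $m$, on $d$ (or the analogous data in case (II)), and on the geometric datum $\mathrm{dist}(\Omega',\partial\Omega)=1$, none of which depends on the base point.

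The main obstacle is precisely the verification of this uniform-in-$x_0$ dependence of $M$ in case (I), where the compactness hypothesis on $\{\mf{f}_\beta\}$ must be handled with care: while $\{\mf{f}_\beta\}$ is precompact in $\mathcal{C}_\loc(\R^N \times \R)$, the translates $\{\tilde{\mf{f}}_\beta^{x_0}\}$ need not be precompact uniformly in $x_0$. To handle this, I would argue by contradiction: if the global bound failed, there would exist $\beta_n$ and $x_n \in \R^N$ with $|\nabla \mf{u}_{\beta_n}(x_n)| \to +\infty$; translating to $\mf{v}_n := \mf{v}_{\beta_n}^{x_n}$ and extracting via Arzel\`a--Ascoli a $\mathcal{C}_\loc$-convergent subsequence of $\tilde{\mf{f}}_{\beta_n}^{x_n}$ (using the equicontinuity of $\{\mf{f}_\beta\}$ on compact sets, which is implied by the precompactness hypothesis, together with the pointwise bound \eqref{be assumpt}), one can then apply Theorem~\ref{thm: main general be} to $\mf{v}_n$ on $B_1(0) \subset\subset B_2(0)$ and reach a contradiction with $|\nabla \mf{v}_n(0)| \to \infty$. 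In case (II) the argument is cleaner, since the hypothesis that $\mf{f}_\beta$ maps bounded sets to bounded sets uniformly in $\beta$ is manifestly translation-invariant.
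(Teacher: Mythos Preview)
The paper gives no detailed proof of this statement; it simply asserts that the local estimates of Theorems~\ref{thm: main general be} and~\ref{thm: main general lv} ``can be used directly'' when $\Omega=\R^N$. Your translation argument is precisely the natural way to make this precise, and for case~(II) it works without issue, since the hypothesis on $\mf{f}_\beta$ is manifestly translation-invariant.

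For case~(I) you correctly isolate the only nontrivial point: the precompactness hypothesis on $\{\mf{f}_\beta\}$ is stated in $\mathcal{C}_{\loc}(\R^N\times\R)$, and one must check that it survives translation by a possibly unbounded sequence $x_n$. Your proposed fix, however, has a gap. Precompactness of $\{\mf{f}_\beta\}$ in $\mathcal{C}_{\loc}(\R^N\times\R)$ yields equicontinuity of the family on every \emph{fixed} compact set $K\subset\R^N\times\R$; it does \emph{not} yield equicontinuity on the moving sets $(K+x_n)\times[0,m]$ when $x_n$ is unbounded. Hence the Arzel\`a--Ascoli step you invoke for $\tilde{\mf{f}}_{\beta_n}^{x_n}$ is not justified by the stated hypotheses alone.

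This is a genuine subtlety that the paper itself elides. In the concrete applications the authors have in mind (e.g.\ system~\eqref{be classical}, where $f_{i,\beta}(x,s)=\omega_i s^3-\lambda_{i,\beta}s$ is $x$-independent), the issue disappears; more generally it suffices to assume a uniform-in-$x$ modulus of continuity for $\{\mf{f}_\beta\}$, or precompactness in $BUC(\R^N\times[0,m])$ rather than merely $\mathcal{C}_{\loc}$. You should either add such a hypothesis, or note explicitly that the global statement in case~(I) requires this mild strengthening.
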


\medskip

In the proofs of the main results, only for the sake of simplicity, we focus on the particular cases
\begin{equation}\label{be syst}
\begin{cases}
-\Delta u_i= f_{i,\beta}(x,u_i) - \beta u_i \sum_{j \neq i} a_{ij}  u_j^2  &\text{ in $\Omega$}\\
u_i > 0 &\text{ in $\Omega$}
\end{cases}
\end{equation}
and
\begin{equation}\label{lv syst}
\begin{cases}
-\Delta u_i= f_{i,\beta}(x,u_i) - \beta u_i \sum_{j \neq i}  u_j  &\text{ in $\Omega$}\\
u_i > 0 &\text{ in $\Omega$.}
\end{cases}
\end{equation}
The reader can easily check that all the results hold in the generality specified by Theorems \ref{thm: main general be} and \ref{thm: main general lv}, with the same proofs. Moreover, in the proof of Theorem \ref{thm: main general be} we assume that $\Omega \subset \R^N$ with $N \ge 3$. The case $N=2$ is in general easier to deal with, and it can be recovered extending the solution in one further dimension in a constant way.

\subsection{Outline of the proofs (H\"older bounds vs. Lipschitz bounds)}

Here we give a rough idea of the proofs of the main theorems, which we think can serve as a guide towards the rest of the paper. Both the proofs of Theorem \ref{thm: main general be} and \ref{thm: main general lv} proceed by contradiction and are based upon a blow-up analysis. In the following we focus on the variational setting, and we consider the case $f_{i,\beta} \equiv 0$ to simplify the notation. For any compact set $K \subset \subset \Omega$, we aim at showing that the Lipschitz semi-norm of $\mf{u}_\beta$ is bounded in $K$, uniformly in $\beta$. To this aim, we introduce a cut-off function $0 \le \eta \le 1$ such that $\eta \equiv 1$ in $K$ and $\supp \eta=:K' \subset \subset \Omega$. If we prove that for some constant $C>0$ independent of $n$
\[
\sup_{x \in \Omega} |\nabla (\eta u_{i,\beta})| \le C \qquad i = 1, \dots, k,
\]
then the desired result follows. Hence, we assume by contradiction that for a sequence $\beta_n \to +\infty$
\[
L_n:= \sup_i \sup_{x \in \Omega} |\nabla (\eta u_{i,\beta_n})| \to +\infty ;
\]
up to relabelling and up to a subsequence $L_n:=|\nabla u_{1,\beta_n}(x_n)|$ for some $x_n \in K'$. We introduce two blow-up sequences
\[
  v_{i,n}(x) := \eta(x_n) \frac{u_{i,\beta_n}(x_n + r_n x)}{L_n r_n} \quad \text{and} \quad \bar{v}_{i,n}(x) := \frac{(\eta u_{i,\beta_n})(x_n + r_n x)}{L_n r_n},
  \]
where $r_n \to 0$ is chosen in such a way that $\sum_i \bar v_{i,n}(0) = 1$. It is possible to check that both of them are defined in scaled domains exhausting $\R^N$. We point out that $\mf{v}_n$ satisfies an equation similar to that for $\mf{u}_{\beta_n}$ but, at a first glance, does not exhibit any property of compactness. On the other hand, it is not difficult to check that $\bar{\mf{v}}_n$ is uniformly convergent on compact sets to a limit function $\mf{v}$, but does not satisfy any reasonable equation. We shall prove that for every $r>0$
\[
\lim_{n \to \infty} \|\mf{v}_n -\bar{\mf{v}}_n\|_{L^\infty(B_r)} = 0,
\]
so that the convergence $\mf{v}_n \to \mf{v}$ in $\mathcal{C}_{\loc}(\R^N)$ will follows by the convergence of $\bar{\mf{v}}_n$. We show then that the limit function $\mf{v}$ is non-constant and globally Lipschitz continuous in $\R^N$, and has only two non-trivial components, say $v_1$ and $v_2$; moreover, $(v_1,v_2)$ is non-negative and solves either the regular problem
\begin{equation}\label{pb reg intro}
\begin{cases}
-\Delta v_1= -  v_1 v_2^2 & \text{in $\R^N$} \\
-\Delta v_2 = - v_1^2 v_2 & \text{in $\R^N$}
\end{cases}
\end{equation}
or the segregated one
\begin{equation}\label{pb segr intro}
\begin{cases}
-\Delta v_1= 0 & \text{in $\{v_1>0\}$} \\
-\Delta v_2 = 0 & \text{in $\{v_2>0\}$} \\
v_1 \cdot v_2 \equiv 0 & \text{in $\R^N$} \\
-\Delta (v_1-v_2)=0 & \text{in $\R^N$}.
\end{cases}
\end{equation}
The fact that $\mf{v}$ solves either a regular problem or a segregated one depends on the asymptotic relation of the sequences $(r_n)$ and $(L_n)$, which a priori is unknown. In both cases, a relevant fact which marks a striking difference with the present literature concerning uniform bounds in H\"older spaces is represented by the existence of globally Lipschitz continuous solutions for both the previous problems (in particular, in the second one the reader may simply consider $v=x_1^+$, $v=x_1^-$). On the contrary, as proved in \cite{nttv}, globally $\alpha$-H\"older continuous solutions does not exist for any $0<\alpha<1$. This means that in order to reach a contradiction we are not allowed to pass to the limit, but we have to argue directly on the blow-up sequence $\{\mf{v}_n\}$ and to prove a kind of \emph{approximate Liouville-type result}, saying that in the previous setting, the sequence $\{\mf{v}_n\}$ cannot converge to a non-constant globally Lipschitz continuous limiting profile which solves \eqref{pb reg intro} or \eqref{pb segr intro}. We will reach such a result by using Almgren type and Alt-Caffarelli-Friedman type monotonicity formulae in the variational setting, while in the symmetric one we make use of the celebrated Caffarelli-Jerison-Kenig monotonicity formula.

\subsection{Plan of the paper}
Section \ref{sec: asymptotic} concerns the blow-up analysis, which will be considered simultaneously for the variational case and for the symmetric one. In Section \ref{sec: monotonicity} we introduce the monotonicity formulae which will serves as main tools in the proof of Theorem \ref{thm: main general be}; such proof will be the object of Section \ref{sec: variational}. In Section \ref{sec: symmetric} we prove Theorem \ref{thm: main general lv}. We point out that, although all the monotonicity formulae will be applied either to  the sequence $\{\mf{u}_{\beta_n}\}$, or to the blow-up sequence $\{\mf{v}_n\}$, in Section \ref{sec: monotonicity} we will state and prove them in higher generality, in order to provide the reader with results as flexible as possible.

\medskip

\paragraph{\textbf{Some notations}} As usual, $B_r(x_0)$ denotes the open ball of centre $x_0$ and radius $r$. When $x_0=0$, we write simply $B_r$ instead of $B_r(0)$ for the sake of simplicity. The normal derivative and the tangential gradient of a funtion $u$ on a given surface are denoted by $\pa_{\nu}$ and $\nabla_\theta$ respectively. The capital letter $C$ stays for a positive constant which can differ from line to line.

\section{Asymptotic of the blow up sequence}\label{sec: asymptotic}

In this section we consider a system of type
\begin{equation}\label{syst q}
\begin{cases}
-\Delta u_i= f_i(x,u_i) - \beta \sum_{j \neq i} a_{ij} u_i u_j^q  &\text{ in $\Omega$}\\
u_i > 0 &\text{ in $\Omega$,}
\end{cases}
\end{equation}
and we address simultaneously the cases $q=2$ (variational interaction) and $q=1$ (Lotka-Volterra type interaction); in the latter situation, as specified in Theorem \ref{thm: main general lv} we assume that $a_{ij}=1$ for every $i \neq j$. Without loss of generality, we suppose that $\Omega \supset B_3$, and we aim at proving the uniform Lipschitz bound in $B_1$. As in \cite{ctv,nttv, stz, tvz1}, the problem of the uniform bound is tackled with the introduction of suitable blow-up sequences. Let $0 \le \eta \le 1$ be a smooth cut-off function such that $\eta =1$ in $B_1$ and $\eta=0$ in $\R^N \setminus B_{2}$. By definition, the family $\{\eta \bu_\beta\}$ admits a uniform bound on the Lipschitz modulus of continuity if there exists a constant $C > 0$ such that
\begin{equation}\label{lip scaling}
\sup_{i=1,\dots,k} \sup_{\substack{x \neq y \\ x,y \in \overline{B_2}}}    \frac{ |(\eta u_{i,\beta})(x)-(\eta u_{i,\beta})(y)|}{|x-y|} \leq C.
\end{equation}
Since $\eta=1$ in $B_1$, this is sufficient to give the desired result. We first observe that, if $\beta$ is bounded, then a uniform bound of such kind does exist as a consequence of the regularity theory for elliptic equation  (for which we refer, here and in the rest of the paper, to \cite{GiTr}): indeed the right hand side of \eqref{syst q} is in this case uniformly bounded in $L^{\infty}$, and the solution $\bu_{\beta}$ is uniformly $\C^{1,\alpha}$-regular in the interior of $\Omega$, for every $\alpha < 1$. Hence, we only need to consider the case $\beta \to +\infty$. We shall show that there exists $C>0$ such that
\[
\sup_{i = 1, \dots, k} \sup_{x \in \overline{B_2} } |\nabla (\eta u_{i,\beta})(x)| \leq C \qquad \text{for every $\beta \gg 1$.}
\]
Let us assume by contradiction that this is not true and, consequently, that there exists a sequence $\beta_n \to +\infty$ and a corresponding sequence $\{\bu_{\beta_n}\}$ such that
\begin{equation}\label{absurd assumption}
    L_n := \sup_{i = 1, \dots, k} \sup_{x \in \overline{B_2} } |\nabla (\eta u_{i,\beta_n})(x)| \to \infty \qquad \text{as $n \to +\infty$.}
\end{equation}
Up to a relabelling, we may assume that the supremum is achieved for $i = 1$ and at a point $x_n \in B_{2}$. Moreover, in the variational setting $q=2$, thanks to the local version of the main results in \cite{nttv} (which have been proved in absence of the terms $f_{i,\beta_n}$ in Theorem 2.6 of \cite{Wa}, and which will appear in a more general setting in \cite{stz}), we may also choose a subsequence $\{\mf{u}_{\beta_n}\}$ which converges to some limiting profile $\mf{u}$ in $H^1(B_2)$ and in $\C^{0,\alpha}(B_2)$ for every $0<\alpha < 1$. The contradiction argument is based upon two blow-up sequences:
\begin{equation}\label{def blow-up}
    v_{i,n}(x) := \eta(x_n) \frac{u_{i,\beta_n}(x_n + r_n x)}{L_n r_n} \quad \text{and} \quad \bar{v}_{i,n}(x) := \frac{(\eta u_{i,\beta_n})(x_n + r_n x)}{L_n r_n},
\end{equation}
both defined on the scaled domain $(\Omega - x_n)/r_n \supset (B_3-x_n)/r_n=: \Omega_n$. The functions $\bar{\mf{v}}_n$ are non-trivial in the subset $(B_{2}-x_n)/r_n=:\Omega_n'$. We choose the scaling factor $r_n > 0$ in such a way that
\[
    \sum_{i=1}^k \bar{v}_{i,n}(0) = \sum_{i=1}^k\frac{(\eta u_{i,\beta_n})(x_n)}{L_n r_n} = 1 \quad \implies \quad r_n = \sum_{i=1}^k\frac{(\eta u_{i,\beta_n})(x_n)}{L_n} \to 0,
\]
where the last conclusion follows by the uniform $L^\infty$ boundedness of the family $\{\mathbf{u}_{\beta}\}$. The following lemma focuses on some preliminary properties of the blow up sequences.
At first, we define
\[
    f_{i,n}(x,t) := r_n  \frac{\eta(x_n)}{ L_n } f_{i,\beta_n}\left(x_n + r_n x,  t \frac{L_n r_n}{\eta(x_n)}\right).
\]
\begin{lemma}\label{lem: mega lemma}
In the previous blow-up setting, the following assertions hold:
\begin{enumerate}
    \item $f_{i,n}(x,v_{i,n}(x)) \to 0$ uniformly in all $\Omega_n$ as $n \to \infty$;
    \item the scaled domains $\Omega_n$ and $\Omega_n'$ exhaust $\R^N$, that is, $\Omega_n, \Omega_n' \to \R^{N}$ as $n \to \infty$; moreover, $\Omega_n \supset B_{1/r_n}$ for every $n$;
    \item the sequence $\{\bv_{n}\}$ satisfies
    \[
        - \Delta v_{i,n} = f_{i,n}(x,v_{i,n}) - M_n v_{i,n} \tsum_{j \neq i}a_{ij} v_{j,n}^q \quad \text{ in $\Omega_n$},
    \]
    where
    \[
        M_n := \beta_n \left(\frac{L_n}{\eta(x_n)}\right)^{q} r_n^{2+q};
    \]
    \item\label{itm: equihold} the sequence $\{\bar \bv_n\}$ has uniformly bounded $Lip$-seminorm:
     \[
        \sup_{i=1,\dots,k} \sup_{x \neq y} \frac{|\bar v_{i,n}(x) - \bar v_{i,n}(y)|}{|x-y|} \le 1 ;
     \]
furthermore $|\nabla \bar v_{1,n}(0)| = 1$, and $|\nabla v_{1,n}(0)| \to 1$ as $n \to \infty$;
    \item\label{itm: close_seqs} there exists $\bv$, globally Lipschitz continuous in $\R^N$ with Lipschitz constant equal to $1$, such that up to a subsequence both $\bv_n \to \bv$ and $\bar{\bv}_n \to \bv$ in $\mathcal{C}_{\loc}(\R^N)$ as $n \to \infty$;
\item There holds $\bv_n \to \bv$ in $H^1_{\loc}(\R^N)$ as $n \to \infty$, and for any $r>0$ there exists $C>0$ such that
\begin{equation}\label{bound interaction}
\int_{B_r} M_n v_{i,n} \sum_{j \neq i} a_{ij} v_{j,n}^q \le C \qquad \text{for every $i$}.
\end{equation}
If $M_n \to +\infty$, then $v_{i,n} v_{j,n} \to 0$ as $n \to \infty$ for any $i \neq j$.
\end{enumerate}
\end{lemma}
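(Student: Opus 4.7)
The six items are verified sequentially, most following from direct rescaling computations, with item (6) being the most delicate. I would treat (1)--(3) together, then (4)--(5), then (6), leveraging in a crucial way the a priori information that $\eta(x_n)$ is bounded away from zero.

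Items (1)--(3) reduce to applying the definition of $f_{i,n}$, computing derivatives under the change of variables $y = x_n+r_n x$, and using the uniform $L^\infty$ bound $u_{i,\beta_n} \le m$. The hypothesis on $\mf{f}_{\beta}$ together with $u_{i,\beta_n}\le m$ yields $|f_{i,\beta_n}(\cdot,u_{i,\beta_n})| \le C$ on $B_3$, whence $|f_{i,n}(x,v_{i,n}(x))| \le C r_n/L_n \to 0$, proving (1). For (2) I would first observe that near $\pa B_2$ the cut-off $\eta$ vanishes, so $|\nabla(\eta u_{i,\beta_n})| = |u_{i,\beta_n}||\nabla\eta| \le m\|\nabla\eta\|_\infty$ there; since $L_n\to\infty$, this forces $x_n$ into a compact subset of $\{\eta \ge c\}$ for some $c>0$, so in particular $\eta(x_n)\ge c>0$, and together with $\Omega\supset B_3$ this gives $\Omega_n,\Omega_n' \supset B_{1/r_n}(0)$ for $n$ large. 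Item (3) is then a direct chain-rule computation that produces the coefficient $M_n = \beta_n(L_n/\eta(x_n))^q r_n^{2+q}$.

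For item (4), the Lipschitz estimate on $\bar{\bv}_n$ and the normalization $|\nabla \bar v_{1,n}(0)| = 1$ follow from the fact that $L_n$ is the supremum of $|\nabla(\eta u_{i,\beta_n})|$, attained at $x_n$ for $i=1$. The convergence $|\nabla v_{1,n}(0)|\to 1$ is obtained by expanding $\nabla(\eta u_{1,\beta_n})(x_n) = \eta(x_n)\nabla u_{1,\beta_n}(x_n) + u_{1,\beta_n}(x_n)\nabla\eta(x_n)$ and observing that the correction term, after division by $L_n$, is of order $1/L_n$. Item (5) then follows from Arzelà-Ascoli applied to $\bar{\bv}_n$, which is uniformly Lipschitz with $\sum \bar v_{i,n}(0) = 1$, yielding a subsequential limit $\bv$ of class $\Lip(\R^N)$ with constant $\le 1$. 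The identity $\bv_n - \bar{\bv}_n$ is controlled by
\[
|v_{i,n}(x) - \bar v_{i,n}(x)| = \frac{|\eta(x_n) - \eta(x_n + r_n x)|\, u_{i,\beta_n}(x_n+r_n x)}{L_n r_n} \le \frac{m\|\nabla\eta\|_\infty |x|}{L_n},
\]
which tends to zero locally uniformly, so $\bv_n \to \bv$ in $\C_\loc(\R^N)$ as well.

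Item (6) is the main work. Testing the equation in (3) with a cut-off $\phi^2 \in C_c^\infty$ equal to $1$ on $B_r$ and using non-negativity gives
\[
\int_{B_r} M_n v_{i,n}\sum_{j\neq i}a_{ij}v_{j,n}^q \le \int f_{i,n}(x,v_{i,n}) \phi^2 - 2\int \phi\,\nabla v_{i,n}\cdot\nabla\phi,
\]
so the integral bound \eqref{bound interaction} reduces to a local $L^1$ gradient control on $\bv_n$. I would obtain such control by combining the Lipschitz bound on $\bar{\bv}_n$ with the identity $\nabla\bar v_{i,n}(x) = (\eta(x_n+r_n x)/\eta(x_n))\,\nabla v_{i,n}(x) + u_{i,\beta_n}(x_n+r_n x) \nabla\eta(x_n+r_n x)/L_n$, which yields $|\nabla v_{i,n}| \le C$ uniformly on any fixed ball since $\eta(x_n) \ge c > 0$ and $\eta(x_n+r_n x)/\eta(x_n) \to 1$ locally uniformly. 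This simultaneously gives boundedness in $H^1_\loc$; strong $H^1_\loc$ convergence then follows by a standard argument, testing with $(v_{i,n}-v_i)\phi^2$ and using uniform $\C_\loc$ convergence. Finally, if $M_n\to +\infty$, the bound \eqref{bound interaction} forces $\int_{B_r} v_{i,n}v_{j,n}^q \to 0$, and since the integrand converges uniformly on $B_r$ to $v_i v_j^q$, continuity and non-negativity give $v_i v_j^q \equiv 0$, whence $v_{i,n} v_{j,n} \to 0$ in $\C_\loc$.

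The main obstacle will be establishing the uniform gradient bound on $\bv_n$ used in (6): unlike $\bar{\bv}_n$, the blow-up sequence $\bv_n$ does not enjoy an \emph{a priori} uniform Lipschitz estimate, because the cut-off $\eta$ is removed from it. The crucial input that makes the transfer possible is the observation in item (2) that $\eta(x_n) \ge c > 0$, without which the product-rule identity linking $\nabla v_{i,n}$ to $\nabla \bar v_{i,n}$ could degenerate.
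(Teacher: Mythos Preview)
Your overall strategy is sound and items (1), (3), (4), (5) are handled correctly. However, there is a genuine gap in your treatment of (2) that propagates to (6).

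The claim $\eta(x_n)\ge c>0$ is not justified. Your argument says that ``near $\partial B_2$ the cut-off $\eta$ vanishes, so $|\nabla(\eta u_{i,\beta_n})| = |u_{i,\beta_n}||\nabla\eta|$ there''. But this equality only holds \emph{on} the set $\{\eta=0\}$, not in a neighborhood of it: inside $B_2$ one has $\nabla(\eta u)=\eta\nabla u+u\nabla\eta$ with $\eta>0$, and if $|\nabla u_{1,\beta_n}(x_n)|$ blows up fast enough relative to $1/\eta(x_n)$, nothing in the setup prevents $x_n\to\partial B_2$ with $\eta(x_n)\to 0$. The paper does not claim or use $\eta(x_n)\ge c$; its proof of (2) relies instead on the key inequality
\[
r_n=\sum_i\frac{(\eta u_{i,\beta_n})(x_n)}{L_n}\le \frac{km}{L_n}\,\eta(x_n)\le \frac{kml}{L_n}\,\dist(x_n,\partial B_2),
\]
which shows directly that $\dist(0,\partial\Omega_n')=\dist(x_n,\partial B_2)/r_n\ge L_n/(kml)\to\infty$. (Note also that your claim $\Omega_n'\supset B_{1/r_n}$ is not established; only $\Omega_n\supset B_{1/r_n}$ holds, because $\dist(x_n,\partial B_3)\ge 1$.)

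This matters for (6) because your route to \eqref{bound interaction} requires a uniform local gradient bound on $\mathbf{v}_n$, which you derive from the identity linking $\nabla v_{i,n}$ to $\nabla\bar v_{i,n}$ \emph{via} the ratio $\eta(x_n)/\eta(x_n+r_nx)$; you justify boundedness of this ratio by $\eta(x_n)\ge c$. Your approach can be repaired: the displayed inequality above gives $r_n/\eta(x_n)\le C/L_n\to 0$, which is what actually forces $\eta(x_n+r_nx)/\eta(x_n)\to 1$ locally uniformly, independently of whether $\eta(x_n)$ stays bounded away from zero. But the paper's approach is simpler and avoids this detour entirely: test the equation in (3) against a cut-off $\varphi$ (linearly, not $\varphi^2$) and integrate by parts \emph{twice} to obtain
\[
\int_{B_r} M_n v_{i,n}\sum_{j\neq i}a_{ij}v_{j,n}^q\le \int_{B_{2r}}\big(|f_{i,n}(x,v_{i,n})|\varphi+ v_{i,n}|\Delta\varphi|\big),
\]
which is bounded using only the local $L^\infty$ bound on $\mathbf{v}_n$ from (5). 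The $H^1$ bound then follows by testing with $v_{i,n}\varphi^2$ and using the interaction bound just proved. This ordering---interaction bound first, gradient bound second---is the cleaner way through.
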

\begin{proof}
Points (1) and (3) are straightforward consequences of the definitions and of our assumptions. \\
(2) Since $0 \in \Omega_n'$ for every $n$, to prove that $\Omega_n' \to \R^N$ it is sufficient to check that $\dist(0,\pa \Omega_n') \to +\infty$ as $n \to \infty$. Firstly we observe that
\[
r_n = \sum_{i=1}^k\frac{(\eta u_{i,\beta_n})(x_n)}{L_n} \le \frac{\|\bu_{\beta_n}\|_{L^\infty(B_2)}}{L_n} \eta(x_n) \le \frac{m l}{L_n} \dist(x_n,\pa B_2),
\]
where $l$ denotes the Lipschitz constant of $\eta$. Therefore,
\[
\dist(0,\pa \Omega_n')  = \frac{\dist(x_n,\pa B_2)}{r_n} \ge \frac{L_n}{m l} \to +\infty \quad \text{as $n \to \infty$}.
\]
The fact that $\Omega_n \supset B_{1/r_n}$ follows by definition.\\
(4) The uniform bound on the Lipschitz seminorm of $\bar{\mf{v}}_n$, and the fact that $|\nabla \bar v_{1,n}(0)|=1$, are direct consequences of the definitions. Moreover
\[
\nabla \bar v_{1,n}(0) = \frac{u_{1,\beta_n}(x_n) \nabla \eta(x_n)}{L_n} + \frac{\eta(x_n) \nabla u_{1,\beta_n}(x_n)}{L_n} = o(1) + \nabla v_{1,n} (0)
\]
as $n \to \infty$.\\
(5) Let $r>0$. The sequence $\{\bar{\bv}_n\}$ has a uniformly bounded Lipschitz seminorm in $\overline{B_r}$, and is uniformly bounded in $0$. Hence, by the Ascoli-Arzel\`a theorem, it is uniformly convergent (up to a subsequence) to some $\bv \in \mathcal{C}(\overline{B_r})$ having Lipschitz-seminorm bounded by $1$. To complete the proof, we show that $\bv_n-\bar{\bv}_n \to 0$ as $n \to \infty$ in $\mathcal{C}_{\loc}(\R^N)$. To this aim, it is sufficient to observe that for any compact $K \subset \R^N$
\[
\sup_{x \in K} |v_{i,n}(x)-\bar{v}_{i,n}(x)| = \sup_{x \in K}\frac{u_{i,\beta_n}(x_n+r_n x)}{L_n r_n} |\eta(x_n)-\eta(x_n+r_nx)| \le \sup_{x \in K}\frac{l m }{L_n} |x|,
\]
where we used the uniform boundedness of $\{\bu_n\}$, and we recall that $l$ denotes the Lipschitz constant of $\eta$. Since $L_n \to +\infty$ and $K$ is compact, the desired result follows. \\
(6) As far as the estimate \eqref{bound interaction} is concerned, it is sufficient to test the equation for $v_{i,n}$ against a smooth cut-off function $0 \le \varphi \le 1$ such that $\varphi =1$ in $B_r$ and $\varphi=0$ in $\R^N \setminus B_{2r}$: we obtain
\[
\int_{B_r} M_n v_{i,n} \sum_{j\neq i} a_{ij} v_{j,n}^q \le \int_{B_{2r}} |f_{i,n}(x,v_{i,n}) \varphi + v_{i,n} \Delta \varphi| \le C,
\]
where we used the boundedness of $\{\mf{v}_n\}$ in compact sets. Testing the equation for $v_{i,n}$ against $v_{i,n} \varphi^2$, we also deduce that
\[
\frac{1}{2}\int_{B_r} |\nabla v_{i,n}|^2 \le 2\int_{B_{2r}} |\nabla \varphi|^2 v_{i,n}^2 + \int_{B_{2r}} \left(f_{i,n}(x,v_{i,n}) v_{i,n} \varphi^2 - M_n v_{i,n}^2 \sum_{j \neq i} a_{ij} v_{j,n}^q \varphi^2\right) \le C,
\]
where, as before, we used the boundedness of $\{\mf{v}_n\}$ on compact sets and the \eqref{bound interaction}. This implies that up to a subsequence $v_{i,n} \wc v_i$ weakly in $H^1(B_r)$. In order to pass from the weak convergence to the strong one, we observe that since $\|v_{i,n}\|_{H^1(B_r)} \le C$ independently of $n$, by replacing if necessary $r$ with a slightly smaller quantity we have also
\[
\int_{\pa B_r} |\nabla v_{i,n}|^2 \le C
\]
independently of $n$. Therefore, by testing the equation for $v_{i,n}$ against $(v_{i,n}-v_i)$ in $B_r$, we deduce that
\begin{align*}
\left|\int_{B_{r}} \nabla v_{i,n} \cdot \nabla(v_{i,n}-v_i) \right| &= \left|\int_{\pa B_r} \pa_{\nu} v_{i,n} (v_{i,n}-v_i)\right|\\
& \qquad +\left|  \int_{B_{r}}  f_{i,n}(x,v_{i,n}) (v_{i,n}-v_i) - M_n v_{i,n} \sum_{j \neq i} a_{ij} v_{j,n}^q (v_{i,n}-v_i) \right|\\
& \le \left( \int_{\pa B_r} |\pa_{\nu} v_{i,n}| + \int_{B_r} \left|f_{i,n}(x,v_{i,n})\right|  + M_n v_{i,n} \sum_{j \neq i} a_{ij} v_{j,n}^q \right) \|v_{i,n}-v_i\|_{L^\infty(B_r)}.
\end{align*}
Recalling that $v_{i,n} \to v_i$ uniformly in $B_{r}$ and that all the other terms are bounded, the desired result follows.
\end{proof}

%

In the rest of this section, we aim at proving that the limit function $\mf{v}$ is non-constant and has exactly two non-trivial components. We have to distinguish between two cases, according to whether $(M_n)$ is bounded or not.
In the former case, the function $\mf{v}$ will be shown to be non-constant as a result of the regularity theory for elliptic equations. In the latter one, the situation is more involved, and we shall make use of the following decay estimate, which allows to treat also more general interaction terms of type $u_i^p u_j^{q}$ with $p,q \ge 1$ falling under the assumptions of Theorems \ref{thm: main general be} or \ref{thm: main general lv}.

\begin{lemma}\label{lem: polynomial decay}
Let $x_0\in \R^N$ and $A, M, \delta,\rho>0$. Let $u \in H^1(B_{2\rho}(x_0)) \cap \mathcal{C}(\overline{B_{2\rho}(x_0)})$ be a subsolution to
\begin{equation}\label{pb confronto}
\begin{cases}
-\Delta u \le -M u^p + \delta & \text{in $B_{2\rho}(x_0)$} \\
u \le A & \text{in $B_{2\rho}(x_0)$}
\end{cases}
\end{equation}
for some $p \ge 1$. Then there exists $C>0$, depending only on the dimension $N$, such that
\[
M u^p(x) \le \frac{CA}{\rho^2}  +  \delta \qquad \text{for every $x \in B_{\rho}(x_0)$}.
\]
\end{lemma}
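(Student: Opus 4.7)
The plan is to construct a quadratic supersolution $\psi$ of the scalar PDE $-\Delta w + Mw^p = \delta$ on a ball $B_\rho(y)$ centred at an arbitrary point $y \in B_\rho(x_0)$, satisfying $\psi \ge A$ on $\partial B_\rho(y)$, and then to conclude $u(y) \le \psi(y)$ by the comparison principle. The monotonicity of the nonlinearity $w \mapsto Mw^p$ (for $w \ge 0$, $p \ge 1$) is exactly what makes comparison work, and a judicious choice of the coefficients of $\psi$ will give directly the claimed bound on $Mu(y)^p$.

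After translating so that $x_0 = 0$ and fixing an arbitrary $y \in B_\rho$ (so that $B_\rho(y) \subset B_{2\rho}$), I would set
\[
a := \left( \frac{2NA}{M\rho^2} + \frac{\delta}{M} \right)^{1/p}, \qquad b := \max\left\{ 0,\, \frac{A-a}{\rho^2} \right\}, \qquad \psi(x) := a + b\,|x-y|^2.
\]
On $\partial B_\rho(y)$ one has $\psi = a + b\rho^2 \ge A \ge u$ (either $a \ge A$ and $b=0$, or $a<A$ and $b\rho^2 = A-a$). In the interior, $-\Delta \psi = -2Nb$, while $\psi \ge a \ge 0$ and $p \ge 1$ give $M\psi^p \ge Ma^p$, so
\[
-\Delta \psi + M\psi^p \;\ge\; -2Nb + Ma^p \;\ge\; \frac{2Na}{\rho^2} + \delta \;\ge\; \delta,
\]
using $2Nb \le 2N(A-a)/\rho^2$ and the identity $Ma^p = 2NA/\rho^2 + \delta$.

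To conclude, I would subtract the weak subsolution inequality for $u$ from the classical supersolution inequality for $\psi$, obtaining $-\Delta(u-\psi) + M(u^p - \psi^p) \le 0$ weakly in $B_\rho(y)$, and test against the admissible $(u-\psi)_+ \in H^1_0(B_\rho(y))$. By monotonicity of $s \mapsto s^p$ on $[0,+\infty)$ the nonlinear term contributes non-negatively, forcing $\int |\nabla (u-\psi)_+|^2 = 0$ and hence $u \le \psi$ throughout $B_\rho(y)$. Evaluating at $y$ gives $Mu(y)^p \le Ma^p = 2NA/\rho^2 + \delta$, which yields the lemma with $C = 2N$ since $y$ was arbitrary in $B_\rho(x_0)$.

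There is essentially no hard step: this is a textbook barrier-plus-comparison construction and all the computations are one-liners. The only real decision is the ansatz for $\psi$, and the twin requirements of matching $A$ on $\partial B_\rho(y)$ and attaining $\psi(y) \sim (A/(M\rho^2))^{1/p}$ at the centre nearly force a quadratic profile. Implicit in the statement is the non-negativity of $u$, needed so that $u^p$ is well defined and the comparison via monotonicity applies; this is automatically satisfied in the applications to the densities $v_{i,n}$ of the blow-up analysis.
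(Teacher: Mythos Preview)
Your barrier argument is correct and gives the result with the explicit constant $C=2N$. One cosmetic point: the intermediate inequality $-2Nb + Ma^p \ge 2Na/\rho^2 + \delta$ in your displayed chain is literally false when $a>A$ (since then $b=0$ while $(A-a)/\rho^2<0$), but in that case $-2Nb+Ma^p = Ma^p \ge \delta$ directly, so the conclusion survives; you may want to split the two cases explicitly.

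The paper takes a different route. Instead of an explicit quadratic barrier, it first solves the nonlinear boundary value problem $-\Delta v + M|v|^{p-1}v = 0$ in $B_{2\rho}(x_0)$ with $v=A$ on the boundary (existence via the direct method), then tests the equation against a cutoff to obtain the integral bound $\int_{B_{3\rho/2}} Mv^p \le CA\rho^{N-2}$, and finally uses subharmonicity of $v$ together with the mean value inequality and Jensen to convert this into the pointwise bound $Mv^p \le CA/\rho^2$ on $B_\rho$. The inhomogeneity $\delta$ is handled afterwards by the shift $\bar v := v + (\delta/M)^{1/p}$, which is checked to be a supersolution of the full problem, and comparison with $u$ concludes. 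Your approach is shorter and more elementary: it avoids the auxiliary variational problem and the mean-value/Jensen step entirely, and it yields an explicit dimensional constant. The paper's argument is closer in spirit to Keller--Osserman type universal bounds and separates the roles of $M$, $A$, $\rho$ and $\delta$ more transparently, but for the purposes of this lemma either proof is perfectly adequate.
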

\begin{proof}
Let $v \in H^1(B_{2\rho}(x_0))$ be a positive solution to
\[
    \begin{cases}
        - \Delta v + M|v|^{p-1}v = 0 &\text{ in $B_{2 \rho}(x_0)$}\\
        v = A &\text{ on $\partial B_{2\rho}(x_0)$}.
    \end{cases}
\]
The existence of such function for any value of $M>0$ and $p\geq1$ can be shown by the direct method of the calculus of variations. Moreover, the weak maximum principle implies that $v \le A$ in $B_{2\rho}(x_0)$. Let $\eta \in \C^{\infty}_0(B_{2\rho}(x_0))$ be a smooth cut-off function such that $\eta = 1$ in $B_{3\rho/2}(x_0)$, $0 \le \eta \le 1$ and $|\Delta \eta| \le C/\rho^2$. Testing the equation for $v$ against $\eta$ we obtain
\[
    \int_{B_{3\rho/2}(x_0)} M v^p  \leq \int_{B_{2\rho}(x_0)} M v^p \eta =  \int_{B_{2\rho}(x_0)} \Delta v \eta =  \int_{B_{2\rho}(x_0)} v \Delta \eta \leq C A\rho^{N-2}.
\]
Let $y \in B_{\rho}(x_0)$. Since $v$ is subhamornic and $p\geq 1$, the mean value theorem gives
\[
    M v(y)^p \leq M \left( \frac{1}{|B_{\rho/2}(y)|}\int_{B_{\rho/2}(y)} v \right)^p \leq \frac{1}{|B_{\rho/2}(y)|} \int_{B_{\rho/2}(y)} M v^p \leq \frac{CA}{\rho^2}.
\]
Let us now consider the auxiliary function $\bar v := v + (\delta/M)^{1/p}$. Trivially, one has $\bar v^p \geq v^p + \delta/M$, and thus
\[
    \begin{cases}
        - \Delta \bar v \geq  - M\bar v^{p} +\delta &\text{ in $B_{2\rho}(x_0)$}\\
        \bar v \geq A  &\text{ on $\partial B_{2\rho}(x_0)$.}
    \end{cases}
\]
Hence, $\bar v$ is a supersolution to \eqref{pb confronto} and the thesis follows applying the comparison principle.
\end{proof}

\begin{lemma}\label{lem: non trivial limit}
The limit function $\bv$ is not constant. In particular, at least the first component $v_1$ is neither trivial nor constant.
\end{lemma}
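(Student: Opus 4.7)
I would argue by contradiction: assume $v_1 \equiv c \ge 0$. Since $|\nabla v_{1,n}(0)| \to 1$ by Lemma \ref{lem: mega lemma}(\ref{itm: equihold}), it suffices to upgrade the $\mathcal{C}_{\loc}$ convergence $v_{1,n} \to v_1$ of Lemma \ref{lem: mega lemma}(\ref{itm: close_seqs}) to a $\mathcal{C}^1_{\loc}$ convergence on a fixed ball around the origin: then $|\nabla v_1(0)| = 1$ contradicts $v_1 \equiv c$ unless simultaneously $c = 0$ and $|\nabla v_{1,n}(0)| \to 0$. The whole difficulty is that the coupling term $M_n v_{1,n} \sum_{j \neq 1} a_{1j} v_{j,n}^q$ on the right-hand side of the equation in Lemma \ref{lem: mega lemma}(3) is not a priori bounded in $n$.

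If $(M_n)$ is bounded, this right-hand side is trivially uniformly bounded on compact sets (together with Lemma \ref{lem: mega lemma}(1)), so standard interior elliptic estimates give a uniform $\mathcal{C}^{1,\alpha}_{\loc}$ bound on $\{v_{1,n}\}$. Hence along a subsequence $v_{1,n} \to v_1$ in $\mathcal{C}^1_{\loc}$, and $|\nabla v_1(0)| = 1$ contradicts $v_1 \equiv c$.

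If instead $M_n \to +\infty$, passing to the limit in $\sum_i \bar v_{i,n}(0) = 1$ yields $\sum_i v_i(0) = 1$, so some index $k$ satisfies $v_k(0) > 0$; by uniform convergence, $v_{k,n} \ge v_k(0)/2$ on a ball $B_{2\rho}(0)$ for $n$ large. The plan is then to apply Lemma \ref{lem: polynomial decay} (with $p = 1$) to the component dominated by $v_{k,n}$ through the interaction. If $v_1(0) > 0$ (so $k = 1$), then the sub-solution inequality $-\Delta v_{j,n} \le \|f_{j,n}\|_{\infty} - M_n a_{j1}(v_1(0)/2)^q v_{j,n}$ on $B_{2\rho}(0)$ gives $v_{j,n} \le C/M_n$ on $B_\rho(0)$ for every $j \ne 1$; therefore $M_n v_{1,n} v_{j,n}^q$ is uniformly bounded on $B_\rho(0)$ and one concludes as in the previous case that $v_{1,n} \to v_1$ in $\mathcal{C}^1(B_{\rho/2}(0))$, giving $|\nabla v_1(0)| = 1$, a contradiction. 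If instead $v_1(0) = 0$ (so $v_1 \equiv 0$ and necessarily $k \ne 1$), the analogous argument applied to $v_{1,n}$ itself yields $v_{1,n} \le C/M_n$ on $B_\rho(0)$, so $M_n v_{1,n} v_{j,n}^q \le C$ is bounded; interior elliptic estimates then give a uniform $\mathcal{C}^{1,\alpha}$ bound on $\{v_{1,n}\}$, which combined with the uniform convergence $v_{1,n} \to 0$ forces $\nabla v_{1,n}(0) \to 0$ by interpolation, contradicting $|\nabla v_{1,n}(0)| \to 1$.

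The main obstacle is Case 2: the coupling term has no direct $n$-uniform bound when $M_n \to \infty$, and Lemma \ref{lem: polynomial decay} is precisely the tool that converts the uniform $L^\infty$ bound on the sub-dominant components into pointwise $O(1/M_n)$ decay, which is exactly what is needed to restore the $\mathcal{C}^{1,\alpha}$ regularity and justify $\mathcal{C}^1_{\loc}$ convergence near the origin.
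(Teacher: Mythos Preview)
Your proof is correct and follows essentially the same strategy as the paper: split according to whether $(M_n)$ is bounded, and in the unbounded case use Lemma~\ref{lem: polynomial decay} (with $p=1$) to force the sub-dominant components to be $O(1/M_n)$ near the origin, thereby restoring a uniform bound on $\Delta v_{1,n}$ and upgrading to $\mathcal{C}^1$ convergence. The only cosmetic differences are that the paper argues directly (first ruling out $v_1(0)=0$ via the segregation of the limit, then concluding $|\nabla v_1(0)|=1$) rather than framing the whole thing as a contradiction, and in the sub-case $v_1(0)=0$ the paper passes to the $\mathcal{C}^1$ limit rather than invoking interpolation; both routes are equivalent.
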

\begin{proof}
As announced, we divide the proof according to properties of $(M_n)$.

\paragraph{\textbf{Case 1)} \emph{$(M_n)$ is bounded.}}
Since $\{\mf{v}_n\}$ is uniformly bounded in any compact set of $\R^N$, the sequence $\{\Delta v_{1,n}\}$ is uniformly bounded as well; by standard regularity theory for elliptic equations, we deduce that for every compact $K \subset \R^N$ there exists $C>0$ independent of $n$ such that $\|v_{1,n}\|_{\mathcal{C}^{1,\alpha}(K)} \le C$. This implies that, up to a subsequence, the convergence of $v_{1,n}$ to $v_1$ takes place in $\mathcal{C}^{1,\alpha}_{\loc}(\R^N)$ for any $0<\alpha<1$, so that in particular $|\nabla v_{1}(0)| = \lim_n |\nabla v_{1,n}(0)|=1$, and $\mf{v}$ cannot be a vector of constant functions.

\paragraph{\textbf{Case 2)} \emph{$M_n \to +\infty$.}}
By the uniform bound \eqref{bound interaction} we infer that the limiting profile $\mf{v}$ is segregated: $v_i v_j \equiv 0$ in $\R^N$ for every $i \neq j$. Therefore, recalling that by the choice of $r_n$ we have $\sum_{i=1}^k v_{i,n}(0)=1$, there are two possibilities: either $v_1(0)=0$, or $v_1(0) = 1$.

Assume at first that $v_1(0) = 0$. Then there exists $h \neq 1$ such that $v_h(0) =1$, and by continuity of $\mf{v}$ it results that $v_1 \equiv 0$ in an open neighbourhood of $0$. Moreover, $v_{h,n}(0) \ge 7/8$ for every $n$ sufficiently large.
Thanks to points \eqref{itm: equihold} and \eqref{itm: close_seqs} of Lemma \ref{lem: mega lemma}, we have
\[
|v_{h,n}(x) - v_{h,n}(0)| \le  |v_{h,n}(x) - \bar v_{h,n}(x)| + |\bar v_{h,n}(x) - \bar v_{h,n}(0)| \le o(1)+ |x| \le o(1) +\frac{1}{2}
\]
as $n \to \infty$, for every $x \in B_{1/2}(0)$. Thus, whenever $n$ is sufficiently large, $v_{h,n} \ge 1/8$ in $B_{1/2}$.
As a consequence, the equation for $v_{1,n}$ gives
\[
\begin{cases}
-\Delta v_{1,n} \le - C M_n v_{1,n} + \delta & \text{in $B_{1/2}$} \\
v_{1,n} \ge 0  & \text{in $B_{1/2}$} \\
v_{1,n} \le A & \text{in $B_{1/2}$},
\end{cases}
\]
where $\delta \ge \sup_{B_{1/2}(0)} f_{i,n}$ can be chosen independently of $n$, and the upper bound on $v_{1,n}$ in $ B_{1/2}$ follows by the uniform boundedness of $\{\mf{v}_{n}\}$ in compact sets. By Lemma \ref{lem: polynomial decay}, we infer that $M_n v_{1,n} \le C$ in $B_{1/4}$ independently of $n$. Therefore $|\Delta v_{1,n}(x)|  \le C$ for every $x \in B_{1/4}$,
%
%
%
%
which implies that up to a subsequence $v_{1,n} \to v_1$ in $\mathcal{C}^1(B_{1/4})$. In particular $|\nabla v_1(0)|=1$, in contradiction with the fact that $v_1 \equiv 0$ in a neighbourhood of $0$. Thus, if $(M_n)$ is unbounded necessarily $v_{1}(0)=1$, and as a consequence the same argument described above provides $M_n v_{j,n} \le C$ for every $x \in B_{1/4}$ and $j \neq 1$. Using again the uniform boundedness of the sequence $\{\mf{v}_n\}$ in $B_{1/4}$, we infer that $|\Delta v_{1,n}(x)| \le C$ in $B_{1/4}$, and hence
up to a subsequence $v_{1,n} \to v_1$ in $\mathcal{C}^1(B_{1/4})$. In particular, by step (4) of Lemma \ref{lem: mega lemma} we have $|\nabla v_1(0)|=\lim_n |\nabla v_{1,n}(0)|=1$, which completes the proof.
\end{proof}

\begin{remark}\label{rem: x_n to free boundary}
The sequence $x_n$ is bounded and thus, up to a subsequence, converges to some $\bar x \in \overline{B_2}$. Following \cite{ctv, nttv, stz}, it is possible to show that $\bar x$ has to be a free-boundary point, that is $\mf{u}(\bar x) = 0$: indeed, if this is not the case, then there exists $i$ such that $u_i(\bar x) \geq C > 0$. Using the local version of the main results in \cite{nttv} (which hold for general systems, and for which we refer to \cite{stz}), $\mf{u}_{\beta_n} \to \mf{u}$ in $\C^{0,\alpha}(\overline{B_2})$ for every $\alpha < 1$, and in particular $u_{i,\beta_n}(x) \geq C/2$ for every $x \in B_{2\delta}(\bar x)$ for some $\delta > 0$ and $\beta_n$ sufficiently large. Reasoning as in Lemma \ref{lem: non trivial limit}, this implies that $\mf{u}_{\beta_n} \to \mf{u}$ in $\C^{1,\alpha}(B_{\delta}(\bar x))$, a contradiction with the unboundedness of the gradient at $x_n$.
\end{remark}

Before concluding the section, we report some further properties of the blow-up sequences and of the asymptotic behaviour of the quantities previously introduced.

\begin{lemma}\label{lem: M_n bound from below}
There exists $C > 0$ such that $M_n \geq C$.
\end{lemma}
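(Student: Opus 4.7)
The plan is to argue by contradiction, combining the non-triviality of the blow-up limit with a Liouville-type observation. Suppose, to the contrary, that no such positive lower bound exists. Then along some subsequence (still denoted $M_n$) one has $M_n \to 0$. In particular $(M_n)$ is bounded, so the hypotheses of Case~1 in the proof of Lemma~\ref{lem: non trivial limit} are met, and that analysis produces a limit profile $\mf{v}$ for which $|\nabla v_1(0)| = 1$.

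On the other hand, since $M_n \to 0$ and the sequence $\{\mf{v}_n\}$ is uniformly bounded on every compact subset of $\R^N$ by items (4)--(5) of Lemma~\ref{lem: mega lemma}, the competition term
\[
M_n v_{1,n}\sum_{j\neq 1} a_{1j}\, v_{j,n}^q
\]
tends to $0$ uniformly on compact sets. Combined with the uniform decay $f_{1,n}(\cdot,v_{1,n})\to 0$ coming from Lemma~\ref{lem: mega lemma}(1), this shows that the right-hand side of the equation for $v_{1,n}$ in Lemma~\ref{lem: mega lemma}(3) converges to $0$ in $L^\infty_{\loc}(\R^N)$. Standard interior elliptic regularity then upgrades the $\C_{\loc}$ convergence $v_{1,n}\to v_1$ to $\C^{1,\alpha}_{\loc}$ convergence, and identifies $v_1$ as a harmonic function on the whole of $\R^N$.

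To close the argument, I would invoke the following Liouville observation: a non-negative, globally Lipschitz, harmonic function on $\R^N$ must be constant. Indeed, differentiation shows that each partial derivative is a bounded harmonic function on $\R^N$, hence constant, so $v_1$ is affine; but a non-negative affine function on $\R^N$ is necessarily constant. This forces $\nabla v_1(0) = 0$, contradicting $|\nabla v_1(0)| = 1$.

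The only real subtlety is bookkeeping: one has to be sure that along the subsequence with $M_n\to 0$ we are genuinely inside Case~1 of Lemma~\ref{lem: non trivial limit} (so that $|\nabla v_1(0)|=1$ is still available) and, simultaneously, that the quadratic interaction term really vanishes uniformly on compacts; both facts follow directly from $M_n\to 0$ and the local $L^\infty$ bounds on $\mf{v}_n$. I do not expect any other genuine obstacle, since the conceptual content is simply Liouville plus non-negativity.
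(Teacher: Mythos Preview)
Your proposal is correct and follows essentially the same route as the paper: argue by contradiction that $M_n\to 0$ along a subsequence, deduce that the limit $v_1$ is entire harmonic, and invoke a Liouville-type fact together with non-negativity to force $v_1$ constant, contradicting $|\nabla v_1(0)|=1$. The only cosmetic difference is in the Liouville step: the paper appeals directly to the one-sided Liouville theorem (an entire harmonic function bounded below is constant), whereas you go via the Lipschitz bound to conclude the partial derivatives are bounded harmonic, hence $v_1$ is affine, and then use non-negativity; both are valid and equivalent here.
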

\begin{proof}
Let us assume by contradiction that there exists a subsequence $M_{n_k} \to 0$. By the previous results, the limiting function $\bv$ is made of entire harmonic functions which are bounded from below, thus constant thanks to the Liouville theorem: this contradicts the fact that $|\nabla v_1(0)| = 1$.
\end{proof}

\begin{lemma}\label{lem: at most 2}
Each limiting profile $\bv$ contains \emph{at most} two non trivial components.
\end{lemma}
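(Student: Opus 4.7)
The plan is to argue by contradiction. Suppose that, in addition to the non-trivial component $v_1$ (with $|\nabla v_1(0)|=1$ by Lemma \ref{lem: non trivial limit}), two further components $v_2$ and $v_3$ of the limit profile $\mathbf{v}$ are also non-trivial. I would split the analysis according to the behaviour of the sequence $(M_n)$, which by Lemma \ref{lem: M_n bound from below} is bounded away from zero, but may or may not diverge.

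I would treat first the segregated regime $M_n\to+\infty$. Lemma \ref{lem: mega lemma}(6) then gives $v_iv_j\equiv 0$ for $i\neq j$, and each non-trivial $v_i$ is non-negative, Lipschitz with constant $\leq 1$, harmonic on $\{v_i>0\}$, and hence subharmonic on $\R^N$. Moreover the positivity set $\{v_i>0\}$ must be unbounded: otherwise $v_i$ would be a non-negative harmonic function on a bounded open set with zero boundary values, forcing $v_i\equiv 0$. I would then introduce the blow-down family $v_i^R(x):=v_i(Rx)/R$. The uniform Lipschitz bound passes to the rescaled family and gives, up to a subsequence, locally uniform convergence as $R\to+\infty$ to a limit $\Phi_i$ which, by the standard scaling invariance of the limit, is $1$-homogeneous, non-negative, subharmonic on $\R^N$, and pairwise segregated; the unbounded positivity set of each $v_i$ ensures that each $\Phi_i$ is non-trivial.

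The contradiction then comes from the Friedland--Hayman inequality (equivalently, from the equality case of the Alt--Caffarelli--Friedman monotonicity formula applied pairwise): three pairwise disjoint open supports of the traces $\Phi_i|_{S^{N-1}}$ cannot all correspond to characteristic exponents equal to $1$, hence the sum of the homogeneity degrees of $\Phi_1,\Phi_2,\Phi_3$ would strictly exceed $3$ and at least one $\Phi_i$ would be $\alpha$-homogeneous with $\alpha>1$. This is incompatible with the Lipschitz bound.

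In the remaining regime where $(M_n)$ is bounded, we have $M_n\to M>0$ along a subsequence, and $\mathbf{v}$ solves $-\Delta v_i=-Mv_i\sum_{j\neq i}a_{ij}v_j^q$ on $\R^N$ with $v_i\geq 0$ Lipschitz; each $v_i$ is smooth and subharmonic, and by the strong maximum principle any non-trivial $v_i$ is strictly positive on all of $\R^N$. I would reduce to the segregated case by a further blow-down: the rescaled functions $v_i^R(x)=v_i(Rx)/R$ satisfy the analogous system with coefficient $MR^{2+q}\to+\infty$, and the interaction estimate \eqref{bound interaction} carried through the rescaling forces $\Phi_i\Phi_j\equiv 0$ in the limit, reducing the problem to the previous paragraph.

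The main obstacle is to make sure that the blow-down limit genuinely retains three non-trivial $1$-homogeneous segregated components simultaneously. The Lipschitz bound gives compactness and $1$-homogeneity, but one must combine this with the unboundedness of each positivity set (provided by subharmonicity and non-triviality in the segregated regime, and by the strong maximum principle in the regular one) to prevent any $\Phi_i$ from degenerating; only then can Friedland--Hayman be brought to bear to rule out three pairwise segregated linearly-growing components.
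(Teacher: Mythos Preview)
Your overall strategy is reasonable in spirit, but there are two genuine gaps that, as written, you do not close.

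First, the $1$-homogeneity of the blow-down limits $\Phi_i=\lim_{R\to\infty} v_i(R\cdot)/R$ is asserted ``by the standard scaling invariance of the limit'', but this is not automatic from the Lipschitz bound alone: without a monotonicity formula (Almgren, Weiss, or ACF) forcing uniqueness of the blow-down, different subsequences may give different limits, none of which need be homogeneous.

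Second, and more seriously, the step ``the unbounded positivity set of each $v_i$ ensures that each $\Phi_i$ is non-trivial'' is the heart of the matter and is not justified. Unboundedness of $\{v_i>0\}$ says nothing about the \emph{rate} at which $v_i$ grows; a non-negative subharmonic $v_i$ which is harmonic on its positivity set can perfectly well have sublinear growth at infinity, in which case $\Phi_i\equiv 0$ and Friedland--Hayman gives no information. The same issue arises in your bounded-$M_n$ reduction: strict positivity of $v_i$ in $\R^N$ via the strong maximum principle again gives no growth rate. What actually forces linear growth of each component, \emph{given} that three of them are non-trivial, is precisely a multi-component ACF-type bound --- and once you have that, the blow-down detour is unnecessary.

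The paper takes the direct route: it applies the $k$-component Alt--Caffarelli--Friedman monotonicity formula (in the segregated case) and its non-segregated counterpart for solutions of $\Delta v_i=v_i\sum_{j\neq i}a_{ij}v_j^q$ (in the bounded-$M_n$ case), both of which give exponent $\nu(k,N)>1$ when $k\ge 3$. Monotonicity of $r\mapsto r^{-2k\nu}\prod_i\int_{B_r}|\nabla v_i|^2|x|^{2-N}$ combined with the linear bound $|\mathbf v(x)|\le C(1+|x|)$ forces this product to both stay bounded below by its value at some fixed radius (non-triviality of all $v_i$) and tend to $0$ as $r\to\infty$, a contradiction. No blow-down is needed, and the non-degeneracy issue never arises.
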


The proof of the lemma is based upon the Alt-Caffarelli-Friedman monotonicity formula, as extended by Conti et al. in \cite{ctv}. We recall the results and some suitable generalizations whose proofs follow in a straightforward way and are thus omitted.

\begin{lemma}[Lemma 2.7 in \cite{ctv}]\label{lem: 2.7 in ctv}
Let $\bv \in \C(\R^N) \cap H^1_{\loc}(\R^N)$ be a vector of $k \geq 2$ non-trivial subharmonic functions such that $v_i v_j \equiv 0$ in $\R^N$ for every $i \neq j$, and there exists $x_0 \in \R^N$ such that $v_i(x_0) = 0$ for every $i$ . Then there exists $\nu(k,N) \geq 1$ such that the quantity
\[
    \Phi(r) := \prod_{i=1}^{k} \frac{1}{r^{2\nu(k,N)}} \int_{B_r(x_0)} \frac{|\nabla v_i|^2}{|x|^{N-2}} \de x
\]
is monotone non decreasing for $r>0$. If $k \geq 3$, then one can choose $\nu(k,N) > 1$.
\end{lemma}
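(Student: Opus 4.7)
The plan is to adapt the classical Alt-Caffarelli-Friedman monotonicity argument, now upgraded to $k$ components by means of a spherical partition inequality for the first Dirichlet eigenvalue of the Laplace-Beltrami operator on $S^{N-1}$. Without loss of generality I take $x_0=0$. Setting
\[
    J_i(r):=\int_{B_r}\frac{|\nabla v_i|^2}{|x|^{N-2}}\,\de x,
\]
we have $\log\Phi(r)=\sum_{i}\log J_i(r)-2k\nu(k,N)\log r + \text{const.}$, so the monotonicity will follow once we establish $\sum_{i=1}^k J_i'(r)/J_i(r)\geq 2k\,\nu(k,N)/r$ for a.e.\ $r>0$, with a suitable choice of $\nu(k,N)$.

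\textbf{Bounding each $J_i'/J_i$ from below.} Differentiating $J_i$ via the coarea formula and splitting $|\nabla v_i|^2 = |\pa_\nu v_i|^2 + r^{-2}|\nabla_\theta v_i|^2$ on $\pa B_r$, one aims at the pointwise-in-$r$ estimate
\[
    \frac{J_i'(r)}{J_i(r)}\;\geq\;\frac{2\gamma_i(r)}{r},
\]
where $\gamma_i(r)$ is the characteristic number of the angular set $\Omega_i(r):=\{\theta\in S^{N-1}:v_i(r\theta)>0\}$, defined by $\gamma_i(r)\bigl(\gamma_i(r)+N-2\bigr)=\lambda_1(\Omega_i(r))$ with $\lambda_1$ the first Dirichlet eigenvalue of $-\Delta_{S^{N-1}}$ on $\Omega_i(r)$. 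The proof is a Rellich--Pohozaev identity: write $|\nabla v_i|^2 = \div(v_i\nabla v_i)-v_i\Delta v_i$, multiply by $|x|^{2-N}$, and integrate on $B_r$. The boundary term supplies $r^{2-N}\int_{\pa B_r}v_i\pa_\nu v_i$, which is controlled by the tangential Dirichlet energy via the eigenvalue Poincar\'e inequality applied to the trace of $v_i$ on $\pa B_r$ (which vanishes on $\pa\Omega_i(r)$). The remaining bulk term $-\int_{B_r}|x|^{2-N}v_i\Delta v_i$ is $\le 0$ by non-negativity of $v_i$ and subharmonicity $\Delta v_i\ge 0$; it can simply be dropped in an upper bound for $J_i(r)$, and then combined with a Cauchy-Schwarz estimate for $J_i'(r)$ produces the claimed ratio inequality.

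\textbf{Partition inequality on the sphere.} Since $v_iv_j\equiv 0$, the angular sets $\Omega_i(r)$ are pairwise disjoint in $S^{N-1}$. Define
\[
    \nu(k,N):=\inf\Bigl\{\tfrac{1}{k}\tsum_{i=1}^k\gamma(\Omega_i)\,:\,\Omega_1,\dots,\Omega_k\subset S^{N-1}\text{ open, pairwise disjoint}\Bigr\}.
\]
The classical Friedland-Hayman inequality yields $\nu(2,N)=1$, attained by complementary hemispheres. For $k\ge 3$ one applies spherical-cap symmetrisation $\Omega_i\rightsquigarrow \Omega_i^*$ (which only decreases $\gamma$, or rather keeps $\gamma(\Omega_i)\ge \gamma(\Omega_i^*)$) and then uses the strict convexity (and strict monotonicity) of the map $t\mapsto \gamma(\text{cap of measure }t)$ to see that the constrained minimum $\sum_i \gamma(\Omega_i^*)$ subject to $\sum_i |\Omega_i^*|\le|S^{N-1}|$ is attained at $k$ congruent caps each of measure $|S^{N-1}|/k$, whose characteristic strictly exceeds $1$. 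This gives $\nu(k,N)>1$.

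\textbf{Conclusion and main obstacle.} Summing the ratio inequalities and invoking the partition bound gives
\[
    \sum_{i=1}^{k}\frac{J_i'(r)}{J_i(r)}\;\ge\;\sum_{i=1}^k\frac{2\gamma_i(r)}{r}\;\ge\;\frac{2k\,\nu(k,N)}{r},
\]
so $(\log\Phi)'(r)\ge 0$ a.e., yielding the monotonicity. The main obstacle is making Step~1 rigorous when the positivity set $\{v_i>0\}$ has a rough free boundary: subharmonicity must be read as a distributional inequality of Radon measures, and the integration by parts must be justified by truncation (e.g.\ replacing $v_i$ by $(v_i-\varepsilon)^+$, working on the open superlevel set $\{v_i>\varepsilon\}$ where the boundary is smoother by Sard, applying the identity there, and letting $\varepsilon\downarrow 0$ using $J_i(r)<\infty$). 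The second delicate point is the strict partition inequality for $k\ge 3$, which genuinely needs the strict convexity of $\gamma$ at the equi-partition configuration and not merely the Friedland-Hayman bound iterated.
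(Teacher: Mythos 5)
The paper itself offers no proof of this lemma (it is quoted from \cite{ctv} and the proof is explicitly omitted), so your proposal has to be measured against the Conti--Terracini--Verzini/Alt--Caffarelli--Friedman argument, which it reconstructs in essentially the standard way: the Rellich--Pohozaev plus Cauchy--Schwarz estimate $J_i'(r)/J_i(r)\ge 2\gamma_i(r)/r$, followed by the Friedland--Hayman partition bound on $\mathbb{S}^{N-1}$. The overall scheme is correct. There is, however, one concrete gap in Step 1: the hypothesis that $v_i(x_0)=0$ for every $i$ is never used in your sketch, and it is indispensable. Integrating $\div(v_i\nabla v_i)\,|x-x_0|^{2-N}$ by parts produces, besides the boundary term $r^{2-N}\int_{\partial B_r(x_0)}v_i\,\partial_\nu v_i$, the term $-\tfrac12\int_{B_r(x_0)}\nabla(v_i^2)\cdot\nabla(|x-x_0|^{2-N})$, which splits into the surface contribution $\tfrac{N-2}{2}\,r^{1-N}\int_{\partial B_r(x_0)}v_i^2$ (needed, together with Young's inequality, to reconstitute the full eigenvalue $\lambda_1=\gamma(\gamma+N-2)$) and a Dirac contribution $\tfrac{C_N}{2}\,v_i(x_0)^2$ coming from $\Delta(|x-x_0|^{2-N})=-C_N\,\delta_{x_0}$. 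Only the vanishing $v_i(x_0)=0$ kills this last term; otherwise the inequality $J_i(r)\le \tfrac{r}{2\gamma_i(r)}J_i'(r)$ acquires an extra positive constant and the monotonicity fails for small $r$. Your sketch, which claims the boundary term "supplies $r^{2-N}\int_{\partial B_r}v_i\partial_\nu v_i$" and that the rest "can simply be dropped," passes over exactly this point.

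On Step 2, your route is viable (convexity of $t\mapsto\gamma(\mathrm{cap}(t))$ is indeed part of the Friedland--Hayman paper), but the appeal to \emph{strict} convexity and to the minimum being attained at $k$ congruent caps is more than the lemma needs and harder to justify. A softer argument suffices: after symmetrisation, order the measures $t_1\ge\dots\ge t_k$ with $\sum_i t_i\le|\mathbb{S}^{N-1}|$; Friedland--Hayman applied to the two largest caps gives $\gamma(t_1)+\gamma(t_2)\ge 2$, while $t_j\le|\mathbb{S}^{N-1}|/3$ for every $j\ge 3$ forces $\gamma(t_j)\ge\gamma\bigl(\mathrm{cap}(|\mathbb{S}^{N-1}|/3)\bigr)>1$ by strict domain monotonicity of the first Dirichlet eigenvalue. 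This already yields $\nu(k,N)>1$ for $k\ge 3$ without any equipartition claim. Finally, a small housekeeping remark: when some $J_i$ vanishes on an initial interval $(0,r_0]$ the ratio $J_i'/J_i$ is undefined, but then $\Phi\equiv 0$ there (each $J_i$ is non-decreasing), so monotonicity is trivially preserved; this degenerate case should be dispatched explicitly before dividing by $J_i(r)$.
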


\begin{corollary}[Hidden in Proposition 7.2 in \cite{ctv}]\label{corol 1 ACF}
Let $\bv$ as in the previous lemma. If there exists $C>0$ such that
\[
    |\bv(x)| \leq C (1+|x|),
\]
then $k \leq 2$.
\end{corollary}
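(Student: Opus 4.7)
The plan is to argue by contradiction, assuming $k \ge 3$, and to combine the monotonicity of $\Phi$ from Lemma \ref{lem: 2.7 in ctv} (which in this case provides an exponent $\nu := \nu(k,N) > 1$) with an upper estimate coming from the linear growth hypothesis. Without loss of generality we may assume $x_0 = 0$, and it is enough to show that each of the weighted Dirichlet integrals
\[
    I_i(r) := \int_{B_r} \frac{|\nabla v_i|^2}{|x|^{N-2}}\,\de x
\]
satisfies $I_i(r) \le C(1+r^2)$ for some constant $C > 0$ independent of $r$. Granting this, we obtain
\[
    0 \le \Phi(r) = \frac{1}{r^{2k\nu}} \prod_{i=1}^{k} I_i(r) \le C^k \frac{(1+r^2)^k}{r^{2k\nu}} \xrightarrow{\,r\to +\infty\,} 0,
\]
since $2k\nu > 2k$. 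Because $\Phi$ is monotone non-decreasing, this forces $\Phi \equiv 0$, so at least one index $i$ satisfies $I_i(r) = 0$ for all $r > 0$. Hence $v_i$ is constant, and the assumption $v_i(x_0) = 0$ yields $v_i \equiv 0$, contradicting its non-triviality.

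The main (and really the only) point requiring work is the weighted estimate $I_i(r) \le C(1+r^2)$. I would derive it by a Caccioppoli-type argument, testing the subharmonicity inequality $\Delta v_i \ge 0$ against $\varphi^2 v_i$ where $\varphi$ is a suitable cut-off, combined with the pointwise bound $v_i(x) \le C(1+|x|)$. The standard Caccioppoli estimate applied on dyadic annuli $A_j := B_{2^{j+1}} \setminus B_{2^j}$ yields
\[
    \int_{A_j} |\nabla v_i|^2 \,\de x \le \frac{C}{2^{2j}} \int_{B_{2^{j+2}}} v_i^2 \,\de x \le C\, 2^{jN}(1 + 2^{2j}),
\]
using the linear growth of $v_i$. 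Weighting by $|x|^{-(N-2)} \le 2^{-j(N-2)}$ on $A_j$ and summing a geometric series in $j$ from $0$ up to the scale of $r$ gives $I_i(r) \le C(1+r^2)$; near the origin, the integrability of $|x|^{-(N-2)}$ together with the local boundedness of $|\nabla v_i|^2$ handles the ball $B_1$ separately. This is the only step with a real estimate in it, and it is entirely analogous to bounds routinely used in the ACF framework (cf.\ \cite{ctv}), so I would import the key inequality and keep the verification short.

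With $I_i(r) = O(r^2)$ in hand, the argument in the first paragraph closes the proof; the role played by the condition $k \ge 3$ is exactly that it promotes $\nu$ strictly above $1$, which is what makes the upper bound for $\Phi$ decay to zero and thus conflict with the non-triviality of the components $v_i$.
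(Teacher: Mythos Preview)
Your argument is correct and is precisely the standard route the paper has in mind: the paper gives no proof of its own but refers the reader to \cite{ctv} and to Corollary 1.14 in \cite{SoTe}, where the same contradiction scheme (growth bound $\Rightarrow I_i(r)=O(r^2)$ $\Rightarrow$ $\Phi(r)\to 0$ $\Rightarrow$ $\Phi\equiv 0$ by monotonicity $\Rightarrow$ some $v_i\equiv 0$) is carried out. One cosmetic point: the sentence ``local boundedness of $|\nabla v_i|^2$'' is not guaranteed by the hypotheses ($\mf{v}$ is only in $\C\cap H^1_{\loc}$), but you do not need it---finiteness of $I_i(1)$ is implicit in the well-posedness of $\Phi$ in Lemma \ref{lem: 2.7 in ctv}, and your dyadic Caccioppoli estimate on $B_r\setminus B_1$ already gives $I_i(r)\le I_i(1)+Cr^2$, which suffices.
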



\begin{lemma}[Lemma 2.7 in \cite{ctv}]
Let $\bv \in \C(\R^N) \cap H^1_{\loc}(\R^N)$ be a vector of $k \geq 2$ non-trivial positive functions, solutions to
\[
   \Delta v_i = v_i \tsum_{j \neq i}a_{ij} v_j^q \qquad \text{in $\R^N$}
\]
for $q \geq 1$. There exists $\nu(k,N) \geq 1$ such that for every $\gamma < \nu$ there exists $\bar r > 1$ such that the quantity
\[
    \Phi(r) := \prod_{i=1}^{k} \frac{1}{r^{2\gamma}} \int_{B_r} \left(|\nabla v_i|^2 + v_i^{2} \tsum_{j \neq i} a_{ij} v_j^q\right)|x|^{2-N}
\]
is monotone non decreasing for $r>\bar r$. If $k \geq 3$, then one can choose $\nu(k,N) > 1$.
\end{lemma}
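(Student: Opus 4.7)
}
The plan is to adapt the classical Alt--Caffarelli--Friedman (ACF) monotonicity formula, together with its $k$-component extension due to Conti--Terracini--Verzini (Lemma \ref{lem: 2.7 in ctv}), from the segregated case to the present setting where the components are only \emph{asymptotically} segregated through the strong coupling $v_i\sum_j a_{ij}v_j^q$. The key observation is that, by Lemma \ref{lem: polynomial decay} applied on balls of unit size centered at far-away points, on each large sphere $\partial B_r$ the components $v_i$ are confined to essentially disjoint angular regions, with an error that can be made arbitrarily small for $r$ large.

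First, I would write, for each $i$,
\[
J_i(r) := \int_{B_r}\bigl(|\nabla v_i|^2 + v_i^2\tsum_{j\neq i}a_{ij}v_j^q\bigr)|x|^{2-N}\de x,
\]
and compute, by the co-area formula,
\[
J_i'(r) = r^{2-N}\int_{\pa B_r}\bigl(|\nabla v_i|^2 + v_i^2\tsum_{j\neq i}a_{ij}v_j^q\bigr)\de\sigma.
\]
A standard computation, multiplying $\Delta v_i = v_i\sum_j a_{ij}v_j^q$ by $v_i|x|^{2-N}$ and integrating by parts on $B_r$, rewrites $J_i(r)$ in terms of the boundary quantity $r^{2-N}\int_{\pa B_r}v_i\partial_\nu v_i\,\de\sigma$ (plus a lower-order term that vanishes in the limit). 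Separating tangential and radial parts of the gradient and applying the Cauchy--Schwarz inequality together with a spherical Poincaré inequality on $\pa B_r$ with respect to the "effective support"
\[
\omega_i(r) := \{\theta\in\S^{N-1}\,:\,v_i^2(r\theta)\gtrsim\tsum_{j\neq i}v_j^{q+1}(r\theta)\},
\]
yields the fundamental differential inequality
\[
\frac{J_i'(r)}{J_i(r)}\,\ge\,\frac{2\bigl(\gamma(\omega_i(r))-\eps_n(r)\bigr)}{r},
\]
where $\gamma(\omega)$ is the characteristic constant attached to $\omega\subset\S^{N-1}$ (related to the first Dirichlet eigenvalue of the Laplace--Beltrami operator on $\omega$) and $\eps_n(r)\to 0$ as $r\to\infty$ uniformly, thanks to the decay estimate of Lemma \ref{lem: polynomial decay} applied at scale $1$ around points of $\pa B_r$.

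Second, I would log-differentiate:
\[
\frac{\Phi'(r)}{\Phi(r)}=\sum_{i=1}^{k}\frac{J_i'(r)}{J_i(r)}-\frac{2k\gamma}{r}\ge\frac{2}{r}\Bigl(\sum_{i=1}^{k}\gamma(\omega_i(r))-k\gamma-k\,\eps_n(r)\Bigr).
\]
By the Friedland--Hayman optimal partition inequality (the core combinatorial ingredient in the proofs of Lemma \ref{lem: 2.7 in ctv} and Corollary \ref{corol 1 ACF}), for any collection $\omega_1,\dots,\omega_k\subset\S^{N-1}$ of \emph{essentially} disjoint subsets one has $\sum_i\gamma(\omega_i)\ge k\,\nu(k,N)$, with $\nu(k,N)\ge 1$ and $\nu(k,N)>1$ whenever $k\ge 3$. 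Since $\gamma<\nu$ is fixed, choosing $\bar r>1$ so large that $\eps_n(r)<\nu-\gamma$ for all $r>\bar r$ forces the right-hand side above to be nonnegative, and monotonicity follows.

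The main obstacle is controlling the error in the spherical eigenvalue estimate: the sets $\omega_i(r)$ are not strictly disjoint, so Friedland--Hayman does not apply verbatim. Quantifying the overlap is exactly what Lemma \ref{lem: polynomial decay} buys us, since it forces each $v_i$ to be pointwise small wherever some other $v_j$ is bounded below, at a rate that beats any loss incurred in passing from the exact partition bound to its perturbed version. This is precisely why the monotonicity can only be established for exponents $\gamma<\nu$ strictly below the optimal ACF exponent, and only for radii $r>\bar r$, rather than for all $r>0$ as in the genuinely segregated Lemma \ref{lem: 2.7 in ctv}.
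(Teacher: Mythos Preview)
Your overall strategy matches the argument in \cite{ctv} (which the paper cites and whose proof it omits; the paper's own detailed version, for two components with perturbation terms, is the content of Theorem~\ref{thm: ACF} and Lemmas~\ref{lem: lemma 4.2 di wang}--\ref{lem: positivita}): log-differentiate $\Phi$, bound each $J_i'/J_i$ from below via the integration-by-parts identity and Cauchy--Schwarz in terms of a spherical Rayleigh quotient, then invoke the Friedland--Hayman optimal-partition inequality.

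The one place where your sketch diverges is the mechanism by which the error $\eps(r)$ tends to $0$. You propose to apply Lemma~\ref{lem: polynomial decay} at scale $1$ in $\R^N$ around points of $\partial B_r$; but that would require a priori lower bounds on some $v_j$ and upper bounds on $v_i$ in such balls, and neither is available here (no growth hypothesis is assumed). The route in \cite{ctv}, mirrored in the paper's Lemma~\ref{lem: lemma 4.2 di wang}, is instead to rescale $\partial B_r$ to $\S^{N-1}$: the interaction term then carries a factor $r^2$, so one is studying $\sum_i\gamma(\Lambda_i)$ on the unit sphere with effective coupling $k\sim r^2\to\infty$. One shows that minimisers of this constrained spherical problem approach a segregated $k$-partition, and only at that stage---on the compact sphere, with a genuine large parameter---does a decay estimate enter. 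In particular, your ``effective supports'' $\omega_i(r)$ never appear explicitly; the Rayleigh quotients $\Lambda_i(r)$, which already contain the interaction potential, play that role and make the argument clean. With this correction your plan is the standard one.
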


\begin{corollary}[Hidden in Proposition 7.1 in \cite{ctv}]\label{corol acf 2}
Let $\bv$ as in the previous lemma. If there exists $C>0$ such that
\[
    |\bv(x)| \leq C (1+|x|),
\]
then $k \leq 2$. If moreover $\bv$ is non constant, then $k = 2$.
\end{corollary}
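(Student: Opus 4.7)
My plan is to argue by contradiction for $k \geq 3$ using the Alt--Caffarelli--Friedman type monotonicity formula from the previous lemma, and to handle the residual case $k = 1$ via a Liouville-type argument for positive harmonic functions.

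First I would establish a Caccioppoli-type bound: testing the equation $-\Delta v_i = -v_i \sum_{j\neq i} a_{ij} v_j^q$ against $v_i \eta^2$ for a standard cut-off $\eta$ supported in $B_{2r}$ with $\eta \equiv 1$ on $B_r$ and $|\nabla \eta| \le C/r$, and absorbing the mixed term via Young's inequality, the linear growth hypothesis $|\bv(x)| \le C(1+|x|)$ yields, for $r \ge 1$,
\[
    \int_{B_r} |\nabla v_i|^2 + \int_{B_r} v_i^2 \tsum_{j\neq i} a_{ij} v_j^q \le C \int_{B_{2r}} v_i^2 |\nabla \eta|^2 \le C r^N.
\]
Inserting the weight $|x|^{2-N}$ and decomposing $B_r$ dyadically converts this into
\[
    J_i(r) := \int_{B_r} \left(|\nabla v_i|^2 + v_i^2 \tsum_{j\neq i} a_{ij} v_j^q\right) |x|^{2-N} \le C r^2,
\]
the geometric sum being dominated by its largest dyadic contribution. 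I expect this step to be the principal obstacle, because the pointwise growth of the coupling term $v_i^2 v_j^q$ is only $|x|^{q+2}$ and would give a substantially weaker estimate on its own; it is only thanks to the equation that the interaction integral appears on the left of the Caccioppoli bound with the same $r^N$ control as the gradient.

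Now suppose $k \ge 3$. The previous lemma lets me pick $\gamma$ with $1 < \gamma < \nu(k,N)$, and then $\Phi(r) = r^{-2k\gamma}\prod_{i=1}^k J_i(r)$ is non-decreasing for $r > \bar r$. The estimate above gives
\[
    \Phi(r) \le C r^{2k(1-\gamma)} \to 0 \qquad \text{as } r \to +\infty,
\]
so monotonicity and non-negativity force $\Phi \equiv 0$ on $(\bar r, +\infty)$. Hence some $J_i$ vanishes and $v_i$ is constant; substituting into the PDE gives $v_i \sum_{j\neq i} a_{ij} v_j^q \equiv 0$, which, combined with the non-triviality of the other components, forces $v_i \equiv 0$, contradicting its non-triviality. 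Hence $k \le 2$.

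Finally, if $\bv$ is non-constant and $k = 1$, then the PDE reduces to $\Delta v_1 = 0$, so $v_1$ is a positive harmonic function on $\R^N$, hence constant by the classical Liouville theorem; this contradicts non-constancy, and combined with the previous step yields $k = 2$.
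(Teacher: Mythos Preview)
Your argument is correct and follows the standard route. The paper does not give its own proof of this corollary; it merely cites Corollary~1.14 of \cite{SoTe} (and notes that the result is already implicit in Proposition~7.1 of \cite{ctv}), and the method there is exactly the one you outline: a Caccioppoli estimate plus dyadic summation to bound each weighted energy $J_i(r)\le Cr^2$, after which the monotonicity formula with $\gamma>1$ forces $\Phi\equiv 0$ when $k\ge 3$, yielding a trivial component and a contradiction.

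Two small remarks. First, to make ``some $J_i$ vanishes'' rigorous you should note that each $J_i$ is non-decreasing in $r$, so the zero set $\{r:J_i(r)=0\}$ is an initial interval; hence if $\Phi(r)=0$ for all $r>\bar r$ there must be a \emph{fixed} index $i$ with $J_i\equiv 0$. Second, finiteness of $\int_{B_1}(|\nabla v_i|^2+\dots)|x|^{2-N}$ requires a word: by the equation and continuity of $\mathbf{v}$, elliptic regularity makes $v_i$ smooth, so the integrand is bounded near the origin and $|x|^{2-N}$ is locally integrable. With these two clarifications your proof is complete; the $k=1$ discussion is correct and addresses the second assertion of the corollary, even if strictly speaking the hypothesis ``$\mathbf{v}$ as in the previous lemma'' already carries $k\ge2$.
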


\begin{remark}
For a detailed proof of Corollary \ref{corol acf 2}, we refer to Corollary 1.14 in \cite{SoTe}. In an analogue way, the reader can derive Corollary \ref{corol 1 ACF} starting from Lemma \ref{lem: 2.7 in ctv}.
\end{remark}

We conclude this section by summing up what we proved so far in the following statement.

\begin{proposition}\label{prop: riassunto blow-up}
Let $\{\mf{u}_{\beta_n}\}$ satisfy the assumptions of Theorem \ref{thm: main general be} or \ref{thm: main general lv}, and assume that \eqref{absurd assumption} holds. Then the sequences $\{\mf{v}_n\}$ and $\{\bar{\mf{v}}_n\}$ defined by \eqref{def blow-up} have the properties (1)-(6) of Lemma \ref{lem: mega lemma}. There exists $C>0$ such that for every $i$
\[
v_i(x) \le C(1+|x|) \qquad \text{for every $x \in \R^N$},
\]
$\mf{v}$ is non-trivial and non-constant, and in particular $|\nabla v_1(0)|=1$. Moreover, $\mf{v}$ has at most $2$ non-trivial components, say $v_1$ and $v_2$, $M_n \ge C>0$, and
\begin{itemize}
\item if $(M_n)$ is bounded, then
\begin{equation}\label{eq intera}
\begin{cases}
-\Delta v_1= - M_\infty v_1 v_2^q & \text{in $\R^N$} \\
-\Delta v_2 = -M_\infty v_1^q v_2 & \text{in $\R^N$} \\
v_1,v_2 \ge 0 & \text{in $\R^N$},
\end{cases}
\end{equation}
where $M_n \to M_\infty$ as $n \to \infty$, and the convergence of $\mf{v}_n$ to $\mf{v}$ takes place in $\mathcal{C}^{1,\alpha}_{\loc}(\R^N)$ for every $\alpha<1$.
\item if $M_n \to +\infty$, then both $v_1$ and $v_2$ are subharmonic in $\R^N$, and
\begin{equation}\label{eq segregata}
\begin{cases}
-\Delta v_1= 0 & \text{in $\{v_1>0\}$} \\
-\Delta v_2 = 0 & \text{in $\{v_2>0\}$} \\
v_1 \cdot v_2 \equiv 0 & \text{in $\R^N$} \\
v_1,v_2 \ge 0 & \text{in $\R^N$}.
\end{cases}
\end{equation}
\end{itemize}
\end{proposition}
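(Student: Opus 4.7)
The plan is to read the proposition as an assembly of facts established in the rest of the section, pinpoint the handful of claims that still require a short argument, and verify those. More precisely, properties (1)--(6) are exactly Lemma \ref{lem: mega lemma}; the non-triviality of $\bv$ together with $|\nabla v_1(0)|=1$ is the content of Lemma \ref{lem: non trivial limit}; the bound $M_n\ge C>0$ is Lemma \ref{lem: M_n bound from below}; and the at-most-two-components assertion is Lemma \ref{lem: at most 2}. For the linear growth $v_i(x)\le C(1+|x|)$ I would argue directly: point~(5) of Lemma \ref{lem: mega lemma} gives that $\bv$ is globally Lipschitz with constant $1$, while the identity $v_{i,n}(0)=\bar v_{i,n}(0)$, the normalisation $\sum_i\bar v_{i,n}(0)=1$ and non-negativity force $0\le v_i(0)\le 1$ in the limit, whence $v_i(x)\le 1+|x|$ with $C=1$.

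It remains to derive the form of the limit system in each regime for $(M_n)$. If $(M_n)$ is bounded I would pass to a subsequence with $M_n\to M_\infty>0$ (positivity being Lemma \ref{lem: M_n bound from below}). By point~(3) of Lemma \ref{lem: mega lemma} the right hand side of the equation for $v_{i,n}$ is uniformly bounded in $L^\infty_{\loc}(\R^N)$, so interior elliptic regularity yields uniform $\mathcal{C}^{1,\alpha}_{\loc}$ bounds and, up to a further subsequence, $\mathcal{C}^{1,\alpha}_{\loc}$-convergence; one then passes to the limit in the equation using $f_{i,n}\to 0$ from point~(1) and discards the trivial components to recover \eqref{eq intera}.

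The case $M_n\to+\infty$ is the most delicate. Segregation $v_iv_j\equiv 0$ is point~(6) of Lemma \ref{lem: mega lemma}, and the subharmonicity of each $v_i$ comes for free by dropping the non-negative competition term in the equation for $v_{i,n}$ and taking the distributional limit, since $f_{i,n}\to 0$. Harmonicity inside the positivity set is the step where the real work sits. For any compact $K\subset\{v_i>0\}$ I would exploit $v_{i,n}\ge c>0$ on $K$ for large $n$ to apply Lemma \ref{lem: polynomial decay} to each equation for $v_{j,n}$ with $j\neq i$, obtaining the pointwise bound $M_nv_{j,n}\le C$ on $K$. In the variational case $q=2$ one then factors $M_nv_{i,n}v_{j,n}^2=v_{i,n}v_{j,n}(M_nv_{j,n})\le C v_{i,n}v_{j,n}$, and since $v_{j,n}\to 0$ uniformly on $K$ the competition term vanishes in the limit, giving $-\Delta v_i=0$ on $\{v_i>0\}$. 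The symmetric case $q=1$ is where I expect the main obstacle, since the analogous product bound only yields boundedness rather than decay; here I would use the cancellation proper to the Lotka--Volterra setting by subtracting the two surviving equations to get $-\Delta(v_{1,n}-v_{2,n})=f_{1,n}-f_{2,n}\to 0$, so that $v_1-v_2$ is harmonic on all of $\R^N$; combining this with $v_2\equiv 0$ on $\{v_1>0\}$ yields $-\Delta v_1=0$ there, and symmetrically for $v_2$, delivering \eqref{eq segregata}.
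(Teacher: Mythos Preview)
Your proposal is essentially correct and matches the paper's treatment: Proposition~\ref{prop: riassunto blow-up} is indeed just a summary of the preceding lemmas, and you correctly identify which claim comes from where, including the linear growth bound via the global Lipschitz constant and the normalisation at the origin.

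There is one small but genuine gap in your handling of the symmetric case $q=1$ when $M_n\to+\infty$. Your claimed identity $-\Delta(v_{1,n}-v_{2,n})=f_{1,n}-f_{2,n}$ is only exact when $k=2$; for $k>2$ the subtraction leaves the residual term $-M_n(v_{1,n}-v_{2,n})\sum_{j\ge 3}v_{j,n}$, and the decay estimate $M_n v_{j,n}\le C$ on compacts of $\{v_1>0\}$ only bounds this term, it does not kill it. The fix is the one the paper itself uses later (Section~\ref{sec: symmetric}, Step~2): subtract \emph{all} the other components, setting $\hat v_{1,n}:=v_{1,n}-\sum_{j\neq 1}v_{j,n}$. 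A direct computation gives
\[
-\Delta \hat v_{1,n}=f_{1,n}-\tsum_{j\neq 1}f_{j,n}+M_n\tsum_{j\neq 1}v_{j,n}\tsum_{l\neq 1,\,l\neq j}v_{l,n}\ \ge\ f_{1,n}-\tsum_{j\neq 1}f_{j,n}\ \to\ 0,
\]
so the limit of $\hat v_{1,n}$ is superharmonic. On any compact $K\subset\{v_1>0\}$ one has $\sum_{j\neq 1}v_{j,n}\to 0$ uniformly (by segregation in the limit and uniform convergence), hence $\hat v_{1,n}\to v_1$ on $K$, and $v_1$ is superharmonic there. Together with the subharmonicity you already established, this yields $-\Delta v_1=0$ on $\{v_1>0\}$, and similarly for $v_2$. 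With this correction your argument is complete and agrees with the paper.
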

\section{Monotonicity formul{\ae}}\label{sec: monotonicity}

This section is devoted to some monotonicity formulae inspired by the Almgren frequency formula and the Alt-Caffarelli-Friedman monotonicity formula, which will be crucially employed in the proof of Theorem \ref{thm: main general be}. Some of the following results are already present in the literature, but not in the following generality, and hence we prefer to also prove them for the sake of completeness. In Subsection \ref{sub: ACF} we will use the assumption $N \ge 3$. As already explained, the case $N=2$ can be treated extending planar solutions as spacial ones, but we point out that it would be also possible to face directly the planar problem. This would require a slightly different Alt-Caffarelli-Friedman monotonicity formula inspired by Lemma 9.2 in \cite{ctvIndiana}, which we prefer to omit.


\subsection{Almgren monotonicity formul{\ae}}
Let us consider a smooth domain $\Omega \subset \R^N$, a compact set $K \subset\subset \Omega$ and a solution $\mf{u}=(u_1,\dots,u_k) \in H^1(\Omega)$ of a generic problem of type \eqref{be syst} satisfying the assumption \eqref{be assumpt}:
\[
\begin{cases}
-\Delta u_i= f_i(x,u_i) - \beta \sum_{j \neq i} a_{ij} u_i u_j^2  &\text{ in $\Omega$}\\
u_i \ge 0 &\text{ in $\Omega$,}
\end{cases}
\]
and there exists $m,d>0$ such that
\[
\|\mf{u}\|_{L^\infty(\Omega)} \le m \qquad \text{and} \qquad \max_i \sup_{0<s \le m} \left|\frac{f_{i}(x,s)}{s}\right| \le d.
\]
In what follows, all the constants that will appear \emph{depend} on the choice of $m$ and $d$, which are considered throughout these preliminary results as fixed, but \emph{are independent} on the choice of any other parameter (in particular, they are independent of $\beta > 0$). The reason behind this observation is that, in the next section, we aim at using monotonicity formulae for sequence of solutions to \eqref{be syst} which verify the assumptions in a uniform way.

For $x_0 \in K$ and $r>0$, we define
\begin{equation}\label{def N regular}
\begin{split}
  \bullet \quad & H(\mf{u},x_0,r):= \frac{1}{r^{N-1}} \int_{\partial B_r(x_0)} \sum_{i=1}^k u_i^2\\
  \bullet \quad & E(\mf{u},x_0,r):= \frac{1}{r^{N-2}} \int_{B_r(x_0)} \sum_{i=1}^k |\nabla u_i|^2+ 2\beta\sum_{1\le i<j\le k} a_{ij} u_i^2 u_j^2 - \sum_{i=1}^k f_i(x,u_i) u_i\\
  \bullet \quad & N(\mf{u},x_0,r):= \frac{E(\mf{u},x_0,r)}{H(\mf{u},x_0,r)} \qquad (\text{Almgren frequency function}).
\end{split}
\end{equation}

\begin{lemma}\label{lem: prelim_almgren}
Let $\mf{u}$ be a solution of \eqref{be syst} and \eqref{be assumpt}. For $x_0 \in K$ and $r>0$, we have
\begin{equation}\label{der di H}
\frac{\de}{\de r} H(\mf{u},x_0,r) = \frac{2}{r^{N-1}} \int_{\partial B_r(x_0)} \sum_{i=1}^k u_i \pa_\nu u_i = 2\frac{E(\mf{u},x_0,r)}{r}.
\end{equation}
Furthermore
\begin{align*}
\frac{\de}{\de r} E(\mf{u},x_0,r) & = \frac{2}{r^{N-2}} \int_{\pa B_r(x_0)} \sum_{i=1}^k (\pa_\nu u_i)^2 + \frac{(4-N)}{r^{N-1}} \beta \int_{B_r(x_0)} \sum_{1 \le i<j \le k} a_{ij} u_i^2 u_j^2  \\
& +\frac{1}{r^{N-1}} \int_{B_r(x_0)} \left[(N-2)\sum_{i=1}^k f_i(x,u_i)u_i + 2 \sum_{i=1}^k f_i(x,u_i) \nabla u_i \cdot (x-x_0) \right] \\
& + \frac{1}{r^{N-2}} \beta \int_{\pa B_r(x_0)} \sum_{1 \le i<j \le k}  a_{ij} u_i^2 u_j^2 - \frac{1}{r^{N-2}} \int_{\pa B_r(x_0)} \sum_{i=1}^k f_i(x,u_i)u_i.
\end{align*}
\end{lemma}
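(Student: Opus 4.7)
The first equality in \eqref{der di H} is just differentiation under the integral sign in polar form: writing $H(\mathbf{u},x_0,r)=\int_{\partial B_1}\sum_i u_i^2(x_0+r\theta)\,d\sigma(\theta)$ and differentiating in $r$ gives $\frac{dH}{dr}=\frac{2}{r^{N-1}}\int_{\partial B_r(x_0)}\sum_i u_i\pa_\nu u_i$. For the second equality I would apply the divergence theorem, $\int_{\partial B_r}u_i\pa_\nu u_i=\int_{B_r}(|\nabla u_i|^2+u_i\Delta u_i)$, substitute $-\Delta u_i=f_i(x,u_i)-\beta u_i\sum_{j\neq i}a_{ij}u_j^2$, and use the symmetry $a_{ij}=a_{ji}$ to rewrite $\sum_i\sum_{j\neq i}a_{ij}u_i^2u_j^2=2\sum_{i<j}a_{ij}u_i^2u_j^2$, recognizing precisely the integrand that defines $r^{N-2}E(\mathbf{u},x_0,r)$.

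For the derivative of $E$, I would begin from
\[
\frac{dE}{dr}=-\frac{N-2}{r^{N-1}}\int_{B_r(x_0)}G+\frac{1}{r^{N-2}}\int_{\partial B_r(x_0)}G,
\]
with $G$ denoting the integrand of $E$, and then convert the bulk term into boundary terms via a Pohozaev-type identity. Concretely, I would multiply the equation for $u_i$ by $\nabla u_i\cdot(x-x_0)$, integrate on $B_r(x_0)$, and use the standard identity
\[
\int_{B_r(x_0)}\Delta u_i\,\nabla u_i\cdot(x-x_0)=r\int_{\partial B_r}(\pa_\nu u_i)^2-\frac{r}{2}\int_{\partial B_r}|\nabla u_i|^2+\frac{N-2}{2}\int_{B_r}|\nabla u_i|^2.
\]
Summing over $i$, the symmetric coupling collapses to $\frac{\beta}{2}\int_{B_r}\nabla W\cdot(x-x_0)$ with $W:=\sum_{i<j}a_{ij}u_i^2u_j^2$ (using that $\sum_{i\neq j}a_{ij}u_j^2\nabla(u_i^2)=\nabla W$ after relabeling), and this is finally transformed through the Rellich-type formula $\int_{B_r}\nabla W\cdot(x-x_0)=r\int_{\partial B_r}W-N\int_{B_r}W$.

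Plugging the resulting expression for $(N-2)\int_{B_r}\sum_i|\nabla u_i|^2$ into the prefactor-derivative piece and adding the plain boundary contribution $\frac{1}{r^{N-2}}\int_{\partial B_r}G$ should yield exactly the six terms listed in the statement: the boundary $|\nabla u_i|^2$ integrals cancel against each other, the two boundary $\beta W$ contributions combine into $\frac{\beta}{r^{N-2}}\int_{\partial B_r}\sum_{i<j}a_{ij}u_i^2u_j^2$, the two bulk $\beta W$ terms combine with coefficient $N-2(N-2)=4-N$ to give $\frac{(4-N)\beta}{r^{N-1}}\int_{B_r}\sum_{i<j}a_{ij}u_i^2u_j^2$, and the reaction terms produce both the $(N-2)\sum f_iu_i$ and the $2\sum f_i\nabla u_i\cdot(x-x_0)$ bulk integrals together with the unchanged boundary $-\frac{1}{r^{N-2}}\int_{\partial B_r}\sum f_iu_i$.

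The main obstacle is purely bookkeeping: verifying the crucial cancellation $N-2(N-2)=4-N$ that yields the correct sign in front of the bulk interaction (and that determines the onset of the sub-critical behaviour for $N\le 4$), and making sure the symmetry of $a_{ij}$ is invoked so that the coupling terms can be written as the divergence of $W$. No analytical subtlety is involved, so once the Pohozaev identity and the two integration-by-parts formulae above are assembled, the identity in the lemma follows by a direct comparison of coefficients.
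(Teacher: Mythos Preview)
Your proposal is correct and essentially equivalent to the paper's argument. The only cosmetic difference is in the bookkeeping for $\frac{d}{dr}E$: the paper first rescales to $B_1$ via $u_{i,r}(x):=u_i(x_0+rx)$, differentiates in $r$ (which produces the multiplier $\partial_r u_{i,r}=\nabla u_i(x_0+rx)\cdot x$), integrates by parts using the equation, and then scales back, whereas you differentiate the prefactor and boundary directly and invoke the Pohozaev identity obtained by testing against $\nabla u_i\cdot(x-x_0)$. Since the scaled multiplier $\partial_r u_{i,r}$ is exactly the Pohozaev vector field in disguise, the two computations are the same line by line; your verification of the key coefficient $N-2(N-2)=4-N$ and of the collapse $\sum_{i\neq j}a_{ij}u_j^2\nabla(u_i^2)=\nabla W$ via $a_{ij}=a_{ji}$ are precisely the points the paper's ``further integration by parts'' encodes.
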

\begin{proof}
The equalities in \eqref{der di H} follow by direct computations. As far as the derivative of $E$ is concerned, we observe that
\begin{align*}
\frac{\de}{\de r} E(\mf{u},x_0,r) = &\frac{\de}{\de r}\left(\frac{1}{r^{N-2}} \int_{B_r(x_0)} \sum_i |\nabla u_i|^2+ 2\beta\sum_{i<j} a_{ij} u_i^2 u_j^2\right) + \frac{(N-2)}{r^{N-1}} \int_{B_r(x_0)} \sum_i f_i(x,u_i) u_i \\
& - \frac{1}{r^{N-2}} \int_{\pa B_r(x_0)} \sum_i f_i(x,u_i) u_i.
\end{align*}
To compute the first term on the right hand side, letting $u_{i,r}(x):= u_i(x_0+rx)$, we have
\begin{align*}
\frac{\de}{\de r}\left(\frac{1}{r^{N-2}} \int_{B_r(x_0)} \sum_i |\nabla u_i|^2+ 2\beta \sum_{i<j} a_{ij} u_i^2 u_j^2\right) = \frac{\de}{\de r}\left( \int_{B_1} \sum_i |\nabla u_{i,r}|^2+ 2 r^2 \beta \sum_{i<j} a_{ij} u_{i,r}^2 u_{j,r}^2\right) \\
= \int_{B_1} 2\sum_i \nabla u_{i,r} \cdot \nabla (\pa_r u_{i,r}) + 4r \beta\sum_{i<j} a_{ij} u_{i,r}^2 u_{j,r}^2 + 4r^2 \beta \sum_{i<j} a_{ij} u_{i,r} u_{j,r} \left( u_{j,r} \pa_r u_{i,r} + u_{i,r} \pa_r u_{j,r} \right)  \\
= 2\int_{\pa B_1}  \sum_i \pa_r u_{i,r} \pa_{\nu} u_{i,r} +\int_{B_1} 4r \beta\sum_{i<j} a_{ij} u_{i,r}^2 u_{j,r}^2 + 2r^2 \beta \sum_{i<j} a_{ij} u_{i,r} u_{j,r} \left( u_{j,r} \pa_r u_{i,r} + u_{i,r} \pa_r u_{j,r} \right) \\
+ \int_{B_1} 2r^2 \sum_i f_i(x_0+rx,u_{i,r}) \pa_r u_{i,r} \\
= \frac{2}{r^{N-2}} \int_{\pa B_r(x_0)} \sum_i  (\pa_\nu u_i)^2 + \frac{1}{r^{N-1}} \int_{B_r(x_0)} 4\beta\sum_{i<j} a_{ij} u_i^2 u_j^2  + \sum_{i<j} \nabla (u_i^2 u_j^2)\cdot (x-x_0) \\
+\frac{2}{r^{N-1}} \int_{B_r(x_0)}  \sum_i f_i(x,u_i) \nabla u_i \cdot (x-x_0).
\end{align*}
After a further integration by parts, the thesis follows.
\end{proof}

We recall the following formulation of the Poincar\'e inequality, which can be shown by a standard scaling argument.
\begin{lemma}[Poincar\'e inequality]\label{lem: poincare}
If $u \in H^1_{\loc}(\R^N)$, then the following inequality holds for any ball $B_r$:
\[
    \frac{1}{r^{N-2}} \int_{B_r} |\nabla u|^2 + \frac{1}{r^{N-1}} \int_{\partial B_r} u^2 \geq \frac{N-1}{r^{N}} \int_{B_r} u^2.
\]
\end{lemma}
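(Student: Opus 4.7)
The plan is to reduce to the unit ball by scaling and then prove the unit-ball inequality via one integration by parts. For the scaling step, set $v(x) := u(rx)$. Changing variables, one checks that
\[
\frac{1}{r^{N-2}} \int_{B_r} |\nabla u|^2 = \int_{B_1} |\nabla v|^2, \quad \frac{1}{r^{N-1}} \int_{\partial B_r} u^2 = \int_{\partial B_1} v^2, \quad \frac{1}{r^N} \int_{B_r} u^2 = \int_{B_1} v^2,
\]
so the desired inequality is exactly scale invariant. Thus it suffices to prove, for every $v \in H^1(B_1)$, that
\[
\int_{B_1} |\nabla v|^2 + \int_{\partial B_1} v^2 \ge (N-1)\int_{B_1} v^2.
\]
By a standard density argument, we may assume $v \in C^1(\overline{B_1})$.

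For the unit-ball inequality I would use the pointwise identity $\div(v^2 x) = N v^2 + 2v\,\nabla v\cdot x$. Integrating over $B_1$ and applying the divergence theorem yields
\[
N \int_{B_1} v^2 \;=\; \int_{\partial B_1} v^2 \;-\; 2\int_{B_1} v\,\nabla v\cdot x.
\]
The cross term is handled by the elementary inequality $2|v||\nabla v||x| \le v^2 + |x|^2|\nabla v|^2$, which inside $B_1$ gives $\bigl|2\int_{B_1} v\,\nabla v\cdot x\bigr| \le \int_{B_1} v^2 + \int_{B_1} |\nabla v|^2$. Substituting and rearranging produces exactly $(N-1)\int_{B_1}v^2 \le \int_{\partial B_1} v^2 + \int_{B_1}|\nabla v|^2$, which is the desired reduced statement.

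There is no real obstacle in this proof: the divergence identity is standard, the AM--GM bound is sharp enough to give constant $N-1$, and the scaling is routine. The only mildly delicate point is ensuring that the trace $v|_{\partial B_r}$ is well-defined and the identity extends from $C^1$ to $H^1$, which is handled by the usual approximation by smooth functions in $H^1(B_1)$ together with the continuity of the trace operator $H^1(B_1)\to L^2(\partial B_1)$.
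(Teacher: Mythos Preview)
Your proof is correct. The paper itself omits the proof entirely, stating only that the inequality ``can be shown by a standard scaling argument''; your scaling reduction is precisely that step, and your divergence-theorem computation on $B_1$ cleanly supplies the base case the paper leaves to the reader.
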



\begin{lemma}\label{thm: almgren}
There exist two constants $\tilde r = \tilde r(m,d) >0$ and $\tilde C = \tilde C(m,d) >0$ such that
\[
N(\mf{u},x_0,r) + 1 \geq 0 \quad\text{and}\quad \frac{d}{dr}N(\mf{u},x_0,r) \ge - \tilde C(N(\mf{u},x_0,r) +1)
\]
for every $0<r \le \tilde r$, $x_0 \in K$.
\end{lemma}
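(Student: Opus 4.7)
The lower bound $N(\mathbf{u},x_0,r)+1 \ge 0$ will follow from the observation that $N+1 = (E+H)/H$ (with $H>0$), combined with the Poincaré inequality of Lemma~\ref{lem: poincare}. Indeed, using $|f_i(x,u_i)u_i|\le d u_i^2$ and dropping the nonnegative competition term in $E$, we get
\[
E(\mathbf{u},x_0,r)+H(\mathbf{u},x_0,r) \;\ge\; \frac{1}{r^{N-2}}\int_{B_r(x_0)}\!\sum_i |\nabla u_i|^2 \;+\; \frac{1}{r^{N-1}}\int_{\pa B_r(x_0)}\!\sum_i u_i^2 \;-\; \frac{d}{r^{N-2}}\int_{B_r(x_0)}\!\sum_i u_i^2,
\]
and Poincaré gives the first two terms $\ge (N-1) r^{-N}\int_{B_r(x_0)}\sum_i u_i^2$; for $r^2 \le (N-1)/(2d)$ the remainder term is absorbed, yielding $E+H\ge 0$ and, as a byproduct, the reverse estimate $H\le 2(E+H)$ (useful below).

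\textbf{Derivative estimate.} Writing $N=E/H$ and using $H'=2E/r$ from Lemma~\ref{lem: prelim_almgren}, one has
\[
N'(r) \;=\; \frac{E'(r)}{H(r)} \;-\; \frac{2E(r)^2}{r\,H(r)^2}.
\]
Applying Cauchy--Schwarz on $L^2(\pa B_r(x_0);\R^k)$ to the identity $\int_{\pa B_r(x_0)}\mathbf{u}\cdot\pa_\nu\mathbf{u} = r^{N-2}E$ yields
\[
\frac{2}{r^{N-2}}\int_{\pa B_r(x_0)}\sum_i (\pa_\nu u_i)^2 \;\ge\; \frac{2E^2}{rH},
\]
which combined with the formula for $E'(r)$ gives $E'(r) \ge \tfrac{2E^2}{rH} + T_1+T_2+T_3+T_4+T_5$, with the $T_j$'s being the remaining five terms in the expression for $E'(r)$ from Lemma~\ref{lem: prelim_almgren}. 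Hence $N'(r) \ge (T_1+\cdots+T_5)/H(r)$, and the desired inequality $N'\ge -\tilde C(N+1)$ reduces to
\[
T_1+T_2+T_3+T_4+T_5 \;+\; \tilde C\,(E+H) \;\ge\; 0.
\]

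\textbf{Signs and remainder bounds.} The dimensional restriction $N\le 4$ enters exactly here: the bulk competition term $T_1=\frac{(4-N)}{r^{N-1}}\beta\int_{B_r(x_0)}\sum_{i<j}a_{ij}u_i^2 u_j^2$ and the boundary competition term $T_4=\frac{1}{r^{N-2}}\beta\int_{\pa B_r(x_0)}\sum_{i<j}a_{ij}u_i^2 u_j^2$ are both nonnegative, so they may be discarded. The remaining three terms $T_2,T_3,T_5$, all involving $f_i$, are controlled using $|f_i(x,s)/s|\le d$, Young's inequality, and the Poincaré-derived estimate $\frac{1}{r^{N-2}}\int_{B_r(x_0)}\sum_i u_i^2 \le C r^2 (E+H)$ already obtained in the first step. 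A direct computation gives
\[
|T_2| \;\le\; C r\,(E+H), \qquad |T_3| \;\le\; C r\,(E+H),
\]
while $|T_5| \le d r H \le 2 d r\,(E+H)$ using the bound $H\le 2(E+H)$. Choosing $\tilde r$ small enough that all these estimates are simultaneously in force and setting $\tilde C$ to be a large multiple of these constants yields $T_2+T_3+T_5 \ge -\tilde C (E+H)$, completing the proof.

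\textbf{Main obstacle.} The delicate point is the presence of the term $T_5=-\frac{1}{r^{N-2}}\int_{\pa B_r(x_0)}\sum_i f_i u_i$, which naturally is bounded by $drH$, not by $dr(E+H)$. To close the argument one must exploit the fact that $E+H\ge 0$ (established in the first step) to obtain a \emph{two-sided} comparison between $H$ and $E+H$, and this is what forces the smallness of $\tilde r$. The rest of the argument is a careful bookkeeping of constants that depend only on $N$, $d$ (and implicitly on $m$ through the range in which the assumption on $f_i$ is invoked), as required for the later application to sequences of solutions.
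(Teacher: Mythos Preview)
Your proof is correct and follows essentially the same route as the paper's: establish $E+H\ge 0$ via the Poincar\'e inequality of Lemma~\ref{lem: poincare}, use Cauchy--Schwarz to cancel the $(\partial_\nu u_i)^2$ term against $2E^2/(rH)$, discard the nonnegative competition contributions (this is where $N\le 4$ enters), and absorb the $f_i$-remainders into $\tilde C(E+H)$ by combining $|f_i(x,s)|\le d|s|$ with Poincar\'e once more. One small inaccuracy: your claimed bound $|T_3|\le Cr(E+H)$ is too strong---after Young's inequality the gradient contribution $\frac{d}{r^{N-2}}\int_{B_r}\sum_i|\nabla u_i|^2$ is only $\le C(E+H)$, not $\le Cr(E+H)$---but since the weaker estimate is all that is needed for $N'\ge -\tilde C(N+1)$, the argument goes through unchanged.
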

\begin{proof}
Let us observe that, since by definition $H(\mf{u}, x_0, r) \geq 0$, the positivity of $N(\mf{u}, x_0,r) + 1$ is equivalent to that of $E(\mf{u}, x_0,r) +H(\mf{u}, x_0,r)$. By the sublinearity of $f_{i}$, we have
\begin{multline*}
    E(\mf{u}, x_0,r) +H(\mf{u}, x_0,r) \\= \frac{1}{r^{N-2}} \int_{B_r(x_0)} \sum_i |\nabla u_{i}|^2+ 2\beta\sum_{i<j} a_{ij} u_{i}^2 u_{j}^2 - \sum_i f_i(x,u_{i}) u_{i} + \frac{1}{r^{N-1}} \int_{\partial B_r(x_0)} \sum_i u_{i}^2 \\ \geq \frac{1}{r^{N-2}} \int_{B_r(x_0)} \sum_i |\nabla u_{i}|^2 -\frac{d r^2}{r^{N}} \int_{B_r(x_0)} \sum_i u_{i}^2 + \frac{1}{r^{N-1}} \int_{\partial B_r(x_0)} \sum_i u_{i}^2,
\end{multline*}
and thus we can conclude with an application of the Poincar\'e inequality in Lemma \ref{lem: poincare}, as long as $dr^2 \le d \tilde r^2<N-1$.

We now pass to the proof of the monotonicity, first dealing with the function $N(\mf{u}, x_0,r)$. We compute the derivative of $N$ using Lemma \ref{lem: prelim_almgren}. We have
\begin{multline*}
\frac{\de}{\de r} N(\mf{u},x_0,r)  = \frac{R(\mf{u},x_0,r)}{H(\mf{u},x_0,r)} \\
+\frac{2}{r^{2N-3} H^2(\mf{u},x_0,r)} \left[\left(\int_{\pa B_r(x_0)} \sum_i (\pa_\nu u_i)^2 \right) \left( \int_{\pa B_r(x_0)} \sum_i u_i^2 \right) - \left( \int_{\pa B_r(x_0)} \sum_i u_i \pa_{\nu} u_i \right)^2\right],
\end{multline*}
where
\begin{align*}
R(\mf{u},x_0,r) &:= \frac{(4-N) \beta }{r^{N-1}} \int_{B_r(x_0)}\sum_{i<j} a_{ij} u_i^2 u_j^2  \\
& \quad+\frac{1}{r^{N-1}} \int_{B_r(x_0)} \left[(N-2)\sum_i f_i(x,u_i)u_i + 2 \sum_i f_i(x,u_i) \nabla u_i \cdot (x-x_0) \right] \\
& \quad + \frac{\beta}{r^{N-2}} \int_{\pa B_r(x_0)} \sum_{i<j} a_{ij} u_i^2 u_j^2 - \frac{1}{r^{N-2}} \int_{\pa B_r(x_0)} \sum_i f_i(x,u_i)u_i \\ &\ge \frac{1}{r^{N-1}} \int_{B_r(x_0)} \left[(N-2)\sum_i f_i(x,u_i)u_i + 2 \sum_i f_i(x,u_i) \nabla u_i \cdot (x-x_0) \right] \\
& \quad - \frac{1}{r^{N-2}} \int_{\pa B_r(x_0)} \sum_i f_i(x,u_i)u_i =: R_1(\mf{u},x_0,r).
\end{align*}
Here we used the fact that $N \le 4$. Thus, by the Cauchy-Schwarz inequality
\begin{equation}\label{der N}
\frac{\de }{\de r} N(\mf{u},x_0,r) \ge \frac{R_1(\mf{u},x_0,r)}{H(\mf{u},x_0,r)}
\end{equation}
for every $r>0$. We now estimate the remainder $R_1$, using the assumptions on the reaction terms $f_i$. For every $x_0 \in K$ and $0<r\le 1$ such that $B_r(x_0) \subset \subset K$, it results that
\begin{multline*}
| R_1(\mf{u},x_0,r) |  \le \frac{1}{r^{N-1}} \int_{B_r(x_0)} \left[ (N-2) d \sum_i u_{i}^2  +2dr \sum_i u_{i} |\nabla u_{i}| \right] + \frac{d}{r^{N-2}} \int_{\pa B_r(x_0)} \sum_i u_{i}^2 \\
\le  \frac{d}{r^{N-2}} \int_{B_r(x_0)} \sum_i |\nabla u_{i}|^2 +  \frac{dr^2 + (N-2)dr}{r^N} \int_{B_r(x_0)} \sum_i u_{i}^2 + \frac{dr}{r^{N-1}} \int_{\pa B_r(x_0)} \sum_i u_{i}^2 \\
 \le C(d,N)\left[ \frac{1}{r^{N-2}} \int_{B_r(x_0)} \sum_i |\nabla u_{i}|^2 + \frac{1}{r^N} \int_{B_r(x_0)} \sum_i u_{i}^2 + \frac{1}{r^{N-1}} \int_{\pa B_r(x_0)} \sum_i u_{i}^2 \right]
\end{multline*}
The Poincar\'e inequality (Lemma \ref{lem: poincare}) can be used in order to estimate the last term, showing that there exist $\tilde C,\tilde r>0$ such that
\[
| R_1(\mf{u},x_0,r) |  \le \tilde C( E(\mf{u},x_0,r) + H(\mf{u},x_0,r) )
\]
for every $x_0 \in K$, $0<r \le \tilde r \le 1$. Coming back to \eqref{der N}, we obtain the desired conclusion.
\end{proof}

\begin{remark}\label{rem: dimension criticality}
In the whole proof of Theorem \ref{thm: main general be}, we use the assumption $N \le 4$ only in the previous lemma. As we have already observed in the introduction, such an assumption can be dropped in absence of reaction terms ($f_{i,\beta} \equiv 0$ for every $i$). In such case it is possible to replace the definition of $E(\mf{u},x_0,r)$ with
\[
\tilde E(\mf{u},x_0,r):= \frac{1}{r^{N-2}} \int_{B_r(x_0)} \sum_i |\nabla u_i|^2 + \beta \sum_{i<j} u_i^2 u_j^2,
\]
proving an Almgren monotonicity formula for the function $\tilde N:= \tilde E/H$ independently on the dimension $N$ (we refer to Proposition 5.2 in \cite{BeTeWaWe} for the details). The rest of the proof of Theorem \ref{thm: main general be} can be adapted with minor changes.\\
We also point out that for $p \neq 1$ the condition $N \le 4$ becomes $p \le 2(1+1/p)$.
\end{remark}

\begin{proposition}\label{prp: monotonia almgren}
There exist $\tilde r = \tilde r(m,d)>0$ and $\tilde C = \tilde C(m,d) >0$ such that the functions
\[
    r \mapsto (N(\mf{u},x_0,r)+1)e^{\tilde C r} \quad \text{and} \quad r \mapsto \left( \frac{1}{r^{N-1}} \int_{\pa B_r(x_0)} u_{i}^2 \right) e^{\tilde C r}
\]
are non-negative and monotone non-decreasing for $r \in (0,\tilde r]$, for every $x_0 \in K$ and $i=1,\dots,k$.
\end{proposition}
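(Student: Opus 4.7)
The first monotonicity is essentially a rewriting of Lemma \ref{thm: almgren}. Indeed, once we have the differential inequality
\[
\frac{d}{dr} N(\mf{u},x_0,r) \ge -\tilde C\bigl(N(\mf{u},x_0,r)+1\bigr)
\]
together with the non-negativity of $N(\mf{u},x_0,r)+1$, a direct computation gives
\[
\frac{d}{dr}\Bigl[(N(\mf{u},x_0,r)+1)e^{\tilde C r}\Bigr] = \Bigl(\tfrac{d}{dr}N(\mf{u},x_0,r) + \tilde C(N(\mf{u},x_0,r)+1)\Bigr) e^{\tilde C r} \ge 0
\]
for every $r \in (0,\tilde r]$ and every $x_0 \in K$. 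So the first half costs nothing beyond what has already been proved.

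For the second assertion, I would denote $H_i(\mf{u},x_0,r) := r^{1-N}\int_{\pa B_r(x_0)} u_i^2$ and compute its derivative by differentiating under the integral sign exactly as in Lemma \ref{lem: prelim_almgren}:
\[
\frac{d}{dr} H_i(\mf{u},x_0,r) = \frac{2}{r^{N-1}}\int_{\pa B_r(x_0)} u_i \pa_\nu u_i.
\]
Using the equation satisfied by $u_i$ and the divergence theorem one rewrites the right-hand side as $\frac{2 E_i(\mf{u},x_0,r)}{r}$, where
\[
E_i(\mf{u},x_0,r) := \frac{1}{r^{N-2}}\int_{B_r(x_0)} \Bigl(|\nabla u_i|^2 + \beta u_i^2 \sum_{j \neq i} a_{ij} u_j^2 - f_i(x,u_i)\,u_i\Bigr).
\]
Consequently
\[
\frac{d}{dr}\bigl[H_i(\mf{u},x_0,r) e^{\tilde C r}\bigr] = \frac{e^{\tilde C r}}{r}\bigl(2 E_i(\mf{u},x_0,r) + \tilde C r \, H_i(\mf{u},x_0,r)\bigr),
\]
so it suffices to produce a lower bound on $E_i$ of the form $E_i \ge -Cr^2\, H_i$ for $r$ small.

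To obtain this bound I would drop the non-negative interaction term (it has the good sign), use the sublinearity assumption $|f_i(x,u_i)u_i| \le d u_i^2$, and then apply the Poincar\'e inequality of Lemma \ref{lem: poincare} to $u_i$:
\[
E_i(\mf{u},x_0,r) \ge \frac{1}{r^{N-2}}\int_{B_r(x_0)} |\nabla u_i|^2 - \frac{dr^2}{N-1}\Bigl(\frac{1}{r^{N-2}}\int_{B_r(x_0)} |\nabla u_i|^2 + H_i(\mf{u},x_0,r)\Bigr).
\]
For $r$ sufficiently small the Dirichlet integral coefficient is positive, so $E_i \ge -\frac{dr^2}{N-1} H_i$. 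Inserting this in the expression for the derivative above shows that one can choose $\tilde C = \tilde C(d)$ and $\tilde r = \tilde r(d)$ (possibly shrinking those obtained for the Almgren part and keeping the larger constant/smaller radius) such that $2E_i + \tilde C r H_i \ge 0$ for all $r \in (0,\tilde r]$. The non-negativity of $H_i(\mf{u},x_0,r) e^{\tilde C r}$ is trivial.

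The only mildly delicate point is making sure the same pair $(\tilde r,\tilde C)$ serves both statements uniformly in $i$ and in $x_0 \in K$; this is just a matter of taking the worst constants produced by the two computations, and the $x_0$-uniformity is automatic since the estimates depend only on $m$, $d$ and $N$, not on the centre of the ball.
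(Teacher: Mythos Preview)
Your proposal is correct and follows essentially the same approach as the paper: both invoke Lemma~\ref{thm: almgren} directly for the first monotonicity, and for the second both compute the derivative of $r^{1-N}\int_{\partial B_r(x_0)} u_i^2$ via the divergence theorem, drop the non-negative interaction term, use the sublinearity bound $|f_i(x,u_i)u_i|\le d u_i^2$, and apply the Poincar\'e inequality of Lemma~\ref{lem: poincare} to absorb the $L^2$ term, finally adjusting $(\tilde r,\tilde C)$ to serve both statements. The only difference is cosmetic: you package the computation as $E_i \ge -\tfrac{dr^2}{N-1}H_i$ before concluding, while the paper bounds $\tfrac{d}{dr}H_i$ directly.
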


\begin{proof}
The first part is a straightforward consequence of Lemma \ref{thm: almgren}. For the second part, we use assumption \eqref{be assumpt} and the Poincar\'e inequality (Lemma \ref{lem: poincare}):
\begin{multline*}
    \frac{\de}{\de r} \left(\frac{1}{r^{N-1}} \int_{\partial B_r(x_0)} u_{i}^2 \right) \geq  \frac{2}{r^{N-1}} \int_{B_r(x_0)} |\nabla u_{i}|^2 - \frac{2dr}{r^{N}} \int_{B_r(x_0)} u_{i}^2 \\
    \geq \frac{2}{r}\frac{(N-1)-dr^2}{(N-1)r^{N-2}} \int_{B_r(x_0)} |\nabla u_{i}|^2 - \frac{2dr}{(N-1)r^{N-1}} \int_{\partial B_r(x_0)} u_{i}^2 \geq -  \frac{\tilde C'}{r^{N-1}} \int_{\partial B_r(x_0)} u_{i}^2,
\end{multline*}
where  the constant $\tilde C'$ depends only on $d$ and on $N$, and the last inequality holds as long as $r< \tilde r'(d,N)$ sufficiently small. Replacing, if necessary, $\tilde C$ and $\tilde r$ of Lemma \ref{thm: almgren} with $\max\{\tilde C,\tilde C'\}$ and $\min\{\tilde r,\tilde r'\}$, the thesis follows by a further integration.
\end{proof}

We complete the first part of this subsection with two useful doubling properties.

\begin{lemma}\label{lem: doubling}
Let $\tilde C$ and $\tilde r$ be defined in the previous lemma.
\begin{itemize}
\item[($i$)] If there exist $0<\underline r<\bar r<\tilde r$ and $d>0$ such that $N(\mf{u},0,r) \le d$ for every $\underline r\le r \le \bar r$, then
\[
r \mapsto \frac{H(\mf{u},x_0,r)}{r^{2d}} \quad \text{is monotone non-increasing for $\underline r\le r \le \bar r$}.
\]
\item[($ii$)] If there exist $0<\underline r<\bar r<\tilde r$ and $\gamma>0$ such that $N(\mf{u},0,r) \ge \gamma$ for every $\underline r\le r \le \bar r$, then
\[
r \mapsto \frac{H(\mf{u},x_0,r)}{r^{2\gamma}} \quad \text{is monotone non-decreasing for $\underline r\le r \le \bar r$}.
\]
\end{itemize}
\end{lemma}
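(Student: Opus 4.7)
The approach is a direct logarithmic-derivative computation, exploiting the identity \eqref{der di H} of Lemma \ref{lem: prelim_almgren}. Since the components $u_i$ are strictly positive by \eqref{be syst}, we have $H(\mf{u},x_0,r)>0$ whenever $\overline{B_r(x_0)}\subset\Omega$, so $\log H(\mf{u},x_0,r)$ is well defined on the relevant range of radii. Combining \eqref{der di H} with the definition of $N$ yields
\[
\frac{d}{dr}\log H(\mf{u},x_0,r)\;=\;\frac{1}{H(\mf{u},x_0,r)}\cdot\frac{2E(\mf{u},x_0,r)}{r}\;=\;\frac{2N(\mf{u},x_0,r)}{r},
\]
for every $0<r\le \tilde r$. (I assume that the base point of $H$ in the statement is $x_0$ rather than $0$, which appears to be a typographical inconsistency.)

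Given this key identity, assertion $(i)$ follows immediately: under the hypothesis $N(\mf{u},x_0,r)\le d$ on $[\underline r,\bar r]$, one computes
\[
\frac{d}{dr}\log\!\left(\frac{H(\mf{u},x_0,r)}{r^{2d}}\right)\;=\;\frac{2N(\mf{u},x_0,r)}{r}-\frac{2d}{r}\;=\;\frac{2}{r}\bigl(N(\mf{u},x_0,r)-d\bigr)\;\le\;0,
\]
and integration on any sub-interval of $[\underline r,\bar r]$ gives that $r\mapsto H(\mf{u},x_0,r)/r^{2d}$ is non-increasing. Assertion $(ii)$ is proved verbatim, reversing the inequality: the assumption $N(\mf{u},x_0,r)\ge \gamma$ yields
\[
\frac{d}{dr}\log\!\left(\frac{H(\mf{u},x_0,r)}{r^{2\gamma}}\right)\;=\;\frac{2}{r}\bigl(N(\mf{u},x_0,r)-\gamma\bigr)\;\ge\;0,
\]
whence $r\mapsto H(\mf{u},x_0,r)/r^{2\gamma}$ is non-decreasing on $[\underline r,\bar r]$.

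I do not anticipate any genuine obstacle in this argument: everything reduces to the first identity in Lemma \ref{lem: prelim_almgren} together with the observation that $H>0$, which in turn is ensured by the positivity of the components of $\mf{u}$. The only mild point worth flagging is that the lemma is a pointwise consequence of $N\lessgtr d$ (respectively $\gamma$) \emph{at each} $r\in[\underline r,\bar r]$, so the conclusion is valid on the entire interval without any need to invoke the differential inequality of Lemma \ref{thm: almgren}; the restriction $\bar r<\tilde r$ is only used to ensure that $N+1\ge 0$ and that the formulae above make sense.
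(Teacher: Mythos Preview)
Your proof is correct and follows essentially the same approach as the paper: both compute the logarithmic derivative of $H$ via \eqref{der di H} to obtain $\frac{d}{dr}\log H(\mf{u},x_0,r)=\frac{2}{r}N(\mf{u},x_0,r)$, then compare with $2d/r$ (resp.\ $2\gamma/r$) and integrate. Your remarks about the positivity of $H$ and the $0$ vs.\ $x_0$ typo are accurate, but the argument itself is identical to the paper's.
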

\begin{proof}
($i$) By \eqref{der di H} we observe that
\[
\frac{\de}{\de r}\log H(\mf{u},x_0,r) = \frac{2}{r}N(\mf{u},x_0,r) \le \frac{2d}{r}
\]
for every $\underline{r}\le r\le \bar r$. By integrating, the thesis follows. The proof of ($ii$) is analogue.
\end{proof}

\paragraph{\textbf{Almgren monotonicity formulae for segregated configurations}} In \cite{tt} the authors introduced the sets $\mathcal{G}(\Omega)$ and $\mathcal{G}_{\loc}(\Omega)$, classes of segregated vector valued functions sharing several properties with solutions of competitive systems, including a version of the Almgren monotonicity formula. We report Definition 1.2 in \cite{tt}, which is of interest in the present setting.

\begin{definition}\label{def: G}
For an open set $\Omega \subset \R^N$, we define the class $\mathcal{G}(\Omega)$ of non-trivial functions $\mf{0} \neq \mf{v}=(v_1,\dots,v_k)$ whose components are non-negative and locally Lipschitz continuous in $\Omega$, and such that the following properties holds:
\begin{itemize}
\item $v_i v_j \equiv 0$ in $\Omega$ for every $i \neq j$;
\item for every $i$
\[
-\Delta v_i= f_i(x,v_i)-\mu_i \qquad \text{in $\Omega$ in distributional sense},
\]
where $\mu_i$ is a non-negative Radon measure supported on the set $\pa\{v_i>0\}$, and $f_i: \Omega \times \R^+ \to \R$ are $\mathcal{C}^1$ functions such that $|f_i(x,s)| \leq d|s|$, uniformly in $x$;
\item defining for $x_0 \in \Omega$ and $r>0$ such that $B_r(x_0) \subset \Omega$ the function
\begin{equation}\label{def E segregated}
E(\mf{v},x_0,r):= \frac{1}{r^{N-2}} \int_{B_r(x_0)} \sum_{i=1}^k |\nabla v_i|^2- \sum_{i=1}^k f_i(x,v_i) v_i
\end{equation}
we assume that $E$ is absolutely continuous as function of $r$ and
\begin{align*}
\frac{\de}{\de r} E(\mf{v},x_0,r) &= \frac{1}{r^{N-2}} \int_{B_r(x_0)} \sum_{i=1}^k (\pa_\nu v_i)^2 - \frac{1}{r^{N-2}} \int_{\pa B_r(x_0)} \sum_{i=1}^k f_i(x,u_i)u_i\\
& \quad + \frac{1}{r^{N-1}} \int_{B_r(x_0)} \left[(N-2)\sum_{i=1}^k f_i(x,u_i)u_i + 2 \sum_{i=1}^k f_i(x,u_i) \nabla u_i \cdot (x-x_0) \right]  .
\end{align*}
\end{itemize}
For points $x_0 \in \{\mf{v}=\mf{0}\}$, we define the multiplicity of $x_0$ as
\[
    \sharp \left\{i=1,\dots,k: \text{$\meas\{B_r(x_0) \cap \{v_i>0\}\} >0$ for every $r>0$}\right\}.
\]
We write that $\mf{v} \in \mathcal{G}_{\loc}(\Omega)$ if $\mf{v} \in \mathcal{G}(K)$ for every compact set $K \subset \subset \Omega$.
\end{definition}

\begin{remark}
The definition of $E$ in \eqref{def N regular} and \eqref{def E segregated} are different, but we do not think that this can be source of misunderstanding, because the correct choice of $E$ is clearly determined by the vector valued function $\mf{v}$ which is considered. In the same spirit, we define the Almgren frequency function for elements of $\mathcal{G}(\Omega)$ as
\[
N(\mf{v},x_0,r):= \frac{E(\mf{v},x_0,r)}{H(\mf{v},x_0,r)},
\]
with $H$ defined as in \eqref{def N regular}.
\end{remark}

We recall some known facts. The following are a monotonicity formula for functions of $\mathcal{G}(\Omega)$, and a lower estimate of $N(\mf{v},x_0,0^+)$ for points $x_0$ on the free boundary $\{\mf{v}=\mf{0}\}$, for which we refer to Theorem 2.2 and Corollary 2.7 in \cite{tt}.

\begin{theorem}\label{prop: monot segregated}
Let $\mf{v} \in \mathcal{G}(\Omega)$ and let $K \subset \subset \Omega$. There exists $\tilde r',\tilde C'$ depending only on $d$ and on the dimension $N$, such that for every $x_0 \in K$ and $r \in (0,\tilde r']$ it results that $H(\mf{v},x_0,r) \neq 0$, the function $N(\mf{v},x_0,r)$ is absolutely continuous in $r$ and
\[
r \mapsto (N(\mf{v},x_0,r)+1)e^{\tilde C' r} \quad \text{is monotone non-decreasing}.
\]
Moreover, for every point of the free boundary $x_0 \in \{\mf{v}=\mf{0}\}$ it results that $N(\mf{v},x_0,0^+) \ge 1$.
\end{theorem}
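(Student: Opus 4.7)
\end{theorem}

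\begin{proof}[Sketch of proof]
The plan is to mirror the argument for the regular system from Lemma~\ref{thm: almgren} and Proposition~\ref{prp: monotonia almgren}, taking advantage of the fact that although $\mf{v}$ solves only a distributional equation with singular measures $\mu_i$, these are supported on $\partial\{v_i>0\}$ while $v_i$ is continuous and vanishes there, so $v_i\,d\mu_i\equiv 0$. Every integration by parts in which $v_i$ is tested against $\mu_i$ therefore produces zero; in particular,
\[
\int_{\partial B_r(x_0)}\sum_i v_i\,\partial_\nu v_i \;=\; \int_{B_r(x_0)}\Bigl(\sum_i |\nabla v_i|^2 - \sum_i f_i(x,v_i)v_i\Bigr) \;=\; r^{N-2}E(\mf{v},x_0,r),
\]
which yields $H'(\mf{v},x_0,r)=\tfrac{2}{r}E(\mf{v},x_0,r)$ exactly as in the regular case.

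To obtain the monotonicity of $(N+1)e^{\tilde C' r}$, I compute $N'(r)=E'(r)/H(r)-2E(r)^2/(rH(r)^2)$. Applying the Cauchy--Schwarz inequality to the vectors $(v_i)$ and $(\partial_\nu v_i)$ on $\partial B_r(x_0)$ together with the identity above gives
\[
\frac{2\,E(r)^2}{r\,H(r)}\;\le\;\frac{2}{r^{N-2}}\int_{\partial B_r(x_0)}\sum_i (\partial_\nu v_i)^2,
\]
which precisely cancels the leading term in the formula for $E'(r)$ postulated in Definition~\ref{def: G}; note that the ``$\beta u_i^2u_j^2$'' terms of the regular case are \emph{absent} here because $v_iv_j\equiv 0$. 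What remains is the remainder produced by the $f_i$'s, which, just as in the proof of Lemma~\ref{thm: almgren}, can be estimated via $|f_i(x,s)|\le d|s|$, Young's inequality, and the Poincar\'e inequality of Lemma~\ref{lem: poincare} by $\tilde C'(E(r)+H(r))$ for $0<r\le\tilde r'$, with $\tilde r'$ and $\tilde C'$ depending only on $d$ and $N$. Dividing by $H(r)$ yields $N'(r)\ge -\tilde C'(N(r)+1)$, hence the announced monotonicity. The non-negativity $N+1\ge 0$ follows from the same Poincar\'e estimate, and $H>0$ on $(0,\tilde r']$ is handled as in the regular case by combining the just-established bound on $N$ with the doubling Lemma~\ref{lem: doubling} and the non-triviality of $\mf{v}$.

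For the lower bound $N(\mf{v},x_0,0^+)\ge 1$ at a free-boundary point $x_0\in\{\mf{v}=\mf{0}\}$, I argue by an Almgren-type blow-up. Setting
\[
\mf{v}^{x_0,r}(x):=\frac{\mf{v}(x_0+rx)}{\sqrt{H(\mf{v},x_0,r)}},
\]
one checks $\int_{\partial B_1}\sum_i (v_i^{x_0,r})^2=1$ and $N(\mf{v}^{x_0,r},0,s)=N(\mf{v},x_0,rs)$. Using the just-proved monotonicity and the local Lipschitz regularity of $\mf{v}$, the rescalings are equi-bounded in $H^1_{\loc}(\R^N)$ and equi-continuous on compacta; along a subsequence $r_n\to 0^+$ they converge to a non-trivial limit $\mf{v}^\star\in\mathcal{G}_{\loc}(\R^N)$ with vanishing reaction terms, so each $v_i^\star$ is harmonic on $\{v_i^\star>0\}$ with pairwise disjoint positivity sets. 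Passing to the limit in the monotonicity formula forces $N(\mf{v}^\star,0,\rho)\equiv\alpha:=N(\mf{v},x_0,0^+)$ for all $\rho>0$, and integrating $H'/H=2N/r$ shows that $\mf{v}^\star$ is $\alpha$-homogeneous away from $0$. If only one component is non-trivial, then it is a non-negative harmonic homogeneous function vanishing at the origin, hence a harmonic polynomial of integer degree $\alpha\ge 1$; if at least two components are non-trivial, Lemma~\ref{lem: 2.7 in ctv} applied to $\mf{v}^\star$ at $0$ yields $2\alpha\ge 2\nu(k,N)\ge 2$, and again $\alpha\ge 1$. The most delicate step I expect is the passage to the limit in the blow-up: one needs the convergence of the rescalings to be strong enough to preserve both the segregation condition $v_i^\star v_j^\star\equiv 0$ and the monotonicity identity, so that $N(\mf{v}^\star,0,\cdot)$ is indeed identically equal to $\alpha$; the compactness and lower-semicontinuity package for $\mathcal{G}(\Omega)$ developed in \cite{tt} is exactly what delivers this.
\end{proof}
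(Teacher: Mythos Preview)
The paper does not supply its own proof of this theorem: it is stated there as a recall of known results, with a direct reference to Theorem~2.2 and Corollary~2.7 of \cite{tt}. So there is no in-paper argument to compare against; your sketch is essentially a reconstruction of the Tavares--Terracini proof, and its overall architecture (the derivative identities survive because $v_i\,d\mu_i\equiv 0$; the remainder is controlled by the sublinearity of $f_i$ and Poincar\'e; the lower bound at the free boundary comes from an Almgren blow-up to a homogeneous profile) is the right one.

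There is one genuine slip in the blow-up step. In the case where the homogeneous limit $\mf{v}^\star$ has a single non-trivial component, you assert that it is ``a non-negative harmonic homogeneous function \dots\ hence a harmonic polynomial of integer degree $\alpha\ge 1$''. This is not correct: the limit is harmonic only on its positivity set, not globally (think of $x_1^+$), so it need not be a polynomial at all, and nothing forces $\alpha$ to be an integer. The conclusion $\alpha\ge 1$ is still true, but it comes from a different mechanism: since $\mf{v}\in\mathcal{G}(\Omega)$ is locally Lipschitz, the rescalings $\mf{v}^{x_0,r}$ have Lipschitz constant of order $r/\sqrt{H(\mf{v},x_0,r)}\sim r^{1-\alpha}$; if $\alpha<1$ this tends to $0$, so $\mf{v}^\star$ would be constant, contradicting $\mf{v}^\star(0)=0$ together with the normalization $H(\mf{v}^\star,0,1)=1$. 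Equivalently, one can argue on the sphere: each non-trivial $v_i^\star|_{\S^{N-1}}$ is a first Dirichlet eigenfunction on its (proper) support with eigenvalue $\alpha(\alpha+N-2)$, and the spherical isoperimetric/Faber--Krahn bound forces $\alpha\ge 1$. Either route closes the gap; your ``harmonic polynomial'' shortcut does not.
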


\begin{remark}\label{rem: on unique continuation}
The fact that $H(\mf{v},x_0,r) \neq 0$ for $r \in (0,\tilde r']$ and $x_0 \in K$ is a unique continuation property for elements of $\mathcal{G}(\Omega)$: indeed, if in an open subset of $\Omega$ we have $\mf{v} \equiv \mf{0}$, then $H(\mf{v},x_0,r) \equiv 0$ for some $x_0 \in \Omega$ and $r \in (r_1,r_2)$, in contradiction with the previous result.
\end{remark}

The almost monotonicity formula for $N$ becomes a full monotonicity formula if $f_{i} \equiv 0$ for every $i$ (see Remark 2.4 in \cite{tt}). Moreover, thanks to a classification result due to \cite{nttv} (see Step 6 in Proposition 3.9), the following holds.

\begin{proposition}\label{prop: monot segregated senza f}
Let $\mf{v} \in \mathcal{G}(\Omega)$ with $f_i \equiv 0$ for every $i$. Then $r \mapsto N(\mf{v},x_0,r)$ is non-decreasing. Moreover, it holds $N(\mf{v},x_0,r) \equiv \sigma>0$ for $r \in (0,\bar r]$ if and only if $\mf{v}$ is a non-trivial homogeneous function of degree $\sigma$.
\end{proposition}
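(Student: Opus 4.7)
The plan is to repeat the Almgren derivative computation from the proof of Lemma \ref{thm: almgren}, specialised to the segregated setting of Definition \ref{def: G} with $f_i \equiv 0$, and exploit the fact that all the reaction-remainder terms vanish so that the almost monotonicity of Theorem \ref{prop: monot segregated} upgrades to a genuine monotonicity. Concretely, with $f_i \equiv 0$ the functional $E$ in \eqref{def E segregated} reduces to $E(\mf{v},x_0,r)=r^{2-N}\int_{B_r(x_0)}\sum_i|\nabla v_i|^2$, and since each $v_i$ is harmonic on $\{v_i>0\}$, the associated measure $\mu_i$ is supported on $\partial\{v_i>0\}\subset\{v_i=0\}$; integration by parts together with $v_i\equiv 0$ on $\operatorname{supp}\mu_i$ therefore yields the key Rellich-type identity
\[
E(\mf{v},x_0,r)=\frac{1}{r^{N-2}}\int_{\partial B_r(x_0)}\sum_{i=1}^{k} v_i\,\partial_\nu v_i .
\]

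Using the formulas for $H'$ and $E'$ from Lemma \ref{lem: prelim_almgren} (which specialise here to $H'=2E/r$ and $E'(\mf{v},x_0,r)=2r^{2-N}\int_{\partial B_r(x_0)}\sum_i(\partial_\nu v_i)^2$), I would write
\[
\frac{d}{dr}N(\mf{v},x_0,r)=\frac{E'}{H}-\frac{2E^2}{rH^{2}}
=\frac{2}{r\,H^{2}}\Bigl(r\,E'\,H/2 - E^{2}\Bigr),
\]
and apply Cauchy--Schwarz on $\partial B_r(x_0)$ to the displayed identity for $E$:
\[
E^{2}\le\Bigl(\frac{1}{r^{N-1}}\int_{\partial B_r(x_0)}\sum_i v_i^{2}\Bigr)\Bigl(\frac{r}{r^{N-2}}\int_{\partial B_r(x_0)}\sum_i(\partial_\nu v_i)^{2}\Bigr)=\frac{r\,H\,E'}{2}.
\]
This gives $N'(\mf{v},x_0,r)\ge 0$; since $H>0$ by Remark \ref{rem: on unique continuation}, the expression is well defined. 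The monotonicity part is then proved.

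For the characterisation $N(\mf{v},x_0,r)\equiv\sigma$, the converse is immediate: if $\mf{v}$ is $\sigma$-homogeneous with respect to $x_0$, then $\partial_\nu v_i=\sigma v_i/r$ on $\partial B_r(x_0)$, whence $E=\sigma H$ and $N\equiv\sigma$. For the direct implication, $N'\equiv 0$ on $(0,\bar r]$ forces equality in the Cauchy--Schwarz above for a.e.\ $r$, so on each sphere there exists $\lambda(r)$ with $\partial_\nu v_i(x)=\lambda(r)v_i(x)$ simultaneously for all $i$; inserting this back gives $\lambda(r)=\sigma/r$, and integrating along radii produces $v_i(x_0+r\theta)=r^{\sigma}v_i(x_0+\theta)$. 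This is exactly the argument of Step~6 in the proof of Proposition 3.9 of \cite{nttv}, which is the reference the statement refers to, and I would quote it rather than redo it in detail.

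The main obstacle is justifying all the integration-by-parts and Rellich manipulations in the presence of the singular measures $\mu_i$, since $\mf{v}$ is only locally Lipschitz. I would handle this by restricting to the open sets $\{v_i>0\}$, where $v_i$ is a smooth harmonic function, using that $\mu_i$ charges only $\partial\{v_i>0\}\subset\{v_i=0\}$ so that boundary contributions of the form $\int v_i\,d\mu_i$ vanish; the absolute continuity of $E$ in $r$, which is the nontrivial analytic input needed to differentiate, is exactly the property built into Definition \ref{def: G} and already used in Theorem \ref{prop: monot segregated}, so it may be invoked directly.
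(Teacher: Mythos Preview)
Your proposal is correct and is precisely the standard Almgren computation that the paper defers to the literature: the paper does not give its own proof of this proposition but simply points to Remark~2.4 in \cite{tt} for the monotonicity and to Step~6 of Proposition~3.9 in \cite{nttv} for the rigidity of the equality case, and what you have written is exactly the content of those references. Note only that the formula for $E'$ you use (with the factor $2$ and the integral over $\partial B_r$) is the correct one from \cite{tt}; the expression displayed in Definition~\ref{def: G} of the paper contains a typo in this term, which you have rightly ignored.
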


The relation between solutions of strongly competing systems and functions in $\mathcal{G}(\Omega)$ is clarified by the following statement, for which we refer to Theorem 8.1 in \cite{tt} in case $f_{i,\beta}(u_i):=\omega_i u_i^3- \lambda_{i,\beta} u_i$, and to \cite{stz} in a completely general setting.

\begin{proposition}\label{prop: limiting profile}
Let us a consider a sequence $\beta \to +\infty$, and let $\{\mf{u}_{\beta}\}$ be a corresponding sequence of solution to \eqref{be syst} in $\Omega$ satisfying \eqref{be assumpt} independently on $\beta$. Assume that
\[
f_{i,\beta} \to f_{i} \qquad \text{in $\mathcal{C}_{\loc}(\Omega \times [0,m])$}
\]
for some $f_i \in \mathcal{C}^1(\Omega \times [0,m])$, and that there exists $\mf{u}$ such that
\[
\mf{u}_\beta \to \mf{u} \qquad \text{in $\mathcal{C}(\overline{\Omega}) \cap H^1(\Omega)$}.
\]
Then $\mf{u} \in \mathcal{G}(\Omega)$, and $N(\mf{u}_\beta, x, r) \to N(\mf{u}, x, r)$ for every $x \in \Omega$ and $r > 0$ such that $B_r(x_0) \subset \subset \Omega$.
\end{proposition}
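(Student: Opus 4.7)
The plan is to establish, in order: (i) segregation of the limit and identification of the defect measure in the distributional equation for each $u_i$; (ii) vanishing of the competition integrals in $L^1_\loc$, which together with existing results yields local Lipschitz continuity; (iii) convergence of $E$ and $H$, hence of $N$, and the derivative formula of Definition \ref{def: G}. For (i), test the equation for $u_{i,\beta}$ against $u_{i,\beta}\varphi$ with $\varphi\in\C^\infty_c(\Omega)$, $0\le\varphi\le 1$. Using \eqref{be assumpt}, the uniform $L^\infty$ bound and the $H^1$-boundedness, we obtain
\[
\beta\sum_{j\neq i}a_{ij}\int u_{i,\beta}^2 u_{j,\beta}^2\,\varphi \le C(\varphi),
\]
so $u_i u_j\equiv 0$ pointwise by continuity of $\mf{u}$. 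Setting $I_{i,\beta}:=\beta u_{i,\beta}\sum_{j\neq i}a_{ij}u_{j,\beta}^2$, the same test shows $\{I_{i,\beta}\}$ is bounded in $L^1_\loc(\Omega)$; up to a subsequence, $I_{i,\beta}\rightharpoonup \mu_i$ as non-negative Radon measures, and $-\Delta u_i = f_i(\cdot,u_i)-\mu_i$ in $\mathcal{D}'(\Omega)$. Segregation together with $\mf{u}\in\C(\overline\Omega)$ forces $I_{i,\beta}\to 0$ locally uniformly on $\{u_i>0\}$, so $\mu_i$ vanishes there; using $f_i(x,0)=0$, the equation likewise yields $\mu_i=0$ on the interior of $\{u_i=0\}$. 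Hence $\supp\mu_i\subset \partial\{u_i>0\}$.

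For (ii), the testing identity
\[
\int|\nabla u_{i,\beta}|^2\varphi + \int u_{i,\beta}\nabla u_{i,\beta}\cdot\nabla\varphi + \int I_{i,\beta}\,u_{i,\beta}\varphi = \int f_{i,\beta}(x,u_{i,\beta})u_{i,\beta}\varphi
\]
passes to the limit on all terms except the third by $H^1$-strong convergence and by uniform continuity of $f_i$ on $\overline\Omega\times[0,m]$; testing the distributional limit equation against $u_i\varphi$ identifies the limit of the third term with $\langle\mu_i,u_i\varphi\rangle=0$, since $\mu_i$ is concentrated on $\partial\{u_i>0\}$ while $u_i=0$ there. Therefore
\[
\beta\int u_{i,\beta}^2\sum_{j\neq i}a_{ij}u_{j,\beta}^2\,\varphi \longrightarrow 0 \qquad \forall\,\varphi\in\C^\infty_c(\Omega).
\]
Local Lipschitz continuity of $\mf{u}$ --- the remaining requirement for $\mf{u}\in\mathcal{G}(\Omega)$ --- follows from the Alt-Caffarelli-Friedman/Almgren monotonicity formulae applied to $\mf{u}_\beta$ in combination with the $\C^{0,\alpha}$-bounds of \cite{nttv, Wa}, as carried out in \cite{tt, stz}.

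For (iii), by $H^1$-strong convergence and the vanishing of the competition integrals,
\[
\frac{1}{r^{N-2}}\int_{B_r(x_0)}\Bigl(\sum_i|\nabla u_{i,\beta}|^2 + 2\beta\sum_{i<j}a_{ij}u_{i,\beta}^2u_{j,\beta}^2 - \sum_i f_{i,\beta}u_{i,\beta}\Bigr) \longrightarrow E(\mf{u},x_0,r),
\]
with $E(\mf{u},\cdot,\cdot)$ as in \eqref{def E segregated}, while $H(\mf{u}_\beta,x_0,r)\to H(\mf{u},x_0,r)$ by uniform convergence. Remark \ref{rem: on unique continuation} (applicable once $\mf{u}\in\mathcal{G}(\Omega)$) gives $H(\mf{u},x_0,r)>0$ for small $r$, so $N(\mf{u}_\beta,x_0,r)\to N(\mf{u},x_0,r)$. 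The absolute continuity of $r\mapsto E(\mf{u},x_0,r)$ with the derivative formula of Definition \ref{def: G} is obtained by passing to the limit in Lemma \ref{lem: prelim_almgren}: the interior term $(4-N)\beta\int_{B_r}\sum a_{ij}u_{i,\beta}^2u_{j,\beta}^2/r^{N-1}$ and the boundary term $\beta\int_{\partial B_r}\sum a_{ij}u_{i,\beta}^2u_{j,\beta}^2/r^{N-2}$ vanish for a.e. $r$ (via a standard Fubini/Sard choice of good radii), while the remaining terms converge by $H^1$-convergence and the smoothness of the $f_i$. The main obstacle is the Lipschitz step in (ii): the $\C^{0,\alpha}$-bounds of \cite{nttv} do not suffice per se, and the upgrade to Lipschitz continuity crucially relies on the monotonicity machinery of Section \ref{sec: monotonicity} as refined in \cite{tt, stz}; the control of boundary integrals on $\partial B_r$ for a.e. $r$ in (iii) is technical but standard.
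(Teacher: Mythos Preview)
The paper does not supply its own proof of this proposition: it is stated with the preface ``for which we refer to Theorem 8.1 in \cite{tt} in case $f_{i,\beta}(u_i)=\omega_i u_i^3-\lambda_{i,\beta}u_i$, and to \cite{stz} in a completely general setting,'' and no argument follows. Your sketch is therefore not competing with a proof in the paper but rather reconstructing the argument of the cited references, and it does so along the standard lines (segregation $\Rightarrow$ defect measure $\Rightarrow$ vanishing of the interaction energy $\Rightarrow$ convergence of $E,H,N$ and of the Pohozaev-type identity).

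One point deserves correction. In step (i) you write that ``segregation together with $\mf{u}\in\C(\overline\Omega)$ forces $I_{i,\beta}\to 0$ locally uniformly on $\{u_i>0\}$.'' Segregation and continuity alone do not give this: on $\{u_i>0\}$ you know $u_{j,\beta}\to 0$ uniformly, but $\beta\to+\infty$, so the product $\beta u_{i,\beta}u_{j,\beta}^2$ is indeterminate. What is needed is a decay estimate. On a compact $K\subset\{u_i>0\}$ one has $u_{i,\beta}\ge c>0$ for large $\beta$, so the equation for $u_{j,\beta}$ yields $-\Delta u_{j,\beta}\le d\,u_{j,\beta}-\beta a_{ji}c^2 u_{j,\beta}$ on a neighbourhood of $K$; Lemma~\ref{lem: polynomial decay} (with $p=1$) then gives $\beta u_{j,\beta}\le C$ on $K$, hence $\beta u_{j,\beta}^2\le C u_{j,\beta}\to 0$ and $I_{i,\beta}\to 0$ uniformly on $K$. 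With this fix the rest of your outline is sound; the subsequence issues you flag in (iii) for the boundary terms on $\partial B_r$ are indeed handled by a Fubini argument and are harmless since the derivative identity concerns only the limit $\mf{u}$.
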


If $\mf{u}$ is as in the previous theorem, we write that \emph{$\mf{u}$ is a limiting profile of system \eqref{be syst}} (as $\beta \to +\infty$). A result which will be crucially employed in the rest of the section establishes the non occurrence of self-segregation for limiting profiles of strongly competing systems. This has been proved in Section 10 of \cite{dwz1}.

\begin{theorem}\label{prop: on multipl}
Let $\mf{v} \in \mathcal{G}(\Omega)$ be a limiting profile of system \eqref{be syst}, and let $x_0 \in \{\mf{v}=\mf{0}\}$. Then $x_0$ has multiplicity greater than or equal to $2$.
\end{theorem}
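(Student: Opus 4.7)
The plan is to argue by contradiction. Assume that $x_0 \in \{\mathbf{v} = \mathbf{0}\}$ has multiplicity exactly $1$. Up to relabelling, this means that $v_1 \not\equiv 0$ in every neighbourhood of $x_0$, while there exists $r_0>0$ with $v_j \equiv 0$ in $B_{r_0}(x_0)$ for every $j \neq 1$; in particular $v_1(x_0)=0$ and $x_0 \in \overline{\{v_1>0\}}$. I would reduce the whole proof to establishing the key claim that the nonnegative Radon measure $\mu_1$ from Definition~\ref{def: G} vanishes identically in $B_{r_0}(x_0)$.

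Granted this claim, $v_1$ is a continuous, nonnegative weak solution of $-\Delta v_1 = f_1(x,v_1)$ in $B_{r_0}(x_0)$, with $|f_1(x,v_1)| \le d\,v_1$. The strong maximum principle then forces either $v_1 \equiv 0$ in $B_{r_0}(x_0)$, so that $x_0$ has multiplicity $0$ in contradiction with the unique continuation property for elements of $\mathcal{G}(\Omega)$ (Remark~\ref{rem: on unique continuation}), or $v_1 > 0$ throughout $B_{r_0}(x_0)$, contradicting $v_1(x_0)=0$; in either case we reach the desired contradiction.

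To prove the key claim I would exploit the fact that $\mathbf{v}$ is a \emph{limiting profile} (and not merely an element of $\mathcal{G}(\Omega)$): there exist $\beta_n \to +\infty$ and solutions $\mathbf{u}_{\beta_n}$ of \eqref{be syst} such that $\mathbf{u}_{\beta_n} \to \mathbf{v}$ in $\mathcal{C}(\overline{B_{r_0}(x_0)}) \cap H^1(B_{r_0}(x_0))$, and as in the proof of Proposition~\ref{prop: limiting profile} one has
\[
\beta_n\, u_{1,\beta_n} \sum_{j \neq 1} a_{1j}\, u_{j,\beta_n}^2 \rightharpoonup \mu_1 \qquad \text{in the sense of distributions on $B_{r_0}(x_0)$.}
\]
Since $v_j \equiv 0$ on $B_{r_0}(x_0)$ for $j\neq 1$, the uniform convergence yields $\varepsilon_n := \max_{j \neq 1}\|u_{j,\beta_n}\|_{L^\infty(B_{r_0}(x_0))} \to 0$. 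Using the symmetry $a_{ij}=a_{ji}$ and testing the $u_{j,\beta_n}$ equations against appropriate combinations of cut-offs and of $u_{1,\beta_n}$, together with the uniform $L^\infty$ and $H^1_{\loc}$ bounds on the approximating sequence, I would derive an estimate of the type $\beta_n \int u_{1,\beta_n} u_{j,\beta_n}\phi \le C$ for every non-negative $\phi \in C^\infty_c(B_{r_0}(x_0))$. Combined with $\varepsilon_n \to 0$, this gives $\langle \mu_1,\phi\rangle \le C\varepsilon_n \to 0$, proving the claim.

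The main obstacle is precisely the intermediate bound $\beta_n \int u_{1,\beta_n} u_{j,\beta_n}\phi \le C$: a naive test of the $u_{j,\beta_n}$ equation against $\phi$ yields only the weaker estimate $\beta_n \int u_{j,\beta_n} u_{1,\beta_n}^2 \phi \le C$, and closing the gap between the two forms is delicate because $u_{1,\beta_n}$ is not bounded away from zero near $x_0$. A robust alternative that avoids this difficulty is to perform an Almgren blow-up of $\mathbf{v}$ at $x_0$ (relying on Theorem~\ref{prop: monot segregated}) to extract a $\sigma$-homogeneous segregated limit $\tilde{\mathbf{v}}$ with $\sigma = N(\mathbf{v},x_0,0^+) \ge 1$ still of multiplicity $1$ at the origin, and then to rule out such a one-component, nonnegative, $\sigma$-homogeneous and subharmonic global profile via a Liouville-type classification in the spirit of Proposition~\ref{prop: monot segregated senza f} and Corollary~\ref{corol 1 ACF}. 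This is the route carried out in Section~10 of \cite{dwz1}.
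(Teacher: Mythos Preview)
The paper does not give its own proof of this statement; it simply records that ``this has been proved in Section~10 of \cite{dwz1}''. The ``robust alternative'' you describe in your last paragraph --- an Almgren blow-up at $x_0$ producing a $\sigma$-homogeneous, one-component, nonnegative segregated profile which is then ruled out by a Liouville-type classification --- is precisely the strategy of \cite{dwz1}, so your proposal ultimately lands in the same place as the paper.

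Regarding your first route (showing $\mu_1\equiv 0$ on $B_{r_0}(x_0)$ by direct estimation of the interaction term): the obstacle you isolate is genuine and, as written, the argument does not close. Testing the equation for $u_{j,\beta_n}$ ($j\neq 1$) against $\phi$ yields $\beta_n\int u_{j,\beta_n}\,u_{1,\beta_n}^{2}\,\phi \le C\varepsilon_n\to 0$, whereas $\mu_1$ is the limit of $\beta_n\int u_{1,\beta_n}\,u_{j,\beta_n}^{2}\,\phi$, with the square sitting on the \emph{other} factor. Converting one into the other (or obtaining your intermediate bound $\beta_n\int u_{1,\beta_n}u_{j,\beta_n}\phi\le C$) would require some lower control on $u_{1,\beta_n}$ near $x_0$, which is exactly what is unavailable since $v_1(x_0)=0$; no straightforward symmetrisation or Cauchy--Schwarz manoeuvre bridges this gap. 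So you were right to flag the difficulty and to fall back on the blow-up argument of \cite{dwz1}.
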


\subsection{A perturbed Alt-Caffarelli-Friedman monotonicity formula}\label{sub: ACF}

We introduce in a general setting an Alt-Caffarelli-Friedman monotonicity formula which has been proved for the first time in a specific situation in \cite{Wa}, Theorem 4.3; accordingly to the current section, here we consider the case $N \ge 3$.

We consider two components, say $u_1$ and $u_2$, of a solution $\mf{u}$ of system \eqref{be syst}:
\[
\begin{cases}
-\Delta u_i= f_i(x,u_i) - \beta \sum_{j \neq i} a_{ij} u_i u_j^2  &\text{ in $\Omega$}\\
u_i > 0 &\text{ in $\Omega$}.
\end{cases}
\]
The ingredients of our result are the following:
\begin{equation}\label{def gamma e lambda}
\begin{split}
\bullet \quad & J_{1}(r) := \int_{B_r} \left(\left|\nabla u_{1}\right|^2 + \beta a_{12} u_{1}^2 u_{2}^2-u_{1} f_{1}(x,u_{1}) \right)|x|^{2-N}  \\
\bullet \quad &  J_{2}(r) := \int_{B_r} \left(\left|\nabla u_{2}\right|^2 + \beta a_{12} u_{1}^2 u_{2}^2-u_{2} f_{2}(x,u_{2}) \right)|x|^{2-N} \\
\bullet \quad &  \gamma(t) := \sqrt{\left(\frac{N-2}{2}\right)^2+t}-\frac{N-2}{2} \\
\bullet \quad &  \Lambda_{1}(r):= \frac{r^2\int_{\pa B_r} \left|\nabla_\theta u_{1}\right|^2 + \beta a_{12} u_{1}^2 u_{2}^2-u_{1} f_{1}(x,u_{1})  }{\int_{\pa B_r} u_{1}^2} \\
\bullet \quad &  \Lambda_{2}(r):= \frac{r^2\int_{\pa B_r} \left|\nabla_\theta u_{2}\right|^2 + \beta a_{12} u_{1}^2 u_{2}^2-u_{2} f_{2}(x,u_{2}) }{\int_{\pa B_r} u_{2}^2}.
\end{split}
\end{equation}

\begin{theorem}\label{thm: ACF}
Let $\mf{u}$ be a solution of \eqref{be syst} and let $R > 1$, $\lambda,\mu,\eps>0$ be such that
\begin{itemize}
\item[($h0$)] $\eps R^2 \leq \left(\frac{N-2}{2}\right)^2$;
\item[($h1$)] $J_i(r), \Lambda_i(r) >0$ for every $r \in (1,R)$, for $i=1,2$;
\item[($h2$)] it holds
\[
\frac{1}{\lambda} \le \frac{\int_{\pa B_r} u_1^2}{\int_{\pa B_r} u_2^2} \le \lambda \quad \text{and} \quad \frac{1}{r^{N-1}} \int_{\pa B_r} u_i^2 \ge \mu
\]
for every $r \in (1,R)$, $i=1,2$;
\item[($h3$)] $|f_i(x,u_i)| \le \eps u_i$ in $\Omega$ for $i=1,2$.
\end{itemize}
There exists a positive constant $C>0$, depending only on $\lambda,\mu$ and on the dimension $N$, such that
\[
    r \mapsto \frac{J_1(r) J_2(r)}{r^4} \exp\{-C (\beta r^2)^{-1/4} + C \eps r^2\} \qquad \text{is monotone non-decreasing for $r \in (1,R)$}.
\]
\end{theorem}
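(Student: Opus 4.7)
My plan is to follow the classical Alt--Caffarelli--Friedman strategy, treating the interaction $\beta a_{12} u_1^2 u_2^2$ and the reaction $u_i f_i$ in $J_i$ as bulk perturbations which must ultimately be converted into the two error factors appearing in the exponential correction. As a preliminary step I would differentiate $J_i$ directly from its definition to obtain
\[
J_i'(r)\ =\ r^{\,2-N}\!\int_{\partial B_r}\!\Bigl(|\nabla u_i|^2+\beta a_{12}u_1^2 u_2^2-u_if_i(x,u_i)\Bigr).
\]
Using the equation $-\Delta u_i=f_i-\beta a_{12}u_iu_{3-i}^2$ and integration by parts in $B_r$, one can rewrite $J_i(r)$ as a boundary integral involving $u_i\,\partial_\nu u_i$ plus a remainder produced by $f_i$. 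A standard spherical Hardy--Poincar\'e inequality on $\partial B_r$, combined with the variational characterisation of $\gamma$ as the characteristic exponent of the first Dirichlet eigenfunction of $-\Delta_{S^{N-1}}$, then yields the key estimate
\[
\frac{J_i'(r)}{J_i(r)}\ \geq\ \frac{2\gamma(\Lambda_i(r))}{r}\ -\ C\varepsilon\, r,
\]
where the $C\varepsilon r$ term collects all the contributions arising from $f_i$ and is kept under control through (h0) and (h3).

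The heart of the argument is then a \emph{perturbed Friedland--Hayman inequality}:
\[
\gamma(\Lambda_1(r))+\gamma(\Lambda_2(r))\ \geq\ 2\,-\,C(\beta r^2)^{-1/4}.
\]
When the traces of $u_1,u_2$ on $\partial B_r$ have disjoint support this reads $\gamma(\Lambda_1)+\gamma(\Lambda_2)\geq 2$, which is the classical Friedland--Hayman inequality (proved via Sperner symmetrisation and the spherical isoperimetric inequality). Here the supports genuinely overlap, but the overlap is controlled by the interaction term: from (h1)--(h2) one has the spherical bound $\int_{\partial B_r}\beta a_{12}u_1^2u_2^2\leq r^{-2}\Lambda_i(r)\int_{\partial B_r}u_i^2$, while the two-sided ratio bound together with the lower mass bound in (h2) prevent the degeneracy of either component. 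The quantitative version is obtained by introducing a threshold $\eta=\eta(r)>0$, decomposing $\partial B_r$ into the two regions where one component dominates the other by a factor $\eta$ and a residual overlap, and replacing $u_1,u_2$ by Lipschitz truncations with disjoint supports; applying the classical Friedland--Hayman inequality to the truncations, keeping careful track of the Lipschitz perturbation of the Rayleigh quotients and balancing it against the measure of the residual overlap supplied by $\int\beta u_1^2u_2^2$, the optimal choice of $\eta$ produces the exponent $1/4$. This perturbed Friedland--Hayman estimate is the main obstacle of the proof; everything else is routine once it is in place.

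Finally, combining the two ingredients above one finds
\[
\frac{d}{dr}\log\!\left(\frac{J_1(r)J_2(r)}{r^{4}}\right)\ =\ \frac{J_1'}{J_1}+\frac{J_2'}{J_2}-\frac{4}{r}\ \geq\ -\frac{C\,(\beta r^2)^{-1/4}}{r}\,-\,C\varepsilon\, r.
\]
The two error terms on the right-hand side are, up to a multiplicative constant, the $r$-derivatives of $-C(\beta r^2)^{-1/4}$ and $C\varepsilon r^2$ respectively, so after possibly enlarging $C$ they are exactly absorbed into the exponential factor $\exp\{-C(\beta r^2)^{-1/4}+C\varepsilon r^2\}$, and the claimed monotonicity on $(1,R)$ follows by a straightforward integration.
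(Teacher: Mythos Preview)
Your overall architecture is correct and coincides with the paper's: compute the logarithmic derivative of $J_1J_2/r^4$, bound $J_i'/J_i$ from below by $2\gamma(\Lambda_i(r))/r$, invoke a perturbed Friedland--Hayman inequality on $\partial B_r$ to get $\gamma(\Lambda_1)+\gamma(\Lambda_2)\ge 2-C(\beta r^2)^{-1/4}-C\varepsilon r^2$, and integrate. One minor inaccuracy: the bound $J_i'/J_i\ge 2\gamma(\Lambda_i)/r$ holds cleanly, with no additional $-C\varepsilon r$, because the $f_i$ term is already built into the definitions of $J_i$ and $\Lambda_i$; the $\varepsilon$ contribution enters only inside the Friedland--Hayman step, when one replaces $-u_if_i$ by $-\varepsilon u_i^2$ in the argument of $\gamma$.

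The substantive gap is in your sketch of the perturbed Friedland--Hayman inequality. You propose to truncate the traces of $u_1,u_2$ on $\partial B_r$ directly and apply the classical inequality to the truncations. But for arbitrary traces you have no a priori control on $\nabla_\theta u_i$ in the overlap region, so there is no reason the Rayleigh quotients of your ``Lipschitz truncations'' stay within $O(k^{-1/4})$ of $\Lambda_i$; the spherical bound you quote, $\int_{\partial B_r}\beta a_{12}u_1^2u_2^2\le r^{-2}\Lambda_i\int_{\partial B_r}u_i^2$, is a rearrangement of the definition of $\Lambda_i$ and does not provide an absolute smallness estimate on the overlap. The paper handles this by first passing to the \emph{minimisers} of the spherical functional $\gamma\bigl(\int_{\S^{N-1}}|\nabla_\theta u|^2+ku^2v^2-\varepsilon u^2\bigr)+\gamma(\dots)$ over pairs $(u,v)$ with prescribed $L^2$ masses (which can only decrease the sum), then using antipodal Steiner symmetrisation to reduce to a single angular variable, and finally exploiting the resulting ODE structure to obtain uniform Lipschitz bounds and the pointwise decay $u_nv_n\le Ck^{-1/2}$. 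Only with this decay in hand are $u_n,v_n$ replaced by the explicitly disjoint pair $(u_n-v_n)^+$, $(u_n-v_n)^-$, whose Rayleigh quotients differ from $x_n,y_n$ by the required $O(k^{-1/4})$. Without this reduction to minimisers---or some substitute producing the same pointwise product decay---your truncation scheme does not go through, and in particular the exponent $1/4$ is not justified.
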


\begin{remark}
For future convenience, we point out that the constant $C$ of the thesis is independent by the ends of the interval $(1,R)$.
\end{remark}

The proof rests upon the following lemma, which can be seen as a Poincar\'e lemma on the sphere $\S^{N-1}$, $N \geq 3$ for two competing densities. This result is actually a generalization of Lemma 4.2 in \cite{Wa}.

For any $\lambda>0$, let
\[
H_\lambda:= \left\{(u,v) \in \left(H^1(\S^{N-1})\right)^2: \int_{\S^{N-1}} u^2 =1 \quad \text{and} \quad \int_{\S^{N-1}} v^2 = \lambda \right\}.
\]

\begin{lemma}\label{lem: lemma 4.2 di wang}
Let us fix any $\bar \lambda > 1$. There exists $C = C(N, \bar \lambda)$ such that if
\[
    \frac{1}{\bar \lambda} < \lambda < \bar \lambda,\quad k > 0 \quad \text{and} \quad 0 \leq \eps \leq \left(\frac{N-2}{2}\right)^2
\]
then
\begin{multline}\label{minimiz problem 1}
\min_{(u,v) \in H_{\lambda}} \gamma\left( \int_{\S^{N-1}} |\nabla_\theta u|^2 + k u^2 v^2 -\eps u^2\right) + \gamma\left( \frac{\int_{\S^{N-1}} |\nabla_\theta v|^2 + k u^2 v^2  -\eps v^2}{\int_{\S^{N-1}} v^2}\right) \\
\ge 2- C\left(\eps+ k^{-1/4}\right).
\end{multline}
\end{lemma}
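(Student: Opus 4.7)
\emph{Proof plan.} The statement is a quantitative stability version of the classical Friedland--Hayman eigenvalue inequality on the sphere $\mathbb{S}^{N-1}$, perturbed by the competition term $k u^2 v^2$ and the lower-order term $-\varepsilon u^2$ (resp.\ $-\varepsilon v^2$). The strategy is to replace the pair $(u,v)$ by a pair $(\tilde u, \tilde v)$ with \emph{strictly disjoint} supports, to which the classical inequality applies, and then to quantify the error.

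\textbf{Step 1 (Reduction to a minimizer).} Denote by $\mathcal{F}(u,v)$ the left-hand side of \eqref{minimiz problem 1}. Since $\gamma$ is monotone increasing and continuous, and all quadratic forms inside are weakly lower semicontinuous on $H^1(\mathbb{S}^{N-1})$, the direct method produces a minimizer $(u_*,v_*) \in H_\lambda$ with $u_*, v_* \geq 0$, which moreover is smooth by standard elliptic regularity on the compact manifold $\mathbb{S}^{N-1}$.

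\textbf{Step 2 (A priori bounds).} Testing against a trial pair obtained by mollifying, on a layer of width $\sim k^{-1/4}$, the $L^2$-normalized characteristic functions of two antipodal spherical caps carrying $L^2$-masses $1$ and $\lambda$, one verifies
\[
    \mathcal{F}(u_*,v_*) \leq 2 + C(\varepsilon + k^{-1/4}).
\]
Since $\gamma$ is monotone and diverges at infinity, both of its arguments at the minimizer are bounded above by some constant $M = M(N,\bar\lambda)$; consequently
\[
    \int_{\mathbb{S}^{N-1}}|\nabla_\theta u_*|^2 + \int_{\mathbb{S}^{N-1}}|\nabla_\theta v_*|^2 + k\int_{\mathbb{S}^{N-1}} u_*^2 v_*^2 \leq C.
\]

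\textbf{Step 3 (Disjointizing truncation).} Set $\tilde u := (u_* - v_*)^+$ and $\tilde v := (v_* - u_*)^+$. These are nonnegative $H^1$ functions whose supports $\{u_* > v_*\}$ and $\{v_* > u_*\}$ are disjoint, and $\tilde u \in H^1_0(\{u_*>v_*\})$, $\tilde v \in H^1_0(\{v_*>u_*\})$. Elementary algebra yields
\[
    \int \tilde u^2 \geq 1 - C\!\int u_* v_*, \quad \int \tilde v^2 \geq \lambda - C\!\int u_* v_*,
\]
and, integrating on each subset,
\[
    \int |\nabla_\theta \tilde u|^2 + \int |\nabla_\theta \tilde v|^2 = \int |\nabla_\theta u_*|^2 + \int |\nabla_\theta v_*|^2 - 2\int \nabla_\theta u_* \cdot \nabla_\theta v_*.
\]

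\textbf{Step 4 (Control of the cross terms).} By H\"older on the compact sphere and Step 2, $\int u_* v_* \leq |\mathbb{S}^{N-1}|^{1/2}(\int u_*^2 v_*^2)^{1/2} \leq C k^{-1/2}$. For the gradient cross term one uses the Euler--Lagrange equations at $(u_*,v_*)$ together with a Caccioppoli-type estimate: the pointwise smallness of $v_*$ on the set $\{u_* > v_*\}$, enforced by the penalty $k u_*^2 v_*^2$, can be transferred to a quantitative smallness of $\int_{\{u_*>v_*\}}|\nabla_\theta v_*|^2$ by testing the equation for $v_*$ against a suitable cut-off, yielding
\[
    \int \nabla_\theta u_* \cdot \nabla_\theta v_* \leq C(\varepsilon + k^{-1/4}).
\]
The exponent $1/4$ arises from optimally balancing a truncation at level $\sim k^{-1/4}$ in the overlap region against the $L^2$-decay $\sim k^{-1/2}$ of $u_* v_*$.

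\textbf{Step 5 (Friedland--Hayman and conclusion).} Since $\tilde u, \tilde v$ have disjoint supports, the classical Friedland--Hayman inequality gives
\[
    \gamma\!\left(\frac{\int|\nabla_\theta \tilde u|^2}{\int \tilde u^2}\right) + \gamma\!\left(\frac{\int|\nabla_\theta \tilde v|^2}{\int \tilde v^2}\right) \geq 2.
\]
Combining this with the estimates of Steps 3--4, the normalizations $\int u_*^2 = 1$, $\int v_*^2 = \lambda > 1/\bar\lambda$, and the Lipschitz continuity of $\gamma$ on the bounded interval $[-((N-2)/2)^2, M]$ (guaranteed by Step 2), one absorbs all corrections into a $C(\varepsilon + k^{-1/4})$ error and obtains $\mathcal{F}(u_*, v_*) \geq 2 - C(\varepsilon + k^{-1/4})$, as desired.

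The main obstacle is Step 4: the precise quantitative control of the gradient cross term, which demands careful interplay between the Euler--Lagrange system, the pointwise smallness enforced by the strong penalty, and a Caccioppoli-type inequality on $\mathbb{S}^{N-1}$. The rate $k^{-1/4}$ (rather than the formally expected $k^{-1/2}$) in the final inequality reflects the loss incurred in passing from the $L^2$ bound on $u_* v_*$ to the pointwise smallness of each factor on the subset where the other dominates.
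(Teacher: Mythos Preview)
Your overall architecture---reduce to a minimizer, disjointize via $\tilde u=(u_*-v_*)^+$, $\tilde v=(v_*-u_*)^+$, apply the Friedland--Hayman inequality, and absorb the errors---is exactly the paper's. The genuine gap is Step~4.

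The paper (following Wang \cite{Wa}) does \emph{not} attempt a direct higher-dimensional Caccioppoli argument. Instead it first applies an antipodal Steiner symmetrization, which reduces the minimizer to a pair $(u_*,v_*)$ depending only on the polar angle $\alpha\in[0,\pi]$, with $u_*$ decreasing and $v_*$ increasing. This one-dimensional structure is then exploited in three ways: (i) one-dimensional ODE techniques (a variant of \cite[Lemma~2.4]{blwz_phase}) give a uniform Lipschitz bound on $(u_*,v_*)$; (ii) one obtains the \emph{pointwise} decay $u_*v_*\le Ck^{-1/2}$ on $\mathbb{S}^{N-1}$; (iii) by monotonicity, on $\{u_*>v_*\}=\{\alpha<\bar\alpha\}$ one has $v_*\le v_*(\bar\alpha)=u_*(\bar\alpha)$, whence $v_*^2\le u_*v_*\le Ck^{-1/2}$ and so $v_*\le Ck^{-1/4}$ pointwise there. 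It is precisely the combination of (i) and (iii) that controls the gradient cross term and yields
\[
\int_{\mathbb{S}^{N-1}}|\nabla_\theta \tilde u|^2\le \int_{\{u_*>v_*\}}\bigl(|\nabla_\theta u_*|^2+\lambda k\,u_*^2v_*^2\bigr)+Ck^{-1/4}.
\]

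Your Step~2 gives only the integral bound $\int u_*^2v_*^2\le C/k$, hence $\int u_*v_*\le Ck^{-1/2}$; this does \emph{not} imply pointwise smallness of $v_*$ on $\{u_*>v_*\}$ (nothing prevents $u_*$ large and $v_*$ moderate on a set of small measure). A Caccioppoli inequality trades pointwise smallness of the function for $L^2$-smallness of its gradient on a smaller set, but you lack the pointwise input; and testing the Euler--Lagrange equation for $v_*$ against a cut-off supported in $\{u_*>v_*\}$ produces boundary terms on $\partial\{u_*>v_*\}$ that you have no means to control without the one-dimensional monotone structure. In short, the symmetrization is not cosmetic: it is the mechanism that converts the integral penalty into the pointwise decay driving the rate $k^{-1/4}$. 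As written, Step~4 is a sketch of what one would like to be true, not an argument.
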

\begin{proof}
It is straightforward to check that the estimate of the lemma is equivalent to related one for the functional
\[
    J(u,v):= \gamma\left( \int_{\S^{N-1}} \left(|\nabla_\theta u|^2 + k \lambda u^2 v^2\right) -\eps\right) + \gamma\left(\int_{\S^{N-1}} \left(|\nabla_\theta v|^2 + k u^2 v^2\right) - \eps \right)
\]
considered on the $H^1$-weakly closed set $H_1$. First of all, we point out that such a minimization problem is well posed: indeed the domain of the function $t \mapsto \gamma(t)$ is given by the half line $t\geq -\left(\frac{N-2}{2}\right)^2$, and the restriction on $\eps$ is sufficient to ensure the meaningfulness of \eqref{minimiz problem 1}. Moreover, for $k \geq 0$ the functional $J$ is coercive and lower-semicontinuous in the weak topology of $H^1(\S^{N-1})$: thus for any triplet $(\lambda, \eps, k)$ that satisfies the assumptions, the minimization problem admits a solution. To conclude the lemma we only need to check the asymptotic expansion of the right hand side of \eqref{minimiz problem 1} for $\eps$ small and $k$ large.

Let us a consider a sequence of triplets $(\lambda_n, \eps_n, k_n)$ satisfying the assumptions and such that $\eps_n \to 0$ and $k_n \to +\infty$, and for any such triplet let us consider a minimizer $(u_n, v_n)$ of the functional $J$. As $J(u,v)=J(|u|,|v|)$, it is not restrictive to assume that $u_n, v_n \ge 0$ in $\mathbb{S}^{N-1}$. Moreover, thanks to Lemma 4.1 in \cite{Wa} it is possible to check that the functional $J$ is decreasing with respect to antipodal Steiner symmetrization rearrangements of the functions $(u_n, v_n)$, and thus we can also assume that the minimizer depends only on the angular coordinate on the sphere $\alpha \in [0,\pi]$, and that $u_n$ is decreasing while $v_n$ is increasing in $\alpha$. Let
\[
    x_n = \int_{\S^{N-1}} \left(|\nabla_\theta u_n|^2 + k_n \lambda_n u_n^2 v_n^2\right) -\eps_n \quad \text{and} \quad y_n = \int_{\S^{N-1}} \left(|\nabla_\theta v_n|^2 + k_n  u_n^2 v_n^2\right) -\eps_n.
\]
By the Lagrange multipliers rule, there exist $\mu_{1,n},\mu_{2,n} \in \R$ such that
\begin{equation}\label{eq per minimi}
    \begin{cases}
        -\Delta_\theta u_n = -k_n \left(\lambda_n +\frac{\gamma'(y_n)}{\gamma'(x_n)}\right) u_n v_n^2 +  \frac{\mu_{1,n}}{\gamma'(x_n)} u_n  \\
        -\Delta_\theta v_n = -k_n \left(1 +\frac{\gamma'(x_n)}{\gamma'(y_n)} \lambda_n\right) u_n v_n^2 +  \frac{\mu_{2,n}}{\gamma'(y_n)} v_n
    \end{cases} \qquad \text{in $\S^{N-1}$}
\end{equation}
where $\Delta_\theta$ is the Laplace-Beltrami operator on the sphere $\S^{N-1}$. Since $(u_n, v_n)$ depends only on one angular coordinate, \eqref{eq per minimi} simplifies as
\[
    \begin{cases}
        -\frac{\de^2 u_n}{\de \alpha^2}  -(N-2) \cot \alpha \frac{\de u_n}{\de \alpha}  = - k_n \left( \lambda_n + \frac{\gamma'(y_n)}{\gamma'(x_n)} \right)u_n v_n^2 + \frac{\mu_{1,n}}{\gamma'(x_n)} u_n \\
        -\frac{\de^2 v_n}{\de \alpha^2}  -(N-2) \cot \alpha \frac{\de v_n}{\de \alpha}  = - k_n \left( 1 + \frac{\gamma'(x_n)}{\gamma'(y_n)} \lambda_n\right)v_n u_n^2 + \frac{\mu_{2,n}}{\gamma'(y_n)} v_n
    \end{cases} \qquad \text{in $[0,\pi]$}.
\]
Note that, with respect to Lemma 4.2 in \cite{Wa}, the presence of $\eps_n$ is irrelevant for the characterization of $(u_n,v_n)$. As a consequence, it is possible to repeat step by step the proof of the quoted result, and to conclude that:
\begin{itemize}
\item the sequence $(J_n(u_n,v_n))$ is bounded. Thus $\{(u_n,v_n)\}$ is bounded in $H^1(\S^{N-1})$, and $(\gamma'(x_n))$, $(\gamma'(y_n))$ are bounded from above and from below by positive constants. Moreover, there exists $C>0$ independent of $n$ such that
\[
\int_{\S^{N-1}} k_n u_n^2 v_n^2 \le C.
\]
\item the sequences of the Lagrange multipliers $(\mu_{1,n})$ and $(\mu_{2,n})$ are bounded, and by a Brezis-Kato argument together with equation \eqref{eq per minimi} and the $H^1$-boundedness, this implies that $\{(u_n,v_n)\}$ is bounded in $L^\infty(\S^{N-1})$.
\item There exists $\bar \alpha_n \in (0,\pi)$ such that $\{u_n > v_n\} = \{\theta \in \S^{N-1}: \alpha < \bar \alpha_n\}$, $\{u_n = v_n\} = \{\theta \in \S^{N-1}: \alpha = \bar \alpha_n\}$ and $\{u_n < v_n\} = \{\theta \in \S^{N-1}: \alpha > \bar \alpha_n\}$; up to multiplicative constants (depending on $n$) $u_n \to (\cos\alpha)^+$, $v_n \to (\cos\alpha)^-$ in $H^1(\S^{N-1})\cap \C(\S^{N-1})$, and in particular $\bar \alpha_n \to \pi/2$.
    \item There exists a constant $C>0$, independent of $\eps_n$ and $k_n$, such that the Lipschitz norm of $(u_n,v_n)$ is smaller than $C$ (implied by a small modification of Lemma 2.4 in \cite{blwz_phase}).
    \item The following pointwise estimate holds uniformly in $n$:
    \begin{equation}\label{optimal decay}
    u_n v_n \le C k_n^{-1/2} \qquad \text{in $\S^{N-1}$}.
    \end{equation}
\end{itemize}
The decay estimate \eqref{optimal decay} implies that for every $\theta \in \S^{N-1}$, either $u_n(\theta) \le C k_n^{-1/4}$ or $v_n(\theta) \le C k_n^{-1/4}$. Let us introduce the functions $f_n=(u_n-v_n)^+$ and $g_n=(u_n-v_n)^-$. As in \cite{Wa}, by \eqref{optimal decay}
\[
\int_{\S^{N-1}} |f_n-u_n|^2  \le C k_n^{-1/2},
\]
and
\[
\begin{split}
\int_{\S^{N-1}} |\nabla_\theta f_n|^2 & \le  \int_{\{u_n>v_n\}} \left(|\nabla_\theta u_n|^2 + \lambda_n k_n u_n^2 v_n^2\right) + C k_n^{-1/4} \\
& \le \int_{\{u_n>v_n\}} \left(|\nabla_\theta u_n|^2 + \lambda_n k_n u_n^2 v_n^2\right) \pm \eps_n + C k_n^{-1/4}.
\end{split}
\]
Therefore
\begin{align*}
\frac{\int_{\S^{N-1}} |\nabla_\theta f_n|^2}{\int_{\S^{N-1}} f_n^2} & \le \frac{ \int_{\S^{N-1}} \left(|\nabla_\theta u_n|^2 + \lambda_n k_n u_n^2 v_n^2\right) \pm \eps_n + C k_n^{-1/4}}{\int_{\S^{N-1}} u_n^2 -C k_n^{-1/2}} \\
& = \frac{x_n +\eps_n + C k_n^{-1/4}}{1-C k_n^{-1/2}} \le x_n +\eps_n+ C k_n^{-1/4}.
\end{align*}
A similar estimate holds for $g_n$. By the monotonicity and the concavity of $\gamma$, we finally infer
\begin{multline*}
2 \le \gamma\left(\frac{\int_{\S^{N-1}} |\nabla_\theta f_n|^2}{\int_{\S^{N-1}} f_n^2}\right) + \gamma\left(\frac{\int_{\S^{N-1}} |\nabla_\theta g_n|^2}{\int_{\S^{N-1}} g_n^2}\right) \\ \le \gamma\left(x_n + \eps_n+ C k_n^{-1/4}\right) + \gamma\left(y_n+\eps_n + C k_n^{-1/4}\right) \le \gamma(x_n) + \gamma(y_n) +C \eps_n+ C k_n^{-1/4},
\end{multline*}
where the first inequality is a consequence of the fact that $f_n$ and $g_n$ have disjoint support (this is nothing but the well-known optimal partition problem of the sphere which serves as keystone in the proof of the original Alt-Caffarelli-Friedman monotonicity formula, see for instance \cite{acf, csbook}).
\end{proof}

\begin{remark}
In the proof of Lemma 4.2 in \cite{Wa}, the author makes use of the exponential decay estimate for solution of \eqref{be syst}. The reader can easily check that such a decay can be replaced by the polynomial one proved in Lemma \ref{lem: polynomial decay}. This allows to generalize the previous lemma, showing that for any $p \ge 1$, under the same assumptions on $\lambda$, $\eps$ and $k$, there exists $C>0$ such that
\begin{multline*}
\min_{(u,v) \in H_{\lambda}} \gamma\left( \int_{\S^{N-1}} |\nabla_\theta u|^2 + k u^{p+1}v^{p+1} -\eps u^2\right) + \gamma\left( \frac{\int_{\S^{N-1}} |\nabla_\theta v|^2 + k u^{p+1} v^{p+1}  -\eps v^2}{\int_{\S^{N-1}} v^2}\right) \\
\ge 2- C\left(\eps+ k^{-1/(2p+2)}\right).
\end{multline*}
\end{remark}

Now a technical result.

\begin{lemma}\label{lem: positivita}
Let $i=1,2$, and let $r>0$ be such that $J_{i}(r) >0$ and $\Lambda_{i}(r)>0$. Then
\[
J_{i}(r) \le \frac{r}{2\gamma(\Lambda_{i}(r))} \int_{\pa B_r} \left(\left|\nabla u_{i}\right|^2 + \beta a_{12} u_{1}^2 u_{2}^2-u_{i} f_{i}(x,u_{i}) \right)|x|^{2-N}.
\]
\end{lemma}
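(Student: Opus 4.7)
The plan is to rewrite the integrand of $J_i(r)$ as (an upper bound by) a divergence, integrate by parts against the weight $|x|^{2-N}$, and close via Cauchy--Schwarz combined with the algebraic identity defining $\gamma$. I do not anticipate a serious obstacle: the only delicate point is justifying the integration by parts near the origin, where the weight $|x|^{2-N}$ is singular, which is routine for $N\ge 3$.

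First I use the equation $-\Delta u_i = f_i(x,u_i) - \beta u_i \sum_{\ell \neq i} a_{i\ell} u_\ell^2$ to write
\[
\div(u_i \nabla u_i) \;=\; |\nabla u_i|^2 + u_i \Delta u_i \;\ge\; |\nabla u_i|^2 + \beta a_{12} u_1^2 u_2^2 - u_i f_i(x,u_i),
\]
since the discarded contributions $\beta a_{i\ell} u_i^2 u_\ell^2$ with $\ell\notin\{1,2\}$ are non-negative (and one has equality in the two-component case). Integrating against $|x|^{2-N}$ on $B_r$ and applying the divergence theorem (with an exhaustion $B_r\setminus B_\eps$: the boundary term on $\pa B_\eps$ is $O(\eps)$ and drops out), together with $\nabla(|x|^{2-N}) = (2-N)|x|^{-N}x$ and the polar-coordinates identity $\int_{B_r} u_i \pa_\rho u_i\, |x|^{1-N}\,dx = \tfrac12 \int_{S^{N-1}}(u_i^2(r\omega)-u_i^2(0))\,d\sigma$, I obtain, after discarding the non-positive $-\tfrac{(N-2)|S^{N-1}|}{2} u_i(0)^2$,
\[
J_i(r) \;\le\; r^{2-N}\!\int_{\pa B_r} u_i\, \pa_\nu u_i \;+\; \frac{N-2}{2\,r^{N-1}} H_i(r), \qquad H_i(r) := \int_{\pa B_r} u_i^2.
\]

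Next I apply Cauchy--Schwarz, $\int_{\pa B_r} u_i\, \pa_\nu u_i \le \sqrt{I_i H_i}$ with $I_i := \int_{\pa B_r}(\pa_\nu u_i)^2$. Writing the right-hand side of the lemma as $r^{3-N} E_i/(2\gamma(\Lambda_i))$ with $E_i := I_i + A_i$ and using $A_i = \Lambda_i H_i / r^2$ by definition of $\Lambda_i$, the claimed estimate is equivalent (after multiplying through by $2\gamma(\Lambda_i)\,r^{N-1}$) to
\[
2\gamma(\Lambda_i)\, r \sqrt{I_i H_i} + (N-2)\gamma(\Lambda_i)\, H_i \;\le\; r^2 I_i + \Lambda_i H_i .
\]
The defining identity $\gamma(t)(\gamma(t)+N-2) = t$ allows me to substitute $\Lambda_i H_i = \gamma(\Lambda_i)^2 H_i + (N-2)\gamma(\Lambda_i) H_i$; after cancellation of the common term $(N-2)\gamma(\Lambda_i) H_i$ the claim collapses to the trivial perfect square
\[
\bigl(r\sqrt{I_i} - \gamma(\Lambda_i)\sqrt{H_i}\bigr)^2 \;\ge\; 0 .
\]
Only the hypothesis $\Lambda_i(r)>0$ is actually used, and solely to guarantee that $\gamma(\Lambda_i(r))>0$ so that the division in the statement is legitimate.
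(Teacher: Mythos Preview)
Your proof is correct and follows essentially the same route as the paper: test the equation against $u_i|x|^{2-N}$ (equivalently, write the integrand as a divergence), integrate by parts to reach the bound $J_i(r)\le r^{2-N}\int_{\pa B_r} u_i\pa_\nu u_i + \tfrac{N-2}{2r^{N-1}}\int_{\pa B_r} u_i^2$, and then close by Cauchy--Schwarz/Young combined with the identity $\gamma(\Lambda_i)\bigl(\gamma(\Lambda_i)+N-2\bigr)=\Lambda_i$. Your perfect-square reduction is exactly the Young inequality step the paper performs, and your constant $\tfrac{N-2}{2}$ is in fact the correct one (the paper's displayed $\tfrac{N-2}{r^{N-1}}$ is a harmless typo for $\tfrac{N-2}{2r^{N-1}}$, as the subsequent line there requires).
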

\begin{proof}
We consider $i=1$. By testing the equation for $u_{1}$ against $u_{1} |x|^{2-N}$ in $B_r$, we obtain
\begin{align*}
J_{1}(r) & = -\frac12 \int_{B_r} \nabla(u_{1}^2)\cdot \nabla (|x|^{2-N}) + \frac{1}{r^{N-2}}\int_{\pa B_r} u_{1} \pa_{\nu} u_{1} \\
&= \frac{1}{2}\int_{B_r} u_{1}^2\Delta(|x|^{2-N}) + \frac{1}{r^{N-2}}\int_{\pa B_r} u_{1} \pa_{\nu} u_{1} +\frac{N-2}{r^{N-1}} \int_{\pa B_r} u_{1}^2 \\
& \le \frac{1}{r^{N-2}}\int_{\pa B_r} u_{1} \pa_{\nu} u_{1} +\frac{N-2}{r^{N-1}} \int_{\pa B_r} u_{1}^2,
\end{align*}
where the last inequality follows from the fact that $\Delta(|x|^{2-N}) =-C \delta$ for some dimensional constant $C>0$, where $\delta$ is the Dirac delta centred in $0$. Now, by Cauchy-Schwarz and Young inequalities, we have
\begin{align*}
\int_{\pa B_r} u_{1} \pa_{\nu} u_{1} &\le \left(\int_{\pa B_r}  u_{1}^2\right)^{1/2} \left(\int_{\pa B_r}  (\pa_{\nu} u_{1})^2\right)^{1/2}  \\
& \le \frac{\gamma(\Lambda_{1}(r))}{2r} \int_{\pa B_r} u_{1}^2 + \frac{r}{2\gamma(\Lambda_{1}(r))} \int_{\pa B_r} (\pa_\nu u_{1})^2,
\end{align*}
where we used the fact that $\Lambda_{1}(r)>0$. Plugging this estimate in the previous chain of inequalities and using the definition of $\gamma$, we deduce that
\begin{align*}
J_{1}(r) & \le \frac{1}{2r^{N-1}\gamma(\Lambda_{1}(r)) } \left[ \left( \gamma(\Lambda_{1}(r))^2+(N-2) \gamma(\Lambda_{1}(r)) \right) \int_{\pa B_r} u_{1}^2 + r^2 \int_{\pa B_r} (\pa_\nu u_{1})^2\right] \\
& \le \frac{r^2}{2r^{N-1}\gamma(\Lambda_{1}(r)) } \left[ \int_{\pa B_r} \left(|\nabla_\theta u_{1}|^2+ \beta a_{12} u_{1}^2 u_{2}^2 - u_{1} f_{1}(x,u_{1})\right)+ \int_{\pa B_r} (\pa_\nu u_{1})^2\right],
\end{align*}
which is the desired result.
\end{proof}

\begin{proof}[Proof of Theorem \ref{thm: ACF}]
Since for $r \in (1,R)$ both $J_i(r)$ and $\Lambda_i(r)$ are strictly positive, we can compute the logarithmic derivative of $J_1(r) J_2(r)/r^4$ and apply Lemma \ref{lem: positivita}, deducing that
\begin{align*}
\frac{\de}{\de r} \log\left( \frac{J_1(r) J_2(r)}{r^4}\right) &= -\frac{4}{r}+ \frac{\int_{\pa B_r} \left(\left|\nabla u_{1}\right|^2 + \beta a_{12} u_{1}^2 u_{2}^2-u_{1} f_{1}(x,u_{1}) \right)|x|^{2-N}}{\int_{B_r} \left(\left|\nabla u_{1}\right|^2 + \beta a_{12} u_{1}^2 u_{2}^2-u_{1} f_{1}(x,u_{1}) \right)|x|^{2-N}} \\
& \hphantom{ \ge -\frac{4}{r}} + \frac{\int_{\pa B_r} \left(\left|\nabla u_{2}\right|^2 + \beta a_{12} u_{1}^2 u_{2}^2-u_{2} f_{2}(x,u_{2}) \right)|x|^{2-N}}{\int_{B_r} \left(\left|\nabla u_{2}\right|^2 + \beta a_{12} u_{1}^2 u_{2}^2-u_{2} f_{2}(x,u_{2}) \right)|x|^{2-N}} \\
& \ge \frac{2}{r}\left(\gamma(\Lambda_{1}(r))+\gamma(\Lambda_{2}(r))-2\right) \\
& \ge -\frac{4}{r} + \frac{2}{r} \gamma\left(\frac{r^2 \int_{\pa B_r} |\nabla_\theta u_{1}|^2 + \beta a_{12} u_{1}^2 u_{2}^2-  \eps u_{1}^2}{\int_{\pa B_r} u_{1}^2 }\right) \\
& \hphantom{\ge -\frac{4}{r}} + \frac{2}{r} \gamma\left(\frac{r^2 \int_{\pa B_r} |\nabla_\theta u_{2}|^2 + \beta a_{12} u_{1}^2 u_{2}^2- \eps u_{2}^2}{\int_{\pa B_r} u_{2}^2 }\right),
\end{align*}
where we used assumption ($h3$) and the monotonicity of $\gamma$. The idea is now to apply Lemma \ref{lem: lemma 4.2 di wang} on the right hand side, and in order to do this we introduce
\[
u_{i,r}(x):=\frac{u_{i}(rx)}{\left( \frac{1}{r^{N-1}}\int_{\pa B_r} u_{1}^2 \right)^{1/2}} \qquad i=1,2;
\]
we emphasize that both $u_1$ and $u_2$ are normalized with respect to the average of $u_1$. By direct computations
\begin{align*}
\frac{r^2 \int_{\pa B_r} |\nabla_\theta u_{1}|^2 + \beta a_{12} u_{1}^2 u_{2}^2- \eps u_{1}^2}{\int_{\pa B_r} u_{1}^2 } &= \int_{\pa B_1} |\nabla_\theta u_{1,r}|^2 + r^2 \left(\frac{1}{r^{N-1}}\int_{\pa B_r} u_{1}^2 \right) \beta a_{12} u_{1,r}^2 u_{2,r}^2 - \eps r^2 u_{1,r}^2 \\
\frac{r^2 \int_{\pa B_r} |\nabla_\theta u_{2}|^2 + \beta a_{12} u_{1}^2 u_{2}^2- \eps u_{2}^2}{\int_{\pa B_r} u_{2}^2 } &= \frac{\int_{\pa B_1} |\nabla_\theta u_{2,r}|^2 + r^2 \left(\frac{1}{r^{N-1}}\int_{\pa B_r} u_{1}^2 \right) \beta a_{12} u_{1,r}^2 u_{2,r}^2 - \eps r^2 u_{2,r}^2 }{\int_{\pa B_1}  u_{2,r}^2  }.
\end{align*}
Thanks to assumptions ($h0$) and ($h2$)
\begin{align*}
\eps r^2 \le \eps R^2 \le \left(\frac{N-2}{2}\right)^2\\
\int_{\pa B_1} u_{1,r}^2 =1 \quad \text{and} \quad \frac{1}{\lambda} \le \int_{\pa B_1} u_{2,r}^2 \le \lambda\\
\frac{1}{r^{N-1}}\int_{\pa B_r} u_{1}^2  \ge \mu
\end{align*}
for every $r \in (1,R)$ and $i=1,2$. Therefore we are in position to apply Lemma \ref{lem: lemma 4.2 di wang}, obtaining
\begin{align*}
\frac{\de}{\de r} \log\left( \frac{J_1(r) J_2(r)}{r^4}\right) &\ge -\frac{4}{r} + \frac{2}{r} \gamma\left(\int_{\pa B_1} |\nabla_\theta u_{1,r}|^2 + r^2 \mu \beta a_{12} u_{1,r}^2 u_{2,r}^2 - \eps r^2 u_{1,r}^2\right) \\
& \qquad \quad + \frac{2}{r}\gamma\left(\frac{\int_{\pa B_1} |\nabla_\theta u_{2,r}|^2 + r^2 \mu \beta a_{12} u_{1,r}^2 u_{2,r}^2 - \eps r^2 u_{2,r}^2 }{\int_{\pa B_1}  u_{2,r}^2  }\right) \\
& \ge -C \beta^{-1/4} r^{-3/2} - C \eps r.
\end{align*}
By integrating, the thesis follows.
\end{proof}

\section{Interior Lipschitz bound in the variational setting for $N \ge 3$}\label{sec: variational}

In this section we complete the proof of Theorem \ref{thm: main general be}, obtaining a contradiction between the conclusions of the blow-up analysis, Proposition \ref{prop: riassunto blow-up}, and the assumption $L_n \to +\infty$. We recall that we are considering a sequence $\{\mf{u}_{\beta_n}\}$ of solutions to \eqref{be syst} satisfying the assumptions of Theorem \ref{thm: main general be}, with $\beta_n \to +\infty$ :
\[
\begin{cases}
-\Delta u_{i,\beta_n}= f_{i,\beta_n}(x,u_{i,\beta_n}) - \beta_n \sum_{j \neq i} a_{ij} u_{i,\beta_n} u_{j,\beta_n}^2  &\text{ in $\Omega$}\\
u_{i,\beta_n} > 0 &\text{ in $\Omega$,}
\end{cases}
\]
there exist $m,d>0$ such that
\[
\|\mf{u}_{\beta_n}\|_{L^\infty(\Omega)} \le m \qquad \text{and} \qquad \max_i \sup_{0<s \le m} \left|\frac{f_{i,\beta_n}(x,s)}{s}\right| \le d,
\]
and there exist functions $f_i \in \mathcal{C}^1(\Omega \times [0,m])$ such that
\[
f_{i,\beta_n} \to f_{i} \qquad \text{in $\mathcal{C}_{\loc}(\Omega \times [0,m])$ as $n \to \infty$}.
\]
Moreover, we are assuming that
\[
L_n:= \sup_{i=1,\dots,k} \sup_{x \in B_2} |\nabla (\eta u_{i,\beta_n})| \to +\infty.
\]
By \eqref{be assumpt} the quantities $\tilde r$ and $\tilde C$ in Proposition \ref{prp: monotonia almgren} can be chosen independently of $n$, and the following holds.

\begin{proposition}\label{prop: monotonia successione}
There exists $\tilde r,\tilde C>0$, depending only on $m$ and on $d$, such that for every $n$ the functions
\[
r \mapsto ( N(\mf{u}_{\beta_n},x_n,r) + 1 )e^{\tilde C r} \quad \text{and} \quad r \mapsto \left(\frac{1}{r^{N-1}}\int_{\partial B_r(x_n)} u_{i,\beta_n}^2\right) e^{\tilde Cr}
\]
are non-negative and monotone non-decreasing for $r \in (0,\tilde r]$.
\end{proposition}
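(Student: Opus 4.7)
The statement is essentially a direct corollary of Proposition \ref{prp: monotonia almgren}, applied to each element of the sequence $\{\mf{u}_{\beta_n}\}$, with the observation that all relevant constants can be taken uniform in $n$. The plan is simply to check this uniformity carefully.

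First I would observe that each $\mf{u}_{\beta_n}$ is a solution of system \eqref{be syst} satisfying the standing hypotheses of Section \ref{sec: monotonicity}. Indeed, by the assumptions of Theorem \ref{thm: main general be} we have $\|\mf{u}_{\beta_n}\|_{L^\infty(\Omega)} \le m$ and $\max_i \sup_{0<s\le m}|f_{i,\beta_n}(x,s)/s| \le d$ for constants $m, d>0$ that do \emph{not} depend on $n$. Moreover, since the sequence of base points satisfies $x_n \in \overline{B_2}$ and $\Omega \supset B_3$, all points $x_n$ lie in the fixed compact set $K := \overline{B_2} \subset\subset \Omega$, on which Proposition \ref{prp: monotonia almgren} is applicable.

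Second, I would inspect the proof of Lemma \ref{thm: almgren} and of Proposition \ref{prp: monotonia almgren} and note that the constants $\tilde r$ and $\tilde C$ produced there depend only on $m$, $d$ and the dimension $N$. Specifically, $\tilde r$ arises from the requirement $d\tilde r^2 < N-1$ needed to apply the Poincaré inequality in Lemma \ref{lem: poincare}, and from the analogous smallness constraint in the estimate of the derivative of $r \mapsto r^{1-N}\int_{\partial B_r(x_0)} u_i^2$; while $\tilde C$ comes from the pointwise estimate of the remainder $R_1(\mf{u},x_0,r)$ in terms of $E(\mf{u},x_0,r) + H(\mf{u},x_0,r)$, which is controlled purely by $d$ and $N$. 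Neither the competition parameter $\beta_n$ nor the particular solution $\mf{u}_{\beta_n}$ enters these constants, so both $\tilde r$ and $\tilde C$ can be chosen once and for all, uniformly in $n$.

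Third, given these uniform constants, I would apply Proposition \ref{prp: monotonia almgren} separately to each $\mf{u}_{\beta_n}$ at the base point $x_n \in K$. This yields, for every $n$ and every $r \in (0,\tilde r]$, both the non-negativity and the monotonicity of the two functions displayed in the statement, with common constants $\tilde r$ and $\tilde C$. No genuine obstacle is involved, since the whole point of stating Proposition \ref{prp: monotonia almgren} with constants depending only on $m$ and $d$ was precisely to allow this passage from a single solution to a uniformly controlled sequence.
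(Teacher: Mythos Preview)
Your proposal is correct and follows exactly the paper's approach: the paper does not even give a separate proof for this proposition, merely remarking that by \eqref{be assumpt} the constants $\tilde r$ and $\tilde C$ in Proposition \ref{prp: monotonia almgren} can be chosen independently of $n$, which is precisely the uniformity check you carry out in detail.
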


We introduce the quantity
\begin{equation}\label{def R n}
R_{\beta_n} := \sup \left\{ r \in (0,\tilde r) :  ( N(\mf{u}_{\beta_n},x_n,r) + 1 )e^{\tilde C r} < 2-r \right\}.
\end{equation}
The role of $R_{\beta_n}$ will be clarified in the following. Before we establish some properties of the sequence $(R_{\beta_n})$. Firstly, since for any fixed $n$ all the components $u_{i,\beta_n}$ are positive, $N(\mf{u}_{\beta_n},x_n,0^+)=0$ for every $n$, and hence $R_{\beta_n} > 0$.

\begin{lemma}\label{lem: R_beta to 0}
$R_{\beta_n} \to 0$ as $n \to \infty$.
\end{lemma}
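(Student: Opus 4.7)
I would argue by contradiction: if $R_{\beta_n}$ stayed bounded below along a subsequence, then passing to the limit would produce a segregated profile $\mf{u} \in \mathcal{G}(B_2)$ whose Almgren frequency at the free-boundary point $\bar x := \lim x_n$ is strictly less than $1$, contradicting the lower bound $N(\mf{u},\bar x, 0^+) \geq 1$ supplied by Theorem \ref{prop: monot segregated} at free-boundary points of elements of $\mathcal{G}$.

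More precisely, I would suppose, up to a subsequence, that $R_{\beta_n} \geq c_0$ for some $c_0 \in (0,\tilde r)$. Combining the monotonicity in Proposition \ref{prop: monotonia successione} with the definition \eqref{def R n}, for every $r \in (0, c_0) \subset (0, R_{\beta_n})$ one has
\[
(N(\mf{u}_{\beta_n},x_n,r)+1)\,e^{\tilde C r} \leq 2 - R_{\beta_n} \leq 2-c_0,
\]
hence $N(\mf{u}_{\beta_n},x_n,r) \leq 1 - c_0$ uniformly in $n$. The preliminary convergence results recalled at the start of Section \ref{sec: asymptotic} then allow me, along a further subsequence, to take $\mf{u}_{\beta_n} \to \mf{u}$ in $H^1(B_2) \cap \C^{0,\alpha}(\overline{B_2})$ with $\mf{u} \in \mathcal{G}(B_2)$ (Proposition \ref{prop: limiting profile}) and $x_n \to \bar x \in \overline{B_2}$; by Remark \ref{rem: x_n to free boundary}, $\mf{u}(\bar x) = \mf{0}$.

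The decisive step will be to transfer the frequency bound to the limit: for every sufficiently small fixed $r>0$, I need to show
\[
N(\mf{u}_{\beta_n},x_n,r) \longrightarrow N(\mf{u},\bar x,r),
\]
which is a moving-centre version of the convergence stated in Proposition \ref{prop: limiting profile}. Writing $H(\mf{u}_{\beta_n},x_n,r) = \int_{\S^{N-1}} \sum_i u_{i,\beta_n}(x_n + r\theta)^2\,d\theta$, its convergence follows from the uniform $\C^{0,\alpha}$-convergence of $\mf{u}_{\beta_n}$ together with $x_n \to \bar x$, using also the unique continuation property of Remark \ref{rem: on unique continuation} to ensure $H(\mf{u},\bar x,r) > 0$. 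For $E$, the gradient contribution converges by strong $H^1$-convergence combined with a dominated-convergence absorption of the symmetric difference $B_r(x_n) \triangle B_r(\bar x)$, and the reaction contribution converges by uniform convergence of $f_{i,\beta_n}$. The only delicate piece is the competition term $\beta_n \int_{B_r(x_n)} u_{i,\beta_n}^2 u_{j,\beta_n}^q$: testing the equation for $u_{i,\beta_n}$ against $u_{i,\beta_n}$ on a fixed ball and comparing with the analogous identity $\int |\nabla u_i|^2 = \int f_i(u_i)\,u_i$ valid in $\mathcal{G}$ (true since $u_i$ vanishes on $\supp \mu_i$) forces precisely the cancellation $\beta_n \int u_{i,\beta_n}^2 u_{j,\beta_n}^q \to 0$.

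Once the convergence is in hand, I would obtain $N(\mf{u},\bar x,r) \leq 1 - c_0$ for every $r \in (0,c_0)$; letting $r \to 0^+$ gives $N(\mf{u},\bar x, 0^+) \leq 1 - c_0 < 1$, in contradiction with Theorem \ref{prop: monot segregated}. The main obstacle is the moving-centre passage to the limit in the frequency, and specifically the vanishing of the competition integral, which relies on carefully combining the strong convergence of $\mf{u}_{\beta_n}$ with the structure of the limit as an element of $\mathcal{G}(B_2)$; the remaining steps are straightforward consequences of the almost-monotonicity and of the free-boundary lower bound on $N$.
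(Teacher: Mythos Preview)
Your argument is correct and actually streamlines the paper's proof. The paper also argues by contradiction, also passes to the limit $\mf{u}_{\beta_n}\to\mf{u}\in\mathcal{G}(B_2)$ with $x_n\to x_\infty\in\{\mf{u}=\mf{0}\}$, and also invokes the bound $N(\mf{u},x_\infty,0^+)\ge 1$ from Theorem~\ref{prop: monot segregated}. The difference lies in how the contradiction is extracted. The paper evaluates the almost-monotone function $(N(\mf{u}_{\beta_n},x_n,\cdot)+1)e^{\tilde C(\cdot)}$ at the \emph{moving} radius $R_{\beta_n}\to R_\infty>0$, and for this needs the auxiliary Lemma~\ref{lem: unif convergence} (pointwise convergence of monotone functions to a continuous limit implies uniform convergence) to justify $N(\mf{u}_{\beta_n},x_n,R_{\beta_n})\to N(\mf{u},x_\infty,R_\infty)$. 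You instead exploit the monotonicity upstream to obtain the uniform bound $(N(\mf{u}_{\beta_n},x_n,r)+1)e^{\tilde C r}\le 2-c_0$ for every \emph{fixed} $r\in(0,c_0)$, and then only need convergence at a fixed radius. This eliminates the need for Lemma~\ref{lem: unif convergence} altogether.

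Both approaches rest on the same moving-centre convergence $N(\mf{u}_{\beta_n},x_n,r)\to N(\mf{u},\bar x,r)$ for fixed $r$; the paper simply asserts it (citing Proposition~\ref{prop: limiting profile}, which as stated is for fixed centres), while you sketch the argument in more detail. Your treatment of the competition term --- testing the equation for $u_{i,\beta_n}$ against itself and comparing with the identity $\int|\nabla u_i|^2=\int_{\partial B_r}u_i\partial_\nu u_i+\int f_i u_i$ for $u_i\in\mathcal{G}$ (where $\int u_i\,d\mu_i=0$ since $\mu_i$ is supported on $\{u_i=0\}$) --- is the right mechanism; just be aware that the boundary terms force this to hold only for a.e.\ $r$, which suffices. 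The handling of the symmetric difference $B_r(x_n)\triangle B_r(\bar x)$ via strong $H^1$-convergence and dominated convergence is routine, as you indicate.
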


In order to prove the previous lemma, we need a result about uniform convergence.

\begin{lemma}\label{lem: unif convergence}
Let $g_n \in \C([0,1])$ be a sequence of monotone non-decreasing functions and let us assume that there exists $g \in \C([0,1])$ such that
$g_n \to g$ pointwise in $[0,1]$. Then $g_n \to g$ uniformly in $[0,1]$.
\end{lemma}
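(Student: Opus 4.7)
The plan is to mimic the classical Dini/Pólya argument for pointwise convergence of monotone functions to a continuous limit. The point is that monotonicity of each $g_n$ lets us sandwich $g_n(x)$ between its values at the endpoints of a small subinterval containing $x$, and uniform continuity of $g$ controls how much $g$ can move on such a subinterval.

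First I would fix $\varepsilon>0$ and use the uniform continuity of $g$ on $[0,1]$ to choose a partition $0=x_0<x_1<\cdots<x_m=1$ fine enough that $|g(x_j)-g(x_{j-1})|<\varepsilon$ for every $j=1,\dots,m$. Next, since the pointwise convergence $g_n(x_j)\to g(x_j)$ only needs to be exploited at finitely many points, there exists $\bar n$ such that
\[
|g_n(x_j)-g(x_j)|<\varepsilon\qquad\text{for every }j=0,1,\dots,m\ \text{and every }n\ge\bar n.
\]
Then, given an arbitrary $x\in[0,1]$, I would pick the index $j$ with $x\in[x_{j-1},x_j]$ and exploit the monotonicity of $g_n$ to write $g_n(x_{j-1})\le g_n(x)\le g_n(x_j)$. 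Combining this sandwich with the displayed inequality at the nodes, and with $g(x_{j-1})\le g(x)+\varepsilon$ and $g(x_j)\le g(x)+\varepsilon$ (this last bit being where the continuity of the limit enters), gives $|g_n(x)-g(x)|<2\varepsilon$ uniformly in $x$ for all $n\ge\bar n$.

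There is no real obstacle here: the only subtle point is that the $g_n$ are assumed monotone but the sequence itself is not (so genuine Dini does not apply directly); this is precisely bridged by the above endpoint-sandwich, which uses only the monotonicity of each individual $g_n$ together with the continuity of the limit $g$. The argument works identically for non-increasing sequences after reversing the inequalities, so the statement as formulated (non-decreasing) is handled without further case distinction.
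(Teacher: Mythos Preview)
Your proof is correct and follows essentially the same strategy as the paper: partition $[0,1]$ finely, use monotonicity of each $g_n$ to sandwich $g_n(x)$ between its values at the adjacent nodes, invoke pointwise convergence at the finitely many nodes, and control the remaining error via the uniform continuity of $g$. The paper packages this through auxiliary step functions $M_n^k$ and $m_n^k$ built from suprema and infima over the tail of the sequence at the nodes, but the substance is identical to your direct $\varepsilon$-argument.
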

\begin{proof}
We shall introduce two functions based on the partition of the set $[0,1]$ in $k$ equal sub-intervals
 For $k \in \N$ fixed, we let
\[
    M_n^k(x) :=
\begin{cases}
    \sup_{j \geq n} (g_j(1/k), g(1/k))            &\text{if $x = 0$}\\
    \sup_{j \geq n} (g_j(l/k), g(l/k))  &\text{if $(l-1)/k < x \leq l/k$},
\end{cases}
\]
and
\[
    m_n^k(x) :=
\begin{cases}
    \inf_{j \geq n} (g_j((l-1)/k), g((l-1)/k))    &\text{if $(l-1)/k \leq x < l/k$}\\
    \inf_{j \geq n} (g_j(1-1/k), g(1-1/k))  &\text{if $x = 1$}.
\end{cases}
\]
From the monotonicity of the functions involved, we immediately obtain that
\[
    m_n^k(x) \leq g_j(x) \leq M_n^k(x) \quad \text{and} \quad m_n^k(x) \leq g(x) \leq M_n^k(x)  \qquad \text{for every }x \in [0,1], k \in \N, j \geq n
\]
and
\[
    |g_j(x) - g(x) | \leq M_n^k(x) - m_n^k(x) \qquad \text{for every }x \in [0,1], k \in \N, j \geq n.
\]
On the other hand, by pointwise convergence
for each $k\in\N$ there exists $N = N(k)$ such that
\[
\sup_x |g_j(x)-g(x)| \le  \sup_x \left(M_N^k(x) - m_N^k(x) \right) \leq 2 \osc_{k} g \qquad \text{for $j \geq N(k)$},
\]
where $\osc_{k} g$ is the maximal oscillation of $g$ in each sub-interval of the considered $k$-partition. Since $g$ is uniformly continuous in $[0,1]$, the thesis follows by taking the limit in $k$.
\end{proof}

We are now in a position to prove Lemma \ref{lem: R_beta to 0}.

\begin{proof}[Proof of Lemma \ref{lem: R_beta to 0}]
We have already observed in Remark \ref{rem: x_n to free boundary} that up to a subsequence $\{\mf{u}_{\beta_n}\}$ converges in $\mathcal{C}(\overline{B_2}) \cap H^1(B_2)$, as $n \to \infty$, to a limiting profile $\mf{u}_\infty \in \mathcal{G}(B_2)$, and also that $x_n \to x_\infty \in \overline{B_2} \cap \{\mf{u}_\infty=\mf{0}\}$. By the convergence
\[
r \mapsto ( N(\mf{u}_{\infty},x_{\infty},r) + 1 )e^{\tilde C r} \quad \text{is monotone non-decreasing};
\]
moreover, by Theorem \ref{prop: monot segregated}, $N(\mf{u}_\infty,x_\infty,0^+) \ge 1$. Let us assume by contradiction that
\[
    \limsup_{n \to \infty} R_{\beta_n} = R_\infty > 0.
\]
In light of the pointwise limit
\[
\lim_{n \to \infty} ( N(\mf{u}_{\beta_n},x_{n},r) + 1 )e^{\tilde C r} = ( N(\mf{u}_{\infty},x_{\infty},r) + 1 )e^{\tilde C r}
\]
valid for any $r \in (0,\tilde r)$, and of the monotonicity of the involved functions, we can apply Lemma \ref{lem: unif convergence} to obtain that up to a subsequence
\begin{align*}
 2 &> 2 - R_\infty = \lim_{n \to \infty} 2-R_{\beta_n} \geq \lim_{n \to \infty} ( N(\mf{u}_{\beta_n},x_{n},R_{\beta_n}) + 1 )e^{\tilde C R_{\beta_n}}  \\
& = ( N(\mf{u}_{\infty},x_{\infty},R_\infty) + 1 )e^{\tilde C R_\infty}\geq ( N(\mf{u}_{\infty},x_{\infty},0^+) + 1 ) \ge 2
\end{align*}
a contradiction. Here we used the fact that $x_\infty \in \{\mf{u}_\infty=\mf{0}\}$, so that by Theorem \ref{prop: monot segregated} we have $N(\mf{u}_\infty,x_\infty,0^+) \ge 1$.
\end{proof}

The previous results can be translated in terms of the elements of the blow-up sequence $\{\mf{v}_n\}$ (for the reader's convenience, we recall that $\{\mf{v}_n\}$ has been defined in \eqref{def blow-up}).

\begin{lemma}\label{corol: almgren v}
Let $\tilde r,\tilde C$ be defined in Proposition \ref{prop: monotonia successione}. For every $n \in \N$, the functions
\[
r \mapsto ( N(\mf{v}_n,0,r) + 1 )e^{\tilde C r_n r} \quad \text{and} \quad r \mapsto \left(\frac{1}{r^{N-1}}\int_{\partial B_r} v_{i,n}^2\right) e^{\tilde C r_n r}
\]
are non-negative and monotone non-decreasing for $r \in (0,\tilde r/r_n]$.
\end{lemma}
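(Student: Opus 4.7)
The plan is to verify that both quantities appearing in Lemma \ref{corol: almgren v} are, up to a multiplicative positive constant and a change of variable, precisely the quantities appearing in Proposition \ref{prop: monotonia successione} evaluated at the point $x_n$ and at radius $s = r_n r$. The monotonicity then follows immediately from the monotonicity already established for $\mf{u}_{\beta_n}$. So the proof is really a routine but careful scaling computation.

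More concretely, write $C_n := \eta(x_n)/(L_n r_n)$, so that $v_{i,n}(x) = C_n u_{i,\beta_n}(x_n + r_n x)$. By the standard change of variable $y = x_n + r_n x$ I would first show the scaling identities
\[
H(\mf{v}_n, 0, r) = C_n^2 H(\mf{u}_{\beta_n}, x_n, r_n r), \qquad \frac{1}{r^{N-1}}\int_{\partial B_r} v_{i,n}^2 = C_n^2 \cdot \frac{1}{(r_n r)^{N-1}}\int_{\partial B_{r_n r}(x_n)} u_{i,\beta_n}^2.
\]
For $E$ the gradient contribution and the $f$ contribution likewise each acquire an overall factor $C_n^2$ (using that $f_{i,n}(x,v_{i,n}) v_{i,n} = C_n^2 r_n^2 f_{i,\beta_n}(x_n + r_n x, u_{i,\beta_n}) u_{i,\beta_n}$ by the very definition of $f_{i,n}$). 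For the interaction term one gets an extra factor $M_n C_n^2 r_n^{-2}$, and the point of the definition
\[
M_n = \beta_n \bigl( L_n/\eta(x_n) \bigr)^2 r_n^4
\]
is precisely that $M_n C_n^2 r_n^{-2} = \beta_n$, so the interaction contributions also match up to the single factor $C_n^2$. Consequently
\[
E(\mf{v}_n, 0, r) = C_n^2 E(\mf{u}_{\beta_n}, x_n, r_n r), \qquad N(\mf{v}_n, 0, r) = N(\mf{u}_{\beta_n}, x_n, r_n r).
\]

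With these identities in hand, the conclusion is immediate: by Proposition \ref{prop: monotonia successione} the maps
\[
s \mapsto \bigl( N(\mf{u}_{\beta_n}, x_n, s) + 1 \bigr) e^{\tilde C s}, \qquad s \mapsto \Bigl( \frac{1}{s^{N-1}}\int_{\partial B_s(x_n)} u_{i,\beta_n}^2 \Bigr) e^{\tilde C s}
\]
are non-negative and non-decreasing for $s \in (0, \tilde r]$. Composing with the increasing change of variable $s = r_n r$ and multiplying the second one by the positive constant $C_n^2$ (which clearly preserves monotonicity) yields the two monotonicities claimed in the lemma, valid on the enlarged interval $r \in (0, \tilde r/r_n]$. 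The non-negativity of the frequency-type expression is inherited from that of its unscaled counterpart.

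There is no substantive obstacle: the only point requiring a bit of care is bookkeeping the powers of $C_n$ and $r_n$ arising from the change of variable in every integral defining $E$, and noticing that the definition of $M_n$ in Lemma \ref{lem: mega lemma} is tailored exactly so the interaction term in $E(\mf{v}_n,0,r)$ scales with the same factor $C_n^2$ as the Dirichlet and reaction terms.
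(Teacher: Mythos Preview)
Your proposal is correct and follows exactly the paper's approach: the paper simply states that it suffices to verify the scaling identities $E(\mf{v}_n,0,r)=C_n^2 E(\mf{u}_{\beta_n},x_n,r_n r)$, $H(\mf{v}_n,0,r)=C_n^2 H(\mf{u}_{\beta_n},x_n,r_n r)$ and hence $N(\mf{v}_n,0,r)=N(\mf{u}_{\beta_n},x_n,r_n r)$, after which Proposition~\ref{prop: monotonia successione} and the change of variable $s=r_n r$ give the result. Your write-up is in fact more explicit than the paper's on the bookkeeping, and your observation that the definition of $M_n$ is precisely what makes the interaction term scale with the same factor $C_n^2$ is the key point.
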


For the proof it is sufficient to check that
\begin{equation}\label{scaled quantities}
\begin{split}
\bullet & \quad E(\mf{v}_n,0,r) = \frac{\eta^2(x_n)}{L_n^2 r_n^2} E(\mf{u}_{\beta_n},x_n,r_n r) \\
\bullet & \quad H(\mf{v}_n,0,r) = \frac{\eta^2(x_n)}{L_n^2 r_n^2} H(\mf{u}_{\beta_n},x_n,r_n r) \\
\bullet & \quad N(\mf{v}_n,0,r) = N(\mf{u}_{\beta_n},x_n,r_n r) \\
\end{split}
\end{equation}
for every $0<r\le \tilde r/r_n$.

%

In the following lemma we enforce the conclusion of Proposition \ref{prop: riassunto blow-up}, showing that not only the limiting profile $\mf{v}$ of the blow-up sequence has at least one and at most two non-trivial components $v_1$ and $v_2$, but that both $v_1$ and $v_2$ are non-trivial and non-constant in the ball $B_2$.

%

\begin{lemma}\label{lem: 2 non deg}
There exists $C>0$ independent of $n$ such that
\[
\frac{1}{r^{N-1}} \int_{\pa B_r} v_{i,n}^2 \ge C
\]
for every $r \in [2, \tilde r/r_n]$ and $i=1,2$. In particular, both $v_{1,n}$ and $v_{2,n}$ are non-trivial and non-constant in $B_r$ for every $r \in  [2, \tilde r/r_n]$.
\end{lemma}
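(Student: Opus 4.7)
The plan is to combine the Almgren-type monotonicity of Lemma \ref{corol: almgren v} with a rigidity analysis of the limit profile $\mf{v}$. The first step is to use monotonicity to reduce the statement to an $L^2$ estimate on $\pa B_2$: for $r\in[2,\tilde r/r_n]$, Lemma \ref{corol: almgren v} together with $r_n r\le \tilde r$ gives
\[
\frac{1}{r^{N-1}}\int_{\pa B_r} v_{i,n}^2 \;\ge\; e^{\tilde C r_n(2-r)}\cdot\frac{1}{2^{N-1}}\int_{\pa B_2} v_{i,n}^2 \;\ge\; e^{-\tilde C \tilde r}\cdot\frac{1}{2^{N-1}}\int_{\pa B_2} v_{i,n}^2.
\]
By the $\mathcal{C}_{\loc}$-convergence $v_{i,n}\to v_i$ of Lemma \ref{lem: mega lemma}.(5), the whole statement reduces to showing that both components $v_1$ and $v_2$ of $\mf{v}$ are not identically zero on $\pa B_2$.

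Proposition \ref{prop: riassunto blow-up} yields at most two non-trivial components of $\mf{v}$, and since $|\nabla v_1(0)|=1$ makes $\mf{v}$ non-constant, Corollaries \ref{corol 1 ACF} and \ref{corol acf 2} upgrade this to \emph{exactly} two non-trivial components, which we call $v_1$ and $v_2$. The non-vanishing on $\pa B_2$ is then established case-by-case. In the regular subcase $M_n\to M_\infty$, each $v_i\ge 0$ is subharmonic because $\Delta v_i=M_\infty v_i\sum_{j\neq i}a_{ij}v_j^q\ge 0$; if $v_i\equiv 0$ on $\pa B_2$ the weak maximum principle gives $v_i\equiv 0$ on $B_2$, and Aronszajn--Cordes unique continuation applied to the linear equation $-\Delta v_i+c\,v_i=0$ (with bounded $c=M_\infty\sum_{j\neq i}a_{ij}v_j^q$) propagates this to $v_i\equiv 0$ on $\R^N$, contradicting non-triviality. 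In the segregated subcase $M_n\to \infty$, the function $v_1-v_2$ is harmonic on $\R^N$ with linear growth, hence affine by Liouville; combined with $v_1(0)=1$, $v_2\equiv 0$ near $0$ and $|\nabla v_1(0)|=1$ (all established in the proof of Lemma \ref{lem: non trivial limit}), one obtains $v_1-v_2 = 1+b\cdot x$ with $|b|=1$, and the disjointness $v_1 v_2\equiv 0$ then forces $v_1=(1+b\cdot x)^+$, $v_2=(1+b\cdot x)^-$. Since the hyperplane $\{b\cdot x=-1\}$ passes strictly through $\overline{B_2}$, both $v_1$ and $v_2$ are non-zero on non-trivial portions of $\pa B_2$.

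Finally, the ``in particular'' assertion: non-triviality of $v_{i,n}$ on $B_r$ is immediate from the positivity of $\int_{\pa B_r}v_{i,n}^2$. Non-constancy of $v_{1,n}$ follows directly from $|\nabla v_{1,n}(0)|\to 1$. For $v_{2,n}$, assuming $v_{2,n}\equiv c$ on $B_r$: the case $c=0$ contradicts the $L^2$ lower bound, while $c>0$ plugged into the PDE forces $M_n\sum_{j\neq 2}a_{2j}v_{j,n}^q$ to be uniformly bounded on $B_r$; in particular, when $M_n\to\infty$ this yields $v_{1,n}\to 0$ uniformly on $B_r$, and interior gradient estimates then give $|\nabla v_{1,n}(0)|\to 0$, contradicting $|\nabla v_{1,n}(0)|\to 1$; when $M_n$ remains bounded, the $\mathcal{C}^{1,\alpha}_{\loc}$-convergence of Proposition \ref{prop: riassunto blow-up} transfers constancy of $v_{2,n}$ to $v_2$, which contradicts the affine/segregated classification of the previous paragraph (or simply the lower bound $\int_{\pa B_2}v_2^2>0$ together with $v_2(0)>0$ being incompatible with constancy and the PDE unless $v_1\equiv 0$ on $B_r$). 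I expect the main obstacle to be the Liouville-based rigidity in the segregated subcase; once the explicit affine shape of $v_1-v_2$ is in hand, monotonicity and uniform convergence assemble the rest.
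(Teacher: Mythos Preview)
Your reduction via the monotonicity of Lemma \ref{corol: almgren v} to a lower bound on $\pa B_2$ matches the paper's first step, and your treatment of the bounded-$M_n$ case (subharmonicity plus unique continuation/strong maximum principle) is essentially equivalent to the paper's.

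The genuine gap is in the segregated case $M_n\to\infty$. You assert that $v_1-v_2$ is harmonic on $\R^N$ and then run Liouville to get the affine classification. But Proposition \ref{prop: riassunto blow-up} does \emph{not} deliver the equation $-\Delta(v_1-v_2)=0$: the segregated limit is only placed in the class $\mathcal{G}_{\loc}(\R^N)$ (Proposition \ref{prop: limiting profile}), where each $v_i$ satisfies $-\Delta v_i=-\mu_i$ for a nonnegative Radon measure $\mu_i$ supported on $\pa\{v_i>0\}$. Harmonicity of the difference would require $\mu_1=\mu_2$, i.e.\ a free-boundary reflection law, which is nontrivial and is neither stated nor proved anywhere in this paper (its appearance in the heuristic outline \eqref{pb segr intro} of the introduction is not a proof). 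Relatedly, your claim that Corollaries \ref{corol 1 ACF} and \ref{corol acf 2} upgrade ``at most two'' to ``exactly two'' nontrivial components is unjustified in the segregated case: Corollary \ref{corol 1 ACF} only gives $k\le 2$, so you cannot presuppose $v_2\not\equiv 0$ at this stage---indeed, ruling out $v_2\equiv 0$ is the whole content of the lemma.

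The paper sidesteps both issues with a purely local argument. Assuming for contradiction that $v_2\equiv 0$ on $B_2$, the unique-continuation property for $\mathcal{G}$ (Remark \ref{rem: on unique continuation}) forces $v_1>0$ on $B_2$; hence $v_1$ is a \emph{positive} harmonic function on $B_2$ with $v_1(0)=1$ and $|\nabla v_1(0)|=1$. Choosing $\theta$ with $\pa_\theta v_1(0)=-1$ and applying the minimum principle to the harmonic function $\pa_\theta v_1$ on each $B_r$ forces $v_1$ to change sign inside $B_2$, a contradiction. This uses only the \emph{local} harmonicity of $v_1$ on its positivity set, which is exactly what Proposition \ref{prop: riassunto blow-up} supplies. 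Your affine picture is morally correct and can be recovered \emph{a posteriori} once both components are known to be nontrivial, but it is not available as an input to this lemma.
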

\begin{proof}
By Lemma \ref{corol: almgren v}, we infer that for $r \in [2, \tilde r/r_n]$ it holds
\[
\frac{1}{r^{N-1}}\int_{\pa B_r} v_{i,n}^2 \ge \left(\frac{1}{2^{N-1}}\int_{\pa B_2} v_{i,n}^2\right) e^{\tilde C r_n(2-\tilde r/r_n)},
\]
so that it is sufficient to show that there exists $C>0$ independent of $n$ such that
\begin{equation}\label{int su pa B_2}
\frac{1}{2^{N-1}} \int_{\pa B_2} v_{i,n}^2 \ge C \qquad \text{for $i=1,2$}
\end{equation}
We separate the prove according to whether $(M_n)$ is bounded or not.\\
\textbf{Case $i$)} \emph{$(M_n)$ is bounded.} \\
Assume by contradiction that the \eqref{int su pa B_2} is not true. By Proposition \ref{prop: riassunto blow-up}, $\mf{v}_n \to \mf{v}$ and $(v_1,v_2)$ solves \eqref{eq intera}. Moreover, $v_1(0)>0$ and, by subharmonicity, this implies that there exists $C>0$ such that
\[
\frac{1}{2^{N-1}} \int_{\pa B_2} v_1^2 \ge 2C \quad \Longrightarrow \quad \frac{1}{2^{N-1}} \int_{\pa B_2} v_{1,n}^2 \ge C
\]
for $n$ sufficiently large. As a consequence, using the subharmonicity of $v_2$
\[
\int_{\pa B_2} v_{2,n}^2 \to 0 \quad \Longrightarrow \quad v_2 \equiv 0 \quad \text{in $B_2$}.
\]
By the strong maximum principle, this means that $v_2 \equiv 0$ in $\R^N$, and thus $v_1$ is an entire, harmonic, non-constant and positive function, a contradiction.\\
\textbf{Case $ii$)} \emph{$M_n \to +\infty$.} \\
Arguing as in the first step, we deduce that $v_2 \equiv 0$ in $B_2$, and by Proposition \ref{prop: limiting profile} we know that $\mf{v} \in \mathcal{G}_{\loc}(\R^N)$. By the unique continuation property given by the Almgren monotonicity formula (see Remark \ref{rem: on unique continuation}) this implies that $v_1 > 0$ in $B_2$, and as a consequence $v_1$ is harmonic therein. To sum up, $v_1$ is a positive harmonic function in $B_2$ such that $v_1(0)=1$ and $|\nabla v_1(0)|=1$. Let $\theta \in \S^{N-1}$ be such that $\pa_\theta v_1=-1$. Since $\pa_\theta v_1$ is in turn harmonic in $B_2$, by the minimum principle $\inf_{B_r} \pa_\theta v_1 \le -1$ for any $r \in (0,2)$, and this immediately implies that $v_1$ changes sign in $B_2$, a contradiction.
\end{proof}

We introduce the counterpart of the radius $R_{\beta_n}$ in the blow-up setting, as
\begin{equation}\label{def bar r n}
\bar r_n:=\frac{R_{\beta_n}}{r_n}= \sup \left\{ r \in \left(0,\frac{\tilde r}{r_n}\right) : \left( N(\mf{v}_n,0,r)+1\right) e^{\tilde C r_n r} < 2-r_n r\right\}.
\end{equation}
By Lemma \ref{lem: R_beta to 0}, we deduce that $r_n \bar r_n \to 0$ as $n \to \infty$.

The value $\bar r_n$ will play a crucial role in the forthcoming argument. The idea is the following: for $r < \bar r_n$ it results $N(\mf{v}_n,0,r) \le 1$, and we shall show that consequently $\mf{v}_n$ exhibits a linear behaviour; on the contrary if $r >\bar r_n$, then $N(\mf{v}_n,0,r) \gtrapprox 1$, so that $\mf{v}_n$ is morally superlinear. We will prove, in the superlinear range $(\bar r_n,\tilde r/r_n)$ the function $(E(\mf{v}_n,0,r)+H(\mf{v}_n,0,r))/r^2$ is almost non-decreasing, uniformly in $n$. If the sequence $(\bar r_n)$ is bounded from above, this easily leads to a contradiction with Proposition \ref{prop: riassunto blow-up}. A more delicate situation takes place when $\bar r_n \to +\infty$, that is, $\bar r_n$ is an intermediate scale between the microscopic setting $r \le R<+\infty$ and the macroscopic scale $\tilde r/r_n \to +\infty$. In such a situation the function $\mf{v}_n$ transits from the linear behaviour to the superlinear one at the threshold $\bar r_n \to +\infty$, with $\bar r_n r_n \to 0$. In the linear range $[2,\bar r_n]$, we shall derive a uniform-in-$n$ perturbed version of the Alt-Caffarelli-Friedman monotonicity formula. In the superlinear range, the almost monotonicity of the function $(E(\mf{v}_n,0,r)+H(\mf{v}_n,0,r))/r^2$ holds. A delicate part of the proof consist in showing that a suitable combination of these results, which considered separately do not lead to any conclusive argument, permits to reach a contradiction also in this case.


In the next lemma we prove that the function $(E(\mf{v}_n,0,r)+H(\mf{v}_n,0,r))/r^2$ is almost monotone beyond the threshold $\bar r_n$. Note that by the definition of $\tilde r$ the numerator is positive in the whole interval $[0,\tilde r/r_n]$. Let us introduce
\[
    \varphi_{n}(r):=  2 \int_{\bar r_{n}}^r \left(2\frac{ e^{-\tilde Cr_n t} -1 }{t} - \frac{1}{t} r_n \bar r_n e^{-\tilde C r_n t} \right)\de t.
\]

\begin{remark}\label{rem: unif. bound phi_n}
Not only $\varphi_n$ is well defined for $r \in [\bar r_n,\tilde r/r_n]$, but it is bounded independently of $n$ in such interval:
\begin{equation}\label{stima sul resto}
\begin{split}
|\varphi_n(r)| &\le 4 \int_{\bar r_n}^{\tilde r/r_n} \frac{1- e^{-\tilde Cr_n t} }{t} \, \de t + 2r_n \bar r_n \int_{\bar r_n}^{\tilde r/r_n} \frac{  e^{-\tilde C r_n t}}{t} \, \de t \\ &\le 4\int_{\bar r_n}^{\tilde r/r_n} \tilde C r_n \, \de t + 2 \bar r_n r_n \int_{\bar r_n}^{\tilde r/r_n}  \frac{\de t}{t}  \le 4 \tilde C \tilde r + 2 \bar r_n r_n(|\log \tilde r|+|\log(\bar r_n r_n)|) \le C,
\end{split}
\end{equation}
with $C$ independent of $n$.
\end{remark}

\begin{lemma}\label{lem: Almgren ACF}
For every $n$, the function
\[
r \mapsto \frac{E(\mf{v}_n,0,r)+H(\mf{v}_n,0,r)}{r^2}e^{\tilde C r_n r-\varphi_n(r)} \qquad \text{is monotone non-decreasing for }r \in \left[\bar r_n,\frac{\tilde r}{r_n}\right].
\]
\end{lemma}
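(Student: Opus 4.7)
The plan is to reduce everything to the Almgren-type monotonicity of Lemma~\ref{corol: almgren v} together with the definition of $\bar r_n$. First, I would rewrite the numerator as $F_n(r):=E(\mf{v}_n,0,r)+H(\mf{v}_n,0,r)=(N(\mf{v}_n,0,r)+1)H(\mf{v}_n,0,r)$, which is strictly positive on $[\bar r_n,\tilde r/r_n]$ by the choice of $\tilde r$ in Lemma~\ref{thm: almgren}. Using \eqref{der di H}, which gives $H'=2E/r=2NH/r$, a direct logarithmic differentiation produces
\[
\frac{d}{dr}\log\!\frac{F_n(r)}{r^2}
=\frac{N'(\mf{v}_n,0,r)}{N(\mf{v}_n,0,r)+1}+\frac{2(N(\mf{v}_n,0,r)-1)}{r}.
\]

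Next, I would use Lemma~\ref{corol: almgren v}: the non-decreasing character of $(N(\mf{v}_n,0,r)+1)e^{\tilde Cr_n r}$ amounts to $\frac{N'}{N+1}+\tilde Cr_n\ge 0$, so adding $\tilde Cr_n$ absorbs the first term. Combined with the previous identity, this yields
\[
\frac{d}{dr}\log\!\Bigl(\frac{F_n(r)}{r^2}e^{\tilde Cr_n r}\Bigr)\ge \frac{2(N(\mf{v}_n,0,r)-1)}{r}.
\]
To complete the proof it therefore suffices to show that the right-hand side dominates $\varphi_n'(r)$, which after differentiating the defining integral equals $\frac{4(e^{-\tilde Cr_n r}-1)-2r_n\bar r_n e^{-\tilde Cr_n r}}{r}$. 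Rearranging, the desired inequality $\frac{2(N-1)}{r}\ge\varphi_n'(r)$ becomes
\[
N(\mf{v}_n,0,r)+1\ \ge\ (2-r_n\bar r_n)\,e^{-\tilde Cr_n r}.
\]

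The key step, and the one that uses the role of $\bar r_n$, is this last bound. By continuity, $(N(\mf{v}_n,0,\bar r_n)+1)e^{\tilde Cr_n\bar r_n}=2-r_n\bar r_n$ at $r=\bar r_n$ (the definition \eqref{def bar r n} is the supremum of a set where the strict inequality holds, and both sides are continuous in $r$). Applying once more the monotonicity of $r\mapsto(N(\mf{v}_n,0,r)+1)e^{\tilde Cr_n r}$ from Lemma~\ref{corol: almgren v}, for every $r\in[\bar r_n,\tilde r/r_n]$ we get
\[
(N(\mf{v}_n,0,r)+1)\,e^{\tilde Cr_n r}\ \ge\ (N(\mf{v}_n,0,\bar r_n)+1)\,e^{\tilde Cr_n\bar r_n}\ =\ 2-r_n\bar r_n,
\]
which is exactly the bound required. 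Plugging it back yields the pointwise differential inequality, and integration gives the desired monotonicity.

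I expect the only real subtlety to be the bookkeeping of the factor $(2-r_n\bar r_n)$: the natural lower bound for $N+1$ obtained directly from the definition of $\bar r_n$ would be $(2-r_n r)e^{-\tilde Cr_n r}$, which is \emph{weaker} than what is needed; using instead the value at $r=\bar r_n$ combined with monotonicity is the correct idea and is also what forces the specific form of $\varphi_n$ in the statement. Aside from this point the rest is a computation, and Remark~\ref{rem: unif. bound phi_n} ensures that $\varphi_n$ stays uniformly bounded so that the weight $e^{-\varphi_n(r)}$ is harmless in the applications that follow.
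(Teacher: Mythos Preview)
Your argument is correct and is essentially the same as the paper's: both reduce to the two ingredients $\frac{N'}{N+1}\ge -\tilde Cr_n$ (from Lemma~\ref{corol: almgren v}) and $N(\mf{v}_n,0,r)+1\ge(2-r_n\bar r_n)e^{-\tilde Cr_nr}$ (from the value at $\bar r_n$ plus monotonicity), the latter being exactly the paper's first displayed inequality $N-1\ge 2(e^{-\tilde Cr_nr}-1)-r_n\bar r_ne^{-\tilde Cr_nr}$. The only cosmetic difference is that the paper first isolates the intermediate statement that $r\mapsto\frac{H(\mf{v}_n,0,r)}{r^2}e^{-\varphi_n(r)}$ is non-decreasing and then writes the target as a sum of two monotone logarithms, whereas you differentiate the full expression at once; the content is identical.
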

\begin{proof}
If $r \in [\bar r_n, \tilde r/r_n]$, then by the Almgren monotonicity formula
\[
N(\mf{v}_{n},0,r) -1 \ge 2\left( e^{-\tilde C r_n r} -1 \right) - r_n \bar r_n e^{-\tilde C r_n r}.
\]
As a consequence, recalling the expression \eqref{der di H} of the derivative of $H$, we have
\[
    \frac{\de}{\de r} \log \left(\frac{H(\mf{v}_{n}, 0,r)}{r^2}\right)  = \frac{2}{r} (N(\mf{v}_{n},0,r) -1) \ge \frac{4}{r}\left( e^{-\tilde C r_n r} -1 \right) - \frac{2}{r} r_n \bar r_n e^{-\tilde Cr_n r}.
\]
By integrating, we deduce that the function
\begin{equation}\label{monot H}
r \mapsto \frac{H(\mf{v}_{n}, 0,r)}{r^2} e^{-\varphi_{n}(r)} \quad \text{is monotone non-decreasing for } r \in \left[\bar r_{n}, \tilde r/r_n \right].
\end{equation}
To conclude, it is sufficient to observe that by Lemma \ref{corol: almgren v}
\begin{multline*}
\frac{\de}{\de r} \log \left( \frac{E(\mf{v}_n,0,r)+H(\mf{v}_n,0,r)}{r^2}e^{\tilde C r_n r-\varphi_n(r)}\right) \\= \frac{\de}{\de r} \log \left( (N(\mf{v}_n,0,r)+1)e^{\tilde C r_n r}\right)  + \frac{\de}{\de r} \log \left( \frac{H(\mf{v}_n,0,r)}{r^2}e^{-\varphi_n(r)} \right) \ge 0,
\end{multline*}
for $\bar r_n \le r \le \tilde r/r_n$.
\end{proof}

As a first consequence we can show that $(\bar r_n)$ cannot be bounded.
\begin{lemma}\label{raggio illimtato}
It holds $\bar r_n \to +\infty$ as $n \to \infty$.
\end{lemma}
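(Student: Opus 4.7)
The plan is to argue by contradiction. Assume that $(\bar r_n)$ is bounded; up to a subsequence, $\bar r_n\to\bar r_\infty\in[0,+\infty)$. Fix $r_0>\max\{2,\bar r_\infty\}$; then for $n$ large one has $\bar r_n\le r_0\le\tilde r/r_n$, so Lemma \ref{lem: Almgren ACF} applied on $[\bar r_n,\tilde r/r_n]$ yields
\[
\frac{(E+H)(\mf{v}_n,0,r_0)}{r_0^2}\,e^{\tilde C r_n r_0-\varphi_n(r_0)} \le \frac{(E+H)(\mf{v}_n,0,\tilde r/r_n)}{(\tilde r/r_n)^2}\,e^{\tilde C\tilde r-\varphi_n(\tilde r/r_n)}.
\]
The idea is to show that the right-hand side vanishes as $n\to\infty$, while the left-hand side stays bounded away from zero.

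For the right-hand side, I would use the scaling identities \eqref{scaled quantities} to rewrite
\[
\frac{(E+H)(\mf{v}_n,0,\tilde r/r_n)}{(\tilde r/r_n)^2}=\frac{\eta(x_n)^2}{L_n^2\tilde r^2}\,(E+H)(\mf{u}_{\beta_n},x_n,\tilde r).
\]
The uniform $L^\infty$ bound controls $H(\mf{u}_{\beta_n},x_n,\tilde r)$, while testing the equation for $u_{i,\beta_n}$ against $u_{i,\beta_n}\psi^2$ with a cut-off $\psi\in\C^\infty_c(B_3)$ equal to $1$ on $B_{2\tilde r}(x_n)$ produces a uniform bound on the interaction energy $\beta_n\int u_{i,\beta_n}^2 u_{j,\beta_n}^2$, and hence on $E(\mf{u}_{\beta_n},x_n,\tilde r)$. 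Combined with the uniform boundedness of $e^{-\varphi_n(\tilde r/r_n)}$ from Remark \ref{rem: unif. bound phi_n} and with $L_n\to+\infty$, the right-hand side tends to $0$.

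For the left-hand side, I would write $(E+H)(\mf{v}_n,0,r_0)=H(\mf{v}_n,0,r_0)\bigl(N(\mf{v}_n,0,r_0)+1\bigr)$ and bound both factors below. The Almgren monotonicity of Lemma \ref{corol: almgren v} applied on $[\bar r_n,r_0]$, together with the defining identity $(N(\mf{v}_n,0,\bar r_n)+1)e^{\tilde C r_n\bar r_n}=2-r_n\bar r_n$, gives $N(\mf{v}_n,0,r_0)+1\ge(2-r_n\bar r_n)e^{-\tilde C r_n r_0}\to 2$, which exceeds $3/2$ for $n$ large. Lemma \ref{lem: 2 non deg}, applicable since $r_0\ge 2$, provides a uniform lower bound $H(\mf{v}_n,0,r_0)\ge C>0$. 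Finally, $e^{\tilde C r_n r_0-\varphi_n(r_0)}\to 1$: indeed $r_n r_0\to 0$, and for fixed $r_0$ the two integrands defining $\varphi_n(r_0)$ are $O(r_n)$ and $O(r_n\bar r_n)$ on the bounded interval $[\bar r_n,r_0]$, so $\varphi_n(r_0)\to 0$. Hence the left-hand side is bounded below by a positive constant for $n$ large, contradicting the vanishing of the right-hand side. The only step requiring any real work is the uniform control of $(E+H)(\mf{u}_{\beta_n},x_n,\tilde r)$—in particular the interaction energy—via the standard test-function estimate sketched above; all the other ingredients are direct consequences of results already in place.
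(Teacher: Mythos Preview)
Your proof is correct and follows essentially the same strategy as the paper: both argue by contradiction, use Lemma \ref{lem: Almgren ACF} to compare $(E+H)(\mf{v}_n,0,\cdot)/r^2$ at a fixed finite radius with its value at $\tilde r/r_n$, and show the latter vanishes via the scaling identities \eqref{scaled quantities} together with the uniform boundedness of $(E+H)(\mf{u}_{\beta_n},x_n,\tilde r)$ and $L_n\to+\infty$. The only cosmetic difference is in how the lower bound on the left-hand side is obtained: the paper passes to the limit and uses the nontriviality of $\mf{v}$ (from Lemma \ref{lem: 2 non deg}) to rule out $(E+H)(\mf{v},0,r)=0$, whereas you stay at the level of $\mf{v}_n$, bounding $H(\mf{v}_n,0,r_0)$ from below by Lemma \ref{lem: 2 non deg} and $N(\mf{v}_n,0,r_0)+1$ from below via Lemma \ref{corol: almgren v} and the defining identity at $\bar r_n$; this is an equally valid and perhaps slightly more direct route.
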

\begin{proof}
Suppose, by contradiction, that up to a subsequence $\bar r_n \leq \bar r$ for some $\bar r > 0$. By the convergence of $\mf{v}_n \to \mf{v}$ in $\C_{\loc}(\R^N)$ and in $H^1_{\loc}(\R^N)$, and by Lemma \ref{lem: Almgren ACF} for any $r \in [\bar r+1, \tilde r/r_n)$
\begin{align*}
0 &\le \frac{E(\mf{v},0,r)+H(\mf{v},0,r)}{r^2} =\lim_{n \to \infty}\frac{E(\mf{v}_n,0,r)+H(\mf{v}_{n}, 0,r)}{r^2}  \\
&\le \lim_{n \to \infty} \sup_{r \in [\bar r_n, \tilde r/r_n]} r_n^2 \frac{E(\mf{v}_n,0,\tilde r/r_n)+H(\mf{v}_{n}, 0,\tilde r/r_n)}{\tilde r^2} e^{\tilde C (\tilde r-r_n r)+\varphi_n(r)-\varphi_n(\tilde r/r_n)} \\
&\le \lim_{n \to \infty} C r_n^2 \frac{E(\mf{v}_n,0,\tilde r/r_n)+H(\mf{v}_{n}, 0,\tilde r/r_n)}{\tilde r^2} = \lim_{n \to \infty} C \eta^2(x_n) \frac{E(\mf{u}_{\beta_n},x_n,\tilde r)+H(\mf{u}_{\beta_n}, x_n,\tilde r)}{L_n^2\tilde r^2},
\end{align*}
where we used the identities \eqref{scaled quantities} and the uniform boundedness of $\{\varphi_n\}$, see Remark \ref{rem: unif. bound phi_n}. Since both $E(\mf{u}_{\beta_n},x_n,\tilde r)$ and $H(\mf{u}_{\beta_n}, x_n,\tilde r)$ are also uniformly bounded (for the boundedness of $E(\mf{u}_{\beta_n},x_n,\tilde r)$, it is possible to proceed as in points (5) and (6) of Lemma \ref{lem: mega lemma}), while $L_n \to +\infty$, the last limit tends to $0$. As a consequence $\mf{v} \equiv 0$ in $B_r$, in contradiction with Lemma \ref{lem: 2 non deg}.
\end{proof}

Summing up, we have shown that if $L_n \to +\infty$ then necessarily
\[
    \bar r_n \to \infty \quad \text{while} \quad r_n \bar r_n \to 0 \qquad \text{as $n\to \infty$}.
\]
It remains to prove that also in this case we reach a contradiction with Lemma \ref{lem: 2 non deg}. To this end, let us introduce $J_n(r) :=r^{-4} J_{1,n}(r) \cdot J_{2,n}(r)$, where
\begin{align*}
J_{1,n}(r) &:= \int_{B_r} \left(\left|\nabla v_{1,n}\right|^2 + M_n a_{12} v_{1,n}^2 v_{2,n}^2-v_{1,n} f_{1,n}(x,v_{1,n}) \right)|x|^{2-N}  \\
J_{2,n}(r)& := \int_{B_r} \left(\left|\nabla v_{2,n}\right|^2 + M_n a_{12} v_{1,n}^2 v_{2,n}^2-v_{2,n} f_{2,n}(x,v_{2,n}) \right)|x|^{2-N}.
\end{align*}
A crucial step in the proof of Theorem \ref{thm: main general be} is the validity of the Alt-Caffarelli-Friedman monotonicity formula of Subsection \ref{sub: ACF} for $J_n$, uniformly in $n$.
\begin{lemma}\label{lem: ACF uniform}
There exists $C>0$ independent of $n$ such that $J_{1,n}(r) \ge C$ and $J_{2,n}(r) \ge C$ for every $r \in [2,\bar r_n/3]$, and
\[
r \mapsto J_n(r) e^{-C M_n^{-1/4} r^{-1/2} + C r_n^2 r^2} \quad \text{is monotone non-decreasing for $r \in [2,\bar r_n/3]$}.
\]
\end{lemma}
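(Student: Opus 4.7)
The plan is to deduce the statement directly from the perturbed Alt--Caffarelli--Friedman monotonicity formula (Theorem~\ref{thm: ACF}), applied to the pair of surviving components $(v_{1,n},v_{2,n})$ identified in Proposition~\ref{prop: riassunto blow-up}, restricted to the interval $[2,\bar r_n/3]$. In the notation of Theorem~\ref{thm: ACF} we make the substitutions $\beta\leftrightarrow M_n$, $R\leftrightarrow \bar r_n/3$, and $\eps\leftrightarrow dr_n^2$; the last choice is dictated by the scaling definition of $f_{i,n}$ preceding Lemma~\ref{lem: mega lemma}, which immediately gives $|f_{i,n}(x,v_{i,n})|\le dr_n^2\, v_{i,n}$, i.e.\ hypothesis (h3). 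Hypothesis (h0) then reduces to $\tfrac{d}{9}(r_n\bar r_n)^2\le((N-2)/2)^2$, which holds for $n$ large thanks to the crucial relation $r_n\bar r_n\to 0$ established in Lemma~\ref{lem: R_beta to 0}.

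The non-degeneracy lower bound in (h2) is exactly the content of Lemma~\ref{lem: 2 non deg}. For the ratio upper bound in (h2) and the strict positivity in (h1), I would combine the Lipschitz estimate of Lemma~\ref{lem: mega lemma}(4)--(5) (which yields $v_{i,n}(x)\le 1+|x|$, since $\sum_j v_{j,n}(0)=1$) with the Almgren doubling of Lemma~\ref{lem: doubling}(i), exploiting that by the very definition of $\bar r_n$ the frequency $N(\mathbf{v}_n,0,r)$ stays of order one on the whole interval. The strict positivity of $J_{i,n}$ and $\Lambda_{i,n}$ then follows from the integral identity already used in the proof of Lemma~\ref{lem: positivita}, namely testing the equation for $v_{i,n}$ against $v_{i,n}|x|^{2-N}$, together with the non-degeneracy just established. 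Once (h0)--(h3) hold uniformly in $n$, Theorem~\ref{thm: ACF} delivers the monotonicity with the correction factor $\exp\{-CM_n^{-1/4}r^{-1/2}+Cr_n^2 r^2\}$ exactly matching the statement, and the uniform lower bound $J_{i,n}\ge C$ on $[2,\bar r_n/3]$ is then extracted from the same identity combined with the $\mu$-lower bound $\tfrac{1}{r^{N-1}}\int_{\partial B_r}v_{i,n}^2\ge \mu$.

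The main obstacle I anticipate is making the ratio upper bound in (h2) truly uniform in both $n$ and $r\in[2,\bar r_n/3]$: since $\bar r_n\to \infty$, the naive Lipschitz bound alone only produces $\tfrac{1}{r^{N-1}}\int_{\partial B_r}v_{i,n}^2=O(r^2)$, which in principle allows the ratio to grow polynomially in $r$ and would ruin the uniform $\bar\lambda$ required by Lemma~\ref{lem: lemma 4.2 di wang}. The natural remedy is a dyadic-scale reduction: apply Theorem~\ref{thm: ACF} on annuli $[2^k,2^{k+1}]\subset[2,\bar r_n/3]$ on which $\bar\lambda$ is essentially frozen, and then chain the resulting monotonicities while tracking the scale-dependent corrections so that the final exponential factor remains integrable in $r$ and the constant $C$ can be chosen uniform in $n$. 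The precise balancing of this chain with the smallness $r_n\bar r_n\to 0$ is where the strongest interaction between the Almgren and the ACF estimates will take place.
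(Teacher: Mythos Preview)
Your overall framework is correct: the lemma is indeed an application of Theorem~\ref{thm: ACF} with the substitutions you list, and your verification of (h0), (h3), and the $\mu$-lower bound in (h2) via Lemma~\ref{lem: 2 non deg} matches the paper. You have also correctly isolated the real difficulty, namely the uniform ratio bound in (h2).

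However, your proposed dyadic-chaining remedy has a genuine gap. The constant $C$ in Theorem~\ref{thm: ACF} depends on $\bar\lambda$ through Lemma~\ref{lem: lemma 4.2 di wang}, and nothing in your scheme prevents the ratio $\int_{\partial B_r}v_{1,n}^2/\int_{\partial B_r}v_{2,n}^2$ from drifting across dyadic scales; if $\bar\lambda_k$ grows with $k$, the chained exponential corrections accumulate over $\sim\log\bar r_n\to\infty$ scales and the final constant is not uniform in $n$. The paper avoids this entirely by a blow-down argument: for \emph{any} sequence $\rho_n\to\infty$ with $\rho_n\le\bar r_n/3$, the rescalings $\tilde v_{i,n}(x)=v_{i,n}(\rho_n x)/\sqrt{H(\mathbf{v}_n,0,\rho_n)}$ are shown to converge in $H^1\cap\mathcal C$ to the explicit profile $(\gamma x_1^+,\gamma x_1^-,0,\dots,0)$. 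This uses the upper bound $N(\mathbf v_n,0,r)\le 1$ on $[2,\bar r_n]$ (from the definition of $\bar r_n$) to control $H$ via doubling, then the limiting profile is identified through $N(\tilde{\mathbf v},0,r)\equiv 1$ and the non-self-segregation result of Theorem~\ref{prop: on multipl}. A short topological lemma about continuous paths in the simplex then converts this pointwise-in-scale statement into the uniform ratio bound on the whole interval.

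Two further points where your sketch is too optimistic. First, the positivity of $\Lambda_{i,n}$ in (h1) does not follow from the integral identity of Lemma~\ref{lem: positivita} (that identity gives an \emph{upper} bound for $J_{i,n}$ in terms of boundary data, not a lower bound for $\Lambda_{i,n}$); the paper obtains $\Lambda_{i,n}\ge C$ by the same blow-down argument, using $\mathcal C^{1,\alpha}$ convergence away from the limit interface to produce a nontrivial tangential gradient. Second, the uniform lower bound $J_{i,n}\ge C$ is not extracted from (h2) as you suggest, but by a bootstrap: one first checks $J_{i,n}\ge\bar C$ on a fixed finite interval by compactness, then defines the maximal interval on which $J_{i,n}>\bar C/10$, applies Theorem~\ref{thm: ACF} there, and uses the resulting monotonicity to show this interval must reach $\bar r_n/3$.
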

The proof consists in verifying that the assumptions ($h0$)-($h3$) of Theorem \ref{thm: ACF} are satisfied in the range $[2,\bar r_n/3]$, with constants uniform in $n$. In doing this, we shall strongly use the fact that $r \in (0,\bar r_n]$, the range where the function $\mf{v}_n$ has linear behaviour. Since the proof is quite long and a little bit technical, we postpone it in Subsection \ref{sub: ACF uniform}, and now we proceed with the conclusion of the proof of Theorem \ref{thm: main general be}.

\begin{proof}[Conclusion of the proof of Theorem \ref{thm: main general be}]
We aim at proving the validity of the following chain of inequalities, connecting the Alt-Caffarelli-Friedman monotonicity formula of Lemma \ref{lem: ACF uniform} to the Almgren-type monotonicity formula of Lemma \ref{lem: Almgren ACF}:
\begin{equation}\label{stima cruciale}
\begin{split}
C \le J_n(2) & \le C J_n\left(\frac{\bar r_n}{3}\right) \le C\left( \frac{E(\mf{v}_n,0,\bar r_n)+H(\mf{v}_n,0,\bar r_n)}{\bar r_n^2} + o_n(1)\right)^2 \\
&\le C\left( r_n^2\frac{E(\mf{v}_n,0,\tilde r/r_n)+H(\mf{v}_n,0,\tilde r/r_n)}{\tilde r^2} + o_n(1)\right)^2,
\end{split}
\end{equation}
where $o_n(1) \to 0$ as $n \to \infty$. Once that this is proved, the conclusion easily follows: indeed, as in Lemma \ref{raggio illimtato}, the last quantity tends to $0$ as $n \to +\infty$, in contradiction with the fact that $J_n(2) \ge C$. Hence we have to verify the validity of \eqref{stima cruciale}. The first two inequalities follow by Lemma \ref{lem: ACF uniform}, the last one can be proved as in Lemma \ref{raggio illimtato}, and therefore we have only to check that
\begin{equation}\label{stima cruciale 2}
J_n\left(\frac{\bar r_n}{3}\right) \le C\left(\frac{E(\mf{v}_n,0,\bar r_n)+H(\mf{v}_n,0,\bar r_n)}{\bar r_n^2} + o_n(1)\right)^2.
\end{equation}
We emphasize that, recalling the definition of $J_n(r)$ and $E(\mf{v}_n,0,r)$, this can be considered an inequality relating the \emph{geometric mean} with the \emph{arithmetic mean} of suitable energy functionals of $\mf{v}_n$ in $B_r$. By definition
\begin{equation*}
\begin{split}
J_n\left(\frac{\bar r_n}{3}\right) &= \left( \frac{9}{\bar r_n^2} \int_{B_{\bar r_n/3}} \left(\left|\nabla v_{1,n}\right|^2 + M_n a_{12} v_{1,n}^2 v_{2,n}^2-v_{1,n} f_{1,n}(x,v_{1,n}) \right)|x|^{2-N} \right) \\
& \qquad \cdot \left(\frac{9}{\bar r_n^2} \int_{B_{\bar r_n/3}} \left(\left|\nabla v_{2,n}\right|^2 + M_n a_{12} v_{1,n}^2 v_{2,n}^2-v_{2,n} f_{2,n}(x,v_{2,n}) \right)|x|^{2-N} \right).
\end{split}
\end{equation*}
We control both terms in the product on the right hand side in the same way, so here we consider only the first term. It holds
\begin{multline*}
\frac{1}{\bar r_n^2} \int_{B_{\bar r_n/3}} \left(\left|\nabla v_{1,n}\right|^2 + M_n a_{12} v_{1,n}^2 v_{2,n}^2-v_{1,n} f_{1,n}(x,v_{1,n}) \right)|x|^{2-N} \\
\le \frac{1}{\bar r_n^2} \int_{B_{\bar r_n}} \left(\left|\nabla v_{1,n}\right|^2 + M_n a_{12} v_{1,n}^2 v_{2,n}^2-v_{1,n} f_{1,n}(x,v_{1,n}) \right)|x|^{2-N}  + \frac{1}{\bar r_n^2}\int_{B_{\bar r_n} \setminus B_{\bar r_n/3}} \frac{v_{1,n} f_{1,n}(x,v_{1,n})}{|x|^{N-2}}.
\end{multline*}

First, we claim that
\begin{equation}\label{claim stime finali 1}
\lim_{n \to \infty} \left| \frac{1}{\bar r_n^2}\int_{B_{\bar r_n} \setminus B_{\bar r_n/3}} \frac{v_{1,n} f_{1,n}(x,v_{1,n})}{|x|^{N-2}}\right| = 0.
\end{equation}
Indeed by definition of $v_{i,n}$ and $f_{i,n}$, and by the assumption \eqref{be assumpt}, it results
\begin{align*}
\left|\frac{1}{\bar r_n^2}\int_{B_{\bar r_n} \setminus B_{\bar r_n/3}} \frac{v_{1,n} f_{1,n}(x,v_{1,n})}{|x|^{N-2}} \right|  & \le \frac{d r_n^2}{\bar r_n^2}\int_{B_{\bar r_n} \setminus B_{\bar r_n/3}} \frac{v_{1,n}^2}{|x|^{N-2}} \\
& =\frac{d \eta^2(x_n)}{L_n^2 \bar r_n^2 r_n^2} \int_{B_{\bar r_n r_n}(x_n) \setminus B_{\bar r_n r_n/3}(x_n)} \frac{u_{1,\beta_n}^2}{|x-x_n|^{N-2}} \\
& \le \frac{C m^2}{L_n^2 \bar r_n^N r_n^N} \int_{B_{\bar r_n r_n}(x_n) } 1 \le \frac{C}{L_n^2} \to 0
\end{align*}
as $n \to \infty$, where we used the uniform boundedness of $\{\mf{u}_n\}$ and the fact that $L_n \to +\infty$.

Secondly, we claim that
\begin{equation}\label{claim 2 stime finali}
\frac{1}{\bar r_n^2} \int_{B_{\bar r_n}} \left(\left|\nabla v_{1,n}\right|^2 + M_n a_{12} v_{1,n}^2 v_{2,n}^2-v_{1,n} f_{1,n}(x,v_{1,n}) \right)|x|^{2-N} \le C \frac{E(\mf{v}_n,0,\bar r_n)+H(\mf{v}_n,0,\bar r_n)}{\bar r_n^2}.
\end{equation}
To prove it, let us test the equation for $v_{i,n}$ against $v_{i,n} |x|^{2-N}$: integrating by parts as in the proof of Lemma \ref{lem: positivita}, we deduce that
\begin{equation}\label{stima finale 3}
\begin{split}
\int_{B_{\bar r_n}} & \left(\left|\nabla v_{1,n}\right|^2 + M_n a_{12} v_{1,n}^2 v_{2,n}^2-v_{1,n} f_{1,n}(x,v_{1,n}) \right)|x|^{2-N} \\
& \quad \le \frac{1}{\bar r_n^{N-2}} \int_{\pa B_{\bar r_n}} v_{1,n} \pa_{\nu} v_{1,n} +\frac{N-2}{2 \bar r_n^{N-1}} \int_{\pa B_{\bar r_n}} v_{1,n}^2 \\
& \quad = \frac{1}{\bar r_n^{N-2}} \int_{B_{\bar r_n}} \left(\left|\nabla v_{1,n}\right|^2 + M_n a_{12} v_{1,n}^2 v_{2,n}^2-v_{1,n} f_{1,n}(x,v_{1,n}) \right) + \frac{N-2}{2 \bar r_n^{N-1}} \int_{\pa B_{\bar r_n}} v_{1,n}^2.
\end{split}
\end{equation}
For every $i=1,\dots,k$ and for every $n$
\begin{multline*}
\frac{1}{\bar r_n^{N-2}} \int_{B_{\bar r_n}}  \left(\left|\nabla v_{i,n}\right|^2 + M_n a_{12} v_{i,n}^2 \sum_{j \neq i} v_{j,n}^2-v_{i,n} f_{i,n}(x,v_{i,n}) \right) + \frac{N-2}{2 \bar r_n^{N-1}} \int_{\pa B_{\bar r_n}} v_{i,n}^2 \\
 \ge \frac{\eta^2(x_n)}{L_n^2 r_n^2}\left( \frac{1}{(\bar r_n r_n)^{N-2}} \int_{B_{\bar r_n r_n}(x_n)} |\nabla u_{i,\beta_n}|^2 - \frac{d (\bar r_n r_n)^2}{(\bar r_n r_n)^N} \int_{B_{\bar r_n r_n}(x_n)} u_{i,\beta_n}^2 \right) \\
+\frac{\eta^2(x_n)}{L_n^2 r_n^2} \frac{N-2}{2 (\bar r_n r_n)^{N-1}} \int_{\pa B_{\bar r_n r_n}} u_{i,\beta_n}^2 \\
\ge \frac{\eta^2(x_n)}{L_n^2 r_n^2} \frac{1}{(\bar r_n r_n)^{N-2}} \left( 1-\frac{d (\bar r_n r_n)^2}{N-1}\right) \int_{B_{ \bar r_n r_n}(x_n)} |\nabla u_{i,\beta_n}|^2 \\
 + \frac{\eta^2(x_n)}{L_n^2 r_n^2} \left( \frac{N-2}{2}-\frac{d (\bar r_n r_n)^2}{N-1}\right) \frac{1}{(\bar r_n r_n)^{N-1}} \int_{\pa B_{\bar r_n r_n}(x_n)} u_{i,\beta_n}^2 \ge 0,
\end{multline*}
where we used the fact that $\bar r_n r_n \to 0$ and the Poincar\'e inequality (Lemma \ref{lem: poincare}). Coming back to \eqref{stima finale 3}, we deduce that
\begin{align*}
\int_{B_{\bar r_n}} & \left(\left|\nabla v_{1,n}\right|^2 + M_n a_{12} v_{1,n}^2 v_{2,n}^2-v_{1,n} f_{1,n}(x,v_{1,n}) \right)|x|^{2-N}  \\
& \le  \sum_{i=1}^k \left[\frac{1}{\bar r_n^{N-2}} \int_{B_{\bar r_n}} \left(\left|\nabla v_{i,n}\right|^2 + M_n a_{12} v_{i,n}^2 \sum_{j \neq i} v_{j,n}^2-v_{i,n} f_{i,n}(x,v_{i,n}) \right) + \frac{N-2}{2 \bar r_n^{N-1}} \int_{\pa B_{\bar r_n}} v_{i,n}^2\right] \\
& = E(\mf{v}_n,0,\bar r_n)+ \frac{N-2}{2}H(\mf{v}_n,0,\bar r_n) \le C \left( E(\mf{v}_n,0,\bar r_n)+ H(\mf{v}_n,0,\bar r_n) \right).
\end{align*}
Multiplying the first and the last term by $\bar r_n^{-2}$, the claim \eqref{claim 2 stime finali} follows.

At this point it is sufficient to observe that claims \eqref{claim stime finali 1} and \eqref{claim 2 stime finali} imply that \eqref{stima cruciale 2} holds, which completes the proof.
\end{proof}

\subsection{Proof of Lemma \ref{lem: ACF uniform}}\label{sub: ACF uniform}

We will often use the fact that the function $N(\mf{v}_n,0,r)$ can be controlled from below and from above by positive constants in the range $[2,\bar r_n]$.

\begin{lemma}\label{lem: N limitato da sopra}
There exists $\sigma \in (0,1)$ such that $\sigma \le N(\mf{v}_n,0,r) \le 1$ for every $r \in [2,\bar r_n]$, for every $n$. As a consequence
\begin{align*}
& r \mapsto \frac{H(\mf{v}_n,0,r)}{r^2} \quad \text{is monotone non-increasing for $r \in [2,\bar r_n]$} \\
& r \mapsto \frac{H(\mf{v}_n,0,r)}{r^{2\sigma}} \quad \text{is monotone non-decreasing for $r \in [2,\bar r_n]$}.
\end{align*}
\end{lemma}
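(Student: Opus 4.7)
The plan is to split the two-sided bound on $N(\mf{v}_n, 0, r)$ into an easy part and a hard part, and then read off the two monotonicity statements from the identity $\frac{\de}{\de r} \log H(\mf{v}_n, 0, r) = 2 N(\mf{v}_n, 0, r)/r$ coming from \eqref{der di H}.

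For the upper bound I would argue directly from the definition of $\bar r_n$. By \eqref{def bar r n} and continuity in $r$, $(N(\mf{v}_n, 0, r) + 1)\, e^{\tilde C r_n r} \le 2 - r_n r$ for every $r \in (0, \bar r_n]$; since the left-hand side is non-negative the right-hand side must be too, so $r_n r \le 2$, and then $e^{-\tilde C r_n r} \le 1$ gives
\[
N(\mf{v}_n, 0, r) + 1 \le (2 - r_n r)\, e^{-\tilde C r_n r} \le 2 - r_n r \le 2,
\]
whence $N(\mf{v}_n, 0, r) \le 1$ throughout $[2, \bar r_n]$.

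For the lower bound I would argue by contradiction. If no uniform $\sigma > 0$ existed, then, up to a subsequence, one could find $s_n \in [2, \bar r_n]$ with $N(\mf{v}_n, 0, s_n) \to 0$. Using the Almgren monotonicity of Lemma \ref{corol: almgren v} on the interval $[2, s_n]$,
\[
(N(\mf{v}_n, 0, 2) + 1)\, e^{2 \tilde C r_n} \le (N(\mf{v}_n, 0, s_n) + 1)\, e^{\tilde C r_n s_n},
\]
and since $r_n s_n \le r_n \bar r_n \to 0$ by Lemma \ref{lem: R_beta to 0}, this forces $N(\mf{v}_n, 0, 2) \to 0$. Combined with the non-degeneracy $H(\mf{v}_n, 0, 2) \ge c > 0$ from Lemma \ref{lem: 2 non deg}, one obtains $E(\mf{v}_n, 0, 2) \to 0$. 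In $E(\mf{v}_n, 0, 2)$ the interaction term is non-negative, while the reaction contribution satisfies $|f_{i,n}(x, v_{i,n}) v_{i,n}| \le d\, r_n^2\, v_{i,n}^2 = o(1)$ in $L^1(B_2)$ thanks to the uniform boundedness of $\bv_n$ on compact sets, so one would deduce $\int_{B_2} \sum_i |\nabla v_{i,n}|^2 \to 0$. This contradicts Lemma \ref{lem: non trivial limit}, which provides genuine $\C^1(B_{1/4})$-convergence $v_{1,n} \to v_1$ with $|\nabla v_1(0)| = 1$: indeed $|\nabla v_1| \ge 1/2$ on some ball $B_\delta$, and the $\C^1$-convergence forces $\int_{B_\delta} |\nabla v_{1,n}|^2 \to \int_{B_\delta} |\nabla v_1|^2 > 0$.

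Once the two-sided bound $\sigma \le N(\mf{v}_n, 0, r) \le 1$ on $[2, \bar r_n]$ is established, the monotonicity of the two quotients will follow by integrating
\[
\frac{\de}{\de r} \log \frac{H(\mf{v}_n, 0, r)}{r^{2a}} = \frac{2\bigl(N(\mf{v}_n, 0, r) - a\bigr)}{r}
\]
with $a = 1$ (where $N - a \le 0$) and with $a = \sigma$ (where $N - a \ge 0$); the logarithms are well defined because $H(\mf{v}_n, 0, r) \ge c > 0$ on $[2, \tilde r/r_n]$ by Lemma \ref{lem: 2 non deg}. The delicate step is the lower bound: it works only because the smallness of $N$ can be transported to the fixed scale $r = 2$ (exploiting $r_n \bar r_n \to 0$) and because Lemma \ref{lem: non trivial limit} upgrades the $\C_{\loc}$-convergence to $\C^1_{\loc}$ near the origin, which is what allows one to turn $L^2$-smallness of the gradient on $B_2$ into a contradiction with $|\nabla v_1(0)| = 1$.
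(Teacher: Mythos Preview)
Your argument is correct and follows the same strategy as the paper. The upper bound is identical: both you and the paper exploit the definition of $\bar r_n$ together with the monotonicity from Lemma~\ref{corol: almgren v}. For the lower bound the paper argues directly---it shows $N(\mf{v}_n,0,2)\ge C>0$ from the non-triviality of $v_{1,n},v_{2,n}$ in $B_2$ (Lemma~\ref{lem: 2 non deg}) and the vanishing of the reaction term, then propagates this to all $r\in[2,\bar r_n]$ via monotonicity and $r_n\bar r_n\to 0$---whereas you run the contrapositive, transporting smallness of $N$ at $s_n$ back to $r=2$ and reaching a contradiction with $|\nabla v_1(0)|=1$. These are logically equivalent.

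Two minor remarks. First, the negation of ``$\exists\,\sigma>0$ with $N\ge\sigma$'' gives $\liminf N(\mf{v}_n,0,s_n)\le 0$ rather than $N\to 0$; your argument goes through unchanged with this phrasing. Second, to pass from $N(\mf{v}_n,0,2)\to 0$ to $E(\mf{v}_n,0,2)\to 0$ you need $H(\mf{v}_n,0,2)$ bounded \emph{above} (which follows from the uniform convergence $\mf{v}_n\to\mf{v}$ on $\overline{B_2}$), not the lower bound from Lemma~\ref{lem: 2 non deg}. The monotonicity of $H/r^{2a}$ is derived exactly as you write; the paper simply packages this as Lemma~\ref{lem: doubling}.
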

\begin{proof}
By the Almgren monotonicity formula
\[
(N(\mf{v}_n,0,r) +1)e^{\tilde C r_n r} \le (N(\mf{v}_n,0,\bar r_n) +1)e^{\tilde C r_n \bar r_n} = 2-r_n \bar r_n
\]
for every $r \in [0,\bar r_n]$. This gives the desired upper bound on $N$. For the lower bound, using again the Almgren monotonicity formula we have
\[
\left(N(\mf{v}_n,0,r)+1\right)e^{\tilde C r_n r} \ge \left(N(\mf{v}_n,0,2)+1\right)e^{\tilde C r_n 2}
\]
for every $r \in [0,\bar r_n]$, which readily implies
\begin{equation}\label{stima su N dal basso}
N(\mf{v}_n,0,r) \ge  (N(\mf{v}_n,0,2)+1)e^{-\tilde C r_n r} -1.
\end{equation}
Now, as by Proposition \ref{prop: riassunto blow-up} and Lemma \ref{lem: 2 non deg} both $v_{1,n}$ and $v_{2,n}$ are non-trivial and non-constant in $B_2$, we have
\[
\int_{B_2} \left(\sum_i \left|\nabla v_{i,n}\right|^2 + 2 M_n \sum_{i<j} a_{ij} v_{i,n}^2 v_{j,n}^2\right)\ge C>0
\]
Since $f_{i,n} \to 0$ uniformly in $\Omega_n$ (see point (1) of Lemma \ref{lem: mega lemma}), we deduce that $E(\mf{v}_n,0,2) \ge C >0$, and by uniform convergence $\mf{v}_n \to \mf{v}$ we infer that $N(\mf{v}_n,0,2) \ge C >0$ independently of $n$. Since $r_n r \le r_n \bar r_n \to 0$ as $n \to \infty$ for every $r \le \bar r_n$, coming back to the estimate \eqref{stima su N dal basso} we conclude that there exists $C>0$ such that
\[
N(\mf{v}_n,0,r) \ge (1+C)e^{-\tilde C r_n r}-1 \ge \sigma >0  \qquad \text{for every $r \in [2,\bar r_n]$},
\]
for every $n$ sufficiently large. The second part of the thesis is now a direct consequence of Lemma \ref{lem: doubling}.
\end{proof}

In the next lemma we make rigorous the concept that $\mf{v}_n$ behaves in a linear way up to the threshold $\bar r_n$.

\begin{lemma}\label{lem: blow-down}
Let $(\rho_n)$ be any sequence such that $\rho_n \to \infty$ and $\rho_n \leq \bar r_n / 3$. Then there exist $\gamma > 0$ and $1 \leq h < l \leq k$ such that, up to a subsequence, the blow-down sequence
\[
    \tilde v_{i,n}(x) := \frac{v_{i,n}(\rho_n x)}{\sqrt{H(\mf{v}_n, 0, \rho_n)}}
\]
converges in $H^1(B_1) \cap \C (\overline{B_1})$, up to a rotation, to the function $\tilde{\mf{v}}$ defined by
\[
    \tilde v_h(x)=\gamma x_1^+ \qquad \tilde v_l(x) = \gamma x_1^- \qquad \tilde v_j(x) = 0 \qquad \text{for every $j \neq h,l$}.
\]
\end{lemma}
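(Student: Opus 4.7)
I would approach Lemma~\ref{lem: blow-down} via a second blow-up/blow-down at scale $\rho_n$, exploiting that on the interval $[2,\bar r_n]$ the Almgren frequency of $\mf{v}_n$ at $0$ is pinched between $\sigma$ and $1$ by Lemma~\ref{lem: N limitato da sopra}. A direct rescaling shows that $\tilde{\mf{v}}_n$ satisfies
\[
    -\Delta \tilde v_{i,n} = \tilde f_{i,n}(x,\tilde v_{i,n}) - \tilde M_n\, \tilde v_{i,n}\sum_{j\neq i} a_{ij}\tilde v_{j,n}^2 \qquad \text{in } B_{\bar r_n/\rho_n},
\]
where $\tilde M_n := \rho_n^2 M_n H(\mf{v}_n,0,\rho_n)$ and $|\tilde f_{i,n}(x,\tilde v_{i,n})|\le d\,r_n^2\rho_n^2\,\tilde v_{i,n}\to 0$ since $r_n\rho_n\le r_n\bar r_n/3\to 0$. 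The rescaling identities $N(\tilde{\mf{v}}_n,0,r)=N(\mf{v}_n,0,\rho_n r)$ and $H(\tilde{\mf{v}}_n,0,r)=H(\mf{v}_n,0,\rho_n r)/H(\mf{v}_n,0,\rho_n)$, combined with the two doubling bounds of Lemma~\ref{lem: N limitato da sopra}, pin $H(\tilde{\mf{v}}_n,0,\cdot)$ between $\min\{r^{2\sigma},r^2\}$ and $\max\{r^{2\sigma},r^2\}$ on $[2/\rho_n,\bar r_n/\rho_n]$, with $H(\tilde{\mf{v}}_n,0,1)=1$.

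Testing the equation against $\tilde v_{i,n}\varphi^2$ for a smooth cutoff $\varphi$ in $B_3$ then yields uniform $H^1(B_2)$ bounds and the interaction estimate $\int_{B_2}\tilde M_n\tilde v_{i,n}^2\tilde v_{j,n}^2\le C$. The local H\"older regularity of strongly competing solutions (Theorem~2.6 in~\cite{Wa};~\cite{nttv,stz}) provides a subsequence converging in $H^1(B_1)\cap \mathcal{C}^{0,\alpha}(\overline{B_1})$ to some $\tilde{\mf{v}}$; strong $H^1$ convergence is proved as in point~(6) of Lemma~\ref{lem: mega lemma}. By Lemma~\ref{lem: M_n bound from below}, Lemma~\ref{lem: 2 non deg} and the lower doubling bound one has $H(\mf{v}_n,0,\rho_n)\ge C\rho_n^{2\sigma}\to+\infty$ and $\tilde M_n\ge C\rho_n^{2+2\sigma}\to+\infty$, so the interaction bound forces $\tilde v_i\tilde v_j\equiv 0$ and identifies $\tilde{\mf{v}}$ as a limiting profile in $\mathcal{G}_{\loc}(B_1)$; moreover $\tilde v_{i,n}(0)\le 1/\sqrt{H(\mf{v}_n,0,\rho_n)}\to 0$, hence $\tilde{\mf{v}}(0)=\mf{0}$.

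The decisive step is a pincer on $N(\tilde{\mf{v}},0,\cdot)$: since $0$ is a free-boundary point of $\tilde{\mf{v}}\in\mathcal{G}(B_1)$, Theorem~\ref{prop: monot segregated} yields $N(\tilde{\mf{v}},0,0^+)\ge 1$, while Almgren monotonicity (Proposition~\ref{prop: monotonia successione}) combined with the defining identity for $\bar r_n$ gives, for every $r\in(0,\bar r_n/\rho_n]$,
\[
    (N(\tilde{\mf{v}}_n,0,r)+1)e^{\tilde C r_n\rho_n r}\le (N(\mf{v}_n,0,\bar r_n)+1)e^{\tilde C r_n\bar r_n}=2-r_n\bar r_n,
\]
and since $r_n\bar r_n,r_n\rho_n\to 0$ we obtain $\limsup_n N(\tilde{\mf{v}}_n,0,r)\le 1$ for every $r\in(0,1]$. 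Passing to the limit forces $N(\tilde{\mf{v}},0,r)\equiv 1$ on $(0,1]$, and Proposition~\ref{prop: monot segregated senza f} makes $\tilde{\mf{v}}$ $1$-homogeneous. Corollary~\ref{corol 1 ACF} bounds the number of non-trivial components by $2$, and Theorem~\ref{prop: on multipl} forces it to be at least $2$, so exactly two components are non-trivial, say $\tilde v_h$ and $\tilde v_l$. In the variational case the limiting profile also satisfies the reflection identity $-\Delta(\tilde v_h-\tilde v_l)=0$ on $\R^N$ (cf.~\cite{nttv,tt,stz}), so $\tilde v_h-\tilde v_l$ is a $1$-homogeneous entire harmonic function: $\tilde v_h-\tilde v_l=\gamma\, e\cdot x$ for some $\gamma>0$ and unit vector $e$. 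Disjointness of supports and non-negativity give $\tilde v_h=\gamma(e\cdot x)^+$ and $\tilde v_l=\gamma(e\cdot x)^-$; a rotation aligns $e$ with $e_1$. The main difficulty is closing this pincer: the upper side needs both $r_n\bar r_n\to 0$ and $r_n\rho_n\to 0$ together with the exponential Almgren inequality, while the lower side demands enough compactness to place $\tilde{\mf{v}}$ in $\mathcal{G}$, for which the lower doubling of Lemma~\ref{lem: N limitato da sopra} and the bound $M_n\ge C$ of Lemma~\ref{lem: M_n bound from below} are indispensable.
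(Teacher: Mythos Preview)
Your proof follows essentially the same strategy as the paper's: rescale, show the new competition parameter diverges while the reaction terms vanish, establish compactness, identify the limit in $\mathcal{G}$, and use the frequency pincer $N\equiv 1$ to force $1$-homogeneity. One step is missing, however: the local H\"older results you invoke (Theorem~2.6 in \cite{Wa}, \cite{nttv,stz}) take uniform $L^\infty$ bounds as input, not merely $H^1$ bounds. The paper closes this gap explicitly by a Brezis--Kato argument, using that the inequality $-\Delta\tilde v_{i,n}\le d(r_n\rho_n)^2\tilde v_{i,n}$ has uniformly bounded coefficient and that $\{\tilde{\mf v}_n\}$ is bounded in $H^1(B_3)$; the latter it obtains directly from $E(\tilde{\mf v}_n,0,3)=N(\tilde{\mf v}_n,0,3)\,H(\tilde{\mf v}_n,0,3)\le 9$ together with Poincar\'e, which is slightly cleaner than your cutoff-testing route but equivalent. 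Your final identification via Corollary~\ref{corol 1 ACF} plus Theorem~\ref{prop: on multipl} plus the reflection law $-\Delta(\tilde v_h-\tilde v_l)=0$ is valid (and the reflection law does hold for two-component limiting profiles of the variational system, cf.\ \cite{nttv,tt}), while the paper appeals directly to the known classification of $1$-homogeneous elements of $\mathcal{G}$ with multiplicity at least two at the origin; both arguments land in the same place.
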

\begin{proof}
We start with the observation that each $\tilde{\mf{v}}_n$ solves the system
\[
    -\Delta \tilde v_{i,n} =  \frac{ \rho_n^2}{\sqrt{H(\mf{v}_n, 0, \rho_n)}} f_{i,n}\left(\rho_n,v_{i,n} (\rho_n x ) \right)- \rho_n^2 H(\mf{v}_n,0,\rho_n) M_n \tilde v_{i,n} \sum_{j \neq i} a_{ij} \tilde v_{j,n}^2
\]
in a set $\tilde \Omega_n \supset B_3$ (this follows directly by the fact that $\Omega_n \supset B_{1/r_n}$). Since $\rho_n \to +\infty$, $H(\mf{v}_n,0,\rho_n) \ge C$ (by Lemma \ref{lem: 2 non deg}) and $M_n \ge C >0$, we infer that the new competition parameter $\rho_n^2 H(\mf{v}_n,0,\rho_n) M_n \to +\infty$. Furthermore, recalling assumption \eqref{be assumpt} and the definition of $f_{i,n}$, we have
\begin{equation}\label{elliptic ineq}
    -\Delta \tilde v_{i,n} \le  \frac{ \rho_n^2}{\sqrt{H(\mf{v}_n, 0, \rho_n)}} f_{i,n}\left(\rho_n,v_{i,n} (\rho_n x ) \right) \le d \left(\rho_n r_n\right)^2 \tilde v_{i,n}
\end{equation}
in $B_3$. We wish to deduce that 
$\{\tilde{\mf{v}}_n\}$ is uniformly bounded in $B_2$. If $\{\mf{v}_n\}$ is uniformly bounded in $H^1(B_3)$ and the coefficients on the right hand side are uniformly bounded, this follows by a classical Brezis-Kato argument. The boundedness of the coefficients is given by $\rho_n r_n \le \bar r_n r_n \to 0$ as $n \to \infty$, as shown in Lemma \ref{lem: R_beta to 0}. Thus it remains to show that $\{\tilde{\mf{v}}_n\}$ is uniformly bounded in $H^1(B_3)$. By Lemma \ref{lem: N limitato da sopra}
\[
N(\tilde{\mf{v}}_n,0,\rho) \leq N\left(\mf{v}_n,0, \rho \rho_n \right) \le 1
\]
for every $0 \le \rho \le 3$, so that by Lemma \ref{lem: doubling}
\[
H(\tilde{\mf{v}}_n,0,\rho) = \frac{H(\mf{v}_n,0,\rho_n \rho)}{H(\mf{v}_n,0,\rho_n)} \le \rho^2
\]
for every $1 \le \rho \le 3$. Therefore
\[
E(\tilde{\mf{v}}_n,0,3) = N(\tilde{\mf{v}}_n,0,3)H(\tilde{\mf{v}}_n,0,3) \le 9.
\]
It is easy to check that this gives the desired upper bound: indeed thanks to the Poincar\`e inequality in Lemma \ref{lem: poincare} we have
\begin{align*}
E(\tilde{\mf{v}}_n,0,3) &\ge \frac{1}{3^{N-2}} \int_{\pa B_3} \sum_i |\nabla \tilde v_{i,n}|^2 - \frac{(\rho_n r_n)^2}{3^N} \int_{B_r} \sum_i \tilde v_{i,n}^2 \\
& \ge \frac{1}{3^{N-2}}\left(1-(\rho_n r_n)^2\right) \int_{B_3} \sum_i |\nabla \tilde v_{i,n}|^2 - (\rho_n r_n)^2 H(\tilde{\mf{v}}_n,0,3)\\
& \ge C \int_{B_3} \sum_i |\nabla \tilde v_{i,n}|^2 -o_n(1),
\end{align*}
with $o_n(1) \to 0$ as $n \to \infty$.
Coming back to \eqref{elliptic ineq}, we deduce that the sequence $\{\tilde{\mf{v}}_n\}$ is bounded in $L^\infty(B_2)$, and in light of the local version of the main results in \cite{nttv}, see \cite{stz}, we conclude that up to a subsequence $\tilde{\mf{v}}_n \to \tilde{\mf{v}}$ in $\mathcal{C}(B_{3/2})$ and in $H^1(B_{3/2})$ as $n \to \infty$. Since
\[
    \left| \frac{ \rho_n^2}{\sqrt{H(\mf{v}_n, 0, \rho_n)}} f_{i,n}\left(\rho_n,v_{i,n} (\rho_n x ) \right)\right| \le (\rho_n r_n )^2 \|\tilde v_{i,n}\|_{L^\infty(B_2)} \to 0
 \]
as $n \to \infty$, by Proposition \ref{prop: limiting profile} the limiting profile $\tilde{\mf{v}}$ belongs to the class $\mathcal{G}(B_{3/2})$, with
\[
    \Delta \tilde v_i = 0 \qquad \text{in $\{\tilde v_i>0\}$}.
\]
Moreover, $0 \in \{\tilde{\mf{v}}=0\}$, the free-boundary of $\tilde{\mf{v}}$, since the sequence $(\mf{v}_n(0))$ is bounded in $0$ while by Lemma \ref{lem: N limitato da sopra}
\[
H(\mf{v}_n,0,\rho_n) \ge \frac{H(\mf{v}_n,0,2)}{4^\sigma} r^{2\sigma}\to +\infty \qquad \text{as $n \to \infty$}.
\]
By Proposition \ref{prop: monot segregated senza f} we conclude that
\[
    1\le N(\tilde{\mf{v}},0,0^+) \le N(\tilde{\mf{v}},0,r) \le 1,
\]
that is $N(\tilde{\mf{v}},0,r)=1$, for every $r \in (0,3/2]$, and hence $\tilde{\mf{v}}$ is homogeneous of degree $1$. Moreover, thanks to Theorem \ref{prop: on multipl} the occurrence of self-segregation phenomena is not allowed, so that up to a rotation there exists $h,j \in \{1,\dots,k\}$ such that
\[
\tilde v_h(x)=\gamma x_1^+ \qquad \tilde v_l(x) = \gamma x_1^- \qquad \tilde v_j(x) = 0 \qquad \text{for every $j \neq h,l$},
\]
where $\gamma>0$ is uniquely determined by the normalization condition $H(\tilde{\mf{v}},0,1)=1$.
\end{proof}
\begin{remark}
The fact that $N(\mf{v}_n,0,r) \le 1$ below the threshold $\bar r_n$ implies that suitable scaled sequences $\{\tilde{\mf{v}}_n\}$ are converging to explicit segregated profile of linear type, as in the thesis of Lemma \ref{lem: blow-down}. If we exceed the threshold $\bar r_n$ in general $N(\mf{v}_n,0,r) > 1$, and the characterization of the possible blow-up limits becomes more involved and remains still an open problem.
\end{remark}

The next step consists in showing that, up to the scale $\bar r_n$, there is a balance between $v_{1,n}$ and $v_{2,n}$, while the other components are of smaller order. Before, we need a technical result.

\begin{definition}
We denote as $\Sigma_{2,k}$ the subset of $\R^k$ of points with at most two non-trivial components, that is
\[
    \Sigma_{2,k} := \{\mf{x} \in \R^k: \ \exists i, j \in \{1,\dots, k\} \text{ such that } x_h = 0 \ \forall h \neq i, j\}.
\]
For every $\mf{x} \in \R^k$ and $g \in \C([0,1]; \R^k)$, we let
\[
    \dist(\mf{x}, \Sigma_{2,k}) = \inf_{\mf{y} \in \Sigma_{2,k}} \|\mf{x} - \mf{y}\|, \quad \dist(g([0,1]), \Sigma_{2,k}) = \sup_{x \in [0,1]} \dist(f(x), \Sigma_{2,k}).
\]
\end{definition}

\begin{lemma}\label{lem: elementare Watson}
Let $\eps \in (0,1)$ be a fixed number and let $g_n \in \C( [0,1], \R^k)$ be a sequence of continuous functions such that
\[
    g_n([0,1]) \subset E_{\eps} := \left\{ \mf{x} = (x_1, \dots, x_k) \in \R^k: x_i \geq 0,\  x_i \leq 1-\eps,\ \sum_{i=1}^{k} x_i = 1 \right\}.
\]
If the limit
\[
    \lim_{n \to \infty} \dist(g_n([0,1]), \Sigma_{2,k} ) = 0
\]
holds true, then (up to a subsequence) there exist $i \neq j$ such that for $n$ sufficiently large
\[
    \frac{\eps}{2} < g_{i,n}(x) ,  g_{j,n}(x) < 1-\frac{\eps}{2} \quad \text{and} \quad  g_{h,n} \to 0 \quad \text{uniformly in $[0,1]$ for $h \neq i, j$}.
\]
\end{lemma}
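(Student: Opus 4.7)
\textbf{Proof plan for Lemma \ref{lem: elementare Watson}.}

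The plan is to observe that on $E_\eps$ the portion of $\Sigma_{2,k}$ is a disjoint union of closed segments (one for each pair $\{i,j\}$), and then to use the connectedness of $[0,1]$ together with the continuity of $g_n$ to show that each image $g_n([0,1])$ sits in a neighbourhood of exactly one such segment.

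First, I would introduce $\delta_n := \sup_{x \in [0,1]} \dist(g_n(x), \Sigma_{2,k}) \to 0$. For any $\mathbf{y} \in \R^k$, if $a_1(\mathbf{y}) \ge a_2(\mathbf{y}) \ge \cdots \ge a_k(\mathbf{y}) \ge 0$ denote the reordered components, then the closest point in $\Sigma_{2,k}$ is obtained by zeroing the $k-2$ smallest entries, so $\dist(\mathbf{y}, \Sigma_{2,k}) \ge a_3(\mathbf{y})$. In particular, for any $x$ and $n$, at most two components of $g_n(x)$ exceed $\delta_n$, and all the others are bounded by $\delta_n$.

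Second, fix $n$ so large that $\delta_n < \min\{\eps/4,\, \eps/(2(k-2)+2)\}$. For each pair $\{i,j\}$ set
\[
    W_{ij}^n := \bigl\{ x \in [0,1] : g_{i,n}(x) > \eps/4 \text{ and } g_{j,n}(x) > \eps/4 \bigr\},
\]
which is open by continuity of $g_n$. From the simplex constraint $\sum_h g_{h,n}(x) = 1$ and $g_{h,n}(x) \le 1-\eps$, together with $g_{h,n}(x) \le \delta_n$ for all but the two largest indices, one deduces that the two largest components of $g_n(x)$ are both at least $\eps - (k-2)\delta_n > \eps/2 > \eps/4$, so every $x \in [0,1]$ belongs to at least one $W_{ij}^n$. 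Conversely, if $x \in W_{ij}^n$, then $g_{i,n}(x), g_{j,n}(x) > \eps/4 > \delta_n$ forces $\{i,j\}$ to be exactly the set of indices of the two largest components; hence the sets $W_{ij}^n$ are pairwise disjoint and cover $[0,1]$. By connectedness of $[0,1]$, exactly one $W_{i(n)j(n)}^n$ is non-empty and equals $[0,1]$.

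Third, since there are only finitely many pairs $\{i,j\} \subset \{1,\dots,k\}$, along a subsequence the pair $(i(n),j(n))$ is constant, equal to some $(i,j)$. For this subsequence and for $n$ large, on the one hand $g_{h,n}(x) \le \delta_n \to 0$ uniformly on $[0,1]$ for every $h \neq i,j$; on the other hand $g_{i,n}(x), g_{j,n}(x) \le 1-\eps < 1-\eps/2$ (from $E_\eps$), and $g_{i,n}(x) \ge 1 - (1-\eps) - (k-2)\delta_n > \eps/2$ (and analogously for $g_{j,n}(x)$), which gives the desired two-sided bounds. The only genuinely non-trivial point is the connectedness argument in the second step; everything else is a direct consequence of the defining constraints of $E_\eps$ and of the estimate $\dist(\mathbf{y},\Sigma_{2,k}) \ge a_3(\mathbf{y})$.
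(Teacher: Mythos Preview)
Your proof is correct and follows essentially the same approach as the paper's: both arguments exploit that the intersection $E_\eps \cap \Sigma_{2,k}$ breaks into $\binom{k}{2}$ well-separated pieces indexed by pairs $\{i,j\}$, and then use the connectedness of $g_n([0,1])$ to trap the image in a single piece. The paper phrases this topologically (thickening the pieces by a small $\delta$ keeps them disjoint), while you make the same idea quantitative via the open covering $\{W_{ij}^n\}$ and the explicit estimate $\dist(\mathbf{y},\Sigma_{2,k}) \ge a_3(\mathbf{y})$; the content is the same.
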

\begin{proof}
The set $E_\eps \cap \Sigma_{2,k}$ is made of $k (k-1)/2$ connected components, given by the reflections of the set
\[
    A = \{ x_1 + x_2 = 1, \ \eps \leq x_1 \leq 1-\eps, \ \eps \leq x_2 \leq 1-\eps,\  x_i = 0 \ \forall i \geq 3 \}.
\]
For the assumptions we evince that for any $\delta > 0$, there exists $\bar n$ sufficiently large such that
\[
    g_n([0,1]) \subset (E_\eps \cap \Sigma_{2,k}) + B_\delta \qquad \text{for all $n \geq \bar n$}.
\]
Evidently, for $\delta$ small enough, the set $(E_\eps \cap \Sigma_{2,k}) + B_\delta$ is again made of $k (k-1)/2$ disjoint components. It follows then that for a suitable subsequence there exists a connected component of $E_\eps \cap \Sigma_{2,k}$, say the set $A$, such that
\[
    \lim_{n \to \infty} \dist(g_n([0,1]), A) = 0.
\]
The thesis is then an immediate consequence.
\end{proof}

\begin{lemma}\label{lem: controllo rapporti}
There exists $\lambda>0$ independent of $n$ such that
\[
\frac{1}{\lambda} \le \frac{\int_{\pa B_r} v_{1,n}^2  }{\int_{\pa B_r} v_{2,n}^2  } \le \lambda
\]
for every $2 \le r \le \bar r_n/3$, while on the contrary for $j=3,\dots,k$ we have
\[
\sup_{r \in [2,\bar r_n/3]} \frac{\int_{\pa B_r} v_{j,n}^2  }{\int_{\pa B_r} v_{1,n}^2  } \to 0 \qquad \text{as $n \to \infty$}.
\]
\end{lemma}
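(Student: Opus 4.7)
The plan is to recast the two claims as a statement about the continuous curves $g_n\from[0,1]\to\R^k$ defined componentwise by
\[
g_{i,n}(s):=\frac{\int_{\pa B_{r_n(s)}}v_{i,n}^2}{\sum_{h=1}^k\int_{\pa B_{r_n(s)}}v_{h,n}^2},\qquad r_n(s):=2\,(\bar r_n/6)^s,
\]
so that $s\mapsto r_n(s)$ is a continuous bijection of $[0,1]$ onto $[2,\bar r_n/3]$ (well-defined for $n$ large because $\bar r_n\to+\infty$ by Lemma~\ref{raggio illimtato}, and with nonzero denominator by the lower bound in Lemma~\ref{lem: 2 non deg}). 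Both conclusions amount to the existence of $\eps>0$ and a subsequence along which $g_{1,n},g_{2,n}$ are uniformly trapped inside $(\eps/2,1-\eps/2)$ on $[0,1]$ while $g_{j,n}\to 0$ uniformly on $[0,1]$ for every $j\ge 3$. I will deduce this from Lemma~\ref{lem: elementare Watson}, which requires (a) $g_n([0,1])\subset E_\eps$ for some $\eps>0$ independent of $n$, and (b) $\dist(g_n([0,1]),\Sigma_{2,k})\to 0$.

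Both (a) and (b) will be established by contradiction through a two-scale analysis: pick $s_n\in[0,1]$ where the property fails and set $\rho_n:=r_n(s_n)$; up to a subsequence, either $\rho_n$ stays bounded or $\rho_n\to+\infty$. If $\rho_n\to\rho_0\in[2,\infty)$, the $\C_\loc$-convergence $\mf v_n\to\mf v$ of Proposition~\ref{prop: riassunto blow-up} together with the non-degeneracy of $v_1,v_2$ on $\pa B_{\rho_0}$ (obtained by passing Lemma~\ref{lem: 2 non deg} to the limit) and the triviality of the other components forces $g_n(s_n)\to(c_1,c_2,0,\dots,0)$ with $c_1,c_2\in(0,1)$ and $c_1+c_2=1$; this limit lies in $\Sigma_{2,k}$ and has no coordinate equal to $1$, contradicting either the failure of (a) or that of (b). If instead $\rho_n\to+\infty$, Lemma~\ref{lem: blow-down} applies: along a further subsequence the blow-down $\tilde{\mf v}_n$ converges in $\mathcal{C}(\overline{B_1})$ to $(\gamma x_1^+,\gamma x_1^-)$ supported on two indices $h,l$, and since $H(\tilde{\mf v}_n,0,1)=1$ and the pair $(x_1^+,x_1^-)$ carries equal $L^2$-mass on $\pa B_1$, one has $g_n(s_n)\to\tfrac12(e_h+e_l)\in\Sigma_{2,k}$ with every coordinate at most $1/2$, again a contradiction in either case.

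Once (a) and (b) hold, Lemma~\ref{lem: elementare Watson} yields, along a subsequence, two distinguished indices $i\neq j$ such that $g_{i,n},g_{j,n}\in(\eps/2,1-\eps/2)$ and $g_{h,n}\to 0$ uniformly on $[0,1]$ for $h\neq i,j$. To identify $\{i,j\}=\{1,2\}$ it suffices to evaluate at $s=0$, where $r_n(0)=2$ is fixed: $\C_\loc$-convergence gives $g_n(0)\to(c_1,c_2,0,\dots,0)$ with $c_1,c_2>0$, while the Watson conclusion simultaneously requires $g_{i,n}(0),g_{j,n}(0)>\eps/2$ and $g_{h,n}(0)\to 0$ for $h\neq i,j$; the only compatible choice is $\{i,j\}=\{1,2\}$. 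The two ratio bounds of the statement then follow directly from the ratios $g_{1,n}/g_{2,n}$ and $g_{j,n}/g_{1,n}$. The conceptually delicate point is that Lemma~\ref{lem: blow-down} alone leaves the pair of non-trivial components of the blow-down free to depend on the subsequence and on the chosen scale $\rho_n$: the argument must prevent this dominant pair from jumping as $r$ sweeps through $[2,\bar r_n/3]$, and this is precisely the role of Lemma~\ref{lem: elementare Watson}, which exploits the continuity of $g_n$ in $s$ together with the disconnected structure of $E_\eps\cap\Sigma_{2,k}$.
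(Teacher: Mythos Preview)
Your proof is correct and follows essentially the same route as the paper's: both recast the ratios as a continuous curve $g_n$ in the simplex, verify the hypotheses of Lemma~\ref{lem: elementare Watson} by contradiction via Lemma~\ref{lem: blow-down}, and conclude. Your version differs only cosmetically---you use an exponential parametrization $r_n(s)=2(\bar r_n/6)^s$ instead of the paper's linear one with constant extension, you treat the bounded-$\rho_n$ case explicitly in both steps (the paper dismisses it in one line in Step~1 and leaves it implicit in Step~2), and you spell out the identification $\{i,j\}=\{1,2\}$ via evaluation at $s=0$, which the paper omits.
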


\begin{proof}
The proof is based on an application of Lemma \ref{lem: elementare Watson}: more precisely, given the sequence $\{\mf{v}_n\}$, let us introduce the family of auxiliary functions
\[
    g_{i,n}(\rho) :=
    \begin{cases}
        \displaystyle \frac{1}{H(\mf{v}_n, 0, \rho \bar r_n/3)} \frac{1}{(\rho \bar r_n/3)^{N-1}} \int_{\pa B_{\rho \bar r_n/3}} v_{i,n}^2 & \text{for $\frac{6}{\bar r_n} \leq \rho \leq 1$}\\
        \\
        \displaystyle \frac{1}{H(\mf{v}_n, 0, 2)} \frac{1}{2^{N-1}} \int_{\pa B_2} v_{i,n}^2              & \text{for $0 \leq \rho \leq \frac{6}{\bar r_n}$}.
    \end{cases}
\]
the thesis follows once we have shown that $\{g_n\}$ satisfies the assumptions of Lemma \ref{lem: elementare Watson}. By construction, we have that each $g_{i,n}$ is continuous, $g_{i,n} \geq 0$, and $\sum_{i=1}^{k} g_{i,n} (x) = 1$ for all $x \in [0,1]$.

\paragraph{\textbf{Step 1)} \emph{There exists $\eps \in (0,1)$ such that $g_{i,n} (x) \leq 1 - \eps$ for all $x \in [0,1]$, independently of $n$}}

By contradiction, let us assume that is there exist an index $i$ and a sequence $s_n \in [0,1]$ such that
\begin{equation}\label{cons abs}
    g_{i,n}(s_n) \to 1 \quad \text{and} \quad g_{j,n}(s_n) \to 0 \qquad \forall j \neq i.
\end{equation}
The local uniform convergence $\mf{v}_n \to \mf{v}$ and Lemma \ref{lem: 2 non deg} imply that necessarily $s_n \bar r_n \to +\infty$ as $n \to \infty$. Let us introduce a new sequence
\[
    \tilde v_{i,n}(x) := \frac{v_{i,n}(s_n \bar r_n x/3)}{\sqrt{H(\mf{v}_n, 0, s_n \bar r_n/3)}}.
\]
We are in a position to apply Lemma \ref{lem: blow-down} (with $\rho_n := s_n \bar r_n/3$) and to conclude that the uniform limit of $\{\tilde{\mf{v}}_n\}$ contains at least two non trivial components, in contradiction with \eqref{cons abs}.

\paragraph{\textbf{Step 2)} \emph{$\lim_{n \to \infty} \dist(g_n([0,1]), \Sigma_{2,k}) = 0$}}

As before, let us assume by contradiction, that there exist $\eps > 0$, three distinct indices $i, j ,k$, and a sequence $s_n \in (0,1)$ such that up to a subsequence
\[
    g_{i,n}(s_n) \geq \eps,\quad g_{j,n}(s_n) \geq \eps, \quad g_{k,n}(s_n) \geq \eps
\]
for any $n$ sufficiently large, and let us introduce again the sequence
\[
    \tilde v_{i,n}(x) := \frac{v_{i,n}(s_n \bar r_n x/3)}{\sqrt{H(\mf{v}_n, 0, s_n \bar r_n/3)}}.
\]
As a result of Lemma \ref{lem: blow-down}, the uniform limit of $\{\tilde{\mf{v}}_n\}$ contains at most two non trivial components, a contradiction.
\end{proof}

\begin{remark}
We point out that such proof rests upon Lemma \ref{lem: blow-down}, which reflects the linear behaviour of $\mf{v}_n$ in $B_r$ for $r \in (0,\bar r_n]$. We do not expect that in a superlinear range the same result holds.
\end{remark}

Lemmas \ref{lem: 2 non deg} and \ref{lem: controllo rapporti} imply that assumption ($h2$) in Theorem \ref{thm: ACF} is satisfied with constants $\mu,\lambda>0$ independent of $n$ in the interval $r \in (2,\bar r_n/3)$. We now consider assumptions ($h0$) and ($h3$).

\begin{lemma}
Provided $n$ is sufficiently large, there holds
\[
|f_{i,n}(x,v_{i,n})| \le d r_n^2 v_{i,n} \quad \text{in $\Omega_n$} \quad \text{with} \quad \frac{d r_n^2 \bar r_n^2}{9} < \left(\frac{N-2}{2}\right)^2
\]
for $i=1,2$. We recall that $\Omega_n$ is the domain of definition of $\mf{v}_n$.
\end{lemma}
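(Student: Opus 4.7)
The plan is to verify the two asserted inequalities separately; both are essentially bookkeeping consequences of the scaling conventions already fixed, together with the convergence $R_{\beta_n}\to 0$.

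For the first estimate, I would simply unwind the definition of $f_{i,n}$. Setting $t=v_{i,n}(x)$ in
\[
    f_{i,n}(x,t) = r_n\,\frac{\eta(x_n)}{L_n}\, f_{i,\beta_n}\Bigl(x_n+r_n x,\; t\,\tfrac{L_n r_n}{\eta(x_n)}\Bigr),
\]
and recognising that $v_{i,n}(x)\,L_n r_n/\eta(x_n)=u_{i,\beta_n}(x_n+r_n x)$ by the definition of the blow-up sequence in \eqref{def blow-up}, one obtains
\[
    f_{i,n}(x,v_{i,n}(x)) = r_n\,\frac{\eta(x_n)}{L_n}\, f_{i,\beta_n}\bigl(x_n+r_n x,\, u_{i,\beta_n}(x_n+r_n x)\bigr).
\]
The value $u_{i,\beta_n}(x_n+r_n x)$ lies in $[0,m]$ by the uniform $L^\infty$-bound, so assumption \eqref{be assumpt} gives $|f_{i,\beta_n}(y,s)|\le d\,s$ on this range. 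Substituting and re-expressing $u_{i,\beta_n}$ in terms of $v_{i,n}$ one more time yields the asserted bound $|f_{i,n}(x,v_{i,n}(x))|\le d\,r_n^2\,v_{i,n}(x)$ throughout $\Omega_n$.

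For the second estimate, the key observation is that by the definition \eqref{def bar r n} of $\bar r_n$ one has $r_n\bar r_n = R_{\beta_n}$. Then Lemma~\ref{lem: R_beta to 0} gives $R_{\beta_n}\to 0$, so
\[
    \frac{d\,r_n^2\,\bar r_n^2}{9} = \frac{d\,R_{\beta_n}^2}{9}\longrightarrow 0 \qquad \text{as $n\to\infty$},
\]
which is eventually strictly smaller than $((N-2)/2)^2>0$, since we are in dimension $N\ge 3$ throughout this section.

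There is no genuine obstacle in this step: the lemma is purely a packaging statement, recording that when one sets $\eps:=d\,r_n^2$ and $R:=\bar r_n/3$, hypotheses $(h0)$ and $(h3)$ of Theorem~\ref{thm: ACF} are satisfied uniformly in $n$ on the range $r\in[2,\bar r_n/3]$, which is exactly what is needed to feed into Lemma~\ref{lem: ACF uniform}.
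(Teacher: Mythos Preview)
Your proof is correct and follows exactly the same approach as the paper, which simply records that the first bound is ``an easy consequence of the definition of $f_{i,n}$'' and the second follows from ``the fact that $r_n^2 \bar r_n^2 \to 0$ as $n \to \infty$, see Lemma~\ref{lem: R_beta to 0}.'' You have merely spelled out the details of these two remarks.
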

\begin{proof}
It is an easy consequence of the definition of $f_{i,n}$, and of the fact that $r_n^2 \bar r_n^2 \to 0$ as $n \to \infty$, see Lemma \ref{lem: R_beta to 0}.
\end{proof}

At this point we can show that the quantities $\Lambda_{1,n}(r)$ and $\Lambda_{2,n}(r)$ are uniformly bounded from below by a positive constant up to the scale $\bar r_n$.

\begin{lemma}\label{lem: lambda_i non zero}
There exists $C>0$ independent of $n$ such that
\[
\Lambda_{1,n}(r),\Lambda_{2,n}(r) \ge C \qquad \text{for } r \in \left[2,\frac{\bar r_n}{3}\right].
\]
\end{lemma}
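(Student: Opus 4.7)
I argue by contradiction. Suppose along a subsequence there exist radii $r_n \in [2, \bar r_n/3]$ with $\Lambda_{1,n}(r_n) \to 0$ (the case of $\Lambda_{2,n}$ is symmetric; the limit is necessarily $\ge 0$, since $|v_{i,n} f_{i,n}| \le d r_n^2 v_{i,n}^2$ together with $r_n \bar r_n \to 0$ from Lemma \ref{lem: R_beta to 0} make the $f_{i,n}$-contribution negligible while the other two terms are non-negative). Passing to blow-down coordinates $\tilde v_{i,n}(y) := v_{i,n}(r_n y)/\sqrt{H(\mf{v}_n,0,r_n)}$, a direct change of variables rewrites the ratio as
\[
\Lambda_{1,n}(r_n) = \frac{\int_{\partial B_1}|\nabla_\theta \tilde v_{1,n}|^2 + K_n\, a_{12}\int_{\partial B_1}\tilde v_{1,n}^2 \tilde v_{2,n}^2}{\int_{\partial B_1}\tilde v_{1,n}^2}+o(1),
\]
where $K_n := r_n^2 M_n H(\mf{v}_n,0,r_n) \ge 0$, and the denominator is bounded below away from zero by the normalization together with Lemmas \ref{lem: 2 non deg} and \ref{lem: controllo rapporti}.

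I distinguish according to the behaviour of $r_n$. If $r_n \to \infty$, Lemma \ref{lem: blow-down} applies with $\rho_n = r_n$; Lemma \ref{lem: controllo rapporti} identifies the two surviving components as $\tilde v_1, \tilde v_2$, so up to a rotation $\tilde v_{1,n} \to \gamma x_1^+$ and $\tilde v_{2,n} \to \gamma x_1^-$ in $H^1(B_1)\cap \mathcal{C}(\overline{B_1})$. A direct computation gives
\[
\frac{\int_{\partial B_1}|\nabla_\theta(\gamma x_1^+)|^2}{\int_{\partial B_1}(\gamma x_1^+)^2} = N-1,
\]
and combining compactness of the $L^2(\partial B_1)$-trace (for the denominator) with a lower semi-continuity and generic-slice argument (for the Dirichlet integral on the sphere) yields $\liminf_n \Lambda_{1,n}(r_n) \ge N - 1 > 0$, contradicting $\Lambda_{1,n}(r_n)\to 0$.

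If $r_n$ remains bounded, up to subsequence $r_n \to r_* \ge 2$. By Proposition \ref{prop: riassunto blow-up}, $\mf{v}$ has exactly two non-trivial components $v_1, v_2$ and $|\nabla v_1(0)| = 1$, hence $v_1$ is non-constant and non-radial. Using the $H^1_{\loc}\cap \mathcal{C}_{\loc}$-convergence $\mf{v}_n \to \mf{v}$ together with a Fubini slicing, I may assume traces converge in $L^2(\partial B_{r_*})$. In the subcase $(M_n)$ bounded, $v_1 > 0$ everywhere by the strong maximum principle applied to \eqref{eq intera}, hence $v_1|_{\partial B_{r_*}}$ is non-constant and $\int_{\partial B_{r_*}}|\nabla_\theta v_1|^2 > 0$. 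In the subcase $M_n \to \infty$, $v_1$ is harmonic on $\{v_1 > 0\}$ and the unique continuation property for $\mathcal{G}_{\loc}(\R^N)$-functions (Remark \ref{rem: on unique continuation}) combined with $|\nabla v_1(0)| = 1$ and Lemma \ref{lem: 2 non deg} rules out $v_1|_{\partial B_{r_*}}$ being constant for generic $r_*$; moreover the interaction term contributes non-negatively. In either subcase $\liminf_n \Lambda_{1,n}(r_n) > 0$, a contradiction.

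The main obstacle I anticipate is passing to the limit in the tangential Dirichlet integral $\int_{\partial B_1}|\nabla_\theta \tilde v_{1,n}|^2$ starting only from $H^1$-convergence in the ball. I plan to address this via a Fubini slicing argument, selecting a generic radius close to $1$ along which $L^2$-convergence of the tangential gradient persists, and by exploiting the uniform Almgren control of Lemma \ref{lem: N limitato da sopra} to rule out concentration of energy between the interior and the boundary slice.
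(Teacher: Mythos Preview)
Your overall strategy---contradiction, case split according to whether the offending radius diverges, and blow-down via Lemma~\ref{lem: blow-down} in the divergent case---matches the paper. The identification of the surviving components through Lemma~\ref{lem: controllo rapporti} is also correct. However, the step you flag as the main obstacle is a genuine gap, and your proposed fix does not close it.

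The issue is this: the contradiction hypothesis hands you a \emph{specific} sequence of radii $\rho_n$ (you call it $r_n$, which clashes with the paper's blow-up scale---be careful) at which $\Lambda_{1,n}(\rho_n)\to 0$. After blow-down, you must control $\int_{\partial B_1}|\nabla_\theta \tilde v_{1,n}|^2$ at the fixed radius $1$. Strong $H^1(B_1)$-convergence gives convergence of $\int_{\partial B_\rho}|\nabla_\theta \tilde v_{1,n}|^2$ only for \emph{almost every} $\rho$, and you have no freedom to shift to a generic slice: the infimum of $\Lambda_{1,n}$ over the range could be attained only at a measure-zero set of radii, so bounding $\Lambda_{1,n}(\rho\,\rho_n)$ from below for generic $\rho$ says nothing about $\Lambda_{1,n}(\rho_n)$. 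Likewise, lower semicontinuity of the tangential Dirichlet energy on a fixed sphere does not follow from $H^1$-convergence in the ball; you would need weak $H^1(\partial B_1)$-convergence of traces, which is not available. The appeal to ``Almgren control to rule out concentration'' is too vague to supply this.

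The paper bypasses the problem by upgrading the convergence. On the set $\{\gamma x_1>2\delta\}$ the limit $\tilde v_1=\gamma x_1^+$ is bounded below, hence so is $\tilde v_{1,n}$ by uniform convergence; then Lemma~\ref{lem: polynomial decay} forces $H(\mf v_n,0,\rho_n)\rho_n^2 M_n\,\tilde v_{j,n}\le C$ there for every $j\neq 1$, so $\Delta\tilde v_{1,n}$ is uniformly bounded on $\{\gamma x_1>2\delta\}$ and elliptic regularity yields $\mathcal C^{1,\alpha}$-convergence on that set. This gives pointwise convergence of $\nabla_\theta\tilde v_{1,n}$ on $\partial B_1\cap\{\gamma x_1>2\delta\}$, and hence
\[
\liminf_n \int_{\partial B_1}|\nabla_\theta\tilde v_{1,n}|^2 \;\ge\; \int_{\partial B_1\cap\{\gamma x_1>2\delta\}}|\nabla_\theta(\gamma x_1)|^2 \;>\;0,
\]
which is the contradiction. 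In the bounded-radius, bounded-$M_n$ subcase the paper avoids the tangential gradient altogether and uses instead the interaction term: since $v_1,v_2>0$ on $\partial B_{\rho_*}$ and $M_n\ge C>0$, uniform convergence alone gives $\rho_*^2 M_n a_{12}\int_{\partial B_{\rho_*}}v_{1,n}^2 v_{2,n}^2 \big/ \int_{\partial B_{\rho_*}}v_{1,n}^2\ge C>0$. Your argument in that subcase again relies on the tangential gradient and so inherits the same convergence gap.
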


\begin{proof}
By contradiction, we assume that there exists $\rho_n \in [2,\bar r_n/3]$ such that (without loss of generality) $\lim_n \Lambda_{1,n}(\rho_n) \le 0$, that is
\[
\lim_{n \to \infty} \frac{\rho_n^2\int_{\pa B_{\rho_n}} |\nabla_\theta v_{1,n}|^2 + M_n a_{12} v_{1,n}^2 v_{2,n}^2-v_{1,n} f_{1,n}(x,v_{1,n})  }{\int_{\pa B_{\rho_n}} v_{1,n}^2} \le  0.
\]
Either $\rho_n \to +\infty$ or $(\rho_n)$ is bounded. In the former case, we consider the scaled sequence
\[
\tilde v_{i,n}(x):= \frac{v_{i,n}(\rho_n x)}{\sqrt{H(\mf{v}_n,0,\rho_n)}},
\]
which is well defined in $B_{3}$ since $\rho_n \le \bar r_n/3$. The asymptotic behaviour of this blow-down sequence is again contained in Lemma \ref{lem: blow-down}, from which we know that $\tilde{\mf{v}}_n \to \tilde{\mf{v}}$ such that (up to a rotation)
\[
\tilde v_{i}=\gamma x_1^+, \quad \tilde v_{j}= \gamma x_1^-, \quad \tilde v_k = 0 \quad \text{for every $k \neq i,j$},
\]
for a suitable $\gamma>0$. Thanks to Lemma \ref{lem: controllo rapporti}, it is necessary that $i=1$ and $j=2$. The knowledge of the limiting profile of the blow-down sequence $\{\tilde{\mf{v}}_n\}$ allows us to pass from the uniform convergence $\tilde{\mf{v}}_n \to \tilde{\mf{v}}$ to the $\mathcal{C}^{1,\alpha}$ convergence $\tilde v_{1,n} \to \tilde v_1$ far away from the free-boundary, and in particular in a set of type $\{\gamma x_1 > 2\delta\}$; here $\delta>0$ is a sufficiently small quantity, such that $\pa B_1 \cap \{\gamma x_1>2\delta\} \neq \emptyset$. To prove this, we observe that by uniform convergence $\tilde v_{1,n} \ge C>0$ in $B_2 \cap \{\gamma x_1>\delta\}$. Let $x_0 \in B_2 \cap \{\gamma x_1>2\delta\}$, and let $\rho >0$ so small that for any such $x_0$ the ball $B_\rho(x_0)$ is contained in $\{\gamma x_1>\delta\}$. Lemma \ref{lem: polynomial decay} applies to
\[
\begin{cases}
-\Delta \tilde v_{j,n} \le (\rho_n^2 r_n^2- C a_{1j} H(\mf{v}_n,0,\rho_n) \rho_n^2 M_n ) \tilde v_{j,n} \le - C a_{1j} H(\mf{v}_n,0,\rho_n) \rho_n^2 M_n \tilde v_{j,n} &  \\
\tilde v_{j,n} \ge 0 & \text{in $B_\rho(x_0)$} \\
\tilde v_{j,n} \le C,
\end{cases}
\]
for every $j\neq 1$, implying that
\[
H(\mf{v}_n,0,\rho_n) \rho_n^2 M_n \tilde v_{j,n}(x_0) \le C \qquad \text{for every $x_0 \in B_{2-\rho} \cap \{\gamma x_1>2\delta\}$}.
\]
As a consequence, $\{\Delta v_{1,n}\}$ is uniformly bounded in $B_{2-\rho} \cap \{\gamma x_1>2 \delta\}$, which together with the uniform bound of $\{v_{1,n}\}$ in the same set provides uniform boundedness in any space $\mathcal{C}^{1,\alpha}(B_{2-\rho} \cap \{\gamma x_1>2\delta\})$. Therefore the convergence $\tilde v_{1,n} \to \tilde v_1$ takes place in $\mathcal{C}^{1,\alpha}( B_{2-\rho} \cap \{\gamma x_1>2\delta\})$ for any $0<\alpha <1$.

This finally permits to reach a contradiction: indeed we have
\begin{align*}
0 &\ge \lim_{n \to \infty} \frac{\rho_n^2\int_{\pa B_{\rho_n}} |\nabla_\theta v_{1,n}|^2 + M_n a_{12} v_{1,n}^2 v_{2,n}^2-v_{1,n} f_{1,n}(x,v_{1,n})  }{\int_{\pa B_{\rho_n}} v_{1,n}^2} \\
& \ge \lim_{n \to \infty} \frac{\int_{\pa B_1} |\nabla_\theta \tilde v_{1,n}|^2}{\int_{\pa B_1} \tilde v_{1,n}^2} - \lim_{n \to \infty} \rho_n^2 \frac{\int_{\pa B_{\rho_n}} v_{1,n} f_{1,n}(x,v_{1,n}) }{\int_{\pa B_{\rho_n}} v_{1,n}^2} \ge C >0,
\end{align*}
where the last inequalities follow from the fact that by $\mathcal{C}^{1,\alpha}$ convergence
\[
\lim_{n \to \infty} \frac{\int_{\pa B_1} |\nabla_\theta \tilde v_{1,n}|^2}{\int_{\pa B_1} \tilde v_{1,n}^2} \ge \lim_{n \to \infty} \frac{\int_{\pa B_1 \cap \{\gamma x_1>2\delta\}} |\nabla_\theta \tilde v_{1,n}|^2}{\int_{\pa B_1} \tilde v_{1,n}^2}  = \frac{\int_{\pa B_1 \cap \{\gamma x_1>2\delta\}} |\nabla_\theta(\gamma x_1)|^2}{\int_{\pa B_1} (\gamma x_1^+)^2} =C>0,
\]
while on the contrary in light of the definition of $f_{i,n}$ and on assumption \eqref{be assumpt}
\[
\lim_{n \to \infty} \left|\rho_n^2 \frac{\int_{\pa B_{\rho_n}} v_{1,n} f_{1,n}(x,v_{1,n}) }{\int_{\pa B_{\rho_n}} v_{1,n}^2}\right| \le d \rho_n^2 r_n^2 = 0.
\]
At this point it remains to prove the result when $\rho_n \le \bar r$. In such a situation the proof is much easier, as it is not necessary to introduce the scaling $\{\tilde{ \mf{v}}_{n}\}$, but it is sufficient to argue on the original sequence $\{\mf{v}_n\}$.
If $M_n \to +\infty$, then since $N(\mf{v}_n,0,r) \le 1$ for every $r \le \bar r_n$ we have that up to a rotation $v_{1,n} \to v_1= \gamma x_1^+$, and the convergence takes place in $\mathcal{C}^{1,\alpha}$ in any compact subset of $\{ \gamma x_1>\delta\}$. Then the conclusion follows exactly as before. If $(M_n)$ is bounded, then as observed in Proposition \ref{prop: riassunto blow-up} $\mf{v}_n \to \mf{v}$ in $\mathcal{C}^{1,\alpha}_{\loc}(\R^N)$, and since both $v_1$ and $v_2$ are non-trivial (this follows from Lemma \ref{lem: 2 non deg}), by the strong maximum principle $v_1,v_2 >0$ in $\R^N$. Since $(\rho_n)$ is also bounded, up to a subsequence $\rho_n \to \bar \rho \ge 2$. Recalling that $M_n \ge C >0$, we deduce that
\[
\frac{\bar \rho^2\int_{B_{\bar \rho}} M_n a_{12} v_{1,n}^2 v_{2,n}^2 }{\int_{B_{\bar \rho}} v_{1,n}^2  } \ge C >0,
\]
which allows to obtain a contradiction.
\end{proof}

It remains to show that also $J_{1,n}(r)$ and $J_{2,n}(r)$ are positive in the whole range $[2,\bar r_n/3]$.

\begin{lemma}\label{lem: positivita 2}
There exists $C>0$ independent of $n$ such that
\[
J_{i,n}(r) > C \qquad \text{for every } r \in \left[2,\frac{\bar r_n}{3}\right],
\]
for $i=1,2$.
\end{lemma}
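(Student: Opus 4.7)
The plan is to argue by contradiction and carry out a blow-down analysis in the spirit of Lemma \ref{lem: blow-down} and Lemma \ref{lem: lambda_i non zero}. Suppose that there exist an index $i_0 \in \{1,2\}$ and a sequence $\rho_n \in [2,\bar r_n/3]$ along which $J_{i_0,n}(\rho_n) \to 0$; by symmetry I take $i_0 = 1$. The first step is to discard the non-negative interaction term $M_n a_{12} v_{1,n}^2 v_{2,n}^2 |x|^{2-N}$ in $J_{1,n}(\rho_n)$ and to use the pointwise bound $|f_{1,n}(x,v)| \le d r_n^2 v$, which follows at once from \eqref{be assumpt} and the very definition of $f_{i,n}$. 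This reduces matters to producing a positive lower bound for $\int_{B_{\rho_n}} |\nabla v_{1,n}|^2 |x|^{2-N}$ and to absorbing the reaction contribution $d r_n^2 \int_{B_{\rho_n}} v_{1,n}^2 |x|^{2-N}$ into an asymptotically vanishing error.

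I would then split according to whether $(\rho_n)$ remains bounded. When $\rho_n \to \bar\rho \in [2,+\infty)$ along a subsequence, the strong $H^1_{\loc}$-convergence $\mf{v}_n \to \mf{v}$ from Lemma \ref{lem: mega lemma}(6), combined with $|\nabla v_1(0)|=1$ from Proposition \ref{prop: riassunto blow-up}, gives $\int_{B_{\bar\rho}} |\nabla v_1|^2 |x|^{2-N}\,dx > 0$, and a Fatou-type argument applied on annuli $B_{\bar\rho-\delta}\setminus B_\varepsilon$ (where the weight is bounded) followed by $\varepsilon,\delta \to 0^+$ transfers this positivity to the sequence itself. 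When instead $\rho_n \to +\infty$, I would invoke Lemma \ref{lem: blow-down} at the scale $\rho_n$ and consider the normalised blow-down $\tilde v_{i,n}(y) := v_{i,n}(\rho_n y)/\sqrt{H_n}$ with $H_n := H(\mf{v}_n, 0, \rho_n)$. By that lemma, together with Lemma \ref{lem: controllo rapporti}, which pins the dominant pair of indices to $\{1,2\}$, $\tilde{\mf{v}}_n$ converges in $H^1(B_1)\cap \mathcal{C}(\overline{B_1})$, up to a rotation, to $\tilde v_1 = \gamma y_1^+$, $\tilde v_2 = \gamma y_1^-$, $\tilde v_j \equiv 0$ for $j \ge 3$. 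A direct change of variables $x = \rho_n y$ yields
\[
J_{1,n}(\rho_n) \ge H_n \int_{B_1} |\nabla \tilde v_{1,n}|^2 |y|^{2-N}\,dy - d (r_n \rho_n)^2 H_n \int_{B_1} \tilde v_{1,n}^2 |y|^{2-N}\,dy.
\]

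The main obstacle will be handling the unbounded weight $|y|^{2-N}$ in the first integral, since strong $H^1$-convergence does not pass through this singular weight directly. I plan to circumvent this by cutting out a small ball $B_\varepsilon$, where the weight is bounded, using strong $L^2$-convergence of $\nabla \tilde v_{1,n}$ to $\nabla \tilde v_1$ on $B_1 \setminus B_\varepsilon$ to obtain convergence of the weighted integrals on the annulus, and then letting $\varepsilon \to 0^+$ by monotone convergence to deduce
\[
\liminf_n \int_{B_1} |\nabla \tilde v_{1,n}|^2 |y|^{2-N}\,dy \ge \gamma^2 \int_{B_1 \cap \{y_1>0\}} |y|^{2-N}\,dy > 0.
\]
The reaction term is negligible because $\tilde v_{1,n}$ is uniformly bounded on $B_1$ by the uniform convergence to $\gamma y_1^+$, and $(r_n \rho_n)^2 \le (r_n \bar r_n)^2 \to 0$ by Lemma \ref{lem: R_beta to 0}. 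Combined with the lower bound $H_n \ge C > 0$ from Lemma \ref{lem: 2 non deg}, these estimates produce a uniform positive lower bound on $J_{1,n}(\rho_n)$, contradicting the hypothesis $J_{1,n}(\rho_n) \to 0$. The argument for $i=2$ is verbatim identical by the symmetric roles of $v_1$ and $v_2$.
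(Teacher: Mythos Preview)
Your proof is correct, and it proceeds along a genuinely different line from the paper's argument.

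The paper does not argue by blow-down contradiction. Instead it runs a continuation/bootstrap scheme based on the Alt--Caffarelli--Friedman formula of Theorem~\ref{thm: ACF}: first the lower bound $J_{i,n}(r)\ge\bar C$ is established on the fixed window $r\in[2,10]$ by the local $H^1\cap\mathcal{C}$ convergence of $\mf{v}_n$ (exactly as in your bounded-$\rho_n$ case); then, setting $s_n:=\sup\{s:\ J_{i,n}(r)>\bar C/10\ \text{on}\ (2,s)\}$, the hypotheses of Theorem~\ref{thm: ACF} are satisfied on $(2,s_n)$, so the almost-monotonicity of $r\mapsto J_n(r)e^{-CM_n^{-1/4}r^{-1/2}+Cr_n^2r^2}$ propagates the lower bound all the way to $s_n$ and forces $s_n=\bar r_n/3$.

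Your route, by contrast, bypasses the ACF machinery entirely for this lemma and instead re-uses the blow-down Lemma~\ref{lem: blow-down} (together with Lemma~\ref{lem: controllo rapporti} to pin the surviving indices to $\{1,2\}$), mirroring the strategy of Lemma~\ref{lem: lambda_i non zero}. This is more elementary and in fact yields a bit more: since $H(\mf{v}_n,0,\rho_n)\to+\infty$ when $\rho_n\to+\infty$, your computation shows $J_{1,n}(\rho_n)\gtrsim H_n\to+\infty$, not merely $J_{1,n}(\rho_n)\ge C$. The paper's approach, on the other hand, fits more naturally into the logical flow of Subsection~\ref{sub: ACF uniform}, where the whole point is to verify the hypotheses of Theorem~\ref{thm: ACF} step by step, and it handles both factors $J_{1,n},J_{2,n}$ simultaneously through the product $J_n(r)$. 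Your handling of the singular weight $|y|^{2-N}$ via the annular cut-off $B_1\setminus B_\varepsilon$ is correct; the weight is integrable on $B_1$ in dimension $N\ge3$, so both the reaction error $(r_n\rho_n)^2H_n\int_{B_1}\tilde v_{1,n}^2|y|^{2-N}$ and the Fatou-type passage to the limit for the gradient term go through.
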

\begin{proof}
First of all, there exists $\bar C>0$ such that $J_{i,n}(r) \ge \bar C$ for every $r \in [2,10]$ and $i=1,2$. This is a simple consequence of the $\mathcal{C}(B_{10})$ and $H^1(B_{10})$ convergence $v_{i,n} \to v_i$, with $v_i \not \equiv 0$ and not constant in $B_{10}$ for $i=1,2$, and of the fact that $f_{i,n}(x,v_{i,n}(x)) \to 0$ uniformly in $B_{10}$, see Lemma \ref{lem: mega lemma}. Let now
\[
s_n:= \sup\left\{ s \in (2,\bar r_n/3): J_{i,n}(r) > \bar C/10\text{ for every $r \in (2,s)$}\right\}.
\]
Note that $s_n \ge 10$ is well defined. In light of Lemmas \ref{lem: 2 non deg} and  \ref{lem: controllo rapporti}-\ref{lem: lambda_i non zero}, the assumptions of Theorem \ref{thm: ACF} are satisfied in the interval $(2,s_n)$, uniformly in $n$. As a consequence there exists $C>0$ such that
\[
r \mapsto \frac{J_{1,n}(r) J_{2,n}(r)}{r^4} \exp\{-C M_n^{-1/4} r^{-1/2} + C r_n^2 r^2\}
\]
is monotone non-decreasing for $r \in (2,s_n)$. We claim that $s_n=\bar r_n/3$: indeed for every $r \in (2,s_n)$ it results
\[
J_n(r) \ge J_n(2) e^{- C M_n^{-1/4}+Cr_n^2} e^{C M_n^{-1/4} s_n^{-1/2} - C r_n^2 s_n^2} \ge \frac{\bar C}{10} > 0
\]
at least for $n$ sufficiently large, which proves the claim.
\end{proof}

\begin{proof}[Conclusion of the proof of Lemma \ref{lem: ACF uniform}]
By Lemmas \ref{lem: 2 non deg}-\ref{lem: positivita 2}, the assumptions of Theorem \ref{thm: ACF} are satisfied by $(v_{1,n},v_{2,n})$ for $r \in [2,\bar r_n/3]$, with constants $\mu, \lambda$ and $\eps$ independent of $n$.
\end{proof}


\section{The completely symmetric interaction}\label{sec: symmetric}

This section is devoted to the study of the Lipschitz uniform continuity of the system \eqref{lv syst}:
\[
\begin{cases}
        - \Delta u_{i,\beta} = f_{i,\beta}(x,u_{i,\beta}) - \beta u_{i,\beta} \tsum_{j\neq i} u_{j,\beta} \\
        u_{i,\beta} > 0.
\end{cases}
\]
In Section \ref{sec: asymptotic}, under rather general assumptions on the competition term, we established the asymptotic properties of two blow-up sequences $\{\mf{v}_n\}$ and $\{\bar{\mf{v}}_n\}$, which have been introduced starting from the assumption that a uniform bound on the Lipschitz norm of $\{\mf{u}_{\beta_n}\}$ does not exist.
In what follows, we will show the such asymptotic properties bring us to a contradiction. 
We shall make use of the celebrated almost monotonicity formula of Caffarelli-Jerison-Kenig \cite{cafjerken}, which we recall here in its original formulation. For any given $u, v \in H^1$ functions, we let
\[
    \Phi(r) := \left( \frac{1}{r^2} \int_{B_r} \frac{|\nabla u|^2}{|x|^{N-2}}\right) \left(\frac{1}{r^2}\int_{B_r} \frac{|\nabla v|^2}{|x|^{N-2}}\right).
\]
\begin{theorem}[Caffarelli-Jerison-Kenig almost monotonicity]
Suppose $u$, $v$ are non-negative, continuous functions on the unit ball $B_1$. Suppose that $-\Delta u \leq 1$ and $-\Delta v \leq 1$ in the sense of distributions and that $u(x) v(x) = 0$ for all $x \in B_1$. Then there exists a constant $C$ depending \emph{only} on the dimension such that
\[
    \Phi(r) \leq C \left(1 + \int_{B_r} \frac{|\nabla u|^2}{|x|^{N-2}} + \int_{B_r} \frac{|\nabla v|^2}{|x|^{N-2}} \right)^2, \qquad \text{for every $0 < r \leq 1$}.
\]
Moreover, if $u$ and $v$ satisfy the same assumptions also in the ball $B_2$, then there exists a dimensional constant $C>0$ such that
\[
    \Phi(r) \leq C\left( 1 + \int_{B_2} u^2 + \int_{B_2} v^2 \right)^2, \quad 0 < r \leq 1.
\]
\end{theorem}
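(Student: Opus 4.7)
The plan is to follow the original strategy of Caffarelli-Jerison-Kenig, which combines a Friedland-Hayman type optimal-partition inequality on the sphere with a careful accounting of the error coming from the non-harmonicity of $u$ and $v$. Following the scheme already used in the proof of Theorem \ref{thm: ACF}, the starting point is the logarithmic derivative
\[
\frac{\Phi'(r)}{\Phi(r)} = -\frac{4}{r} + \frac{\int_{\pa B_r} |\nabla u|^2 |x|^{2-N}}{\int_{B_r} |\nabla u|^2 |x|^{2-N}} + \frac{\int_{\pa B_r} |\nabla v|^2 |x|^{2-N}}{\int_{B_r} |\nabla v|^2 |x|^{2-N}},
\]
paired with a bound analogous to Lemma \ref{lem: positivita} controlling each denominator, up to contributions from $-\Delta u, -\Delta v \le 1$, by $r/(2\gamma(\Lambda_u(r)))$ (resp.\ $r/(2\gamma(\Lambda_v(r)))$) times the corresponding boundary integral, where $\Lambda_u, \Lambda_v$ are Rayleigh quotients on $\pa B_r$ and $\gamma(t) = \sqrt{((N-2)/2)^2 + t} - (N-2)/2$.

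The geometric ingredient is the Friedland-Hayman inequality: if $A, B \subset \S^{N-1}$ are disjoint open sets with first Dirichlet eigenvalues $\lambda_1(A), \lambda_1(B)$ of $-\Delta_\theta$, then $\gamma(\lambda_1(A)) + \gamma(\lambda_1(B)) \ge 2$. Since $u \cdot v \equiv 0$ the positivity sets on $\pa B_r$ are disjoint, so the variational characterization yields $\gamma(\Lambda_u(r)) + \gamma(\Lambda_v(r)) \ge 2$. Combining with the above produces a differential inequality $\Phi'(r)/\Phi(r) \ge -E(r)$, where $E(r)$ is a controllable remainder arising solely from the fact that $u,v$ are only subsolutions of Laplace's equation on their positivity sets rather than genuinely harmonic there; integrating this inequality, after packaging $E(r)$ appropriately, yields the first bound with the multiplicative factor $(1 + \int_{B_r} |\nabla u|^2 |x|^{2-N} + \int_{B_r} |\nabla v|^2 |x|^{2-N})^2$.

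For the second, sharper formulation involving $\int_{B_2} u^2 + \int_{B_2} v^2$, I would argue as follows: since $u + |x|^2/(2N)$ is subharmonic, standard mean-value and Caccioppoli-type inequalities on $B_{3/2}\subset\subset B_2$ give pointwise bounds on $u$ and $L^2$-bounds on $\nabla u$ in $B_{3/2}$ in terms of $\|u\|_{L^2(B_2)}$. A weighted version of the Caccioppoli inequality, splitting $\int_{B_r} |\nabla u|^2 |x|^{2-N}$ into a region near the origin (handled by the local $L^\infty$ bound together with integrability of the kernel for $N \ge 3$) and one away from the origin, then produces $\int_{B_r} |\nabla u|^2 |x|^{2-N} \le C(1 + \int_{B_2} u^2)$. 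Plugging this and its analogue for $v$ into the first bound gives the second.

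The main obstacle, and the technical heart of the argument, is the quantitative bookkeeping of the deviation from the two-phase harmonic regime. The Friedland-Hayman estimate is sharp and applies cleanly only when $u, v$ are harmonic on their supports; promoting it to the distributional inequality $-\Delta u \le 1$ requires either a decomposition $u = u_H + w$ with $u_H$ the harmonic majorant in $B_1$ and $w$ a controllable remainder (with $\Delta w \ge -1$, $w|_{\pa B_1} = 0$), or direct estimates on perturbed Rayleigh quotients that account for drift terms. Either way, the error must be absorbed \emph{multiplicatively} into the right-hand side, which is precisely what forces the quadratic factor in the CJK bound rather than a more benign additive one.
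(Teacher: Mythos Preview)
The paper does not contain a proof of this statement: it is quoted verbatim from \cite{cafjerken} as a black-box tool, so there is no ``paper's own proof'' to compare against. Your outline is a reasonable high-level summary of the Caffarelli--Jerison--Kenig strategy, and it correctly identifies the Friedland--Hayman inequality and the analogue of Lemma \ref{lem: positivita} as the geometric core.

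That said, your sketch understates the main technical difficulty. In the genuine Alt--Caffarelli--Friedman setting (harmonic on the positivity sets) the differential inequality $\Phi'(r)/\Phi(r)\ge 0$ is exact; here the error $E(r)$ coming from $-\Delta u\le 1$ is \emph{not} integrable in $r$ by soft arguments alone, and your description ``integrating this inequality, after packaging $E(r)$ appropriately'' hides the actual content of \cite{cafjerken}. The CJK proof does not proceed by bounding $\int_0^1 E(r)\,\de r$ directly. Instead it runs a dichotomy over dyadic annuli: on each scale either one of the two factors is already large (in which case the right-hand side of the claimed inequality absorbs $\Phi$), or both are small, in which case a quantitative stability version of Friedland--Hayman shows that the spherical caps are nearly hemispherical and the defect $2-\gamma(\Lambda_u)-\gamma(\Lambda_v)$ is summable over those scales. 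It is precisely this dichotomy, not a straightforward Gronwall argument, that forces the quadratic factor $(1+\int|\nabla u|^2|x|^{2-N}+\int|\nabla v|^2|x|^{2-N})^2$ on the right. Your decomposition $u=u_H+w$ via the harmonic majorant is not how \cite{cafjerken} proceeds and would not by itself close the estimate, since $u_H$ need not vanish where $v>0$. The second part of the statement (passing to $\int_{B_2}u^2+\int_{B_2}v^2$) is indeed a routine Caccioppoli/mean-value consequence, as you indicate.
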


One of the main consequences of the previous theorem is that the function $\Phi$ is bounded uniformly in $r$ whenever $u$ and $v$ are bounded in the ball $B_2$. In particular, in our setting we have the following.

\begin{lemma}
Let $\bar r > 0$ be fixed. There exists a constant $C>0$ independent of $\beta$ such that for any $r \leq \bar r/2$ and $x_0$ for which $B_{\bar r}(x_0) \subset \Omega$, the estimate
\begin{equation}\label{eqn: CJK estimate for LV}
    \frac{1}{r^N} \int_{B_r(x_0)} |\nabla \left( u_{i,\beta} - u_{j,\beta}\right)^+ |^2 \cdot \frac{1}{r^N} \int_{B_r(x_0)} |\nabla \left( u_{i,\beta} - u_{j,\beta}\right)^- |^2 \leq C
\end{equation}
holds for any $i \neq j$ and $\beta>0$.
\end{lemma}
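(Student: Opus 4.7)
The plan is to apply the Caffarelli--Jerison--Kenig almost monotonicity formula to the positive and negative parts of the difference $w := u_{i,\beta} - u_{j,\beta}$. First I would compute the equation satisfied by $w$, exploiting the fully symmetric structure of \eqref{lv syst}. Writing $\sum_{k\neq i}u_{k,\beta} = \sum_k u_{k,\beta} - u_{i,\beta}$ and similarly for $j$, the quadratic-in-$u$ competition terms telescope:
\[
\beta u_{j,\beta}\sum_{k\neq j}u_{k,\beta} - \beta u_{i,\beta}\sum_{k\neq i}u_{k,\beta} = -\beta w \sum_{k\neq i,j}u_{k,\beta},
\]
which yields
\[
-\Delta w = f_{i,\beta}(x,u_{i,\beta}) - f_{j,\beta}(x,u_{j,\beta}) - \beta w \sum_{k\neq i,j}u_{k,\beta}.
\]
This algebraic cancellation is the crucial point: what was a quadratic-in-$\beta$ competition becomes linear in $w$, with a coefficient of definite sign.

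Next I would apply Kato's inequality to deduce $-\Delta w_+ \le -\chi_{\{w>0\}}\Delta w$ in $\mathcal{D}'(B_{\bar r}(x_0))$. On $\{w>0\}$ one has $-\beta w \sum_{k\neq i,j}u_{k,\beta} \le 0$, so using the uniform $L^\infty$ bound on $\mf{u}_\beta$ and the assumption that $\mf{f}_\beta$ maps bounded sets into bounded sets (uniformly in $\beta$), I obtain $-\Delta w_+ \le C_0$ in $B_{\bar r}(x_0)$ for a constant $C_0$ independent of $\beta$. The symmetric argument gives $-\Delta w_- \le C_0$, and by construction $w_+ w_- \equiv 0$.

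I would then rescale to invoke CJK. Setting $\tilde u(y) := w_+(x_0 + (\bar r/2)y)/K$ and $\tilde v(y) := w_-(x_0 + (\bar r/2)y)/K$ with $K := (\bar r/2)^2 C_0$, the functions $\tilde u,\tilde v$ are non-negative, continuous on $B_2$, satisfy $-\Delta\tilde u,-\Delta\tilde v \le 1$, are mutually disjointly supported, and are uniformly bounded in $L^\infty(B_2)$ (by a constant depending only on $\|\mf{u}_\beta\|_{L^\infty}$ and $\bar r$). Hence the second form of CJK yields
\[
\left(\frac{1}{\tilde r^{2}}\int_{B_{\tilde r}}\frac{|\nabla \tilde u|^2}{|y|^{N-2}}\right)\left(\frac{1}{\tilde r^{2}}\int_{B_{\tilde r}}\frac{|\nabla \tilde v|^2}{|y|^{N-2}}\right) \le C
\]
for every $\tilde r \le 1$, with $C$ independent of $\beta$. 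Undoing the scaling (with $r = (\bar r/2)\tilde r$) gives an analogous uniform bound for $w_+,w_-$ on $B_r(x_0)$ with the weight $|x-x_0|^{2-N}$.

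Finally, the passage from weighted to unweighted integrals is trivial: on $B_r(x_0)$ one has $|x-x_0|^{2-N} \ge r^{2-N}$, and therefore
\[
\frac{1}{r^{N}}\int_{B_r(x_0)}|\nabla w_\pm|^2 \le \frac{1}{r^{2}}\int_{B_r(x_0)}\frac{|\nabla w_\pm|^2}{|x-x_0|^{N-2}},
\]
which, combined with the weighted estimate above, delivers \eqref{eqn: CJK estimate for LV}. I do not expect a real obstacle here: the only delicate step is the opening algebraic identity, which relies essentially on the \emph{symmetric} nature of the competition in \eqref{lv syst}; everything else is a routine rescaling of CJK.
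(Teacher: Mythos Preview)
Your proposal is correct and follows essentially the same route as the paper: both derive the equation $-\Delta w + \beta w \sum_{h\neq i,j} u_{h,\beta} = f_{i,\beta}-f_{j,\beta}$ from the symmetric structure, observe that consequently $w_\pm$ satisfy $-\Delta w_\pm \le C_0$ with $w_+ w_- \equiv 0$, and then apply the Caffarelli--Jerison--Kenig formula (in its second form, bounding $\Phi$ by the $L^2$ norms on the larger ball) together with the trivial weight comparison $|x-x_0|^{2-N}\ge r^{2-N}$. Your write-up is simply more explicit about Kato's inequality and the rescaling normalizing $C_0$ to $1$, but there is no substantive difference.
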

\begin{proof}
For any two indices $i \neq j$, a straightforward computation shows that
\[
    - \Delta \left( u_{i,\beta} - u_{j,\beta}\right) + \beta \left( u_{i,\beta} - u_{j,\beta}\right) \sum_{h \neq i, j} u_{h,\beta} = f_{i,\beta}(x,u_{i,\beta}) - f_{j,\beta}(x,u_{j,\beta})
\]
where the right hand side is, by assumption, uniformly bounded in $L^\infty$. It follows that the positive and the negative part of $\left( u_{i,\beta} - u_{j,\beta}\right)$ fall under the assumptions of the Caffarelli-Jerison-Kenig monotonicity formula, and in particular
\begin{multline*}
    \frac{1}{r^N} \int_{B_r(x_0)} |\nabla \left( u_{i,\beta} - u_{j,\beta}\right)^+ |^2  \cdot \frac{1}{r^N} \int_{B_r(x_0)} |\nabla \left( u_{i,\beta} - u_{j,\beta}\right)^- |^2  \\
    \leq \left( \frac{1}{r^2} \int_{B_r(x_0)} \frac{|\nabla  \left( u_{i,\beta} - u_{j,\beta}\right)^+|^2}{|x-x_0|^{N-2}} \right) \left(\frac{1}{r^2}\int_{B_r(x_0)} \frac{|\nabla  \left( u_{i,\beta} - u_{j,\beta}\right)^-|^2}{|x-x_0|^{N-2}} \right) \\
    \leq C\|f_{i,\beta} - f_{j,\beta} \|_{L^{\infty}}^2 \left( 1+ \int_{B_2(x_0)} u_{i,\beta}^2 + \int_{B_2(x_0)} u_{j,\beta}^2 \right)^2
\end{multline*}
where the last term is, by assumption, uniformly bounded in $\beta$.
\end{proof}

\begin{corollary}\label{cor: ordering}
Any blow-up limit $\bv$ is made of ordered functions, that is, for any pair $i \neq j$ either $v_i \leq v_j$ or $v_j \leq v_i$ in the whole $\R^N$.
\end{corollary}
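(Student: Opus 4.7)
The plan is to pull back the Caffarelli--Jerison--Kenig estimate \eqref{eqn: CJK estimate for LV} through the blow-up rescaling \eqref{def blow-up} and exploit the $L_n^{-4}$ prefactor that arises: in the limit, the product of the Dirichlet energies of the positive and negative parts of $v_i - v_j$ must vanish on every ball, which will force a pointwise ordering.

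First I would carry out the scaling computation. Since $v_{i,n}(x) = \eta(x_n) u_{i,\beta_n}(x_n + r_n x)/(L_n r_n)$ and taking positive/negative parts commutes with multiplication by positive constants, a change of variables $y = x_n + r_n x$ gives, for every $r > 0$ and all $n$ large enough that $B_{rr_n}(x_n) \subset \Omega$,
\[
\frac{1}{r^N}\int_{B_r} |\nabla (v_{i,n} - v_{j,n})^{\pm}|^2 \;=\; \frac{\eta^2(x_n)}{L_n^2 (rr_n)^N}\int_{B_{rr_n}(x_n)} |\nabla(u_{i,\beta_n} - u_{j,\beta_n})^{\pm}|^2.
\]
Multiplying the identities for $+$ and $-$ and invoking \eqref{eqn: CJK estimate for LV}, I would obtain
\[
\left(\frac{1}{r^N}\int_{B_r} |\nabla (v_{i,n} - v_{j,n})^+|^2\right)\cdot\left(\frac{1}{r^N}\int_{B_r} |\nabla (v_{i,n} - v_{j,n})^-|^2\right) \;\le\; \frac{C\,\eta^4(x_n)}{L_n^4} \;\longrightarrow\; 0,
\]
since $L_n \to +\infty$ by \eqref{absurd assumption} and $0 \le \eta \le 1$.

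Next I would pass to the limit using the strong $H^1_{\loc}(\R^N)$ convergence $\mathbf{v}_n \to \mathbf{v}$ from Lemma \ref{lem: mega lemma}(6) together with the continuity of $f \mapsto f^{\pm}$ on $H^1_{\loc}$ (Stampacchia). This yields, for every $r > 0$,
\[
\int_{B_r} |\nabla(v_i - v_j)^+|^2 \cdot \int_{B_r} |\nabla(v_i - v_j)^-|^2 = 0,
\]
so at every radius at least one of $(v_i - v_j)^{\pm}$ is constant on the connected set $B_r$. The sets $A := \{r > 0 : \nabla(v_i - v_j)^+ \equiv 0 \text{ on } B_r\}$ and $B := \{r > 0 : \nabla(v_i - v_j)^- \equiv 0 \text{ on } B_r\}$ are both downward closed and cover $(0,\infty)$, hence at least one of them equals $(0,\infty)$. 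If $A = (0,\infty)$ then $(v_i - v_j)^+ \equiv c \ge 0$ on $\R^N$: when $c = 0$ one has $v_i \le v_j$ everywhere, while when $c > 0$ one has $v_i - v_j \equiv c$ everywhere, hence $v_i \ge v_j$. The case $B = (0,\infty)$ is symmetric, yielding the pairwise ordering.

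The main point that requires a brief verification is the strong convergence of $(v_{i,n} - v_{j,n})^{\pm}$ in $H^1_{\loc}$; this is standard, via Stampacchia's identity $\nabla f^+ = \chi_{\{f > 0\}} \nabla f$ combined with pointwise a.e.\ convergence and dominated convergence. Beyond that, the whole argument is just the rescaling identity above fed into the uniform-in-$\beta$ bound \eqref{eqn: CJK estimate for LV}.
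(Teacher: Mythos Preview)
Your proof is correct and follows essentially the same route as the paper's: rescale the Caffarelli--Jerison--Kenig estimate \eqref{eqn: CJK estimate for LV} to pick up the factor $\eta(x_n)^4/L_n^4 \to 0$, then pass to the limit via the strong $H^1_{\loc}$ convergence of $\mf{v}_n$ to conclude the ordering. The paper's own proof is a three-line sketch of exactly this argument; you have simply written out the scaling identity and the final implication (from vanishing of the product on every ball to a global sign for $v_i - v_j$) in more detail.
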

\begin{proof}
Indeed, scaling properly the estimate \eqref{eqn: CJK estimate for LV}, we obtain for every $r \in (0,1/r_n)$ and $n$ large enough
\[
    \frac{1}{r^N} \int_{B_r} |\nabla (v_{i,n} - v_{j,n})^+ |^2 \cdot \frac{1}{r^N} \int_{B_r} |\nabla (v_{j,n} - v_{i,n})^- |^2 \leq \frac{\eta(x_n)^4}{L_n^4} C \to 0
\]
as $n\to \infty$. The conclusion follows by strong $H^1_{\loc}(\R^N)$ convergence of the blow-up sequence and by the continuity of the blow-up limit.
\end{proof}

We now recall a classical result, for which we refer to Lemma 2 in \cite{Br84}.

\begin{lemma}\label{lem: liouville equation}
Let $1<p<\infty$, and let $u \in L^p_{\loc}(\R^N)$ be a solution of
\[
    - \Delta u \leq - |u|^{p-1}u \qquad \text{in $\R^N$},
\]
in the sense of distributions. Then $u \le 0$. In particular, if we assume $u$ to be non-negative, then $u \equiv 0$.
\end{lemma}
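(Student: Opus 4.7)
The strategy is the classical Brezis-type argument: convert the differential inequality for $u$ into one for the positive part $v := u^+$, derive a scale-invariant integral decay for $v$ by a cutoff test, and upgrade this to a pointwise decay by exploiting the subharmonicity of $v$.

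First I would reduce to a non-negative subsolution. Regularize $u$ by convolution, $u_\epsilon := u * \rho_\epsilon$. Since the map $t \mapsto |t|^{p-1}t$ is convex on $\R$ for $p \ge 1$, Jensen's inequality yields $|u_\epsilon|^{p-1}u_\epsilon \le (|u|^{p-1}u)*\rho_\epsilon$, and commuting $\Delta$ with the convolution in the inequality $-\Delta u \le -|u|^{p-1}u$ gives $-\Delta u_\epsilon \le -|u_\epsilon|^{p-1}u_\epsilon$ pointwise. Since $u_\epsilon$ is smooth, the standard Kato inequality gives $-\Delta (u_\epsilon^+) \le -(u_\epsilon^+)^p$ in $\mathcal{D}'(\R^N)$. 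Using $u \in L^p_{\loc}$, we pass to the limit $\epsilon \to 0$ and obtain $-\Delta v \le -v^p$ in $\mathcal{D}'(\R^N)$, where $v := u^+$. In particular $\Delta v \ge v^p \ge 0$, so (a.e. equivalent to) an upper semicontinuous subharmonic function.

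Next I would derive a uniform integral decay. Fix $x_0 \in \R^N$ and $R > 0$, and let $\psi_R(x) := \eta(|x-x_0|/R)$ with $\eta \in C_c^\infty([0,\infty))$, $\eta \equiv 1$ on $[0,1]$ and $\supp \eta \subset [0,2]$. Testing $-\Delta v + v^p \le 0$ against $\psi_R^{2p'}$, with $p' := p/(p-1)$, gives
\[
\int v^p \psi_R^{2p'} \le - \int v\, \Delta(\psi_R^{2p'}) \le C\!\int v\bigl(\psi_R^{2p'-2}|\nabla\psi_R|^2 + \psi_R^{2p'-1}|\Delta\psi_R|\bigr).
\]
The power $2p'$ is chosen so that $(2p'-2)-2p'/p = 0$ and $(2p'-1)-2p'/p = 1$; this is exactly what is needed to split each term via Young's inequality with exponents $(p,p')$ as $v^p\psi_R^{2p'}$ (absorbed on the left) plus pure cutoff terms $|\nabla\psi_R|^{2p'}$ and $\psi_R^{p'}|\Delta\psi_R|^{p'}$. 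Absorbing and using $|\nabla\psi_R|\le C/R$, $|\Delta\psi_R|\le C/R^2$ gives
\[
\int_{B_R(x_0)} v^p \le C\!\int\bigl(|\nabla\psi_R|^{2p'} + \psi_R^{p'}|\Delta\psi_R|^{p'}\bigr) \le C R^{N - 2p'},
\]
with $C$ depending only on $p$, $N$ and $\eta$, and independent of $x_0,R$.

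Finally I combine subharmonicity with the decay estimate. The mean value inequality and Hölder's inequality give, for every $x_0$ and every $R>0$,
\[
v(x_0) \le \frac{1}{|B_R|}\int_{B_R(x_0)} v \le C R^{-N/p}\Bigl(\int_{B_R(x_0)} v^p\Bigr)^{1/p} \le C R^{-N/p + (N-2p')/p} = C R^{-2/(p-1)},
\]
using $2p'/p = 2/(p-1)$. Since $p>1$, letting $R\to\infty$ forces $v(x_0) = 0$ for every $x_0$, hence $u \le 0$; the nonnegativity assertion follows at once. The delicate step is the interplay between Kato's inequality, which is non-trivial for merely $L^p_\loc$ functions and is handled by the Jensen/mollification reduction above, and the rigid choice of cutoff power $2p'$: any other power leaves residual fractional powers of $\psi_R$ that prevent the clean absorption in Young's inequality, and the final decay $R^{-2/(p-1)}$ would not emerge in a scale-invariant way.
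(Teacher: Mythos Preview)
The paper does not prove this lemma; it simply quotes Lemma~2 of Brezis \cite{Br84}. Your argument reproduces Brezis's strategy (Kato reduction to $v=u^+$, then a cutoff test with the exponent $2p'$), and your steps~2 and~3 are correct, including the scale-invariant bound $\int_{B_R}v^p\le CR^{N-2p'}$ and the mean-value/H\"older step giving $v(x_0)\le CR^{-2/(p-1)}$.

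Step~1, however, contains a real error. The function $t\mapsto|t|^{p-1}t$ is \emph{not} convex on $\R$ for $p>1$: it is convex on $[0,\infty)$ and concave on $(-\infty,0]$. Hence Jensen's inequality does not yield $(|u|^{p-1}u)*\rho_\epsilon\ge|u_\epsilon|^{p-1}u_\epsilon$, and in fact this inequality is false. With $p=2$ and $u$ taking the values $1$ and $-2$ with equal mass under $\rho_\epsilon$, one has $u_\epsilon=-\tfrac12$, so $|u_\epsilon|u_\epsilon=-\tfrac14$, while $(|u|u)*\rho_\epsilon=\tfrac12\cdot1+\tfrac12\cdot(-4)=-\tfrac32<-\tfrac14$. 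So the mollified inequality $-\Delta u_\epsilon\le-|u_\epsilon|^{p-1}u_\epsilon$ is not available by this route.

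The reduction can be repaired, just not this way. From $\Delta u\ge|u|^{p-1}u$ one sees that $\Delta u$ is a Radon measure (an $L^1_{\loc}$ function plus a nonnegative measure), and Kato's inequality in the measure setting (Brezis--Ponce) gives $\Delta u^+\ge\chi_{\{u>0\}}\,|u|^{p-1}u=(u^+)^p$ in $\mathcal{D}'$; your steps~2--3 then apply verbatim. Alternatively, note that in the only place the paper uses the lemma (Step~1 of the proof of Theorem~\ref{thm: main general lv}) the function is already nonnegative, so the Kato reduction is not needed at all there.
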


\begin{proof}[Proof of Theorem \ref{thm: main general lv}]
We divided the proof in two steps.

\paragraph{\textbf{Step 1)} \emph{the case $(M_n)$ bounded.}}
In this case by Proposition \ref{prop: riassunto blow-up} the limiting function $\bv$ is a non-negative, non-trivial, non-constant and sublinear solution of
\[
    -\Delta v_1 = -M_\infty v_1 v_2 \quad -\Delta v_2 = -M_\infty v_1 v_2 \quad v_j \equiv 0 \quad \text{for every $j \neq 1,2$},
\]
for some $M_\infty>0$. By Corollary \ref{cor: ordering} we evince that either $v_1 \ge v_2$ in $\R^N$, or $v_2 \ge v_1$ in $\R^N$. We shall show that in such situation $v_1 \equiv v_2 \equiv 0$ in $\R^N$, a contradiction. Without loss of generality, we suppose that $v_1 \not \equiv 0$ and $v_1 \ge v_2$. Thanks to Lemma \ref{lem: liouville equation}
\[
    - \Delta v_2 = -M_\infty v_1 v_2 \le -M_\infty v_2^2 \quad \implies \quad  v_2 \equiv 0.
\]
But then $v_1$ is a non-constant positive harmonic function, in contradiction with the classical Liouville theorem.


\paragraph{\textbf{Step 2)} \emph{the case $M_n \to +\infty$.}}
In such a situation, the segregation condition (Proposition \ref{prop: riassunto blow-up}, \eqref{eq segregata}) implies that all the component of the vector $\bv$ are all trivial with the exception of $v_1$, which is then a subharmonic non-constant function. Letting
\[
    \hat v_{1,n} := v_{1,n} - \sum_{j \neq 1} v_{j,n}
\]
we immediately obtain that also $\hat v_{1,n} \to v_1$ locally uniformly in $\R^N$. But at the same time, a direct computation shows that
\[
    - \Delta \hat v_{1,n} \geq f_{1,n}(x,v_{1,n}) - \sum_{j \neq 1} f_{j,n}(x,v_{j,n}) \qquad \text{in $\Omega_n$}
\]
where the right hand side vanishes uniformly as $n \to +\infty$, implying that the function $v_1$ is also superharmonic. This again forces $v_1$ to be a non-constant positive harmonic function, in contradiction with the classical Liouville theorem.

\end{proof}

\medskip

\noindent \textbf{Acknowledgements:} we wish to thank Benedetta Noris for a careful reading of the first version of this paper. The authors are partially supported through the project ERC Advanced Grant 2013 n. 339958 ``Complex Patterns for Strongly Interacting Dynamical Systems - COMPAT''.


\begin{thebibliography}{10}

\bibitem{acf}
H.~W. Alt, L.~A. Caffarelli, and A.~Friedman.
\newblock Variational problems with two phases and their free boundaries.
\newblock {\em Trans. Amer. Math. Soc.}, 282(2):431--461, 1984.

\bibitem{blwz_phase}
H.~Berestycki, T.-C. Lin, J.~Wei, and C.~Zhao.
\newblock On phase-separation models: asymptotics and qualitative properties.
\newblock {\em Arch. Ration. Mech. Anal.}, 208(1):163--200, 2013.

\bibitem{BeTeWaWe}
H.~Berestycki, S.~Terracini, K.~Wang, and J.~Wei.
\newblock On entire solutions of an elliptic system modeling phase separations.
\newblock {\em Adv. Math.}, 243:102--126, 2013.

\bibitem{Br84}
H.~Brezis.
\newblock Semilinear equations in {${\bf R}^N$} without condition at infinity.
\newblock {\em Appl. Math. Optim.}, 12(3):271--282, 1984.

\bibitem{cafjerken}
L.~A. Caffarelli, D.~Jerison, and C.~E. Kenig.
\newblock Some new monotonicity theorems with applications to free boundary
  problems.
\newblock {\em Ann. of Math. (2)}, 155(2):369--404, 2002.

\bibitem{ckl}
L.~A. Caffarelli, A.~L. Karakhanyan, and F.-H. Lin.
\newblock The geometry of solutions to a segregation problem for nondivergence
  systems.
\newblock {\em J. Fixed Point Theory Appl.}, 5(2):319--351, 2009.

\bibitem{caflin}
L.~A. Caffarelli and F.-H. Lin.
\newblock Singularly perturbed elliptic systems and multi-valued harmonic
  functions with free boundaries.
\newblock {\em J. Amer. Math. Soc.}, 21(3):847--862, 2008.

\bibitem{cafroq}
L.~A. Caffarelli and J.-M. Roquejoffre.
\newblock Uniform {H}\"older estimates in a class of elliptic systems and
  applications to singular limits in models for diffusion flames.
\newblock {\em Arch. Ration. Mech. Anal.}, 183(3):457--487, 2007.

\bibitem{csbook}
L.~A. Caffarelli and S.~Salsa.
\newblock {\em A geometric approach to free boundary problems}, volume~68 of
  {\em Graduate Studies in Mathematics}.
\newblock American Mathematical Society, Providence, RI, 2005.

\bibitem{clll}
S.-M. Chang, C.-S. Lin, T.-C. Lin, and W.-W. Lin.
\newblock Segregated nodal domains of two-dimensional multispecies
  {B}ose-{E}instein condensates.
\newblock {\em Phys. D}, 196(3-4):341--361, 2004.


\bibitem{ctvIndiana}
M.~Conti, S.~Terracini, and G.~Verzini.
\newblock A variational problem for the spacial segregation of reaction-diffusion systems.
\newblock {\em Indiana Univ. Math. J.}, 54(3):779--815, 2005.


\bibitem{ctvNehari}
M.~Conti, S.~Terracini, and G.~Verzini.
\newblock Nehari's problem and competing species systems.
\newblock {\em Ann. Inst. H. Poincar\'e Anal. Non Lin\'eaire}, 19(6):871--888,
  2002.

\bibitem{ctvOptimal}
M.~Conti, S.~Terracini, and G.~Verzini.
\newblock An optimal partition problem related to nonlinear eigenvalues.
\newblock {\em J. Funct. Anal.}, 198(1):160--196, 2003.

\bibitem{ctv}
M.~Conti, S.~Terracini, and G.~Verzini.
\newblock Asymptotic estimates for the spatial segregation of competitive
  systems.
\newblock {\em Adv. Math.}, 195(2):524--560, 2005.

\bibitem{dwz1}
E.~N. Dancer, K.~Wang, and Z.~Zhang.
\newblock The limiting equation for the Gross-Pitaevskii equations and S. Terracini's conjectures.
\newblock {\em J. Funct. Anal.}, 262(3):1087--1131, 2012.


\bibitem{DaWaZh2}
E.~N. Dancer, K.~Wang, and Z.~Zhang.
\newblock Uniform {H}\"older estimate for singularly perturbed parabolic
  systems of {B}ose-{E}instein condensates and competing species.
\newblock {\em J. Differential Equations}, 251(10):2737--2769, 2011.

\bibitem{FaSo}
A.~Farina and N.~Soave.
\newblock Monotonicity and 1-dimensional symmetry for solutions of an elliptic
  system arising in {B}ose-{E}instein condensation.
\newblock Archives Ration. Mech. Anal. \textbf{213} (1) (2014), 287--326.



\bibitem{GiTr}
D.~Gilbarg and N.~S. Trudinger.
\newblock {\em Elliptic partial differential equations of second order}, volume
  224 of {\em Grundlehren der Mathematischen Wissenschaften [Fundamental
  Principles of Mathematical Sciences]}.
\newblock Springer-Verlag, Berlin, second edition, 1983.

\bibitem{matpet}
N.~Matevosyan and A.~Petrosyan.
\newblock Almost monotonicity formulas for elliptic and parabolic operators
  with variable coefficients.
\newblock {\em Comm. Pure Appl. Math.}, 64(2):271--311, 2011.

\bibitem{Mi}
M.~Mimura.
\newblock Spatial distributions of competiing species, Mathematical Ecology
  (Trieste, 1982), Lecture Notes in Biomathematics, vol. 54, 1982, 492–-501.

\bibitem{nttv}
B.~Noris, H.~Tavares, S.~Terracini, and G.~Verzini.
\newblock Uniform {H}\"older bounds for nonlinear {S}chr\"odinger systems with
  strong competition.
\newblock {\em Comm. Pure Appl. Math.}, 63(3):267--302, 2010.

\bibitem{PaWa}
A.~S. Parkins and D.~F. Walls.
\newblock The physics of trapped dilute-gas {B}ose-{E}instein condensates.
\newblock {\em Phys. Rep.}, 303:1--80, 1998.

\bibitem{PiSt}
L.~Pitaevskii and S.~Stringari.
\newblock {\em {B}ose-–{E}instein condensation}.
\newblock Oxford, 2003.

\bibitem{qui}
V.~Quitalo.
\newblock A free boundary problem arising from segregation of populations with
  high competition.
\newblock {\em Arch. Ration. Mech. Anal.}, 210(3):857--908, 2013.

\bibitem{rtt}
M.~Ramos, H.~Tavares, and S.~Terracini.
\newblock Existence and regularity of solutions to optimal partition problems
  involving laplacian eigenvalues.
\newblock Preprint arXiv:1403.6313.

\bibitem{RuCaFu}
C.~R\"uegg, N.~Cavadini, A.~Furrer, H.~U. G\"udel, K.~Kr\"amer, H.~Mutka,
  A.~Wildes, K.~Habicht, and P.~Vorderwisch.
\newblock {B}ose–-{E}instein condensation of the triplet states in the
  magnetic insulator {T}l{C}u{C}l3.
\newblock {\em Nature}, 423:62--65, 2003.

\bibitem{ShKaTe}
N.~Shigesada, K.~Kawasaki, and E.~Teramoto.
\newblock The effects of interference competition on stability, structure and
  invasion of a multi species system, J. Math. Biol. 21(2)(1984) 97–-113.

\bibitem{stz}
N.~Soave, H.~Tavares, and A.~Zilio.
\newblock Quasi-optimal local bounds for strongly competing Schr\"odinger
  equations with nontrivial grouping (preprint).

\bibitem{SoTe}
N.~Soave and S.~Terracini.
\newblock Liouville theorems and $1$-dimensional symmetry for solutions of an elliptic system modelling phase separation.
\newblock {\em Advances in Math.}, to appear. Preprint arXiv:1404.7288.

\bibitem{SoZi}
\newblock N. Soave and A. Zilio,
\newblock Entire solutions with exponential growth for an elliptic system modeling phase separation.
\newblock Nonlinearity \textbf{27} (2) (2014), 305--342.


\bibitem{tt}
H.~Tavares and S.~Terracini.
\newblock Regularity of the nodal set of segregated critical configurations
  under a weak reflection law.
\newblock {\em Calc. Var. Partial Differential Equations}, 45(3-4):273--317,
  2012.

\bibitem{tvz1}
S.~Terracini, G.~Verzini, and A.~Zilio.
\newblock Uniform {H}\"older bounds for strongly competing systems involving
  the square root of the laplacian.
\newblock Preprint arXiv:1211.6087.

\bibitem{tvz2}
S.~Terracini, G.~Verzini, and A.~Zilio.
\newblock Uniform {H}\"older regularity with small exponent in
  competition-fractional diffusion systems.
\newblock {\em Discrete Contin. Dyn. Syst.}, 34(6):2669--2691,
  2014.

\bibitem{Timm}
E.~Timmermans.
\newblock Phase separation of {B}ose-{E}instein condensates.
\newblock {\em Phys. Rev. Lett.}, 81:5718--5721, 1998.

\bibitem{vz}
G.~Verzini and A.~Zilio.
\newblock Strong competition versus fractional diffusion: the case of
  {L}otka-{V}olterra interaction.
\newblock {\em Comm. Partial Differential Equations}, in press doi:10.1080/03605302.2014.890627.

\bibitem{Wa}
K.~Wang.
\newblock On the {D}e {G}iorgi type conjecture for an elliptic system modeling
  phase separation.
\newblock {\em Comm. Partial Differential Equations}, 39(4):696--739, 2014.

\bibitem{WaZh}
K.~Wang and Z.~Zhang.
\newblock Some new results in competing systems with many species.
\newblock {\em Ann. Inst. H. Poincar\'e Anal. Non Lin\'eaire}, 27(2):739--761, 2010.



\bibitem{ww}
J.~Wei and T.~Weth.
\newblock Asymptotic behaviour of solutions of planar elliptic systems with
  strong competition.
\newblock {\em Nonlinearity}, 21(2):305--317, 2008.

\end{thebibliography}

\end{document}